\numberwithin{equation}{section}
\theoremstyle{plain}
\newtheorem{Thm}{Theorem}[section]
\newtheorem{Prop}[Thm]{Proposition}
\newtheorem{Lem}[Thm]{Lemma}
\newtheorem{Cor}[Thm]{Corollary}
\theoremstyle{definition}
\newtheorem{Defff}[Thm]{Definition}
\newtheorem{ex}[Thm]{Example}
\newtheorem{Rem}[Thm]{Remark}
\newcommand{\lam}{\lambda}
\newcommand{\al}{\alpha}
\newcommand{\ep}{\varepsilon}
\newcommand{\Ann}{\mathrm{Ann}}
\newcommand{\eq}{\begin{equation}}
\newcommand{\en}{\end{equation}}
\newcommand{\beqna}[1]{\begin{eqnarray}\label{#1}}
\newcommand{\eeqna}{\end{eqnarray}}
\newcommand{\beqn}[1]{\begin{equation}\label{#1}}
\newcommand{\eeqn}{\end{equation}}
\newcommand{\mc}[1]{\mathcal{#1}}
\newcommand{\mf}[1]{\mathfrak{#1}}
\renewcommand{\subset}{\subseteq}
\newcommand{\rar}{\rightarrow}
\newcommand{\bil}[2]{\langle{#1},{#2}^{\vee} \rangle }
\newcommand{\hs}{ \mathfrak{h}^*}
\newcommand{\aff}{ \mathbf{a} }
\newcommand{\gkd}{\operatorname{GKdim}}
\newcommand{\lest}{\leqslant}
\newcommand{\gest}{\geqslant}
\renewcommand{\leq}{\leqslant}
\renewcommand{\geq}{\geqslant}
\begin{document}

\title[GK dimensions and annihilator varieties]{Gelfand--Kirillov dimensions and annihilator varieties of highest weight modules of exceptional Lie algebras
}
\author{Zhanqiang Bai}\author{Fan Gao}\author{Yutong Wang}\author{Xun Xie}

\address[Bai]{School of Mathematical Sciences, Soochow University, Suzhou 215006,  China}
\email{zqbai@suda.edu.cn}

 \address[Gao]{School of Mathematical Sciences, Zhejiang University, Hangzhou 310058,  China}
 \email{gaofan@zju.edu.cn}

\address[Wang]{School of Mathematical Sciences,  Peking University, Beijing, 100871, China}
\email{wangyutong25@stu.pku.edu.cn}

\address[Xie]{School of Mathematics and Statistics, Beijing Institute of Technology, Beijing 100081, China}
\email{xieg7@163.com}


\subjclass[2010]{Primary 22E47; Secondary 17B10, 17B08}
\date{\today}
\keywords{Gelfand--Kirillov dimension, annihilator variety,  nilpotent orbit, Springer correspondence, Sommers duality}

\begin{abstract}
In this paper, we give  an efficient algorithm to compute the Gelfand--Kirillov dimensions of simple highest weight modules of exceptional  Lie algebras. By using the Sommers--Achar duality, we also determine the annihilator varieties of these highest weight modules.
\end{abstract}
\maketitle

\tableofcontents

\section{Introduction}


 Let $\mathfrak{g}$ be a complex semisimple Lie algebra and $U(\mathfrak{g})$ be its enveloping algebra. The Gelfand--Kirillov (GK) dimension is an important invariant to measure the size of an infinite-dimensional $U(\mathfrak{g})$-module $M$ \cite{Ge-Ki}. 
It plays significant roles in representation theory, see for example \cite{Vo78,Jo78,J81-1}. 

Annihilator variety is another invariant for infinite-dimensional $U(\mathfrak{g})$-modules.  
 For a simple $U(\mathfrak{g})$-module $M$, let $I(M)=\Ann(M)$ be its annihilator ideal in  $U(\mathfrak{g})$ and $J(M)$ be the corresponding graded ideal in $ S(\mathfrak{g}):=\text{gr} (U(\mathfrak{g}))$. The zero set of $J(M)$ in the dual vector space $\mathfrak{g}^*$ of $\mathfrak{g}$ is called {\it the annihilator variety} of $M$, and is denoted by $V(\Ann(M))$. From Joseph \cite{Jo78}, we know that $\dim V(\Ann(M))=2\gkd M$. 
In fact, Borho--Brylinski \cite{BoB1} proved that the annihilator variety of a highest weight module  with a fixed regular integral infinitesimal
	character is the Zariski closure of a nilpotent orbit in $\mathfrak{g}^*$. Joseph \cite{Jo85} extended this result to highest weight modules with  arbitrary  infinitesimal  characters.
In general, the study of Gelfand--Kirillov dimensions and annihilator varieties of  irreducible highest weight modules  is closely related with many research fields, such as representations of Weyl groups, Kazhdan--Lusztig cells, representations of Lie groups and $W$-algebras.  See for example \cite{BarV82,BMSZ-typeC,BMSZ-ABCD,BMSZ-counting,GS,GSK,GT,Mc96,Lo15,LY}.

However, in general,  these two invariants are not  easy to compute for irreducible highest weight modules. 
Nishiyama--Ochiai--Taniguchi  \cite{NOT} and Enright--Willenbring \cite{EW} independently gave characterizations of the GK dimensions  for unitary highest weight Harish-Chandra modules appearing in the dual pair settings. Bai--Hunziker \cite{Bai-Hu} found a uniform formula for the GK dimensions of   unitary highest weight modules.
By using the theory of Lusztig's $\aff$-function  \cite{lusztig1977symbol, Lu79,Lusztig1982A-class-II, lusztig1984char,lusztig1985cellsI,lusztig1985A_n}, Bai--Xie  \cite{BX} also realized a simple algorithm to compute the GK dimension of a highest weight module of $\mathfrak{sl}(n,\mathbb{C})$, which generalized Joseph's work on  GK dimensions of regular integral highest weight modules for type $A$ in \cite{Jo78}.
 Then Bai--Xiao--Xie \cite{BXX} gave certain explicit formulas of GK dimensions for simple highest weight modules of all classical Lie algebras and highest weight Harish-Chandra modules of exceptional Lie algebras.  
For the trivial infinitesimal 
character case, the computation of the annihilator variety of a highest weight module  has been done in type $A$ by Joseph \cite{Jo-com}, in types $B$ 
and $C$ by Garfinkle \cite{Garfinkle3}, and in type $D$ by McGovern and Pietraho 
\cite{MP23}. 
 Furthermore, building upon the Robinson--Schensted insertion algorithm utilized in \cite{BX,BXX}, Bai--Ma--Wang \cite{BMW} gave a simple description for the nilpotent orbit associated with the annihilator variety of a highest weight module of classical Lie algebras. However, describing these two invariants for a general highest weight module of  exceptional Lie algebras remains notably more intricate and challenging. This is one motivation for the present paper, devoted to this problem. On the other hand, Bai--Xiao \cite{BXiao} recently proved that a scalar generalized Verma module is reducible if and only if the GK dimension of its corresponding highest weight module is strictly less than the dimension of the nilradical of the corresponding parabolic subalgebra.   This also gives us another motivation to study the GK dimensions of arbitrary highest weight modules of exceptional Lie algebras.

From  \cite{BX} and \cite{BXX}, we know that the computation of the GK dimension of a highest weight module $L(\lam)$ with infinitesimal character $\lam$ can be reduced to the computation of Lusztig's $\aff$-function of a certain Weyl group element $w_{\lam}$.  For classical types, we can use the Robinson--Schensted (RS) insertion algorithm to compute the value of $\aff(w_{\lam})$ directly without determining $w_{\lam}$ explicitly. For an exceptional type Lie algebra, the computation of $\aff(w_{\lam})$ is more complicated since there is no RS algorithm and it is not easy to find out the corresponding Weyl group element $w_{\lam}$. In view of this, we first seek to give an efficient algorithm to compute the value of $\aff(w_{\lam})$, which will help us with computing the GK dimensions and annihilator varieties of  highest weight modules of exceptional Lie algebras.

Given an integral  highest weight module $L(\lambda)$ of an exceptional Lie algebra, this is achieved in two steps. First, we use Lemma \ref{find-w-lambda} to obtain the $w_{\lam}$ such that $w_{\lam}$ is minimal and $w_{\lam}^{-1}\lambda$ is antidominant. Second, we use  the computer algebra package PyCox written by Geck \cite{ge} to determine the
value of $\aff(w_\lambda)$, which in turn determines $\gkd L(\lambda)$ for integral $\lambda$.

Our first result in this paper is to generalize the above and to give an efficient algorithm to compute the value of $\aff(w_{\lam})$ for a nonintegral weight $\lam$, which also allows us to obtain $\gkd L(\lambda)$. This we elaborate now.

Fix a Cartan subalgebra $\mathfrak{h}$ of $\mathfrak{g}$. Let $\Phi$ be the root system of $(\mathfrak{g},\mathfrak{h})$ and let $W$ be its Weyl group. For any $\lam\in \mathfrak{h}^*$, the integral Weyl subgroup is denoted by $W_{[\lam]}$, whose root system is denoted by $\Phi_{[\lambda]}$. Suppose  $\Phi_{[\lambda]}=\Phi_{[\lambda]_1}\times \Phi_{[\lambda]_2}\times \cdots \times \Phi_{[\lambda]_k}\simeq \Phi_1\times \Phi_2\times \cdots\times \Phi_k$  is a direct product of irreducible root systems, and we denote such an isomorphism by $\phi$.  Thus we also get an isomorphism between the Cartan subalgebras $\mathfrak{h}_{\Phi_{[\lambda]}}=\mathfrak{h}_{\Phi_{[\lambda]_1}}\times \mathfrak{h}_{\Phi_{[\lambda]_2}}\times \cdots \times \mathfrak{h}_{\Phi_{[\lambda]_k}}$ and $\mathfrak{h}_{\Phi_1}\times \mathfrak{h}_{\Phi_2}\times \cdots\times \mathfrak{h}_{\Phi_k}$, and also between their complex duals, both still denoted by $\phi$.
From the isomorphism  $\phi$, we can write 
$$\phi(\lam|_{\mathfrak{h}_{\Phi_{[\lam]}}})=\prod\limits_{1\lest i\lest k} \phi(\lambda|_{\mathfrak{h}_{\Phi_{[\lambda]_i}}}).$$
 Denote ${\lam}'=\phi(\lam|_{\mathfrak{h}_{\Phi_{[\lam]}}})$.
Then ${\lam}'|_{\Phi_i}:={\lam}'|_{{\mathfrak{h}_{\Phi_{i}}}}=\phi(\lambda|_{\mathfrak{h}_{\Phi_{[\lambda]_i}}})$ is an integral weight of type $\Phi_i$.

\subsection{Main result} With these notations we have the following.

\begin{Thm} \label{T:main1}
    For a given nonintegral weight $\lam$, suppose that $\lambda=w_{\lambda}\mu$, where $\mu$ is antidominant and $w_{\lambda}$ is the unique minimal length element in the integral Weyl subgroup $ W_{[\lam]}$.
Suppose  $\Phi_{[\lambda]}\simeq \Phi_1\times \Phi_2\times \cdots\times \Phi_k$  is a direct product of some irreducible root systems, and we denote such an isomorphism by $\phi$. Then we have  $$\aff(w_{\lambda})=\aff(w_{{\lam}'})=\sum_{1\lest i\lest k}\aff(w_{{\lam}'|_{\Phi_i}}).$$ 
In particular, when ${\Phi_i}$ is of classical type, we can determine the value of $\aff(w_{{\lam}'|_{\Phi_i}})$ directly from ${\lam}'|_{\Phi_i}$ without finding out ${w_{{\lam}'|_{\Phi_i}}}$, by using the algorithm in \cite{BXX}.
\end{Thm}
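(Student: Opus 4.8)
The plan is to reduce everything to three facts: (1) Lusztig's $\aff$-function is defined purely in terms of the Coxeter group structure together with its weight function (which here is trivial/equal, since all our root systems are simply-laced or we use the standard length function), so it is invariant under isomorphism of Coxeter systems; (2) Lusztig's $\aff$-function is additive over direct products of Coxeter groups, i.e. $\aff_{W\times W'}(w,w') = \aff_W(w) + \aff_{W'}(w')$; and (3) the element $w_\lambda$, being the minimal-length element with $w_\lambda^{-1}\lambda$ antidominant, lives in the integral Weyl subgroup $W_{[\lambda]}$ and is carried by $\phi$ to the corresponding minimal-length element $w_{\lambda'}$ for the weight $\lambda'$.

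\medskip

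\emph{Step 1: transport along $\phi$.} I would first argue that $\phi$ is an isomorphism of Coxeter systems $(W_{[\lambda]}, S_{[\lambda]}) \xrightarrow{\sim} (W_{\Phi_1\times\cdots\times\Phi_k}, S)$, where $S_{[\lambda]}$ is the set of simple reflections attached to the choice of positive system making $\mu$ antidominant. Since $\lambda = w_\lambda\mu$ with $w_\lambda$ of minimal length in $W_{[\lambda]}$, the element $w_{\lambda'}$ obtained the same way for $\lambda'$ satisfies $w_{\lambda'} = \phi(w_\lambda)$; this is because $\phi$ intertwines the $W_{[\lambda]}$-action on $\mathfrak h_{\Phi_{[\lambda]}}^*$ with the $W_{\Phi_1\times\cdots}$-action on $\prod \mathfrak h_{\Phi_i}^*$ (by construction of $\phi$ on the Cartan subalgebras and their duals), preserves dominance chambers, and preserves length (being a Coxeter isomorphism). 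Lusztig's $\aff$-function depends only on the Coxeter system (with its length function), hence $\aff(w_\lambda) = \aff(\phi(w_\lambda)) = \aff(w_{\lambda'})$. Here one should be slightly careful that $\aff$ is genuinely a Coxeter-theoretic invariant: for Weyl groups with the equal-parameter Hecke algebra this is classical (Lusztig), and the isomorphism $\phi$ respects root lengths within each irreducible factor, so no unequal-parameter subtlety arises.

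\medskip

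\emph{Step 2: additivity over the product.} Writing $w_{\lambda'} = (w_1,\dots,w_k)$ under $W_{\Phi_1\times\cdots\times\Phi_k} \cong W_{\Phi_1}\times\cdots\times W_{\Phi_k}$, I claim $w_i = w_{\lambda'|_{\Phi_i}}$: indeed $\lambda'$ decomposes as $\prod_i \lambda'|_{\Phi_i}$, the antidominance condition and minimality are coordinatewise, and minimal length in a direct product is achieved componentwise since $\ell(w_1,\dots,w_k) = \sum_i \ell_{\Phi_i}(w_i)$. Then additivity of $\aff$, $\aff(w_1,\dots,w_k) = \sum_i \aff_{\Phi_i}(w_i)$, gives $\aff(w_{\lambda'}) = \sum_{1\lest i\lest k} \aff(w_{\lambda'|_{\Phi_i}})$. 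Additivity of $\aff$ over direct products follows from the fact that the Kazhdan--Lusztig basis, the $\mathbf a$-function, and hence $\aff$ for $W\times W'$ factor as tensor products over those for $W$ and $W'$ (the Hecke algebra of a product is the tensor product of the Hecke algebras, and $C_w \otimes C_{w'} = C_{(w,w')}$), so the lowest power of $q$ (or $v$) appearing is the sum. The last sentence of the theorem is then immediate: $\aff$ for a classical-type factor is computed from $\lambda'|_{\Phi_i}$ via the Robinson--Schensted-based algorithm of \cite{BXX} without exhibiting $w_{\lambda'|_{\Phi_i}}$.

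\medskip

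\emph{Main obstacle.} The nontrivial point is not the additivity (which is standard) but \emph{verifying that $\phi(w_\lambda)$ is exactly the minimal-length element $w_{\lambda'}$ for $\lambda'$}, i.e.\ that passing from $\lambda$ to the "integral picture" $\lambda'$ via $\phi$ commutes with the operation "take the minimal $w$ with $w^{-1}(\cdot)$ antidominant." This requires that $\phi$ match up not merely the abstract groups but the specific positive systems of $\Phi_{[\lambda]}$ and of each $\Phi_i$ that define antidominance, together with the $\rho$-shifts implicit in the dot-action versus linear action. I would handle this by fixing the positive system on $\Phi_{[\lambda]}$ to be the one inherited from that of $\Phi$ (so that $\mu$ is antidominant in the usual sense), declaring the positive system on each $\Phi_i$ to be its $\phi$-image, and then checking directly that $\phi$ is length-preserving and dominance-preserving for these compatible choices --- at which point the identification $w_{\lambda'} = \phi(w_\lambda)$ is forced by the uniqueness of the minimal-length representative. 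Once this bookkeeping is in place, Steps 1 and 2 assemble into the claimed chain of equalities.
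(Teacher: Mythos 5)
Your proposal is correct and takes essentially the same route as the paper's own argument (the transfer result and its corollary in \S 3.2 together with the additivity of $\aff$ over direct products from Lemma 2.1): you transport $\mu$ and $w_{\lambda}$ along $\phi$, use that $\phi$ preserves the pairings and the compatible positive systems so that $\phi(w_{\lambda})=w_{\lambda'}$ remains the minimal antidominant-making element, and then invoke invariance of $\aff$ under Coxeter isomorphisms plus componentwise additivity. The only difference is cosmetic: you rederive additivity from the tensor-product structure of the Hecke algebra and the Kazhdan--Lusztig basis, whereas the paper simply cites Lusztig.
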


From $\dim V(\Ann(L(\lam)))=2\gkd L(\lam)$, we see that the nilpotent orbit $\mathcal{O}_{{\rm Ann}(L(\lambda))}$ associated with $V(\Ann(L(\lam)))$ can be determined by $\gkd L(\lam)$ if there is only one such nilpotent orbit with dimension equal to $2\gkd L(\lam)$. However, if there are more than one nilpotent orbits with dimension equal to $2\gkd L(\lam)$, we need to resort to the result of Joseph \cite{Jo85} to determine $\mathcal{O}_{{\rm Ann}(L(\lambda))}$ explicitly. 

For this purpose, we use $\pi_{w} \in {\rm Irr}(W)$ to denote the special representation of $W$ corresponding to the two-sided cell containing $w$, as in \cite{BMW}. For a special representation $\pi_{w}$, let $\mathcal{O}(\pi_{w})$ be the associated special nilpotent orbit via the Springer correspondence.

From the isomorphism $\phi$, we see that $\lam'|_{{\Phi_i}}$ is an integral weight of type ${\Phi_i}$. When $\Phi_i$ is of classical type, we use the RS algorithm and H-algorithm given in \cite{BMW} to obtain a special  nilpotent orbit $\mathcal{O}(\pi_{w_{\lam'|_{{\Phi_i}}}})$. When $\Phi_i$ is of exceptional type, we  use  Lemma \ref{find-w-lambda} and PyCox to get  the corresponding character $\chi_i \in {\rm Irr}(W(\Phi_i))$ of $w_{\lam'|_{{\Phi_i}}}$ and the corresponding special  nilpotent orbit. In this way, we see that $\pi_{w_{\lam}}=\prod \pi_{w_{\lam'|_{{\Phi_i}}}} \times \prod \chi_j$ will be the special representation of $W_{[\lam]}$ and we have $\mathcal{O}_{{\rm Ann}(L(\lambda))} = \mathcal{O}_{\rm Spr}^G \circ j_{W_{[\lambda]}}^{W} (\pi_{w_\lambda})$ by Joseph \cite{Jo85}. Granted with $\pi_{w_\lambda}$, the computation of $\mathcal{O}_{{\rm Ann}(L(\lambda))}$ can be further reduced to the Sommers duality as given in \cite{Sommers}. This gives us the following:

\begin{Thm}\label{T:main2}
Keep notations as above.
Let $H^\vee \subseteq G^\vee$ be a pseudo-Levi with root system $\Phi_{[\lambda]}^\vee$. Let $H$ be its Langlands dual group, whose root system is then $\Phi_{[\lambda]}$.
Now $\pi_{w_\lambda} \in {\rm Irr}(W(H))$ is a special representation of $W(H)$. Let $\pi^{\vee}_{w_\lambda}$ be the corresponding special representation of $W(H^{\vee})$ obtained via the canonical isomorphism $W(H) \simeq W(H^\vee)$. Furthermore, let $L^\vee\subset H^\vee$ be a Levi subgroup of $H^\vee$ and $\mathcal{O}_{L^\vee}$ a distinguished nilpotent orbit of $L^\vee$ such that $d_{\rm LS}^{H^\vee}(\mathcal{O}(\pi^{\vee}_{w_\lambda})) = G\cdot \mathcal{O}_{L^\vee}$.
Then 
$$\mathcal{O}_{{\rm Ann}(L(\lambda))} = d_{\rm Som}^{\Phi^\vee}(L^{\vee}, \mathcal{O}_{L^\vee}),$$
where $d_{\rm Som}^{\Phi^\vee}$ is the duality given in \cite{Sommers}.
\end{Thm}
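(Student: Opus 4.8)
The plan is to take Joseph's formula $\mathcal{O}_{{\rm Ann}(L(\lambda))} = \mathcal{O}_{\rm Spr}^G \circ j_{W_{[\lambda]}}^W(\pi_{w_\lambda})$ recalled above and to identify the operator $\mathcal{O}_{\rm Spr}^G \circ j_{W_{[\lambda]}}^W$, restricted to special representations of $W_{[\lambda]}$, with Sommers' generalized duality read on the Langlands dual side. Granting Joseph's formula, what remains is to prove
\[
\mathcal{O}_{\rm Spr}^G \circ j_{W_{[\lambda]}}^W(\pi_{w_\lambda}) \;=\; d_{\rm Som}^{\Phi^\vee}(L^\vee,\mathcal{O}_{L^\vee}).
\]
I would first note, for the right-hand side to make sense, that $\Phi_{[\lambda]}^\vee = \{\alpha^\vee \in \Phi^\vee : \langle \lambda,\alpha^\vee\rangle \in \mathbb{Z}\}$ is the coroot system of the centralizer in $G^\vee$ of the semisimple element $\exp(2\pi i\lambda)$, hence (Borel--de Siebenthal) is $W$-conjugate to a pseudo-Levi subsystem of $\Phi^\vee$; this is the pseudo-Levi $H^\vee$, with Langlands dual $H$, and one has $W(H) = W(H^\vee) = W_{[\lambda]}$ as reflection subgroups of $W$.

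The central step will exploit the compatibility of $j$-induction with the Springer correspondence and with Lusztig--Spaltenstein duality, transported through the canonical isomorphism $W(H) \simeq W(H^\vee)$ (which is the identity on $W_{[\lambda]}$, carrying $\pi_{w_\lambda}$ to $\pi^\vee_{w_\lambda}$ and $\sgn$ to $\sgn$). Since $\pi^\vee_{w_\lambda}$ is special, $\sgn_{W(H^\vee)} \otimes \pi^\vee_{w_\lambda} = {\rm Spr}^{H^\vee}(d_{\rm LS}^{H^\vee}(\mathcal{O}(\pi^\vee_{w_\lambda})))$, and writing $d_{\rm LS}^{H^\vee}(\mathcal{O}(\pi^\vee_{w_\lambda}))$ in Bala--Carter form as the orbit induced from the distinguished orbit $\mathcal{O}_{L^\vee}$ of the Levi $L^\vee \subseteq H^\vee$ --- exactly the pair in the statement --- the Lusztig--Spaltenstein identity ${\rm Spr}^{H^\vee}({\rm Ind}_{L^\vee}^{H^\vee}(\mathcal{O}_{L^\vee})) = j_{W(L^\vee)}^{W(H^\vee)}({\rm Spr}^{L^\vee}(\mathcal{O}_{L^\vee}))$, valid because $L^\vee$ is an honest Levi, gives after transporting back
\[
\pi_{w_\lambda} \;=\; \sgn_{W_{[\lambda]}} \otimes j_{W(L^\vee)}^{W_{[\lambda]}}({\rm Spr}^{L^\vee}(\mathcal{O}_{L^\vee})).
\]

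Next I would feed this into $\mathcal{O}_{\rm Spr}^G \circ j_{W_{[\lambda]}}^W$: using transitivity of $j$-induction, $j_{W_{[\lambda]}}^W \circ j_{W(L^\vee)}^{W_{[\lambda]}} = j_{W(L^\vee)}^W$, together with the behaviour of $j_{W_{[\lambda]}}^W$ under tensoring by the sign character on the representations at hand, one arrives at $j_{W_{[\lambda]}}^W(\pi_{w_\lambda}) = \sgn_W \otimes j_{W(L^\vee)}^W({\rm Spr}^{L^\vee}(\mathcal{O}_{L^\vee}))$. By the construction of Sommers' duality in \cite{Sommers} --- which extends the Barbasch--Vogan/Lusztig--Spaltenstein duality and, on a pair $(L^\vee,\mathcal{O}_{L^\vee})$ consisting of a Levi of a pseudo-Levi of $G^\vee$ and a distinguished orbit, is given exactly by this sign-twisted $j$-induction recipe on Springer representations --- the right-hand side equals ${\rm Spr}^G(d_{\rm Som}^{\Phi^\vee}(L^\vee,\mathcal{O}_{L^\vee}))$. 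Applying $\mathcal{O}_{\rm Spr}^G$ to both sides then yields the claim (as a sanity check, for integral $\lambda$ both sides are the regular orbit when $\lambda$ is antidominant and are $\{0\}$ when $\lambda$ is dominant regular).

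The hard part is the step just used: $j$-induction from $W_{[\lambda]} = W(H) \subseteq W$, which is a reflection subgroup but \emph{not} in general a standard parabolic subgroup --- precisely because $H$ is only a pseudo-Levi, not a Levi, of $G$ --- does not commute with tensoring by the sign character and need not preserve irreducibility, so the passage from $j_{W_{[\lambda]}}^W(\sgn_{W_{[\lambda]}} \otimes Y)$ to $\sgn_W \otimes j_{W_{[\lambda]}}^W(Y)$ for the particular $Y = j_{W(L^\vee)}^{W_{[\lambda]}}({\rm Spr}^{L^\vee}(\mathcal{O}_{L^\vee}))$ is not formal. This is exactly what Sommers' analysis of $j$-induction from pseudo-parabolic reflection subgroups is designed to control, and in the write-up I would invoke the relevant statements of \cite{Sommers} (and \cite{BMW} for the precise Springer normalization, chosen so that $\mathcal{O}_{\rm Spr}^G({\rm triv}_W)$ is the regular orbit) rather than reproving them; one must also keep the Springer correspondences of $\Phi$, $\Phi^\vee$, $\Phi_{[\lambda]}$, $\Phi_{[\lambda]}^\vee$ and of the intermediate reductive group $L^\vee$ consistent through $W(H)\simeq W(H^\vee)$. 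Since the classical-type instance of this argument is already carried out in \cite{BMW}, the genuinely new ingredient is the concrete determination of $\pi_{w_\lambda}$ for exceptional factors, supplied via Lemma \ref{find-w-lambda} and PyCox \cite{ge}.
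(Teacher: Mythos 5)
There is a genuine gap, and it sits exactly at the step you flag as delicate, but the problem starts earlier than you think. Your intermediate identity $\pi_{w_\lambda}=\sgn_{W_{[\lambda]}}\otimes j_{W(L^\vee)}^{W_{[\lambda]}}\bigl({\rm Spr}^{L^\vee}(\mathcal{O}_{L^\vee})\bigr)$ is obtained by reading the Bala--Carter datum of $d_{\rm LS}^{H^\vee}(\mathcal{O}(\pi^\vee_{w_\lambda}))$ as an \emph{induced} orbit, i.e.\ by identifying the saturation ${\rm sat}_{L^\vee}^{H^\vee}(\mathcal{O}_{L^\vee})=H^\vee\cdot\mathcal{O}_{L^\vee}$ with ${\rm Ind}_{L^\vee}^{H^\vee}(\mathcal{O}_{L^\vee})$. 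These are different operations, and the resulting formula is false. Concretely, take $H^\vee$ of type $A_2$ and $\pi^\vee_{w_\lambda}$ the reflection representation, so $\mathcal{O}(\pi^\vee_{w_\lambda})=[2,1]$ is self-dual under $d_{\rm LS}$ and has Bala--Carter pair $(L^\vee,\mathcal{O}_{L^\vee})$ with $L^\vee$ of type $A_1$ and $\mathcal{O}_{L^\vee}$ its regular orbit: here the saturation is the minimal orbit $[2,1]$ while the induced orbit is the regular orbit $[3]$, and your identity would give $\sgn\otimes\pi^\vee_{w_\lambda}=j_{S_2\times S_1}^{S_3}(\mathrm{triv})=\mathrm{triv}$, i.e.\ $\pi^\vee_{w_\lambda}=\sgn$, contradicting $\pi^\vee_{w_\lambda}=[2,1]$. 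The correct bridge between saturation and induction is $d_{\rm LS}^{H^\vee}\circ{\rm sat}_{L^\vee}^{H^\vee}(\mathcal{O}_{L^\vee})={\rm ind}_{L^\vee}^{H^\vee}\circ d_{\rm LS}^{L^\vee}(\mathcal{O}_{L^\vee})$ (\cite[Theorem 8.3.1]{CM}); combined with the specialness of $\pi^\vee_{w_\lambda}$ (so that applying $d_{\rm LS}^{H^\vee}$ twice recovers $\mathcal{O}(\pi^\vee_{w_\lambda})$) and the fact that the Springer correspondence intertwines $j_{W(L^\vee)}^{W(H^\vee)}$ with ${\rm ind}_{L^\vee}^{H^\vee}$ for the honest Levi $L^\vee\subset H^\vee$, this yields $\pi^\vee_{w_\lambda}=j_{W(L^\vee)}^{W(H^\vee)}\bigl({\rm Spr}_{L^\vee,\mathbf 1}^{-1}(d_{\rm LS}^{L^\vee}\mathcal{O}_{L^\vee})\bigr)$ --- no sign twist anywhere, because the dualization has been pushed inside $L^\vee$.

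This also removes the repair you hoped for at the final step. You want to move $\sgn$ across $j_{W_{[\lambda]}}^{W}$ and appeal to "Sommers' analysis of $j$-induction from pseudo-parabolic subgroups", but $j$-induction fails to commute with sign already for honest parabolic subgroups (e.g.\ $j_{W_L}^{W}(\sgn_{W_L})$ is the Springer representation of the Richardson orbit, whereas $\sgn_W\otimes j_{W_L}^{W}(\mathrm{triv})=\sgn_W$), and no statement of that kind is what \cite{Sommers} supplies. What \cite{Sommers} provides, and what the paper actually uses, is the sign-free formula \eqref{E:dSom}: $d_{\rm Som}^{G}(L,\mathcal{O}_L)=\mathcal{O}_{\rm Spr}^{G^\vee}\circ\tau_{G,G^\vee}\circ j_{W(L)}^{W(G)}\circ{\rm Spr}_{L,\mathbf 1}^{-1}\circ d_{\rm LS}^{L}(\mathcal{O}_L)$, where the only duality is the internal one on the distinguished orbit. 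The paper's proof (Proposition \ref{P:Som}) runs: start from Joseph's formula exactly as you do, rewrite $\pi^\vee_{w_\lambda}$ via specialness, \cite[Theorem 8.3.1]{CM} and the Springer/$j$-induction compatibility for $L^\vee\subset H^\vee$, apply transitivity $j_{W(H)}^{W}\circ j_{W(L^\vee)}^{W(H)}=j_{W(L^\vee)}^{W}$, and then recognize the resulting expression as \eqref{E:dSom} applied to the pseudo-Levi pair $(L^\vee,\mathcal{O}_{L^\vee})$ with the roles of $G$ and $G^\vee$ switched. So your overall skeleton (Joseph plus identification with the Sommers duality through $j$-induction and Springer compatibilities) is the paper's, but the middle of your derivation must be replaced by this chain; as written it proves a false identity and then leans on a commutation property that neither holds nor appears in \cite{Sommers}.
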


The content and details of Theorem \ref{T:main2} are explained in \S \ref{AVHWM}. We just note here that it essentially follows from applying Sommers' result in \cite{som-98, Sommers} to Joseph's formula above, in order to compute $\mathcal{O}_{{\rm Ann}(L(\lambda))}$ from $\pi_{w_\lambda}$ in a more direct way.

To summarize, Theorem \ref{T:main1}, Theorem \ref{T:main2} and the usage of PyCox (whenever necessary) constitute the main algorithmic steps in our determining $\gkd L(\lambda)$ and $\mathcal{O}_{{\rm Ann}(L(\lambda))}$. Moreover, the same as for the classical case treated in \cite{BMW}, we have incorporated all these in a simple online program. The program takes $\lambda$ as the input and returns $\gkd L(\lambda)$ and $\mathcal{O}_{{\rm Ann}(L(\lambda))}$. It is available at
    \begin{center}
\textcolor{blue}{http://test.slashblade.top:5000/lie/GKdim}
    \end{center}

We remark also that it is an interesting problem to determine all highest weight modules whose annihilator varieties are  equal to a given nilpotent orbit closure, see \cite{Ir85,Jo98,Tamori,BMXX,BMW,BZhang} for example. When $L(\lam)$ is a simple  highest weight module such that $\Phi_{[\lambda]}^\vee$ is pseudo-maximal (see Definition \ref{thmbd}), we tabulate all the possibilities of $\mathcal{O}(\pi_{w_{\lam}})$ or $\pi_{w_{\lam}}$ in the Appendix, when $\mathcal{O}_{{\rm Ann}(L(\lambda))}$ is equal to a fixed given nilpotent orbit.

This paper is organized as follows. In \S\ref{sec:pre}, we give some necessary preliminaries. In \S \ref{a-value-compu}, for a given weight $\lam$, we explain our method of computing $\aff(w_{\lam})$, where we write $\lam=w_{\lam}\mu$ such that  $w_{\lam}$  has the minimal length in the Weyl group $W_{[\lam]}$ and $\mu $ is antidominant. In \S \ref{f4-gkd}--\S\ref{e-gkd}, we give  the GK dimensions of all the simple highest weight modules for exceptional Lie algebras. In \S\ref{AVHWM}--\S\ref{S:gen},   we show how to determine the annihilator varieties of the highest weight  modules for all exceptional Lie algebras, by using our results in \S \ref{a-value-compu} coupled with the Sommers duality. Throughout, we use Bourbaki's notation and coordinates for root systems and the relevant data, as given in \cite{Bour}.

\subsection*{Acknowledgments}
We would like to thank  M. Geck for helpful discussions about PyCox. 
Z. Bai  is partially supported  by NSFC Grant No. 12171344. F. Gao is partially supported by the National Key $\textrm{R}\,\&\,\textrm{D}$ Program of China (No. 2022YFA1005300) and  NSFC Grant No.  12171422. X. Xie is partially supported by 
NSFC Grant No. 12171030 and 12431002.

%
%

\section{Preliminaries}\label{sec:pre}

%
%

In this section, we recall some basic results on Lusztig's $\aff$-functions, Gelfand--Kirillov dimensions and associated varieties. See \cite{lusztig1984char, Vo78, Vo91} for more details.

Now let $\mathfrak{g}$ be a simple complex Lie algebra and $\mathfrak{h}$ be a Cartan subalgebra.
	Let $\Phi^+\subset\Phi$ be the set of positive roots determined by a Borel subalgebra $\mathfrak{b}$ of $\mathfrak{g}$. So we have a Cartan decomposition $\mathfrak{g}=\mathfrak{n}\oplus\mathfrak{h}\oplus\mathfrak{n}^-$ with $\mathfrak{b}=\mathfrak{n}\oplus \mathfrak{h}$. Denote by $\Delta$ the set of simple roots in $\Phi^+$.

\subsection{Hecke algebra and \texorpdfstring{$\aff$}{}-functions}\label{sec:cell}

Recall that the Weyl group $ W  $ of $ \mf{g} $ is a Coxeter group generated by $S:=\{s_\al\mid\al\in\Delta \}$. Let $\ell(-)$ be the length function on $W$ with respect to $S$. Then we have a Hecke algebra $ \mc{H} $ over $ \mathcal{A} :=\mathbb{Z}[q,q^{-1}]$, which is generated by $ T_w $, $ w\in W $ with relations \[
T_wT_{w'}=T_{ww'} \text{ if }\ell(ww')=\ell(w)+\ell(w'),
\]\[
\text{and }(T_s+q^{-1})(T_s-q)=0 \text{ for any }s\in S.
\]
The Kazhdan--Lusztig basis $C_w, w\in W$ of $ \mc{H} $ are characterized as the unique elements $ C_w \in \mc{H}$ such that
\[
\overline{C_w}=C_w,\qquad C_w\equiv T_w \mod{\mc{H}_{<0}}
\]where $ \bar{\,} :\mc{H}\rar\mc{H}$ is the bar involution such that $ \bar{q}=q^{-1} $, $ \overline{T_w} =T_{w^{-1}}^{-1}$, and $ \mc{H}_{<0}=\bigoplus_{w\in W}\mathcal{A}_{<0}T_w $, $ \mathcal{A}_{<0}=q^{-1}\mathbb{Z}[q^{-1}] $.

If $ C_y $ occurs in the expansion of $ hC_w $ (resp. $C_wh$) with respect to the KL-basis for some $ h\in\mc{H} $, then we write $ y\leftarrow_L w $ (resp. $ y\leftarrow_R w $). Extend $ \leftarrow_L $ (resp. $ \leftarrow_R $) to a preorder $ \lest_L $ (resp. $\lest _R$) on $ W $. For $x, w\in W$, write $x \lest_{LR} w$ if there exist $x=w_1, \dots, w_n=w$ such that for every $1\lest i<n$ we have either $w_i\lest_L w_{i+1}$ or $w_i\lest_R w_{i+1}$. Let $\sim_{L}$, $\sim_{R}$, $\sim_{LR}$ be the equivalence relations associated with $\lest_L$, $\lest_R$, $\lest_{LR}$ (for example, $x\sim_{L}w$ if and only if $x\lest_L w$ and $w\lest_Lx$). The equivalence classes on $W$ for $\sim_L$, $\sim_R$, $\sim_{LR}$ are called \textit{left cells}, \textit{right cells} and \textit{two-sided cells} respectively. 



We have $ C_xC_y=\sum_{z\in W} h_{x,y,z}C_z $ with $ h_{x,y,x}\in\mathcal{A} $. Then Lusztig's \textit{$\aff$-function} $ \mathbf{a}:W\rar\mathbb{N} $ is defined by\[
\aff(z)=\max\{\deg h_{x,y,z}\mid x,y\in W \} \text{ for } z\in W.
\]
The following lemma is an easy consequence of Lusztig's results \cite[\S 15]{Lus03}.

\begin{Lem}\label{alem1}
Let $x, w\in W$. Then
	\begin{itemize}\item [(1)]$ \aff(w)=\aff(w^{-1}) $.
		\item [(2)] If $x\lest_{LR} w$, then $\aff(x)\geq\aff(w)$. Hence $\aff(x)=\aff(w)$ whenever $x\sim_{LR} w$.
		\item [(3)] If $ w_I $ is the longest element of the parabolic subgroup of $ W $ generated by a subset $ I\subset S $, the $ \aff(w_I)$ is equal to  the length $\ell(w_I) $ of $ w_I $.
		\item [(4)] If $ W $ is a direct product of  Coxeter subgroups $ W_1 $ and $ W_2 $, then\[
		\aff(w)=\aff(w_1)+\aff(w_2)
		\]for  $ w=(w_1,w_2) \in W_1\times W_2=W$.
	\end{itemize}
\end{Lem}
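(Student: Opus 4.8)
The plan is to prove the four items essentially independently: parts (1), (4), and the lower bound in (3) admit short self-contained arguments, while part (2) and the upper bound in (3) I would simply take from Lusztig's theory of the $\aff$-function. Two elementary facts are worth isolating first. Since the bar involution is an $\mathcal{A}$-algebra automorphism of $\mc{H}$ fixing every $C_w$, applying it to $C_xC_y=\sum_z h_{x,y,z}C_z$ shows each $h_{x,y,z}$ is fixed by $q\mapsto q^{-1}$; in particular a nonzero $h_{x,y,z}$ has $q$-degree $\gest 0$. And since $\mbb{Z}[q,q^{-1}]$ is an integral domain, $\deg$ is additive on products of nonzero Laurent polynomials; also $h_{z,e,z}=1$, so the index set defining $\aff(z)$ is non-empty.

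For (1) I would introduce the $\mathcal{A}$-linear anti-automorphism $\flat\colon\mc{H}\rar\mc{H}$, $T_w\mapsto T_{w^{-1}}$, and check that it commutes with the bar involution and preserves $\mc{H}_{<0}$; then $\flat(C_w)$ is bar-invariant and $\equiv T_{w^{-1}}\bmod\mc{H}_{<0}$, so $\flat(C_w)=C_{w^{-1}}$ by uniqueness of the Kazhdan--Lusztig basis. Applying $\flat$ to $C_xC_y=\sum_z h_{x,y,z}C_z$, and using that $\flat$ reverses products and fixes scalars, gives $C_{y^{-1}}C_{x^{-1}}=\sum_z h_{x,y,z}C_{z^{-1}}$, i.e.\ $h_{y^{-1},x^{-1},z}=h_{x,y,z^{-1}}$ for all $x,y,z$; since $(x,y)\mapsto(y^{-1},x^{-1})$ permutes $W\times W$, taking maxima of $q$-degrees yields $\aff(z^{-1})=\aff(z)$. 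For (4) I would use the $\mathcal{A}$-algebra identification $\mc{H}(W_1\times W_2)\simeq\mc{H}(W_1)\otimes_{\mathcal{A}}\mc{H}(W_2)$, $T_{(w_1,w_2)}\leftrightarrow T_{w_1}\otimes T_{w_2}$ (valid because length is additive), note that $C_{w_1}\otimes C_{w_2}$ is bar-invariant and $\equiv T_{(w_1,w_2)}\bmod\mc{H}_{<0}$, hence $C_{(w_1,w_2)}=C_{w_1}\otimes C_{w_2}$ and $h_{(x_1,x_2),(y_1,y_2),(z_1,z_2)}=h^{(1)}_{x_1,y_1,z_1}\,h^{(2)}_{x_2,y_2,z_2}$; additivity of $\deg$ on products then makes the maximum over all quadruples factor as $\aff(z_1)+\aff(z_2)$.

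For (3), set $N=\ell(w_I)$ and use the standard expansion $C_{w_I}=q^{-N}\sum_{w\in W_I}q^{\ell(w)}T_w$. From $C_s=T_s+q^{-1}T_e$ one checks $T_sC_{w_I}=qC_{w_I}=C_{w_I}T_s$ for $s\in I$, hence $C_{w_I}T_w=q^{\ell(w)}C_{w_I}$ for every $w\in W_I$; substituting the expansion for the right-hand factor of $C_{w_I}\cdot C_{w_I}$ gives $C_{w_I}^2=\bigl(q^{-N}\sum_{w\in W_I}q^{2\ell(w)}\bigr)C_{w_I}$. The coefficient is $q^{-N}$ times the Poincar\'e polynomial of $W_I$ at $q^2$, which has $q$-degree $2N$, so $\deg h_{w_I,w_I,w_I}=N$ and $\aff(w_I)\gest N$. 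The reverse inequality $\aff(w_I)\lest N$---i.e.\ $\aff(w_I)=\ell(w_I)$---I would take as a known property of Lusztig's $\aff$-function, \cite[\S 13, \S 15]{Lus03}.

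Finally, part (2) is exactly Lusztig's conjecture P4, which is a theorem for Weyl groups with equal parameters: \cite[\S 14, \S 15]{Lus03}; the ``hence'' clause is immediate since $x\sim_{LR}w$ means $x\lest_{LR}w$ and $w\lest_{LR}x$. I expect this to be the one genuinely non-elementary input. While the \emph{symmetry} $\overline{h_{x,y,z}}=h_{x,y,z}$ used above is elementary, the proof of P4 in this generality rests on the \emph{positivity} $h_{x,y,z}\in\mbb{N}[q,q^{-1}]$, which comes from the intersection-cohomology interpretation of the Kazhdan--Lusztig basis, and I do not expect to circumvent it.
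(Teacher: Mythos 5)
Your proposal is correct, but it is organized quite differently from the paper, which offers no argument at all beyond the single citation ``easy consequence of Lusztig's results [Lus03, \S 15]'' --- i.e.\ the properties P1--P15 of the $\aff$-function established there in the equal-parameter case. You instead re-derive the elementary parts directly from the Kazhdan--Lusztig basis: the anti-automorphism $T_w\mapsto T_{w^{-1}}$ giving $h_{y^{-1},x^{-1},z}=h_{x,y,z^{-1}}$ and hence (1); the tensor decomposition $C_{(w_1,w_2)}=C_{w_1}\otimes C_{w_2}$ giving (4); and the explicit formula $C_{w_I}=q^{-N}\sum_{w\in W_I}q^{\ell(w)}T_w$ giving $\deg h_{w_I,w_I,w_I}=N$, i.e.\ the lower bound in (3) --- all of these computations are valid in the paper's normalization. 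What your route buys is transparency about where the deep input sits: only (2) (Lusztig's P4, whose proof in the split case rests on positivity) and the upper bound $\aff(w_I)\lest\ell(w_I)$ genuinely require [Lus03, \S 15], whereas the paper's one-line citation hides this. For the latter upper bound you leave the citation unspecified; the cleanest way to pin it down is P1, $\aff(z)\lest\Delta(z)$, combined with your own expansion of $C_{w_I}$, which shows the coefficient of $T_e$ is exactly $q^{-\ell(w_I)}$, so $\Delta(w_I)=\ell(w_I)$ (alternatively P12 plus the bound of the $\aff$-function of the finite group $W_I$ by $\ell(w_I)$). One cosmetic slip: the bar involution is not an $\mathcal{A}$-algebra automorphism but a ring automorphism semilinear over $q\mapsto q^{-1}$; this is exactly what your deduction $\overline{h_{x,y,z}}=h_{x,y,z}$ uses, so the conclusion stands.
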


Consider
$$\mathcal{C}:=\{w\in W\backslash \{1\}: \ w \text{ has a unique reduced expression}\}.$$
Denote $\mathcal{C}w_0:=\{ww_0\mid w\in\mathcal{C}\}$, where $w_0$ is the longest element of $W$. From \cite[Prop. 3.8]{Lus83}, we know that $\mathcal{C}$ and $\mathcal{C}w_0$ are two-sided cells of $W$.

Recall that Lusztig singled out a subclass of representations of Weyl groups called \emph{special} in \cite{lusztig1984char}. Roughly speaking, for classical types, special representations are characterized by special symbols (equivalently special partitions). For exceptional types, special representations are listed as the first characters in each family of symbols \cite[Chap. 4]{lusztig1984char}.

Special orbits are then by definitions those corresponding to the special representations via the Springer correspondence, which we briefly recall now. Let $G$ be a connected reductive group over $\mathbb{C}$ with Lie algebra $\mathfrak{g}$. Denote by $\mathcal{N}(G)$ the set of nilpotent orbits in $\mathfrak{g}$. For $\mathcal{O} \in \mathcal{N}(G)$, we write $A_\mathcal{O}:=G_{\rm ad}^x/(G_{\rm ad}^x)^o$ for the connected component group of the centralizer subgroup  of any $x\in \mathcal{O}$ in the adjoint group $G_{\rm ad}$ of $G$. Consider
$$\mathcal{N}^{\rm en}(G):=\{(\mathcal{O}, \eta): \mathcal{O} \in \mathcal{N}(G), \eta \in {\rm Irr}(A_\mathcal{O}) \}.$$
The Springer correspondence gives an injective map
\begin{equation} \label{Spr}
  {\rm Spr}_G: {\rm Irr}(W(G)) \longrightarrow \mathcal{N}^{\rm en}(G)  
\end{equation}
denoted by ${\rm Spr}_G(\sigma) = (\mathcal{O}_{\rm Spr}^G(\sigma), \eta_\sigma)$. In particular ${\rm Spr}_G(\mathbf{1})=(\mathcal{O}_{\rm reg}, \mathbf{1})$ and ${\rm Spr}_G(\varepsilon_W)=(0, \mathbf{1})$. 
For every orbit $\mathcal{O} \in \mathcal{N}(G)$, the pair $(\mathcal{O}, \mathbf{1})$ always lies in the image of ${\rm Spr}_G$, and thus we have a well-defined map
$${\rm Spr}_{G, \mathbf{1}}^{-1}: \mathcal{N}(G) \longrightarrow {\rm Irr}(W(G))$$
given by ${\rm Spr}_{G,\mathbf{1}}^{-1}(\mathcal{O}):= {\rm Spr}_G^{-1}(\mathcal{O}, \mathbf{1})$.  If $\sigma \in {\rm Irr}(W(G))$ is a special representation, then one has ${\rm Spr}_G(\sigma) = (\mathcal{O}_{\rm Spr}^G(\sigma), \mathbf{1})$, and we call $\mathcal{O}_{\rm Spr}^G(\sigma)$ a special representation, as alluded to above.

In fact, it can be seen that ${\rm Spr}_G$ actually only depends on the Lie algebra $\mathfrak{g}$, and thus we may write all the above using $\mathfrak{g}$ as
$${\rm Spr}_\mathfrak{g}:={\rm Spr}_G, \quad \mathcal{O}_{\rm Spr}^\mathfrak{g}:=\mathcal{O}_{\rm Spr}^G, \quad {\rm Spr}_{\mathfrak{g}, \mathbf{1}}^{-1}:={\rm Spr}_{G, \mathbf{1}}^{-1}.$$

On the other hand, in \cite{BXX} or \cite{BMW},  from a classical Weyl group element $w\in W$, we can get a special  partition ${\bf p}$ and a special symbol $\Lambda_{\bf p}$ (representing the irreducible special representation ${\rm Spr}_{\mathfrak{g}, \mathbf{1}}^{-1}({\bf p})$). Then we  can compute the corresponding $\aff(w)$ from this symbol and write $\aff(\Lambda_{\mathfrak{p}})=\aff(w)$ since the $\aff$-function is constant on a two-sided cell by Lemma \ref{alem1}. The following lemma lists all the possible special partitions for a given $\aff$ value. It will be used later.



\begin{Lem}\label{a-value}
We denote $\aff({\bf p}):=\aff(\Lambda_{{\bf p}})$.
\begin{itemize}\item [(1)]
 The  $\aff$-function of the Weyl group of type $A_1 $ takes the following values:
   $$\aff({\bf p})=\begin{cases}
	0 &\emph{if }{\bf p}=[2],\\
	1 &\emph{if }{\bf p}=[1^2].
	\end{cases}
 $$
    
   
\item [(2)]   The  $\aff$-function of the Weyl group of type $A_2 $ takes the following values:
$$\aff({\bf p})=\begin{cases}
	0 &\emph{if }{\bf p}=[3],\\
	1 &\emph{if }{\bf p}=[2,1],\\
 3 &\emph{if }{\bf p}=[1^3].
	\end{cases}
 $$

\item [(3)]   The  $\aff$-function of the Weyl group of type $A_3 $ takes the following values:
$$\aff({\bf p})=\begin{cases}
	0 &\emph{if }{\bf p}=[4],\\
	1 &\emph{if }{\bf p}=[3,1],\\
 2 &\emph{if }{\bf p}=[2^2],\\
 3 &\emph{if }{\bf p}=[2,1^2],\\
 6 &\emph{if }{\bf p}=[1^4].
	\end{cases}
 $$

\item [(4)]
 The  $\aff$-function of the Weyl group of type $ A_4$  takes the following values:
  $$\aff({\bf p})=\begin{cases}
	0 &\emph{if }{\bf p}=[5],\\
	1 &\emph{if }{\bf p}=[4,1],\\
 2 &\emph{if }{\bf p}=[3,2],\\
 3 &\emph{if }{\bf p}=[3,1^2],\\
 4 &\emph{if }{\bf p}=[2^2,1],\\
 6 &\emph{if }{\bf p}=[2,1^3],\\
 10 &\emph{if }{\bf p}=[1^5].\\
	\end{cases}
 $$

\item [(5)]
 The  $\aff$-function of the Weyl group of type $ A_5$  takes the following values:
$$\aff({\bf p})=\begin{cases}
	0 &\emph{if }{\bf p}=[6],\\
	1 &\emph{if }{\bf p}=[5,1],\\
 2 &\emph{if }{\bf p}=[4,2],\\
 3 &\emph{if }{\bf p}=[4,1^2] \emph{~or~} [3^2],\\
 4 &\emph{if }{\bf p}=[3,2,1],\\
 6 &\emph{if }{\bf p}=[3,1^3] \emph{~or~} [2^3], \\
 7 &\emph{if }{\bf p}=[2^2,1^2], \\
10 &\emph{if }{\bf p}=[2,1^4],\\
15 &\emph{if }{\bf p}=[1^5].\\
	\end{cases}
 $$

 \item [(6)]  The  $\aff$-function of the Weyl group of type $ A_7$  takes the following values:
$$\aff({\bf p})=\begin{cases}
	0 &\emph{if }{\bf p}=[8],\\
	1 &\emph{if }{\bf p}=[7,1],\\
 2 &\emph{if }{\bf p}=[6,2],\\
 3 &\emph{if }{\bf p}=[6,1^2] \emph{~or~} [5,3],\\
 4 &\emph{if }{\bf p}=[5,2,1] \emph{~or~} [4^2],\\
 5 &\emph{if }{\bf p}=[4,3,1],\\
 6 &\emph{if }{\bf p}=[5,1^3] \emph{~or~} [4,2^2], \\
 7 &\emph{if }{\bf p}=[4,2,1^2] \emph{~or~} [3^2,2], \\
8 &\emph{if }{\bf p}=[3^2,1^2],\\
	\end{cases}
  \emph{~and~} \aff(\bf p)=\begin{cases}
9 &\emph{if }{\bf p}=[3,2^2,1],\\
10 &\emph{if }{\bf p}=[4,1^4],\\
11 &\emph{if }{\bf p}=[3,2,1^3],\\
12 &\emph{if }{\bf p}=[2^4],\\
13 &\emph{if }{\bf p}=[2^3,1^2],\\
15 &\emph{if }{\bf p}=[3,1^5],\\
16 &\emph{if }{\bf p}=[2^2,1^4],\\
21 &\emph{if }{\bf p}=[2,1^6],\\
28 &\emph{if }{\bf p}=[1^8].\\
	\end{cases}
 $$

\item [(7)] The  $\aff$-function of the Weyl group of type $B_3 $ takes the following values:
    $$\aff({\bf p})=\begin{cases}
	0 &\emph{if }{\bf p}=[7],\\
	1 &\emph{if }{\bf p}=[5,1^2],\\
 2 &\emph{if }{\bf p}=[3^2,1],\\
 3 &\emph{if }{\bf p}=[3,2^2],\\
 4 &\emph{if }{\bf p}=[3,1^4],\\
9 &\emph{if }{\bf p}=[1^7].\\
	\end{cases}
 $$

  \item [(8)]  The  $\aff$-function of the Weyl group of type $C_3 $ takes the following values:
    $$\aff({\bf p})=\begin{cases}
	0 &\emph{if }{\bf p}=[6],\\
	1 &\emph{if }{\bf p}=[4,2],\\
 2 &\emph{if }{\bf p}=[3^2],\\
 3 &\emph{if }{\bf p}=[2^3],\\
 4 &\emph{if }{\bf p}=[2^2,1^2],\\
9 &\emph{if }{\bf p}=[1^6].\\
	\end{cases}
 $$
 
  \item [(9)]  The  $\aff$-function of the Weyl group of type $C_4$  takes the following values:
$$\aff({\bf p})=\begin{cases}
	0 &\emph{if }{\bf p}=[8],\\
	1 &\emph{if }{\bf p}=[6,2],\\
 2 &\emph{if }{\bf p}=[4^2],\\
 3 &\emph{if }{\bf p}=[4,2^2],\\
 4 &\emph{if }{\bf p}=[4,2,1^2],\\
 5 &\emph{if }{\bf p}=[3^2,1^2], \\
 6 &\emph{if }{\bf p}=[2^4], \\
9 &\emph{if }{\bf p}=[2^2,1^4],\\
16 &\emph{if }{\bf p}=[1^8].\\
	\end{cases}
 $$


\item [(10)]   The  $\aff$-function of the Weyl group of type $ A_8$  takes the following values:

$$\aff({\bf p})=\begin{cases}
	0 &\emph{if }{\bf p}=[9],\\
	1 &\emph{if }{\bf p}=[8,1],\\
 2 &\emph{if }{\bf p}=[7,2],\\
 3 &\emph{if }{\bf p}=[7,1^2] \emph{~or~} [6,3],\\
 4 &\emph{if }{\bf p}=[6,2,1] \emph{~or~} [5,4],\\
 5 &\emph{if }{\bf p}=[5,3,1],\\
 6 &\emph{if }{\bf p}=[5,2^2] \emph{~or~} [4^2,1] \emph{~or~} [6,1^3], \\
 7 &\emph{if }{\bf p}=[5,2,1^2] \emph{~or~} [4,3,2], \\
8 &\emph{if }{\bf p}=[4,3,1^2],\\
9 &\emph{if }{\bf p}=[3^3] \emph{~or~} [4,2^2,1],\\
\end{cases}$$
 and $$\aff({\bf p})=\begin{cases}

10 &\emph{if }{\bf p}=[3^2,2,1] \emph{~or~} [5,1^{4}],\\
11 &\emph{if }{\bf p}=[4,2,1^3],\\
12 &\emph{if }{\bf p}=[3,2^3] \emph{~or~} [3^2,1^3],\\
13 &\emph{if }{\bf p}=[3,2^2,1^2],\\
15 &\emph{if }{\bf p}=[4,1^5],\\
16 &\emph{if }{\bf p}=[3,2,1^4] \emph{~or~} [2^4,1],\\

18 &\emph{if }{\bf p}=[2^3,1^3],\\
21 &\emph{if }{\bf p}=[3,1^6],\\
22 &\emph{if }{\bf p}=[2^2,1^5],\\
28 &\emph{if }{\bf p}=[2,1^7],\\
36 &\emph{if }{\bf p}=[1^9].\\
	\end{cases}
 $$

	



  \item [(11)]   The  $\aff$-function of the Weyl group of type $ D_5$  takes the following values:
$$\aff({\bf p})=\begin{cases}
	0 &\emph{if }{\bf p}=[9,1],\\
	1 &\emph{if }{\bf p}=[7,3],\\
 2 &\emph{if }{\bf p}=[7,1^3] \emph{~or~} [5^2],\\
 3 &\emph{if }{\bf p}=[5,3,1^2],\\
 4 &\emph{if }{\bf p}=[4^2,1^2],\\
 5 &\emph{if }{\bf p}=[3^3,1], \\
 6 &\emph{if }{\bf p}=[5, 1^5] \emph{~or~} [3^2,2^2], \\
7 &\emph{if }{\bf p}=[3^2, 1^4],\\
10 &\emph{if }{\bf p}=[2^4,1^2],\\
12 &\emph{if }{\bf p}=[3,1^7],\\
13 &\emph{if }{\bf p}=[2^2,1^6],\\
20 &\emph{if }{\bf p}=[1^{10}].\\
	\end{cases}
 $$

  \item [(12)]  The  $\aff$-function of the Weyl group of type $ D_6$  takes the following values:
$$\aff({\bf p})=\begin{cases}
	0 &\emph{if }{\bf p}=[11,1],\\
	1 &\emph{if }{\bf p}=[9,3],\\
 2 &\emph{if }{\bf p}=[9,1^3] \emph{~or~} [7,5],\\
 3 &\emph{if }{\bf p}=[7,3,1^2] \emph{~or~} [6^2],\\
 4 &\emph{if }{\bf p}=[5^2,1^2],\\
 5 &\emph{if }{\bf p}=[5,3^2,1], \\
 6 &\emph{if }{\bf p}=[7, 1^5] \emph{~or~} [5,3,2^2] \emph{~or~} [4^2,3,1], \\
7 &\emph{if }{\bf p}=[5,3, 1^4] \emph{~or~} [4^2,2^2],\\

8 &\emph{if }{\bf p}=[3^4] \emph{~or~} [4^2,1^4],\\
\end{cases}$$
 and $$\aff({\bf p})=\begin{cases}
9 &\emph{if }{\bf p}=[3^3, 1^3],\\
10 &\emph{if }{\bf p}=[3^2,2^2,1^2],\\
12 &\emph{if }{\bf p}=[5,1^7],\\
13 &\emph{if }{\bf p}=[3,3,1^6],\\
15 &\emph{if }{\bf p}=[2^6],\\
16 &\emph{if }{\bf p}=[2^4,1^4],\\
20 &\emph{if }{\bf p}=[3,1^{9}],\\
21 &\emph{if }{\bf p}=[2^2,1^8],\\
30 &\emph{if }{\bf p}=[1^{12}].\\
	\end{cases}
 $$

   \item [(13)] The  $\aff$-function of the Weyl group of type $ D_8$  takes the following values:
$$\aff({\bf p})=\begin{cases}
	0 &\emph{if }{\bf p}=[15,1],\\
	1 &\emph{if }{\bf p}=[13,3],\\
 2 &\emph{if }{\bf p}=[13,1^3] \emph{~or~} [11,5],\\
 3 &\emph{if }{\bf p}=[11,3,1^2] \emph{~or~} [9,7],\\
 4 &\emph{if }{\bf p}=[9,5,1^2] \emph{~or~} [8^2],\\
 5 &\emph{if }{\bf p}=[9,3^2,1] \emph{~or~} [7^2,1^2], \\
 6 &\emph{if }{\bf p}=[11, 1^5] \emph{~or~} [9,3,2^2] \emph{~or~} [7,5,3,1], \\
7 &\emph{if }{\bf p}=[9,3, 1^4] \emph{~or~} [7,5,2^2] \emph{~or~} [6^2,3,1],\\

8 &\emph{if }{\bf p}=[7,3^3] \emph{~or~} [7,5,1^4] \emph{~or~} [6,6,2,2] \emph{~or~} [5^3,1],\\
9 &\emph{if }{\bf p}=[7,3^2, 1^3] \emph{~or~} [6^2,1^4] \emph{~or~} [5^2,3^2],\\
\end{cases}$$
 and $$\aff({\bf p})=\begin{cases}
10 &\emph{if }{\bf p}=[7,3,2^2,1^2] \emph{~or~} [5^2,3,1^3],\\
11 &\emph{if }{\bf p}=[5^2,2^2,1^2],\\
12 &\emph{if }{\bf p}=[9,1^7]  \emph{~or~ }[5,3^3,1^2] \emph{~or~}[4^4],\\
13 &\emph{if }{\bf p}=[7,3,1^6]\emph{~or~}[4^2,3^2,1^2],\\
14 &\emph{if }{\bf p}=[5^2,1^6],\\
 
15 &\emph{if }{\bf p}=[5,3,2^4] \emph{~or~}[5,3^2,1^5],\\
16 &\emph{if }{\bf p}=[4^2,2^4] \emph{~or~}[4,4,3,1^5]\emph{~or~}[5,3,2^2,1^4] \emph{~or~}[3^5,1],\\
17 &\emph{if }{\bf p}=[4^2,2^2,1^4] \emph{~or~}[3^4,2^2],\\
18 &\emph{if }{\bf p}=[3^4,1^4],\\

20 &\emph{if }{\bf p}=[7,1^{9}],\\

21 &\emph{if }{\bf p}=[5,3,1^8] \emph{~or~}[3^2,2^4,1^2],\\
\end{cases}$$
 and $$\aff({\bf p})=\begin{cases}
22 &\emph{if }{\bf p}=[4^2,1^8],\\
23 &\emph{if }{\bf p}=[3^3,1^7],\\
24 &\emph{if }{\bf p}=[3^2,2^2,1^6],\\
28 &\emph{if }{\bf p}=[2^8],\\
29 &\emph{if }{\bf p}=[2^6,1^4],\\
30 &\emph{if }{\bf p}=[5,1^{11}],\\
31 &\emph{if }{\bf p}=[3^2,1^{10}],\\
34 &\emph{if }{\bf p}=[2^4,1^8],\\
42 &\emph{if }{\bf p}=[3,1^{13}],\\
43 &\emph{if }{\bf p}=[2^2,1^{12}],\\
56 &\emph{if }{\bf p}=[1^{16}].\\
	\end{cases}
 $$
  
\end{itemize}

 
\end{Lem}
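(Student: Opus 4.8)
The plan is to treat Lemma~\ref{a-value} as a finite verification. For each classical Cartan type and each rank occurring in the list, I would (i) enumerate all \emph{special} partitions (equivalently, all special symbols) of that type and rank, and (ii) evaluate Lusztig's $\aff$-function on the associated special symbol $\Lambda_{\mathbf p}$ by the combinatorial recipe recalled above from \cite{BXX, lusztig1984char}. Since $\aff$ is constant on a two-sided cell (Lemma~\ref{alem1}(2)) and every two-sided cell of a classical Weyl group carries a unique special representation, hence a well-defined special symbol, the quantity $\aff(\mathbf p):=\aff(\Lambda_{\mathbf p})$ is unambiguous and equals $\aff(w)$ for any $w$ in the corresponding cell. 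Thus the only inputs needed are a complete list of special partitions in rank at most $8$ for the relevant types, together with the evaluation of $\aff$ on each such symbol.

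For type $A_{m-1}$ every partition of $m$ is special, and the $\aff$-function has the closed form $\aff(\mathbf p)=n(\mathbf p)=\sum_i (i-1)p_i=\sum_i \binom{(\mathbf p^{t})_i}{2}$, where $p_1\geqslant p_2\geqslant\cdots$ are the parts of $\mathbf p$ and $\mathbf p^{t}$ is the transpose; this is consistent with the Robinson--Schensted description of $\aff(w_\lambda)$ used in \cite{BX,BXX}. Hence items (1)--(6) and (10) reduce to listing all partitions of $2,3,4,5,6,8,9$ and reading off $n(\mathbf p)$, a routine tabulation. As consistency checks one notes the two extremes $n([m])=0$ (the identity cell) and $n([1^m])=\binom m2=\ell(w_0)=\#\Phi^+(A_{m-1})$ (the cell $\mathcal{C}w_0$).

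For items (7), (8), (9), (11), (12) and (13) — types $B_3$, $C_3$, $C_4$, $D_5$, $D_6$, $D_8$ — I would first produce the list of special partitions: partitions of $2n+1$ (type $B_n$) resp. $2n$ (types $C_n$, $D_n$) in which the even, resp. odd, resp. even parts occur with even multiplicity and which satisfy Lusztig's specialness condition, with the standing caveat that a very even partition in type $D_n$ labels two orbits sharing the same symbol and hence the same $\aff$-value, so it appears once. For each special partition I would write down the normalized special symbol $\Lambda_{\mathbf p}$ (defect $1$ for $B/C$, defect $0$ for $D$) and apply the $\aff$-formula for symbols from \cite{BXX,lusztig1984char}; equivalently, one may identify $\Lambda_{\mathbf p}$ with its special nilpotent orbit $\mathcal{O}$ via the Springer correspondence and use $\aff=\dim\mathcal{B}_u$ for $u\in\mathcal{O}$, a value recorded in the standard orbit tables. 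Here too the extreme entries ($\aff=0$ for the regular orbit and $\aff=\#\Phi^+$, i.e. $9,9,16,20,30,56$ respectively, for the zero orbit) provide a built-in check.

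I do not expect any conceptual obstacle; the only real hazard is bookkeeping — ensuring that the enumeration of special partitions in each rank is complete and that the symbol is put into its normalized form before the $\aff$-formula is applied, since both the formula and the classification of special symbols are notation-sensitive. To guard against slips I would cross-check every entry with PyCox \cite{ge} (already used elsewhere in this paper) by computing $\aff$ directly on a representative of the relevant two-sided cell, and against the Springer-correspondence and nilpotent-orbit tables in the standard references. With these checks in place, the table in the statement follows.
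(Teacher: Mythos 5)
Your proposal is correct and matches the paper's treatment: the paper gives no argument beyond the remark that "the computation of $\aff$-functions of classical Weyl groups can be found in \cite{BXX}," i.e.\ exactly the finite tabulation you describe — evaluate the combinatorial $\aff$-formula (via $n(\mathbf p)=\sum_i(i-1)p_i$ in type $A$, and via special symbols for types $B$, $C$, $D$) on the complete list of special partitions in each rank. Your extra cross-checks (Springer-fiber dimensions, PyCox) are sensible safeguards but do not constitute a different route.
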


The computation of $\aff$-functions of classical Weyl groups can be found in \cite{BXX}.  

\subsection{GK dimensions and associated varieties}


Let $M$ be a finitely generated $U(\mathfrak{g})$-module. Fix a finite dimensional generating space $M_0$ of $M$. Let $U_{n}(\mathfrak{g})$ be the standard filtration of $U(\mathfrak{g})$. Set $M_n=U_n(\mathfrak{g})\cdot M_0$ and
\(
\text{gr} (M)=\bigoplus\limits_{n=0}^{\infty} \text{gr}_n M,
\)
where $\text{gr}_n M=M_n/{M_{n-1}}$. Then $\text{gr}(M)$ is a graded module of $\text{gr}(U(\mathfrak{g}))\simeq S(\mathfrak{g})$.

\begin{Defff} The \textit{Gelfand--Kirillov dimension} of $M$  is given by
\begin{equation*}
	\operatorname{GKdim} M = \overline{\lim\limits_{n\rightarrow \infty}}\frac{\log\dim( M_n )}{\log n}.
\end{equation*}
\end{Defff}

 Denote $\varphi_{M,M_0}(n)=\dim( U_n(\mathfrak{g})M_{0})$. By \cite[Chapter VII. Thm. 41]{Za-Sa}, there exists a unique polynomial $\tilde{\varphi}_{M,M_0}(n)$ such that $\varphi_{M,M_0}(n)=\tilde{\varphi}_{M,M_0}(n)$ for large $n$. The leading term
of $\tilde{\varphi}_{M,M_0}(n)$ is $\frac{c(M)}{(d_{M})!}n^{d_{M}},$ where $c(M)$ is an integer. The integer $d_M$ is the
Gelfand--Kirillov dimension of $M$, that is, $d_M=\operatorname{GKdim} M$. 
In particular, $\operatorname{GKdim} M=0$ if and only if $M$ is finite-dimensional.

\begin{Defff}
	The  \textit{associated variety} of $M$ is defined by
\begin{equation*}
	V(M):=\{X\in \mathfrak{g}^* \mid f(X)=0 \text{ for all~} f\in \operatorname{Ann}_{S(\mathfrak{g})}(\operatorname{gr} M)\}.
\end{equation*}
\end{Defff}

The above two definitions are independent of the choice of $M_0$, and $\dim V(M)=\gkd M$ (see e.g. \cite{NOT}). 
	\begin{Defff} Let $\mathfrak{g}$ be a finite-dimensional semisimple Lie algebra. Let $I$ be a two-sided ideal in $U(\mathfrak{g})$. Then $\text{gr}(U(\mathfrak{g})/I)\simeq S(\mathfrak{g})/\text{gr}I$ is a graded $S(\mathfrak{g})$-module, and its annihilator ideal is $\text{gr}I \subset S(\mathfrak{g})$. We define its associated variety by
		$$V(I):=V(U(\mathfrak{g})/I)=\{X\in \mathfrak{g}^* \mid p(X)=0\ \mbox{for all $p\in {\text{gr}}I$}\}.
		$$
	\end{Defff}
	
	Following \cite{GSK}, $V(\Ann (M))$ is called the \textit{annihilator variety} of the $U(\mathfrak{g})$-module $M$. From Joseph \cite[Prop. 2.7]{Jo78}, we have $$\dim V(\Ann L(\lam))=2\gkd L(\lam).$$



 Let $\langle-, -\rangle: \mathfrak{h} \times \mathfrak{h}^* \to \mathbb{C}$ be the canonical pairing. For $\mu\in\mathfrak{h}^*$, define 
\begin{equation*}
\Phi_{[\mu]}:=\{\alpha\in\Phi\mid\langle\mu, \alpha^\vee\rangle\in\mathbb{Z}\},
\end{equation*}
where $ \alpha^\vee$ is the coroot associated with the root $\alpha \in \Phi$. 
Denote $\Phi_{[\mu]}^+=\Phi_{[\mu]}\cap \Phi^+$ and $ \Phi_\mu^+=\{\alpha\in\Phi_{[\mu]}^+\mid\langle\mu, \alpha^\vee\rangle>0\} $. 
Set 
\[
W_{[\mu]}:=\{w\in W\mid w\mu-\mu\in \mathbb{Z}\Phi\}.
\]
Here $\Phi_{[\mu]}$ is a root system with Weyl group $W_{[\mu]}$. 
 Let $\Delta_{[\mu]}$ be the simple system of $\Phi_{[\mu]}$ lying in $\Phi^+$. Set $J=\{\alpha\in\Delta_{[\mu]}\mid\langle\mu, \alpha^\vee\rangle=0\}$. Denote by $W_J$ the Weyl group generated by reflections $s_\alpha$ with $\alpha\in J$. Let $\ell_{[\mu]}$ be the length function on $W_{[\mu]}$ with respect to the chosen $\Delta_{[\mu]}$. In particular, we have $\ell_{[\mu]}=\ell$, the length function on $W$, when $\mu$ is integral. Put
\begin{equation*}\label{ceq1}
	W_{[\mu]}^J:=\{w\in W_{[\mu]}\mid \ell_{[\mu]}(ws_\alpha)=\ell_{[\mu]}(w)+1\ \mbox{for all}\ \alpha\in J\}.
\end{equation*}
Thus $W_{[\mu]}^J$ consists of the shortest representatives of the cosets $wW_J$ with $ w\in W_{[\mu]} $. When $\mu$ is integral, we simply write $W^J:=W_{[\mu]}^J$ .

\begin{Defff} \label{D:ant-d}
A weight $ \mu\in\hs $ is called \textit{dominant} if $ \bil{\mu}{\al} \notin \mathbb{Z}_{<0}$ for all $ \al\in\Phi^+ $.
A weight $ \mu\in\hs $ is called \textit{antidominant} if $ \bil{\mu}{\al} \notin\mathbb{Z}_{>0}$ for all $ \al\in\Phi^+ $. 
\end{Defff}
We caution the reader that the above notion of dominance and anti-dominance may not be equivalent to the ``standard" one, unless $\lambda$ is integral.

For any $\lambda\in\mathfrak{h}^*$, there exists a unique antidominant weight $\mu\in\hs$ and a unique $w_{\lambda}\in W_{[\mu]}^J$ such that $\lambda=w_{\lambda}\mu$. 

\begin{Prop}[{\cite[Prop. 3.5]{Hum08}}]\label{anti}
    Let $\lambda\in \mathfrak{h}^*$, with corresponding root system $\Phi_{[\lambda]}$ and Weyl group $W_{[\lambda]}$. Let $\Delta_{[\lambda]}$ be the simple system of  $\Phi_{[\lambda]}$ in $\Phi_{[\lambda]} \cap \Phi^+$.  Then $\lambda$ is antidominant if and only if one of the
following three equivalent conditions holds:
\begin{itemize}
    \item[(1)] $\langle\lambda, \alpha^\vee\rangle\lest 0$ for all $\alpha \in \Delta_{[\lambda]}$.
    \item[(2)] $\lambda\lest s_{\alpha}\lambda$ for all $\alpha \in \Delta_{[\lambda]}$.
    \item[(3)]  $\lambda\lest w\lambda$ for all $w \in W_{[\lambda]}$.
\end{itemize}
Therefore, there is a unique antidominant weight in the orbit $W_{[\lambda]}\lambda$.
\end{Prop}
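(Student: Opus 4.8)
The plan is to establish directly that ``$\lambda$ antidominant $\iff$ (1) $\iff$ (2)'' and then to close the loop with (2) $\Rightarrow$ (3) $\Rightarrow$ (2), after which the uniqueness statement is formal. Throughout, $\lest$ denotes the standard partial order on $\hs$, i.e.\ $\nu\lest\nu'$ means $\nu'-\nu\in\mathbb{Z}_{\geq 0}\Phi^+:=\sum_{\al\in\Phi^+}\mathbb{Z}_{\geq 0}\al$ (equivalently a non-negative integer combination of $\Delta$), and I will repeatedly use the elementary fact that in any root system every positive coroot is a non-negative integer combination of the simple coroots.

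First I would dispose of ``$\lambda$ antidominant $\iff$ (1)''. Applying the coroot fact inside the sub-root-system $\Phi_{[\lambda]}$, every $\be\in\Phi_{[\lambda]}^+$ satisfies $\be^\vee=\sum_{\g\in\Delta_{[\lambda]}}c_\g\,\g^\vee$ with $c_\g\in\mathbb{Z}_{\geq 0}$. Hence, assuming (1), $\langle\lambda,\be^\vee\rangle=\sum_\g c_\g\langle\lambda,\g^\vee\rangle\lest 0$ for all $\be\in\Phi_{[\lambda]}^+$, while for $\be\in\Phi^+\setminus\Phi_{[\lambda]}$ one has $\langle\lambda,\be^\vee\rangle\notin\mathbb{Z}$; either way $\langle\lambda,\be^\vee\rangle\notin\mathbb{Z}_{>0}$, so $\lambda$ is antidominant. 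Conversely, if $\lambda$ is antidominant then for $\g\in\Delta_{[\lambda]}\subseteq\Phi^+$ the pairing $\langle\lambda,\g^\vee\rangle$ is an integer (since $\g\in\Phi_{[\lambda]}$) that is not in $\mathbb{Z}_{>0}$, hence $\lest 0$, which is (1). The equivalence (1) $\iff$ (2) is then immediate from $s_\g\lambda-\lambda=-\langle\lambda,\g^\vee\rangle\g$ together with $\g\in\Phi^+$.

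Next, (3) $\Rightarrow$ (2) is trivial since $s_\g\in W_{[\lambda]}$ for $\g\in\Delta_{[\lambda]}$, so the substance is (1)/(2) $\Rightarrow$ (3), which I would prove by induction on $\ell_{[\lambda]}(w)$, the case $w=1$ being vacuous. For $\ell_{[\lambda]}(w)=n\geq 1$, pick $\al\in\Delta_{[\lambda]}$ with $\ell_{[\lambda]}(s_\al w)=n-1$ and put $w'=s_\al w$, so that $w\lambda=w'\lambda-\langle w'\lambda,\al^\vee\rangle\al$. The length drop forces $w^{-1}\al$ to be a negative root of $\Phi_{[\lambda]}$, whence $w'^{-1}\al=-w^{-1}\al\in\Phi_{[\lambda]}^+$; therefore $\langle w'\lambda,\al^\vee\rangle=\langle\lambda,(w'^{-1}\al)^\vee\rangle$ is an integer, and it is $\lest 0$ by the computation of the previous paragraph applied to the positive root $w'^{-1}\al$. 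Thus $-\langle w'\lambda,\al^\vee\rangle\al\in\mathbb{Z}_{\geq 0}\Phi^+$, and adding the inductive hypothesis $w'\lambda-\lambda\in\mathbb{Z}_{\geq 0}\Phi^+$ gives $\lambda\lest w\lambda$, completing the induction.

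For the last sentence, existence of an antidominant weight in $W_{[\lambda]}\lambda$ is already recorded before the statement (take $\mu$ with $\lambda=w_\lambda\mu$). For uniqueness I would first note that $\Phi_{[w\lambda]}=w\Phi_{[\lambda]}=\Phi_{[\lambda]}$ for $w\in W_{[\lambda]}$, so $W_{[\mu]}=W_{[\lambda]}$ for every $\mu\in W_{[\lambda]}\lambda$; then if $\mu$ and $\mu'=w\mu$ with $w\in W_{[\lambda]}$ are both antidominant, condition (3) applied to $\mu$ gives $\mu\lest\mu'$ and applied to $\mu'$ gives $\mu'\lest\mu$, so $\mu=\mu'$ by antisymmetry of $\lest$. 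The main obstacle is the non-integral bookkeeping in the second paragraph: promoting the sign condition on simple coroots to the same condition on all positive coroots of $\Phi_{[\lambda]}$, while tracking which pairings are forced to be integral, is exactly what makes the induction in the third paragraph run; once that is in hand, the rest is routine length combinatorics in the Coxeter group $W_{[\lambda]}$.
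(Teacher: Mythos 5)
Your argument is correct, and it is essentially the standard proof of this fact: the paper itself gives no argument (the proposition is quoted from Humphreys, \cite[Prop.~3.5]{Hum08}), and your route --- reduce antidominance to the sign condition on $\Delta_{[\lambda]}$ via the fact that every positive coroot of $\Phi_{[\lambda]}$ is a nonnegative integral combination of the simple coroots, get (1)$\iff$(2) from $s_\alpha\lambda-\lambda=-\langle\lambda,\alpha^\vee\rangle\alpha$, and prove (2)$\Rightarrow$(3) by induction on $\ell_{[\lambda]}(w)$ using $\ell_{[\lambda]}(s_\alpha w)<\ell_{[\lambda]}(w)\Rightarrow w'^{-1}\alpha\in\Phi_{[\lambda]}^+$ --- is exactly the textbook argument. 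The length-induction step and the uniqueness argument (using $\Phi_{[w\lambda]}=\Phi_{[\lambda]}$ for $w\in W_{[\lambda]}$ and antisymmetry of $\lest$) are carried out correctly, including the bookkeeping of which pairings are integral in the nonintegral setting. The only soft spot is existence in the final sentence: you defer it to the sentence preceding the proposition, which is itself just a restatement of this result (also taken from Humphreys), so strictly speaking that is circular. It would cost you one line to make it self-contained: in the finite orbit $W_{[\lambda]}\lambda$, if $\mu$ is not antidominant pick $\gamma\in\Delta_{[\mu]}=\Delta_{[\lambda]}$ with $\langle\mu,\gamma^\vee\rangle>0$, so $s_\gamma\mu<\mu$; since the orbit is finite this descent terminates at an antidominant weight (equivalently, any $\lest$-minimal element of the orbit satisfies (1)).
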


\begin{Prop}[{\cite[Prop. 3.8]{BX}}]\label{pr:main1}
	Let $ \lam\in\hs $. Suppose that $\lambda=w_{\lambda}\mu$, where $\mu$ is antidominant and $w_{\lambda}\in W_{[\mu]}^J$. Then
\begin{equation*}
	\gkd L(\lambda)=|\Phi^+|-\aff_{[\lambda]}(w_{\lambda}),
\end{equation*}
	where $\aff_{[\lambda]}$ is the $\aff$-function on $W_{[\lambda]}=W_{[\mu]}$.
\end{Prop}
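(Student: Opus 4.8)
The plan is to convert the statement about $\gkd L(\lambda)$ into a statement about the dimension of a nilpotent orbit, and then to evaluate that dimension via the Springer correspondence and Lusztig's $\aff$-function. By Joseph's equality $\dim V(\Ann L(\lambda)) = 2\gkd L(\lambda)$ (recorded above, \cite[Prop. 2.7]{Jo78}) together with the theorem that the annihilator variety of a simple highest weight module is the Zariski closure of a single nilpotent orbit $\mathcal{O}_{\Ann L(\lambda)} \subseteq \mathfrak{g}^*$ --- Borho--Brylinski \cite{BoB1} for regular integral infinitesimal character, and Joseph \cite{Jo85} for arbitrary infinitesimal character --- it suffices to show
$$\dim \mathcal{O}_{\Ann L(\lambda)} = 2\bigl(|\Phi^+| - \aff_{[\lambda]}(w_\lambda)\bigr).$$

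First I would settle the integral case, in which $W_{[\lambda]} = W$ and $\aff_{[\lambda]} = \aff$. Here the parametrization of primitive ideals by Kazhdan--Lusztig cells (\cite{Vo78}, Joseph) gives that $\Ann L(w_\lambda\mu)$ depends only on the left cell of the minimal representative $w_\lambda \in W^J$, and that $V(\Ann L(w_\lambda\mu))$ is the closure of the special orbit $\mathcal{O}_C := \mathcal{O}_{\rm Spr}^{\mathfrak{g}}(\sigma_C)$, where $\sigma_C$ is the special representation of the two-sided cell $C$ containing $w_\lambda$; when $\mu$ is singular this reduces to the regular case by deforming $\mu$ to a regular antidominant $\mu_0$ and observing, via the $\tau$-invariant, that the translation functor carries $L(w_\lambda\mu_0)$ to $L(\lambda)$ --- nonzero exactly because $w_\lambda$ is the shortest element of $w_\lambda W_J$ --- without altering the associated variety of the annihilator. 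The dimension is then pure bookkeeping: $\aff(C)$ equals the common value $a_{\sigma_C}$ of Lusztig's $a$-function on the family of $C$, and $a_{\sigma_C} = b_{\sigma_C}$ because special representations are precisely those with $a$-value equal to $b$-value \cite{lusztig1984char}; the Springer correspondence gives $b_{\sigma_C} = \dim\mathcal{B}_e = |\Phi^+| - \tfrac{1}{2}\dim\mathcal{O}_C$ for $e \in \mathcal{O}_C$; and $\aff(C) = \aff(w_\lambda)$ since $\aff$ is constant on two-sided cells (Lemma \ref{alem1}(2)). Hence $\dim\mathcal{O}_C = 2(|\Phi^+| - \aff(w_\lambda))$, as wanted.

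For general (non-integral) $\lambda$ I would invoke Joseph's description \cite{Jo85} of the orbit at an arbitrary infinitesimal character, which expresses $\mathcal{O}_{\Ann L(\lambda)}$ through truncated ($j$-)induction from the integral Weyl group: $\mathcal{O}_{\Ann L(\lambda)} = \mathcal{O}_{\rm Spr}^{\mathfrak{g}} \circ j^{W}_{W_{[\lambda]}}(\pi_{w_\lambda})$, where $\pi_{w_\lambda} \in {\rm Irr}(W_{[\lambda]})$ is the special representation of the two-sided cell of $w_\lambda$ in $W_{[\lambda]}$. The decisive point is that $j$-induction preserves the $b$-invariant --- it is by construction the unique constituent of the induced representation of minimal $b$-value \cite{lusztig1984char} --- so $b_{j^{W}_{W_{[\lambda]}}(\pi_{w_\lambda})} = b_{\pi_{w_\lambda}}$; and, exactly as in the integral case, $b_{\pi_{w_\lambda}} = a_{\pi_{w_\lambda}} = \aff_{[\lambda]}(w_\lambda)$ since $\pi_{w_\lambda}$ is special and $\aff_{[\lambda]}$ is constant on two-sided cells of $W_{[\lambda]}$. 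Substituting into the Springer dimension formula $\dim \mathcal{O}_{\rm Spr}^{\mathfrak{g}}(\sigma) = 2(|\Phi^+| - b_\sigma)$ yields $\dim\mathcal{O}_{\Ann L(\lambda)} = 2(|\Phi^+| - \aff_{[\lambda]}(w_\lambda))$, and halving gives the proposition. (One could alternatively route through the equivalence between the block of $\lambda$ and a block of category $\mathcal{O}$ for a reductive subalgebra with root system $\Phi_{[\lambda]}$, but that equivalence does not preserve GK dimension, so one would still need the $j$-induction computation to bridge $|\Phi_{[\lambda]}^+|$ and $|\Phi^+|$.)

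The main obstacle is the non-integral, singular input from Joseph: that the annihilator variety is a nilpotent orbit closure at all at a general infinitesimal character, and that it is computed by $j$-induction from $W_{[\lambda]}$ --- with the attendant technical wrinkle that $W_{[\lambda]}$ may be a pseudo-Levi Weyl subgroup rather than a parabolic one, so the relevant truncated induction must be the version that still admits a unique minimal-$b$ constituent. Once Joseph's theorem is granted, what remains is the combinatorial identity $b_{\pi_{w_\lambda}} = \aff_{[\lambda]}(w_\lambda)$, which is Lusztig's theory of special representations, together with the now-routine $\tau$-invariant argument that translation to a singular integral wall leaves $\dim V(\Ann L(w_\lambda\mu))$ unchanged when $w_\lambda$ is the minimal representative of $w_\lambda W_J$.
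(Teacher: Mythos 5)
Your argument is correct, and it reconstructs essentially the standard proof: the paper itself states this proposition as an import from \cite[Prop.~3.8]{BX} without reproving it, and the route you take --- reduce via Joseph's identity $\dim V(\Ann L(\lambda))=2\gkd L(\lambda)$ to a dimension count for the annihilator orbit, identify that orbit through Joseph's $j$-induction formula $\mathcal{O}_{\rm Spr}^{\mathfrak g}\circ j^{W}_{W_{[\lambda]}}(\pi_{w_\lambda})$, and then use that $j$-induction preserves the $b$-invariant, that $a=b$ for special representations, and that $b_{\sigma}=\dim\mathcal{B}_e=|\Phi^+|-\tfrac12\dim\mathcal{O}$ for Springer representations with trivial local system --- is exactly the chain of inputs on which the cited proof rests. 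The only steps requiring care (nonvanishing of translation to a singular wall for the minimal coset representative, and the fact that $W_{[\lambda]}$ need not be parabolic so the reflection-subgroup version of truncated induction is needed) are correctly identified and handled by standard results, so there is no gap.
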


For  a totally ordered set $ \Gamma $, we  denote by $ \mathrm{Seq}_n (\Gamma)$ the set of sequences $ x=(x_1,x_2,\cdots, x_n) $   of length $ n $ with $ x_i\in\Gamma $. In our paper, we usually take $\Gamma$ to be $\mathbb{Z}$ or a coset of $\mathbb{Z}$ in $\mathbb{C}$.
 \begin{Defff}[Robinson--Schensted insertion algorithm]
For an element  $ x \in  \mathrm{Seq}_n (\Gamma)$, we write  $x=(x_1,\dots,x_n)$. We associate to $x $ a  Young tableau  $ P(x) $ as follows. Let $ P_0 $ be an empty Young tableau. Assume that we have constructed Young tableau $ P_k $ associated to $ (x_1,\dots,x_k) $, $ 0\leq k<n $. Then $ P_{k+1} $ is obtained by adding $ x_{k+1} $ to $ P_k $ as follows. Firstly we add $ x_{k+1} $ to the first row of $ P_k $ by replacing the leftmost entry $ x_j $ in the first row which is \textit{strictly} bigger than $ x_{k+1} $.  (If there is no such an entry $ x_j $, we just add a box with entry $x_{k+1}  $ to the right side of the first row, and end this process). Then add this $ x_j $ to the next row as the same way of adding $x_{k+1} $ to the first row.  Finally we put $P(x)=P_n$.

\end{Defff}

Note that this  algorithm  was  originally
discovered by Robinson
 \cite{Rob38} and then rediscovered independently in a different form by Schensted \cite{Sc61}. 
 
We use $p(x)=(p_1,\dots, p_k)$ to denote the shape of $P(x)$, where $p_i$ is the number of boxes in the $i$-th row of  $P(x)$.
When $\sum\limits_{1\leq i\leq k} p_i=N$, $p(x)$ will be a partition of $N$.


	For a Young diagram $P$, use $ (k,l) $ to denote the box in the $ k $-th row and the $ l $-th column.
	We say that the box $ (k,l) $ is \textit{even} (resp. \textit{odd}) if $ k+l $ is even (resp. odd). Let $ p_i ^{\rm ev}$ (resp. $ p_i^{\rm od} $) be the numbers of even (resp. odd) boxes in the $ i $-th row of the Young diagram $ P $.
	One can easily check that
	\begin{equation}\label{eq:ev-od}
	p_i^{\rm ev}=\begin{cases}
	\left\lceil \frac{p_i}{2} \right\rceil,&\text{ if } i \text{ is odd},\\
	\left\lfloor \frac{p_i}{2} \right\rfloor,&\text{ if } i \text{ is even},
	\end{cases}
	\quad p_i^{\rm od}=\begin{cases}
	\left\lfloor \frac{p_i}{2} \right\rfloor,&\text{ if } i \text{ is odd},\\
	\left\lceil \frac{p_i}{2} \right\rceil,&\text{ if } i \text{ is even}.
	\end{cases}
	\end{equation}
	Here for $ a\in \mathbb{R} $, $ \lfloor a \rfloor $ is the largest integer $ n $ such that $ n\leq a $, and $ \lceil a \rceil$ is the smallest integer $n$ such that $ n\geq a $. For convenience, we set
	\begin{equation*}
	p^{\rm ev}=(p_1^{\rm ev},p_2^{\rm ev},\cdots)\quad\mbox{and}\quad p^{\rm od}=(p_1^{\rm od},p_2^{\rm od},\cdots).
	\end{equation*}
	
	For $ x=(x_1,x_2,\cdots,x_n)\in \mathrm{Seq}_n (\Gamma) $, set
	\begin{equation*}
	\begin{aligned}
	{x}^-=&(x_1,x_2,\cdots,x_{n-1}, x_n,-x_n,-x_{n-1},\cdots,-x_2,-x_1).
	\end{aligned}
	\end{equation*}
	
	\begin{Prop}[{\cite[Thm. 1.5]{BXX}}]\label{integral}
		Let $\lambda=(\lambda_1, \lambda_2, \cdots, \lambda_n)\in \mathfrak{h}^*$ be  integral. Then
		
		\[	\aff(w_{\lambda})=\left\{
		\begin{array}{ll}
		\sum\limits_{i\gest 1}(i-1)p(\lambda)_i, &\textnormal{if}\:\Phi =A_{n-1},\\
        \sum\limits_{i\gest 1}(i-1)p((\lambda)^-)_i^{\rm od}, &\textnormal{if}\:\Phi=B_{n}/C_n,\\
		\sum\limits_{i\gest 1}(i-1)p((\lambda)^-)_i^{\rm ev}, &\textnormal{if}\:\Phi=D_{n}.
		\end{array}	
		\right.
		\]

	\end{Prop}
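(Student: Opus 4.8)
The plan is to reduce the computation of $\aff(w_\lambda)$ to a purely combinatorial statement about Kazhdan--Lusztig two-sided cells, and then to resolve that statement with the Robinson--Schensted correspondence and the classification of cells in classical Weyl groups. By Lemma~\ref{alem1}(2) the function $\aff$ is constant on each two-sided cell, so it suffices to (i) identify the two-sided cell $\mathbf c$ containing $w_\lambda$, and (ii) evaluate $\aff$ on $\mathbf c$. For step (ii) I would use that the two-sided cells of a classical Weyl group are indexed by special partitions/symbols and that $\aff$ on the cell indexed by $\mathbf p$ has a closed form: in type $A_{n-1}$ it equals $n(\mathbf p):=\sum_{i\ge1}(i-1)\mathbf p_i=\sum_{j\ge1}\binom{\mathbf p'_j}{2}$, which one checks on the extreme cells (by Lemma~\ref{alem1}(3), $\aff=0$ on the trivial cell of shape $[n]$ and $\aff=\ell(w_0)=\binom n2$ on the cell of $w_0$, which has shape $[1^n]$) and in general follows from Lusztig's computation of $\aff$ for classical types as recorded in \cite{BXX}; for types $B_n/C_n$ and $D_n$ the corresponding value, read off the special symbol, works out to $\sum_{i\ge1}(i-1)\,\mathbf q^{\rm od}_i$ and $\sum_{i\ge1}(i-1)\,\mathbf q^{\rm ev}_i$ respectively, where $\mathbf q$ is the $2n$-box partition $p((\lambda)^-)$ and $\mathbf q^{\rm od},\mathbf q^{\rm ev}$ are its odd/even box counts as in \eqref{eq:ev-od}. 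All of these can be sanity-checked against the tables in Lemma~\ref{a-value}.

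The heart of the argument is step (i): showing that the two-sided cell of the minimal coset representative $w_\lambda$ is the one indexed by the shape $p(\lambda)$ in type $A$, and by the symbol extracted from $p((\lambda)^-)$ in the $B/C/D$ cases. For type $A_{n-1}$, since $\mu$ is antidominant and integral its coordinates are weakly increasing, and $w_\lambda$ is the shortest element of $S_n$ with $w_\lambda\mu=\lambda$; one shows, e.g.\ by a Knuth-equivalence argument, that a reduced word for $w_\lambda$ Robinson--Schensted-inserts to a tableau of the same shape as the insertion tableau of the plain sequence $(\lambda_1,\dots,\lambda_n)$, the subtlety of repeated coordinates being matched exactly by the ``bump the leftmost strictly larger entry'' rule of the RS algorithm. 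Since the two-sided cell of $w\in S_n$ is determined by the shape of its RS tableau, this gives $\mathbf c=\mathbf c_{p(\lambda)}$ and hence the type $A$ formula. For types $B_n,C_n,D_n$ one uses the standard embedding of the Weyl group into $S_{2n}$ (as the centralizer of the relevant involution), under which $\mu$ doubles to a weakly increasing length-$2n$ sequence and $w_\lambda$ doubles to a permutation whose $S_{2n}$-RS shape is $p((\lambda)^-)$; the folding relation between cells of $S_{2n}$ and cells of $W(B_n)$ or $W(D_n)$ then translates this shape into the special symbol governing the cell of $w_\lambda$, and extracting odd boxes (for $B/C$) versus even boxes (for $D$) yields the remaining two formulas.

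The main obstacle is precisely the combinatorial identification in step (i), on two fronts. First, one must treat the nontrivial stabilizer $W_J$ of $\mu$ when $\lambda$ has repeated coordinates and verify that passing to the \emph{minimal} coset representative is exactly compensated by the RS insertion convention; this is a careful but elementary bookkeeping with Knuth classes. Second, and more delicate, one must make the $S_{2n}$-to-$W(B_n)/W(D_n)$ folding precise at the level of cells and symbols — that is, check that the generalized RS procedure implicit in $(\lambda)^-$ computes the Lusztig symbol of the two-sided cell, and match the odd/even-box extraction to the special-symbol combinatorics, keeping the $B$/$C$ versus $D$ distinction straight. Once both are in place, assembling steps (i) and (ii) proves the proposition, and Proposition~\ref{pr:main1} then converts it into the stated formula for $\gkd L(\lambda)$ in the integral case.
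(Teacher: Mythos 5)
First, note that the paper does not prove Proposition \ref{integral} at all: it is quoted from \cite{BXX} (Thm.\ 1.5), so the relevant comparison is with the argument of \cite{BX,BXX} rather than with any proof in this paper. Your overall strategy does mirror that source: reduce via Lemma \ref{alem1}(2) to identifying the two-sided cell of $w_{\lambda}$, then evaluate $\aff$ on that cell using Lusztig's closed formulas attached to special partitions/symbols. Your type $A_{n-1}$ step is essentially right and matches \cite{BX}: minimality of $w_{\lambda}$ in its coset modulo the stabilizer $W_J$ of the antidominant $\mu$, combined with the strict-bumping convention of the RS algorithm, gives that the cell of $w_{\lambda}$ is indexed by $p(\lambda)$, and the identity $\aff=\sum_{i\geq 1}(i-1)p_i$ on that cell is Lusztig's.

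The genuine gap is in your step (i) for types $B_n/C_n/D_n$. You propose to transfer the computation through the embedding of $W(B_n)$, $W(D_n)$ into $S_{2n}$ and a ``folding relation between cells of $S_{2n}$ and cells of $W(B_n)$ or $W(D_n)$''. No such folding of Kazhdan--Lusztig cells exists: two-sided cells of $W(B_n)$ (equal parameters) and of $W(D_n)$ are governed by special symbols, equivalently by Garfinkle's domino tableaux, and are not obtained by restricting the cell structure of $S_{2n}$; likewise $\aff_{W(B_n)}$ is not the restriction of $\aff_{S_{2n}}$ (for the longest element one has $\aff_{W(B_n)}(w_0)=n^2$ by Lemma \ref{alem1}(3), while its image in $S_{2n}$ is the longest element there, with $\aff=n(2n-1)$). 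What is actually needed, and what \cite{BXX} supplies, is (a) Lusztig's $\aff$-formula on a cell in terms of its special symbol, and (b) the nontrivial combinatorial theorem that the symbol/bipartition of the cell containing $w_{\lambda}$ can be read off from the ordinary RS shape $p((\lambda)^-)$ of the doubled sequence through the odd/even box statistics of \eqref{eq:ev-od} -- a statement proved via domino insertion and its compatibility with RS applied to the doubled word, in the spirit of Garfinkle and Barbasch--Vogan -- together with the check that the minimal coset representative $w_{\lambda}$ lies in the predicted cell. You flag this as ``delicate bookkeeping,'' but it is the actual mathematical content of the proposition in types $B/C/D$; as written, your proposal asserts rather than proves it, so those cases remain open in your argument.
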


\subsection{Annihilator variety}

 The Verma module $U(\mathfrak{g})\otimes_{U(\mathfrak{b})} \mathbb{C}_{\lambda-\rho}$
has a simple quotient  $L(\lambda)$, which is a highest weight module with highest weight $\lambda-\rho$, where $\rho$ is half the sum of positive roots of $\Phi^+$. 
Let $W$ be the Weyl group of $(\mathfrak{g}, \mathfrak{h})$.  We use $L_w, w\in W$ to denote the simple highest weight $\mathfrak{g}$-module of highest weight $-w\rho-\rho$.  We denote $I_w=\Ann(L_w)$.  Borho--Brylinski \cite{BoB1} proved that the annihilator variety of $L_w$ is irreducible, and in fact is the closure of a single nilpotent orbit $\mathcal{O}_w$. Thus, we also write  $V(I_w):= V(\Ann (L_w))=\overline{\mathcal{O}}_w$.

Moreover, it is known that the map $w\mapsto \mathcal{O}_w$ above induces a bijection between the two-sided cells in the Weyl group $W$ and special nilpotent orbits, see for example \cite{BarV82, BoB1, Ta}. In other words, $\mathcal{O}_w=\mathcal{O}_y$ if and only if $w\sim_{LR}y$. 
  
We summarize the above as follows.

\begin{Prop} \label{intehral-W}
The map $w\mapsto \mathcal{O}_w$ and the Springer correspondence induce a bijective correspondence between the set of two-sided cells in $W_{[\lambda]}$ and the set  of special representations of $W_{[\lambda]}$.
	\end{Prop}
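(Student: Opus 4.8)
The plan is to assemble, for the Weyl group $W_{[\lambda]}$, a short chain of bijections out of facts that are either classical or already recalled above. The starting observation is that $W_{[\lambda]}$ is itself the Weyl group of the root system $\Phi_{[\lambda]}$, so the entire apparatus available for Weyl groups---Kazhdan--Lusztig cells, Lusztig's families, the $\aff$-function, and the Springer correspondence---applies to it verbatim, with the ambient nilpotent cone taken to be that of any semisimple Lie algebra $\mathfrak{g}_{[\lambda]}$ having root system $\Phi_{[\lambda]}$ (recall from \S\ref{sec:cell} that ${\rm Spr}$ depends only on the underlying root system). If $\Phi_{[\lambda]}$ is reducible, two-sided cells, special representations, and special nilpotent orbits all factor as products over the irreducible components---using Lemma \ref{alem1}(4) for the $\aff$-function---so it suffices to treat the irreducible case.

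First I would record the bijection between two-sided cells and special orbits. By Borho--Brylinski, $V(I_w)=\overline{\mathcal{O}}_w$ is the closure of a single nilpotent orbit in $\mathfrak{g}_{[\lambda]}^*$; by the results of Joseph and Barbasch--Vogan cited above (\cite{BarV82, BoB1, Ta}) one has $\mathcal{O}_w=\mathcal{O}_y$ precisely when $w\sim_{LR}y$, and the image of $w\mapsto\mathcal{O}_w$ is exactly the set of special nilpotent orbits of $\mathfrak{g}_{[\lambda]}$. Hence $w\mapsto\mathcal{O}_w$ descends to a bijection
\begin{equation*}
\{\,\text{two-sided cells of } W_{[\lambda]}\,\}\ \xrightarrow{\ \sim\ }\ \{\,\text{special nilpotent orbits of } \mathfrak{g}_{[\lambda]}\,\}.
\end{equation*}

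Second I would invoke the Springer correspondence as recalled in \S\ref{sec:cell} (see \eqref{Spr}): for a special representation $\sigma\in{\rm Irr}(W_{[\lambda]})$ one has ${\rm Spr}_{\mathfrak{g}_{[\lambda]}}(\sigma)=(\mathcal{O}_{\rm Spr}^{\mathfrak{g}_{[\lambda]}}(\sigma),\mathbf{1})$ with $\mathcal{O}_{\rm Spr}^{\mathfrak{g}_{[\lambda]}}(\sigma)$ a special orbit, and---this being essentially Lusztig's definition of special orbits---the map $\sigma\mapsto\mathcal{O}_{\rm Spr}^{\mathfrak{g}_{[\lambda]}}(\sigma)$ is a bijection from the set of special representations of $W_{[\lambda]}$ onto the set of special nilpotent orbits. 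Composing the bijection of the previous paragraph with the inverse of this one---that is, post-composing $w\mapsto\mathcal{O}_w$ with ${\rm Spr}_{\mathfrak{g}_{[\lambda]},\mathbf{1}}^{-1}$ restricted to special orbits---produces the claimed bijection between two-sided cells of $W_{[\lambda]}$ and special representations of $W_{[\lambda]}$. Unwinding, the two-sided cell containing $w$ corresponds to the unique special representation $\sigma$ with $\mathcal{O}_{\rm Spr}^{\mathfrak{g}_{[\lambda]}}(\sigma)=\mathcal{O}_w$, which is the representation $\pi_w$ of the introduction.

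I do not expect a real obstacle: the statement is a formal composition of bijections whose existence is recorded in the literature and invoked, with references, in the lines preceding the proposition. The only point deserving care is the compatibility built into the first bijection---that the orbit $\mathcal{O}_w$ coming from primitive-ideal theory agrees with the orbit that the Springer correspondence attaches to the special member of the two-sided cell of $w$. This identification is precisely the content of the Barbasch--Vogan/Joseph classification of primitive ideals, so it is taken as an input rather than reproved here.
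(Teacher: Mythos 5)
Your argument is correct and is essentially the paper's own: the proposition is stated there as a summary of the facts cited immediately before it (Borho--Brylinski's irreducibility of $V(I_w)$, the Barbasch--Vogan/Joseph fact that $\mathcal{O}_w=\mathcal{O}_y$ iff $w\sim_{LR}y$ with image the special orbits, and the Springer correspondence identifying special orbits with special representations), which is exactly the chain of bijections you compose. Your additional remarks on reduction to the irreducible case and on ${\rm Spr}$ depending only on the root system are harmless elaborations of the same route.
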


Let $\mathfrak{g}$ be a finite dimensional semisimple Lie algebra with a fixed Cartan subalgebra $\mathfrak{h}$.
Let $W$ be the Weyl group attached to a root system $\Phi$ (with a fixed subset of simple roots $\Delta$) in a finite dimensional real vector space $V$. Then $W$ acts on the space $P_k(V)$ of degree $k$ homogeneous polynomials on $V$.  
Following Joseph \cite{J80-2}, a representation $\sigma\in {\rm Irr}(W)$ is called \emph{univalent} if it
occurs with multiplicity one in  $P_{b(\sigma)}(V)$ where $b(\sigma)$ is the minimal degree  such that $\sigma$ occurs in $P_{b(\sigma)}(V)$.
The number $b(\sigma)$ is called the \emph{fake degree} of $\sigma$.
Let ${\rm Irr}(W)^{\rm uv} \subset {\rm Irr}(W)$ be the set of univalent representations. 
Under the Springer correspondence, the representation associated with a nilpotent orbit and its trivial local system is always univalent \cite[Corollary~4]{BM}, i.e., ${\rm Im}({\rm Spr}_{G,\mathbf{1}}^{-1}) \subseteq {\rm Irr}(W)^{\rm uv}$ using the notation from \ref{Spr}.

Suppose $W'$ is a subgroup of $W$ generated by reflections in a root subsystem of $\Phi$.   
The $j$-induction from $W'$ to $W$ is a well defined map 
\[
\begin{array}{rcl}
j_{W'}^{W} \colon {\rm Irr}(W')^{\rm uv}& \rightarrow &{\rm Irr}(W)^{\rm uv} \\
\sigma' & \mapsto & j_{W'}^{W}\sigma'
\end{array}
\]
where $j_{W'}^{W}\sigma'$ is the representation generated by the $\sigma'$ 
isotypical component in $P_{b(\sigma')}(V)$. Furthermore, $b(j_{W'}^{W}\sigma') =b(\sigma')$ and $j_{W'}^{W}\sigma'$ occurs in $P_{b(\sigma')}(V)$ with multiplicity one.  
The definitions of ``univalent'' and ``$j$-induction'' are independent of the choice of $V$. 

The $j$-induction satisfies the property of induction by stages as follows. 
\begin{Lem}[{See \cite[Theorem~11.2.4]{Ca85}}]\label{j.1}
Let $W''\subset W'$ be two subgroups of $W$ generated by two root subsystems (which are not necessary the parabolic subgroups). 
Then 
\[
j^W_{W''}=j^W_{W'}\circ j^{W'}_{W''}.
\]
\end{Lem}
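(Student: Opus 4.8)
The plan is to exhibit both $j^W_{W''}\sigma''$ and $j^W_{W'}\bigl(j^{W'}_{W''}\sigma''\bigr)$ as one and the same $W$-submodule of a single homogeneous component $P_b(V)$, namely the submodule generated by the $\sigma''$-isotypical part.

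First I would fix $\sigma''\in{\rm Irr}(W'')^{\rm uv}$, set $b:=b(\sigma'')$ and $N:=P_b(V)$. For a reflection subgroup $H\subseteq W$ and a subset $X\subseteq N$, write $\langle X\rangle_H$ for the smallest $H$-submodule of $N$ containing $X$. Let $N_{\sigma''}$ be the $\sigma''$-isotypical component of $N$ regarded as a $W''$-module, and put $\tau:=j^{W'}_{W''}\sigma''$. By the properties of $j$-induction recalled before the lemma, $\tau\in{\rm Irr}(W')^{\rm uv}$ with $b(\tau)=b$, it occurs in $N$ with multiplicity one, and by definition $\tau=\langle N_{\sigma''}\rangle_{W'}$. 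Because $\tau$ has multiplicity one in $N$, this irreducible submodule is precisely the $\tau$-isotypical component $N_\tau$ of $N$ as a $W'$-module; hence $\langle N_{\sigma''}\rangle_{W'}=N_\tau$.

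Next I would use the elementary observation that the $W$-submodule generated is insensitive to intermediate submodules: from the chain $N_{\sigma''}\subseteq\langle N_{\sigma''}\rangle_{W'}\subseteq\langle N_{\sigma''}\rangle_{W}$ one gets, upon applying $\langle\,\cdot\,\rangle_W$ and using $\langle\langle Y\rangle_W\rangle_W=\langle Y\rangle_W$,
\[
\langle N_{\sigma''}\rangle_{W}=\bigl\langle\langle N_{\sigma''}\rangle_{W'}\bigr\rangle_{W}=\langle N_\tau\rangle_{W}.
\]
By definition $\langle N_{\sigma''}\rangle_{W}=j^W_{W''}\sigma''$, this module being generated inside $P_{b(\sigma'')}(V)=N$; and since $b(\tau)=b$ we likewise have $\langle N_\tau\rangle_{W}=j^W_{W'}\tau=j^W_{W'}\bigl(j^{W'}_{W''}\sigma''\bigr)$, generated inside $P_{b(\tau)}(V)=N$. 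Combining the two identifications gives $j^W_{W''}\sigma''=j^W_{W'}\bigl(j^{W'}_{W''}\sigma''\bigr)$, as desired; that this is independent of the ambient $V$ has already been noted in the text.

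The step I expect to be the crux is the identification $\langle N_{\sigma''}\rangle_{W'}=N_\tau$: it relies on knowing that the $W'$-module generated by the $\sigma''$-isotypical part of $P_b(V)$ is irreducible and occurs in $P_b(V)$ with multiplicity one, which is exactly the content of the $j$-induction facts quoted before the lemma (resting in turn on the univalence theorem of Borho--MacPherson and the theory of fake degrees). Once that is granted, everything else is a formal manipulation of generated submodules, the only bookkeeping being the equality $b(\sigma'')=b(j^{W'}_{W''}\sigma'')$, which guarantees that all three generated submodules genuinely live in the common homogeneous component $P_b(V)$.
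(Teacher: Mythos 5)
Your argument is correct. Note, however, that the paper does not prove this lemma at all: it is quoted as a known result with a citation to Carter's book (Theorem 11.2.4), so there is no in-paper proof to compare against; your write-up is essentially the standard proof of the cited theorem. The formal skeleton is exactly right: working inside the single homogeneous component $N=P_{b(\sigma'')}(V)$ (legitimate because, as the text notes, $j$-induction is independent of the choice of $V$, so all three reflection subgroups may be made to act on the same $V$), the equality $b(j^{W'}_{W''}\sigma'')=b(\sigma'')$ keeps everything in $N$, the multiplicity-one property identifies $\langle N_{\sigma''}\rangle_{W'}$ with the full $\tau$-isotypic component $N_\tau$, and the idempotence/monotonicity of the generated-submodule operation gives $\langle N_{\sigma''}\rangle_W=\langle N_\tau\rangle_W$. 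You correctly isolate the only nonformal input, namely that the $W'$-module (resp.\ $W$-module) generated by the relevant isotypic component is irreducible, univalent, of the same fake degree, and of multiplicity one; these are precisely the properties of $j$-induction stated just before the lemma (resting on Macdonald--Lusztig--Spaltenstein and the univalence result of Borho--MacPherson), and your proof properly uses them as hypotheses rather than pretending to re-derive them. So the proposal establishes transitivity granted the well-definedness of $j$-induction, which is exactly the logical status of the lemma in the paper.
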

Some details can be found in \cite[\S 11.2]{Ca85} and \cite[Chapter~4]{lusztig1984char}.

Let $\lambda\in \mathfrak{h}^*$, with associated root system $\Phi_{[\lambda]}$ and Weyl subgroup $W_{[\lambda]}$. Then $\lam$ is an integral weight for $\Phi_{[\lambda]}$. Suppose that $\lambda=w_{\lambda}\mu$, where $\mu$ is antidominant and $w_{\lambda}\in W_{[\mu]}^J$. Then we use $\pi_{w_{\lam}}$ to denote the special representation of $W_{[\lambda]}$ corresponding to the two-sided cell of $W_{[\lambda]}$ containing $w_{\lam}$. 
By using the Springer correspondence and $j$-induction operator, Joseph extended the result in \cite{BoB1} to arbitrary infinitesimal character, which gives the following result.


\begin{Prop}[{\cite[Theorem~3.10]{Jo85}}]\label{2dim}
Let $\mu\in \mathfrak{h}^*$ be an anti-dominant element and   $w\in W_{[\mu]}^J$. 
    Then $\tilde{\pi}_w:=j_{W_{[\mu]}}^W(\pi_w)$ is an
    irreducible $W$-module.   This $W$-module $\tilde{\pi}_w$ corresponds to  a nilpotent orbit $\mathcal{O}_{\tilde{\pi}_w}$
    with trivial local system via the Springer correspondence. Furthermore,
$$V(\Ann (L(w\mu)))=\overline{\mathcal{O}_{\rm Spr}^\mathfrak{g} ( \tilde{\pi}_w)}.$$
\end{Prop}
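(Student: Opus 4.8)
The plan is to split the statement into two parts: the representation‑theoretic assertions that $\tilde{\pi}_w$ is irreducible and lies in the image of the Springer correspondence with trivial local system, and the geometric assertion $V(\Ann L(w\mu))=\overline{\mathcal{O}_{\rm Spr}^{\mathfrak{g}}(\tilde{\pi}_w)}$. For the latter I would first reduce to $\mu$ regular by a translation argument, and then transport the Borho--Brylinski description of $V(\Ann(-))$ from the integral regular case across the $j$-induction map, using Joseph's theory of Goldie rank polynomials together with the Springer‑theoretic description of associated varieties of primitive ideals.

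First I would dispose of the combinatorics. By Proposition \ref{intehral-W}, $\pi_w$ is the special representation of $W_{[\mu]}$ attached to the two‑sided cell of $w$; special representations are univalent, since they occur as Springer representations with trivial local system and such representations are univalent by \cite[Cor.~4]{BM}. Hence $j_{W_{[\mu]}}^{W}$ is defined on $\pi_w$, and by the very construction of $j$-induction the output $\tilde{\pi}_w$ lies in ${\rm Irr}(W)^{\rm uv}$, so $\tilde{\pi}_w$ is irreducible. To upgrade univalence to a ``trivial‑local‑system Springer representation'' I would invoke Lusztig's theorem that $j$-induction from a reflection subgroup carries special representations to special representations \cite{lusztig1984char} (see also \cite[\S 11.2]{Ca85}); this is exactly what is needed here because $W_{[\mu]}$ is generated by the reflections in $\Phi_{[\mu]}$, which in general is a pseudo‑Levi (not a parabolic) subsystem, so one cannot simply read $\mathcal{O}_{\tilde{\pi}_w}$ off the Lusztig--Spaltenstein induction of nilpotent orbits. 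Granting this, $\tilde{\pi}_w$ is special for $W$, hence equals ${\rm Spr}_{\mathfrak{g},\mathbf{1}}^{-1}(\mathcal{O})$ for a unique (special) orbit $\mathcal{O}=:\mathcal{O}_{\tilde{\pi}_w}$, with ${\rm Spr}_{\mathfrak{g}}(\tilde{\pi}_w)=(\mathcal{O}_{\tilde{\pi}_w},\mathbf{1})$.

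Next I would reduce the geometric part to $\mu$ regular. If $J=\{\alpha\in\Delta_{[\mu]}\mid\langle\mu,\alpha^\vee\rangle=0\}$ is nonempty, pick a regular antidominant $\mu'$ with $\Phi_{[\mu']}=\Phi_{[\mu]}$ such that $\mu$ lies in the closure of the facet of $\mu'$, meeting precisely the walls indexed by $J$; the translation functor between the $\mu'$-block and the $\mu$-block then carries $L(w\mu')$ to $L(w\mu)$ for $w\in W_{[\mu]}^{J}$, and being exact and compatible with the standard filtration it leaves $V(\Ann(-))$ unchanged (this invariance of the associated variety of a primitive ideal under translation is part of \cite{Jo85}). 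The two‑sided cell of $w$ in $W_{[\mu]}=W_{[\mu']}$, and hence $\pi_w$ and $\tilde{\pi}_w$, are also unchanged, so I may assume $\mu$ is regular antidominant with $\lambda=w\mu$, $w\in W_{[\mu]}$. For regular $\mu$ I would then invoke the core of Joseph's argument: the $W$-module spanned by the Goldie rank polynomials of the primitive ideals with infinitesimal character $\mu$ lying in the two‑sided cell of $W_{[\mu]}$ through $w$ is precisely $j_{W_{[\mu]}}^{W}$ of the Goldie rank representation of $W_{[\mu]}$ attached to that cell, which is the special representation $\pi_w$; and, by the Springer‑theoretic description of associated varieties of primitive ideals generalizing \cite{BoB1} (see also \cite{BarV82}), $V(\Ann L(w\mu))$ equals the closure of the nilpotent orbit attached by the Springer correspondence to this Goldie rank $W$-module, i.e.\ $\overline{\mathcal{O}_{\rm Spr}^{\mathfrak{g}}(\tilde{\pi}_w)}$. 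Combining the two statements yields the claimed identity.

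The hard part, and the true content of \cite[Thm.~3.10]{Jo85}, is the input to the last step: that at a nonintegral infinitesimal character the $W$-module controlling the associated variety of $\Ann L(w\mu)$ is obtained from the integral‑Weyl‑group cell datum by the single operation of $j$-induction. This rests on Joseph's machinery of Goldie rank polynomials and the comparison of characteristic cycles across blocks, and it is here (not in the purely combinatorial second step above) that the real work lies; the role of that combinatorial step is only to ensure that the output of $j$-induction is again a genuine trivial‑local‑system Springer representation, so that $\mathcal{O}_{\rm Spr}^{\mathfrak{g}}(\tilde{\pi}_w)$ makes sense. As sanity checks I would verify that for integral $\mu$ one has $W_{[\mu]}=W$ and $j_{W}^{W}={\rm id}$, so $\tilde{\pi}_w=\pi_w$ and the statement collapses to \cite{BoB1}; and that in general $\dim\overline{\mathcal{O}_{\rm Spr}^{\mathfrak{g}}(\tilde{\pi}_w)}=\dim V(\Ann L(w\mu))=2\gkd L(w\mu)=2(|\Phi^+|-\aff_{[\mu]}(w))$ by Proposition \ref{pr:main1}, which is the numerical compatibility the rest of the paper relies on.
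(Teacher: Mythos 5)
The paper offers no proof of this statement: it is imported verbatim from Joseph \cite[Theorem~3.10]{Jo85} and used as a black box, so there is no internal argument to compare yours against. Your outline of Joseph's proof has the right overall shape (univalence makes the $j$-induction well defined and forces irreducibility; translation functors reduce to regular antidominant $\mu$; the Goldie-rank-polynomial machinery identifies $V(\Ann L(w\mu))$), and you are right that the last ingredient is the actual content of the theorem rather than something you re-derive — so what you have is an honest sketch of the citation, not an independent proof.

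There is, however, one concrete error in your combinatorial step. You assert that $j$-induction from an arbitrary reflection subgroup carries special representations to special representations, and conclude that $\tilde\pi_w$ is special. This is false when $W_{[\mu]}$ is not parabolic, and the paper's own data refute it: for $\mathfrak{g}=F_4$ with $\Phi_{[\lambda]}\simeq C_4$ and $\pi_{w_\lambda}=[2^4]$ (a special partition of type $C_4$, see Lemma \ref{a-value}(9) and Table \ref{tabc4}), the induced representation is the one denoted $\chi_{9,2}$ in Table \ref{tabcharf4}, namely the Springer representation of the pair $(B_2,\mathbf{1})$, and $B_2$ is a \emph{non-special} orbit of $F_4$ (marked $36ns$ there); the orbit $\tilde A_2+A_1$ occurs the same way. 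The Lusztig--Spaltenstein preservation of specialness under $j$-induction is a statement about parabolic subgroups; for pseudo-Levi subsystems such as $\Phi_{[\mu]}$ the correct, weaker statement — which is exactly what the proposition asserts and what Joseph proves — is that $j$-induction preserves the property of lying in the image of ${\rm Spr}_{\mathfrak{g},\mathbf{1}}^{-1}$, i.e., of being a Springer representation with trivial local system. This is also precisely what makes the Sommers-duality formula \eqref{E:dSom} in \S\ref{AVHWM} meaningful. Your argument should route through that statement; as written, the step ``hence $\tilde\pi_w$ is special, hence equals ${\rm Spr}_{\mathfrak{g},\mathbf{1}}^{-1}(\mathcal{O})$'' rests on a false premise, even though the conclusion you need survives.
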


 Henceforth, for a highest weight module $L(\lambda)$, Proposition \ref{2dim} gives 
 $$V(\Ann (L(\lambda)))=\overline{\mathcal{O}}_{\Ann(L(\lambda))}$$
 for a nilpotent orbit $\mathcal{O}_{{\rm Ann}(L(\lambda))}$ that is uniquely determined by $L(\lambda)$. We call $\mathcal{O}_{{\rm Ann}(L(\lambda))}$ the annihilator variety or orbit associated with $L(\lambda)$.
 
\subsection{PyCox}
In this subsection, we  illustrate how to use the computer algebra package PyCox  to find the nilpotent orbit $\mathcal{O}_w$ for a given element $w\in W$. More details about PyCox can be found in \cite{ge}.

From now on, for a simple root $\alpha_i\in \Delta$ we may simply write the simple reflection $s_{\alpha_i}$ by $s_i$. 
 When there are too many factors in an element $w$, we  adopt Geck's notation in \cite{ge} and use it to compute the value of $\aff(w)$. For example, we use $[i_1-1,i_2-1,\cdots,i_k-1]$ to denote
$ w=s_{i_1}s_{i_2}\cdots s_{i_k}$ for type $A_n$, $E_n$, $F_4$ or $G_2$, and use $[n-i_1,n-i_2,\cdots,n-i_k]$ to denote
$ w=s_{i_1}s_{i_2}\cdots s_{i_k}$ for type $B_n$, $C_n$ and $D_n$.

In PyCox, the function ``klcellrepem''   returns us the 
 $\aff$ value of $w \in W$ and the character of the left cell to which $w$ belongs. Note that the notation for characters in PyCox is different with Carter \cite[p. 428]{Ca85}. 

\begin{ex}
    In the case of $F_4$, we use $w=[1,3,2,1]$ to denote the element $w=s_2s_4s_3s_2$. Using PyCox, we have:
   \begin{verbatim}
    >>> W = coxeter("F", 4)
    >>> print(klcellrepelm(W, [1,3,2,1]))
    {'size': 9, 'character': [['9_1', 1]], 'a': 2,
    'special': '9_1', 'index': 22, 'elms': False, 
    'distinv': False}
\end{verbatim}

From this we know that $\aff(w)=2$ and the corresponding character is $9_1$. By comparing \cite[Table C.3]{GP} and \cite[p. 428]{Ca85}, we will know that the corresponding character is  $\phi_{9,2}$ in the notation of \cite[p. 428]{Ca85}. Also, the corresponding special nilpotent orbit is $\mathcal{O}_w=F_4(a_2)$.
\end{ex}

	


Thus for exceptional types, by using PyCox we can determine the nilpotent orbit $\mathcal{O}_w$ for a given element $w\in W$.

\subsection{Pseudo-maximal root subsystem}




Let $\Phi$ be a root system and let
\[
\alpha_0=\sum_{i=1}^nh_i\alpha_i
\]
be the highest root of $\Phi$, where $h_i$ are non-negative integers and $\Delta=\{\alpha_i\mid 1\leq i\leq n\}$ is the set of simple roots in $\Phi^+$. 
For every $1\lest i, j \lest n$, we set
$$\tilde{\Phi}(i):=\Phi \cup \{-\alpha_0\} - \{\alpha_i\}, \quad \Phi(j):=\Phi - \{\alpha_j\}.$$

\begin{Defff}\label{thmbd}
	Let $\Phi$ be irreducible. A root subsystem of $\Phi$ is called pseudo-maximal if it is a proper root subsystem and is equal (up to the action of $W$) to a maximal element of the set
$\{\tilde{\Phi}(i), \Phi(j): \ 1\lest i, j \lest n \}$.
\end{Defff}

Note that a pseudo-maximal root subsystem of $\Phi$ has rank $n-1 $ or $n$.

Since $j$-induction satisfies the property of induction by stages as in Lemma \ref{j.1}, we only need to consider the case of $\Phi_{[\lam]}^\vee$ which is a pseudo-maximal subsystem of $\Phi^\vee$, as specified in Definition \ref{thmbd}. This is what we will focus in the sections \S \ref{g2-gkd}--\S \ref{e-gkd} when computing the Gelfand--Kirillov dimension of $L(\lambda)$. For general case we discuss in \S \ref{S:gen}.

\section{The computation of \texorpdfstring{$\aff(w_{\lam})$}{} for exceptional types}\label{a-value-compu}

In view of Proposition \ref{pr:main1}, in order to compute the GKdim of  highest weight modules, it is essential to find the required $w_{\lam}$  or the value of $\aff(w_{\lam})$. In this section, we will give a simple method to find a $w_{\lam}$ which has minimal length in the Weyl group $W_{[\lam]}$ such that $\lam=w_{\lam}\mu$, where $\mu $ is antidominant. Furthermore, we explain how to compute $\aff(w_\lambda)$ by utilizing a natural decomposition of $\Phi_{[\lambda]}$.

\subsection{Integral highest weight modules}\label{inte-a}

Let $\mathfrak{g}$ be a finite-dimensional complex simple Lie algebra $\mathfrak{g}$ with a fixed Cartan subalgebra $\mathfrak{h}$. Recall that $\langle \cdot, \cdot\rangle$ denotes the canonical pairing between  $\mathfrak{h}$ and its dual $\mathfrak{h}^*$. Let $\Delta=\{\alpha_i \mid  1\lest i\lest n\}$ be the set of simple roots of $\mathfrak{g}$ with corresponding fundamental weights $\{\omega_i\mid 1\lest i\lest n\}$. 
Recall that the Cartan matrix is defined by
$A=(A_{ij})_{n\times n}$ with $A_{ij}=\langle \alpha_i, \alpha_j^\vee \rangle$ and $A_{ij}\in \{0,-1,-2,-3\}$ for $i \neq j$.


\begin{Lem}\label{rho}
   We have $\rho=\sum\limits_{k=1}^n \omega_k$ and 
   $\alpha_j=\sum\limits_{k=1}^n A_{jk}\omega_k$. For any $\lam\in \mathfrak{h}^*$, we write $\lam=\sum\limits_{1\lest i\lest n}k_i\omega_i$, where $k_i=\langle \lam, \alpha_i^{\vee}\rangle$. Then $\lam$ is an integral weight if and only if $k_i\in \mathbb{Z}$ for all $1\lest i\lest n$. Also, $\lam$ is an integral and antidominant weight if and only if   $k_i\in \mathbb{Z}_{\lest 0}$ for  all $1\lest i\lest n$.
\end{Lem}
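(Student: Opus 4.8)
The statement is essentially a collection of standard facts about root systems and weights relative to a fixed Cartan subalgebra, so the plan is to verify each assertion in turn using only the definitions of fundamental weights, the Cartan matrix, and antidominance (Definition \ref{D:ant-d}).

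First I would establish $\rho=\sum_{k=1}^n\omega_k$. By definition $\rho$ is half the sum of the positive roots, and the classical fact is that $\langle\rho,\alpha_i^\vee\rangle=1$ for every simple root $\alpha_i$; since the fundamental weights $\omega_k$ are by definition the dual basis to the simple coroots (i.e.\ $\langle\omega_k,\alpha_i^\vee\rangle=\delta_{ki}$), it follows that $\rho$ and $\sum_k\omega_k$ have the same pairing with every $\alpha_i^\vee$, hence are equal. Next, to get $\alpha_j=\sum_{k=1}^n A_{jk}\omega_k$, I pair both sides with $\alpha_i^\vee$: the left side gives $\langle\alpha_j,\alpha_i^\vee\rangle=A_{ji}$, wait — I should be careful with the index convention; with $A_{ij}=\langle\alpha_i,\alpha_j^\vee\rangle$ as defined, pairing $\alpha_j$ with $\alpha_i^\vee$ gives $\langle\alpha_j,\alpha_i^\vee\rangle=A_{ji}$... actually the cleanest route is: write $\alpha_j=\sum_k c_{jk}\omega_k$ and pair with $\alpha_i^\vee$ to read off $c_{ji}=\langle\alpha_j,\alpha_i^\vee\rangle$, which matches the stated formula once one uses the definition of $A$ (the paper's convention $A_{jk}=\langle\alpha_j,\alpha_k^\vee\rangle$ makes this immediate).

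Then, given $\lambda=\sum_i k_i\omega_i$, pairing with $\alpha_j^\vee$ yields $\langle\lambda,\alpha_j^\vee\rangle=\sum_i k_i\langle\omega_i,\alpha_j^\vee\rangle=k_j$, so indeed $k_i=\langle\lambda,\alpha_i^\vee\rangle$. For integrality: $\lambda$ is integral (in the sense $\langle\lambda,\alpha^\vee\rangle\in\mathbb{Z}$ for all $\alpha\in\Phi$, equivalently $\Phi_{[\lambda]}=\Phi$) if and only if $\langle\lambda,\alpha_i^\vee\rangle\in\mathbb{Z}$ for all simple $\alpha_i$, because every root is a $\mathbb{Z}$-combination of simple roots and hence every coroot is a $\mathbb{Z}$-combination of simple coroots; this gives the equivalence with $k_i\in\mathbb{Z}$ for all $i$. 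Finally, assuming $\lambda$ integral, Proposition \ref{anti}(1) says $\lambda$ is antidominant iff $\langle\lambda,\alpha^\vee\rangle\leq 0$ for all $\alpha\in\Delta_{[\lambda]}$; since $\lambda$ is integral we have $\Phi_{[\lambda]}=\Phi$ and $\Delta_{[\lambda]}=\Delta$, so this is exactly $k_i=\langle\lambda,\alpha_i^\vee\rangle\leq 0$ for all $i$, i.e.\ $k_i\in\mathbb{Z}_{\leq 0}$.

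There is no real obstacle here — each step is a one-line computation from the definitions. The only point requiring a little care is the index/normalization convention in the formula $\alpha_j=\sum_k A_{jk}\omega_k$: one must make sure the paper's definition $A_{ij}=\langle\alpha_i,\alpha_j^\vee\rangle$ is used consistently (as opposed to the transposed convention), and that the reduction of ``integral for all roots'' to ``integral for simple roots'' correctly invokes the fact that simple coroots span the coroot lattice over $\mathbb{Z}$. Everything else is bookkeeping with the dual-basis property of fundamental weights.
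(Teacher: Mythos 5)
Your proposal is correct: the paper states Lemma \ref{rho} without proof, treating it as a collection of standard facts, and your verification — the dual-basis property $\langle\omega_k,\alpha_i^\vee\rangle=\delta_{ki}$ together with $\langle\rho,\alpha_i^\vee\rangle=1$, reading off $\alpha_j=\sum_k A_{jk}\omega_k$ from the paper's convention $A_{jk}=\langle\alpha_j,\alpha_k^\vee\rangle$, reducing integrality to simple coroots because every coroot is a $\mathbb{Z}$-combination of simple coroots, and invoking Proposition \ref{anti}(1) with $\Delta_{[\lambda]}=\Delta$ in the integral case for antidominance — is exactly the argument the authors are implicitly relying on. The only delicate points (the index convention for $A$ and the passage from simple coroots to all coroots) are handled correctly in your write-up.
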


Now we want to find the aforementioned  $w_{\lam}$ for an integral weight $\lambda$. When $\lam$ is antidominant, $w_{\lam}=\mathrm{id}$. Suppose $\lam$ is not antidominant, then there are some $k_i\in \mathbb{Z}_{>0}$. Suppose the largest index is $i_{\lam}={i_1}$ such that $k_{i_1}\in \mathbb{Z}_{>0}$. Then we have $$s_{i_1}\lam=\lam-k_{i_1}\alpha_{i_1}=\lam-k_{i_1}\sum\limits_{j=1}^n A_{i_1j}\omega_j=\lam-2k_{i_1}\omega_{i_1}-k_{i_1}\sum\limits_{j=1,j\neq i_1}^n A_{i_1j}\omega_j.$$
 Then the coefficient of $\omega_{i_1}$ in $s_{i_1}\lam$ becomes $-k_{i_1}$. We continue this process until all the coefficients of $\omega_{i}$ are in $\mathbb{Z}_{\lest 0}$. This process will stop after finite steps. Multiplying all the $s_i$ appeared in this process will give us the desired $w_{\lam}$.

We call the above process \emph{positive index reduction} algorithm.  In fact, we have the following result.

\begin{Lem}\label{find-w-lambda}
    For any integral weight $\lam$ (regular or singular),  we can get an antidominant weight $\mu$ by applying the positive index reduction algorithm. We multiply all the $s_i, 1\lest i \lest k$ that appeared in this process and get a $w_{\lambda}:=s_1 s_2 \cdots s_k$. Then this $w_{\lambda}$ has the minimal length in $W$ such that $w_{\lambda}^{-1}\lam$ is antidominant.
\end{Lem}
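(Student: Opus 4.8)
The plan is to establish two things about the element $w_\lambda := s_1 s_2 \cdots s_k$ produced by the positive index reduction algorithm: first, that $w_\lambda^{-1}\lambda$ is indeed antidominant (correctness), and second, that $w_\lambda$ has minimal length among all $w \in W$ with $w^{-1}\lambda$ antidominant (minimality). Termination of the algorithm after finitely many steps should be handled first, since it is needed even to make sense of the statement.

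For \textbf{termination and correctness}, I would track the quantity $\sum_i \max(k_i, 0)$ or, better, argue via the standard theory of orbits under $W_{[\lambda]}$. Each application of $s_{i_1}$ with $i_1$ the \emph{largest} index having $k_{i_1} > 0$ sends the coefficient of $\omega_{i_1}$ from $k_{i_1}$ to $-k_{i_1}$, and only changes coefficients of $\omega_j$ with $j < i_1$ (by the choice of $i_1$ as the largest such index, no $\omega_j$ with $j>i_1$ gets altered — wait, one must be careful: $s_{i_1}$ alters coefficients of $\omega_j$ for all $j$ with $A_{i_1 j}\neq 0$, which can include $j > i_1$). So the cleaner invariant is: after the step, the coefficients $k_{i_1+1},\dots,k_n$ are all $\leq 0$ and $k_{i_1}$ becomes negative; subsequent steps use indices $< i_1$ possibly, but could they ruin a higher coefficient? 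Here I would invoke that the reduction corresponds to moving $\lambda$ strictly \emph{down} in the ordering $\leq$ on $W_{[\lambda]}\lambda$ (namely $s_{i_1}\lambda < \lambda$ since $\langle \lambda, \alpha_{i_1}^\vee\rangle = k_{i_1} > 0$), and since $W_{[\lambda]}\lambda$ is finite, the process must terminate; at termination all $k_i \leq 0$, so $w_\lambda^{-1}\lambda = \mu$ is antidominant by Lemma \ref{rho} / Proposition \ref{anti}(1).

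For \textbf{minimality}, the key fact is that each step is \emph{length-decreasing} in the right sense: I claim the expression $s_1 s_2 \cdots s_k$ is reduced, i.e. $\ell_{[\lambda]}(w_\lambda) = k$, and moreover $\ell_{[\lambda]}(w_\lambda) = \#\{\alpha \in \Phi^+_{[\lambda]} : \langle \mu, \alpha^\vee\rangle > 0 \text{ with } w_\lambda^{-1}\alpha \in -\Phi^+_{[\lambda]}\}$-type count — more precisely I would show that at each step the inversion set of the partial product grows by exactly one, equivalently that $s_{i_1}$ at each stage is a simple reflection making $\lambda$ strictly smaller, so that $\ell_{[\lambda]}(s_{i_1}(\text{current }\lambda)\text{'s element}) $ increments. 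Then I would use the general fact (as in \cite[\S3]{Hum08}, underlying Proposition \ref{anti} and the sentence after it: ``there exists a unique antidominant weight $\mu$ and a unique $w_\lambda\in W_{[\mu]}^J$ such that $\lambda = w_\lambda\mu$'') that the minimal-length element carrying $\lambda$ to an antidominant weight is precisely the distinguished coset representative in $W_{[\mu]}^J$, and that \emph{any} reduced word realizing a sequence of strict descents $\lambda > s_{i_1}\lambda > s_{i_2}s_{i_1}\lambda > \cdots > \mu$ of length equal to $\#\Phi^+_\mu$-type inversions gives that minimal element. So the remaining point is a counting/no-redundancy argument: the algorithm never ``wastes'' a reflection because every reflection it applies strictly decreases $\lambda$ in the orbit, hence the total number of reflections used equals the minimal possible, which is $\ell_{[\lambda]}(w_\lambda^{\min})$.

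I expect the \textbf{main obstacle} to be the minimality half — specifically, verifying that the greedy choice (always pick the \emph{largest} index with positive coefficient) produces a reduced word of globally minimal length, rather than merely some word carrying $\lambda$ to $\mu$. The subtlety is that a single application of $s_{i_1}$ may flip coefficients $k_j$ with $j \ne i_1$ from nonpositive to positive, so one must argue the process does not enter a loop and does not accumulate extra reflections beyond $\#\{\alpha\in\Phi^+_{[\lambda]}: \langle\mu,\alpha^\vee\rangle \le 0,\ \langle\lambda, \alpha^\vee\rangle \text{...}\}$. The clean way around this is to phrase it entirely in terms of strictly decreasing chains in the finite poset $(W_{[\lambda]}\lambda, \leq)$: each step produces $\nu' = s_{i_1}\nu < \nu$ with $\langle \nu, \alpha_{i_1}^\vee\rangle > 0$, so if $\nu = u\mu$ with $u\in W_{[\mu]}$ then $s_{i_1}u$ has $\ell_{[\mu]}$ one less than $\ell_{[\mu]}(u)$ (standard: $\langle u\mu, \alpha_{i_1}^\vee\rangle>0 \iff \ell_{[\mu]}(s_{i_1}u) < \ell_{[\mu]}(u)$, using $\mu$ antidominant); inducting on $\ell_{[\mu]}(u)$ shows the algorithm terminates in exactly $\ell_{[\mu]}(w_\lambda)$ steps where $w_\lambda = s_1\cdots s_k$ is the unique element with $\lambda = w_\lambda\mu$ and $w_\lambda$ of minimal length (equivalently $w_\lambda \in W_{[\mu]}^J$), which is exactly the claim.
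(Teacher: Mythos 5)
Your argument is correct, but it is a genuinely different route from the paper's: the paper disposes of this lemma in one line by citing \cite[Lem.~3.1]{do23}, which repackages the classical facts from \cite[Thm.~4.3.1(iv)]{BB05} and \cite[Prop.~4.1]{Er95} (the ``numbers game'' circle of ideas), whereas you give a self-contained proof. Your key mechanism is the right one: since $\mu$ is antidominant, $\langle u\mu,\alpha_i^\vee\rangle>0$ forces $u^{-1}\alpha_i\in-\Phi^+$, hence $\ell(s_iu)=\ell(u)-1$, so every step of the algorithm (greedy or not, which is why the Remark after the lemma is legitimate) strictly decreases length; termination and correctness then come from finiteness of the orbit together with Proposition \ref{anti}(1), and minimality from the induction in your last paragraph. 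Your earlier worry about $s_{i_1}$ turning nonpositive coefficients positive is exactly the issue that makes a naive sign-tracking argument fail, and resolving it through the length/descent criterion rather than coefficient bookkeeping is the correct fix; your middle paragraph's ``inversion-set counting'' phrasing is dispensable once you have the induction. The one point to tighten for singular $\lambda$: run the induction on the \emph{distinguished} representative $u\in W^J$ of the coset $\{w: w\mu=\lambda\}=uW_J$, and check that after one step the new distinguished representative has length exactly $\ell(u)-1$ (if some $u''\in s_{i_1}uW_J$ had $\ell(u'')<\ell(u)-1$, then $s_{i_1}u''\in uW_J$ would have length $<\ell(u)$, contradicting minimality of $u$); with that line added, your proof is complete and, unlike the paper's, makes transparent both why any choice of positive index works and why the output is the unique minimal-length element rather than merely some element carrying $\lambda$ to $\mu$.
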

 \begin{proof}
     This is an easy consequence of \cite[Lem. 3.1]{do23}, which is a reformulation of the results in 
     \cite[Thm. 4.3.1(iv)]{BB05} and \cite[Prop. 4.1]{Er95}.
 \end{proof}

\begin{Rem}
    From \cite[Lem. 3.1]{do23},  we can choose any positive index (whose corresponding coefficient is positive) during the positive induction algorithm. 
For convenience, we have designed   the following web page for an interested reader to apply our algorithm:
    \begin{center}
\textcolor{blue}{http://test.slashblade.top:5000/lie/antidominant}
    \end{center}
We write $\lam=\sum\limits_{0\lest i\lest n-1}k_i\omega_i:=[k_0,\cdots,k_{n-1}]$, where $k_i=\langle \lam, \alpha_i^{\vee}\rangle$. Here  the order of simple roots is the same with that in \cite{ge}. In our web page, we only need to input $``k_0,\cdots,k_{n-1}"$, and the corresponding $w_{\lambda}$ will appear as the output.

\end{Rem}

\begin{ex}
  Let  $\lam=(2,-1,0,-5,-6,-8,12,-12)$ be  an integral weight of type $E_7$. We can write $\lam=-\omega_1+\omega_2-3\omega_3+\omega_4-5\omega_5-\omega_6-2\omega_7:=[-1,1,-3,1,-5,-1,-2]$. The largest index of $\lam$ with a positive element is $i_1=4$. Recall that $\alpha_4=-\omega_{2}-\omega_{3}+2\omega_{4}-\omega_5$. Thus $s_{4}\lam=\lam-\alpha_4=[-1,2,-2,-1,-4,-1,-2]$. The largest index of $s_4\lam$ with a positive element is $i_2=2$. Thus $s_2s_{4}\lam=s_{4}\lam-2\alpha_2=[-1,-2,-2,1,-4,-1,-2]$ since $\alpha_2=2\omega_{2}-\omega_4$. The largest index of $s_2s_4\lam$ with a positive element is $i_4=4$. Thus $s_4s_2s_{4}\lam=s_2s_{4}\lam-\alpha_4=[-1,-1,-1,-1,-3,-1,-2]$, which is already antidominant. Thus we have $w_{\lam}=s_4s_2s_{4}$. By using PyCox, we have $\aff(w_{\lam})=3$.
\end{ex}

\subsection{Nonintegral highest weight modules}\label{nonint}
To highlight the root system $\Phi$, we use $\mathfrak{h}_{\Phi}$ to denote the corresponding Cartan subalgebra of the Lie algebra $\mathfrak{g}_\Phi$. 

Recall that we use $\Phi_{[\lam]}$ to denote the integral root system for $\lam\in \mathfrak{h}_{\Phi}^*$ and $\Delta_{[\lambda]}$ to denote the simple system of  $\Phi_{[\lambda]}$ lying in $\Phi_{[\lambda]} \cap \Phi^+$. 
The following enables us to determine  $\Delta_{[\lambda]}$ more explicitly.

\begin{Lem}
    For $\lam\in \mathfrak{h}_{\Phi}^*$, we decompose the root system $\Phi_{[\lam]}$ into several orthogonal irreducible subsystems
    $$\Phi_{[\lam]}=\Phi_{[\lam]_1}\sqcup \Phi_{[\lam]_2}\sqcup \cdots \sqcup \Phi_{[\lam]_k}.$$
 We can determine the simple root subsystem $\Delta_{[\lambda]_i}$ of $\Phi_{[\lam]_i}$ as follows:
\begin{itemize}
    \item[(1)] If $|\Phi^+_{[\lam]_i}|=\frac{n(n+1)}{2}$, then $\Phi_{[\lam]_i}\simeq A_n$. We compute $\rho_i=\frac{1}{2}\sum_{\alpha\in \Phi^+_{[\lam]_i}}\alpha$. We filter out all the positive roots  $\alpha \in \Phi^+_{[\lam]_i}$ such that $\langle \rho_i, \alpha^{\vee}\rangle=1$ and denote them by $I_n=\{\alpha_{i_1},\alpha_{i_2},\dots,\alpha_{i_n}\}$. Then we have $\Delta_{[\lambda]_i}=I_n$.
    \item[(2)] If $|\Phi^+_{[\lam]_i}|=n^2$ and the number of short roots is $n$ ($n\geq 2$), then $\Phi_{[\lam]_i}\simeq B_n$.
    We compute $\rho_i=\frac{1}{2}\sum_{\alpha\in \Phi^+_{[\lam]_i}}\alpha$. We filter out all the positive roots  $\alpha \in \Phi^+_{[\lam]_i}$ such that $\langle \rho_i, \alpha^{\vee}\rangle=1$ and denote them by $I_{n}=\{\alpha_{i_1},\alpha_{i_2},\dots,\alpha_{i_{n}}\}$.  Then we have $\Delta_{[\lambda]_i}=I_{n}$.
      
    \item[(3)] If $|\Phi^+_{[\lam]_i}|=n^2$ and the number of long roots is $n$ ($n\geq 3$), then $\Phi_{[\lam]_i}\simeq C_n$. Similar to the case in (2), we have $\Delta_{[\lambda]_i}=I_{n}$.
    
       \item[(4)] If $|\Phi^+_{[\lam]_i}|=n^2-n$ ($n\geq 4$), then $\Phi_{[\lam]_i}\simeq D_n$.  Similar to the case in (1), we have $\Delta_{[\lambda]_i}=I_{n}$.
       
\item[(5)] If $|\Phi^+_{[\lam]_i}|=36$ and the roots are of the same length, then $\Phi_{[\lam]_i}\simeq E_6$. Similar to the case in (1), we have $\Delta_{[\lambda]_i}=I_{6}$.

\item[(6)] If $|\Phi^+_{[\lam]_i}|=63$, then $\Phi_{[\lam]_i}\simeq E_7$. Then similar to the case in (1), we have $\Delta_{[\lambda]_i}=I_{7}$.

\end{itemize}

\end{Lem}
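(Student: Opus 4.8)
The plan is to verify, case by case, that the numerical invariants $|\Phi^+_{[\lambda]_i}|$ together with the stated root-length information pin down the Cartan--Killing type of the irreducible root system $\Phi_{[\lambda]_i}$, and then to justify the extraction of $\Delta_{[\lambda]_i}$ via the $\rho_i$-criterion. For the first part, I would simply tabulate $|\Phi^+|$ for each simple type of the relevant rank: an irreducible root system of rank $n$ has $|\Phi^+| = \tfrac{n(n+1)}{2}$ precisely for $A_n$ among the simply-laced types (with $D_n$ giving $n^2-n$, $E_6$ giving $36$, $E_7$ giving $63$, and these values are mutually distinct), and $|\Phi^+| = n^2$ for both $B_n$ and $C_n$, which are then distinguished by whether there are $n$ short roots (type $B_n$) or $n$ long roots (type $C_n$). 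Since $\Phi_{[\lambda]_i}$ sits inside $\Phi$ and the ambient root system of an exceptional Lie algebra $\mathfrak g$ has rank at most $8$, only finitely many types arise, and one checks that the listed conditions are exhaustive and non-overlapping for the subsystems that can actually occur; in particular $G_2$ and $F_4$ subsystems (the remaining multiply-laced small-rank possibilities) do not appear in the integral root systems of the exceptional types under consideration, or are excluded by the stated length data. This is the bookkeeping step and, while somewhat tedious, presents no real difficulty.

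The second part is the identification of the simple roots. Here the key fact is that for an irreducible root system $\Psi$ with positive system $\Psi^+$ and $\rho_\Psi := \tfrac12\sum_{\alpha\in\Psi^+}\alpha$, a positive root $\alpha\in\Psi^+$ is simple if and only if $\langle \rho_\Psi, \alpha^\vee\rangle = 1$. One direction is classical: $\langle\rho_\Psi,\alpha_j^\vee\rangle = 1$ for every simple root $\alpha_j$, since $s_{\alpha_j}$ permutes $\Psi^+\setminus\{\alpha_j\}$ and sends $\alpha_j$ to $-\alpha_j$. For the converse, if $\alpha = \sum_j c_j \alpha_j$ is a non-simple positive root then $\sum_j c_j \geq 2$; writing $\langle\rho_\Psi,\alpha^\vee\rangle$ in terms of the expansion of $\alpha^\vee$ in the simple coroots and using that the resulting coefficients are non-negative with sum at least $2$ (for the simply-laced cases this is immediate; for $B_n$ and $C_n$ one uses the explicit height/coroot computation), one gets $\langle\rho_\Psi,\alpha^\vee\rangle \geq 2$. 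Hence the filtered set $I_n$ is exactly the set of simple roots $\Delta_{[\lambda]_i}$ determined by $\Psi^+ = \Phi^+_{[\lambda]_i}$, which is $\Phi_{[\lambda]_i}\cap\Phi^+$, i.e. the simple system lying in $\Phi^+$ as required.

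I would assemble these into a uniform argument: given the decomposition $\Phi_{[\lambda]} = \bigsqcup_i \Phi_{[\lambda]_i}$ into orthogonal irreducibles (each $\Phi_{[\lambda]_i}$ being itself a root system with its own positive part inherited from $\Phi^+$), apply the type-recognition table to get the abstract type of $\Phi_{[\lambda]_i}$ from $|\Phi^+_{[\lambda]_i}|$ and the short/long root count, and then apply the $\rho_i$-criterion to extract $\Delta_{[\lambda]_i}$. The main obstacle — though it is a matter of care rather than depth — is ensuring the case list in (1)--(6) is genuinely complete and unambiguous for all integral root subsystems that can occur inside the exceptional root systems $E_6, E_7, E_8, F_4, G_2$: one must confirm that no two distinct types among those that actually arise share the same value of $|\Phi^+|$ without being separated by the length datum, and that types such as $A_1, A_2, A_3, B_2\cong C_2$, etc. are correctly captured (the small-rank coincidences $B_2 = C_2$ and $A_3 = D_3$ need a remark, and $A_1$ with $|\Phi^+|=1$ fits case (1) with $n=1$). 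Once this finite verification is dispatched, the lemma follows.
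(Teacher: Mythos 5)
Your proposal is correct and follows essentially the same route as the paper: the paper's proof consists exactly of the observation that $\langle \rho, \alpha^{\vee}\rangle = 1$ characterizes the simple roots among the positive roots (since $\rho$ is the sum of the fundamental weights and every positive coroot is a nonnegative integral combination of simple coroots), applied to each subsystem $\Phi_{[\lambda]_i}$ with its own $\rho_i$. You merely spell out the converse direction via the coroot-height argument and make explicit the type-recognition bookkeeping that the paper leaves implicit, which is a fair but not essentially different treatment.
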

\begin{proof}
  Recall that $\Phi^+_{[\lam]}=\Phi_{[\lam]}\cap \Phi^+$. Let $\Delta=\{\alpha_i \mid  1\lest i\lest n\}$ be the simple roots of  $\Phi$ with corresponding fundamental weights $\{\omega_i\mid 1\lest i\lest n\}$. Then we  have $\rho=\sum\limits_{k=1}^n \omega_k$. Note that any positive root in $\Phi^+$ is a linear combination of simple roots with nonnegative integral coefficients. Thus we have $\langle \rho, \alpha^{\vee}\rangle=1$ if and only if $\alpha\in \Phi^+$ is a simple root. Therefore our results follow from this observation. 
\end{proof}

\begin{Thm}\label{transf}
    Suppose  $\Phi$ and $  \Psi$ are two isomorphic root systems. Let $\phi: \Phi \rightarrow \Psi$ be an isomorphism. By abuse of notation, still denote by $\phi$ the associated isomorphism  from $\mathfrak{h}_{\Phi}$ to $\mathfrak{h}_{\Psi}$, and from $\mathfrak{h}_{\Phi}^*$ to $\mathfrak{h}_{\Psi}^*$. Suppose for $\lam\in \mathfrak{h}_{\Phi}^*$, we have  an isomorphism  $\phi|_{\Phi_{[\lam]}}: \Phi_{[\lam]} \rightarrow \Psi_0 $, where $\Psi_0$ is a root subsystem of $\Psi$ and $\mathrm{rank} {(\Psi_0)}=\mathrm{rank} {(\Psi)}=n$. Then $\lam'=\phi(\lam)$ is an integral weight of type $\Psi_0$ and  we have: $$\aff(w_{\lam})=\aff(w_{\lam'}).$$

\end{Thm}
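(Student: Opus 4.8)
The plan is to use the isomorphism $\phi|_{\Phi_{[\lambda]}}\colon \Phi_{[\lambda]}\to\Psi_0$ of root systems to transport the entire datum $(W_{[\lambda]},\ \ell_{[\lambda]},\ \lambda,\ w_\lambda)$ to the corresponding datum for $\Psi_0$ and $\lambda'=\phi(\lambda)$, and then to observe that Lusztig's $\aff$-function is an invariant of the underlying Coxeter system, so that it is carried along with the rest. Concretely I would proceed in three moves: build the dictionary furnished by $\phi$, transport $w_\lambda$ to $w_{\lambda'}$, and transport the $\aff$-function.

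First I would record the dictionary. Being an isomorphism of root systems, $\phi$ preserves Cartan integers, $\langle\phi(\nu),\phi(\alpha)^\vee\rangle=\langle\nu,\alpha^\vee\rangle$ for $\nu\in\mathfrak{h}_\Phi^*$ and $\alpha\in\Phi_{[\lambda]}$, and it intertwines reflections, $\phi(s_\alpha\nu)=s_{\phi(\alpha)}\phi(\nu)$. Hence: (i) for every $\beta=\phi(\alpha)\in\Psi_0$ one has $\langle\lambda',\beta^\vee\rangle=\langle\lambda,\alpha^\vee\rangle\in\mathbb{Z}$, so $\lambda'$ is integral for $\Psi_0$, which gives the first assertion of the theorem; (ii) the rule $s_\alpha\mapsto s_{\phi(\alpha)}$ defines a group isomorphism $\bar\phi\colon W_{[\lambda]}\simeq W(\Psi_0)$ satisfying the equivariance $\bar\phi(w)\,\phi(\nu)=\phi(w\nu)$; and (iii) $\phi$ maps the simple system $\Delta_{[\lambda]}$ of $\Phi_{[\lambda]}$ onto a simple system of $\Psi_0$, and in the applications $\phi$ is chosen so that this is precisely the simple system $\Delta_{[\lambda']}$ of $\Psi_0$ used to define $w_{\lambda'}$ (e.g.\ the one inherited from $\Psi^+$, or the standard one when $\Psi_0$ is taken in standard coordinates) — such a choice exists because $W(\Psi_0)$ permutes the positive systems of $\Psi_0$ simply transitively. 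With these conventions $\bar\phi$ carries the Coxeter generators of $W_{[\lambda]}$ bijectively onto those of $W(\Psi_0)$, i.e.\ $\bar\phi$ is an isomorphism of Coxeter systems, and in particular it is length-preserving, $\ell_{[\lambda]}(w)=\ell_{\Psi_0}(\bar\phi(w))$ for all $w$.

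Next I would transport the two quantities. An isomorphism of Coxeter systems induces an $\mathcal{A}$-algebra isomorphism of the associated Hecke algebras taking $T_w\mapsto T_{\bar\phi(w)}$ and commuting with the bar involution, hence sending the Kazhdan--Lusztig basis to the Kazhdan--Lusztig basis, $C_w\mapsto C_{\bar\phi(w)}$, and matching the structure constants $h_{x,y,z}$; it is then immediate from the definition $\aff(z)=\max\{\deg h_{x,y,z}\}$ that $\aff(w)=\aff(\bar\phi(w))$ for all $w\in W_{[\lambda]}$, the two $\aff$-functions being those of $W_{[\lambda]}$ and of $W(\Psi_0)$ respectively (one may also see this via Lemma \ref{alem1}, as $\bar\phi$ preserves the cell preorders and the constancy of $\aff$ on two-sided cells). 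It remains to identify $\bar\phi(w_\lambda)$ with $w_{\lambda'}$. Using (ii) and Proposition \ref{anti}, $w_\lambda^{-1}\lambda$ is antidominant for $\Phi_{[\lambda]}$ iff $\langle w_\lambda^{-1}\lambda,\alpha^\vee\rangle\leq 0$ for all $\alpha\in\Delta_{[\lambda]}$ iff $\langle\bar\phi(w_\lambda)^{-1}\lambda',\beta^\vee\rangle\leq 0$ for all $\beta\in\Delta_{[\lambda']}$ iff $\bar\phi(w_\lambda)^{-1}\lambda'$ is antidominant for $\Psi_0$; and since $\bar\phi$ is length-preserving, minimality of $w_\lambda$ among elements of $W_{[\lambda]}$ with this property transports to minimality of $\bar\phi(w_\lambda)$ in $W(\Psi_0)$, so $\bar\phi(w_\lambda)=w_{\lambda'}$ by uniqueness. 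Combining the two transports yields $\aff(w_\lambda)=\aff(\bar\phi(w_\lambda))=\aff(w_{\lambda'})$.

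The one point requiring genuine care is step (iii): the value $\aff(w_{\lambda'})$ really does depend on which positive system of $\Psi_0$ is used to define $w_{\lambda'}$, so one must make sure that the simple system used there — and, downstream, in the Robinson--Schensted and H-algorithms of \cite{BXX} when $\Psi_0$ is of classical type — is exactly $\phi(\Delta_{[\lambda]})$; this is why the simple transitivity of $W(\Psi_0)$ on the positive systems of $\Psi_0$ is invoked, as it guarantees that an isomorphism $\phi$ with the required compatibility always exists. Everything else is the formal transport of structure along $\phi$ and involves no computation.
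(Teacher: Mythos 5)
Your proposal is correct and follows essentially the same route as the paper's proof: transport the Coxeter datum along $\phi$ (so $\phi s_\alpha\phi^{-1}=s_{\phi(\alpha)}$, lengths and the $\aff$-function are preserved), check $\langle\lambda',\phi(\alpha)^\vee\rangle=\langle\lambda,\alpha^\vee\rangle\in\mathbb{Z}$ for integrality, and identify $\phi(w_\lambda)=w_{\lambda'}$ via antidominance of $\phi(\mu)$. The only difference is presentational: you make explicit the compatibility of $\phi$ with the chosen simple (positive) system of $\Psi_0$, a point the paper's argument uses implicitly when asserting that $\phi(\mu)$ is antidominant and that $\phi(w_\lambda)\in W_{[\phi(\mu)]}^{\phi(J)}$.
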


\begin{proof}
Note that if $\phi: \Phi_1 \rightarrow \Psi_1$  is an isomorphism between two root systems, then their Weyl groups $W_{\Phi_1}$  and $W_{\Psi_1}$ are isomorphic (we still denote this isomorphism by $\phi$) and we have $\phi s_{\alpha}\phi ^{-1}=s_{\phi(\alpha)}$ for any root $\alpha \in \Phi_1$  and simple reflection $s_{\alpha}$. For any $w\in W_{\Phi_1}$, we will have 
$\aff(w)=\aff(\phi(w))$. 

Recall that the Cartan matrix of a root system $\Phi$ (its simple system is $\Delta=\{\alpha_i \mid  1\lest i\lest n\})$ is defined by
$A=(A_{ij})_{n\times n}$ with $A_{ij}={\langle \alpha_i, \alpha_j^\vee \rangle}$ and $A_{ij}\in \{0,-1,-2,-3\}$ for $i \neq j$. Then we have $\alpha_j=\sum\limits_{k=1}^n A_{jk}\omega_k$. 
Suppose the simple system of $\Phi_{[\lam]}$ is $\Delta_{[\lam]}=\{\beta_1,\cdots,\beta_n\}$. For $\lam \in \mathfrak{h}_{\Phi}^*$, we can write $$\lam=\sum_{1\lest i\lest n}x_i\beta_i=\sum_{1\lest i\lest n}x'_i\omega_{\beta_i}\in \mathfrak{h}_{\Phi_{[\lam]}}^*,$$ since $\mathrm{rank} {(\Psi_0)}=\mathrm{rank} {(\Psi)}$ and $$\mathrm{span}_{\mathbb{R}}\{\omega_{\beta_1},\cdots,\omega_{\beta_n}\}=\mathrm{span}_{\mathbb{R}}\{\beta_1,\cdots,\beta_n\}.$$
Since $\lam$ is an integral weight in $\mathfrak{h}_{\Phi_{[\lam]}}^*$, we must have $x'_i\in \mathbb{Z}$ and thus
$$\lam'=\phi(\lam)=\sum_{1\lest i\lest n}x_i\phi(\beta_i)=\sum_{1\lest i\lest n}x'_i\omega_{\phi({\beta_i})}$$ is an integral weight in $\mathfrak{h}_{\Psi_{0}}^*$.




Now  we write $\lambda=w_{\lambda}\mu$, where $\mu$ is antidominant and $w_{\lambda}\in W_{[\mu]}^J$. Then $\lam'=\phi(\lam)=\phi(w_{\lambda}\mu)=\phi(w_{\lambda})(\mu)=\phi w_{\lambda}\phi ^{-1}\phi(\mu)$. From $\langle \lam, \alpha\rangle=\langle \phi(\lam),\phi(\alpha)\rangle$, we can see that $\phi(\mu)$ is still antidominant with respect to $\Psi$. If we write $w_{\lam}=s_{\beta_{i_1}}s_{\beta_{i_2}}\cdots s_{\beta_{i_k}}$ for some simple roots $\{\beta_{i_j}\mid 1\lest j\lest k\}\subset \Phi_{[\lam]}^+$, then \begin{align*}
   \phi(w_{\lam})&= \phi w_{\lam}\phi^{-1}=(\phi s_{\beta_{i_1}}\phi ^{-1})(\phi s_{\beta_{i_2}}\phi ^{-1})\cdots (\phi s_{\beta_{i_k}}\phi ^{-1})\\
    &=s_{\phi(\beta_{i_1})}s_{\phi(\beta_{i_2})}\cdots s_{\phi(\beta_{i_k})}.
\end{align*}
Thus $\aff(w_{\lam})=\aff(\phi(w_{\lam}))=\aff(w_{\lam'})$ since $\phi(w_{\lam})=w_{\lam'}\in W_{[\phi(\mu)]}^{\phi(J)}$.
\end{proof}

From the argument in the above proof, we have the following corollary.

\begin{Cor}\label{transf-1}
    Keep the notation as above. When $\mathrm{rank} {(\Psi_0)}<\mathrm{rank} {(\Psi)}$, 
  we denote ${\lam}'=\phi(\lam|_{\mathfrak{h}_{\Phi_{[\lam]}}})$.
  Then we have $$\aff(w_{\lam})=\aff(w_{{\lam}'}).$$
\end{Cor}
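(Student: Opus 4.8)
The plan is to reduce to Theorem~\ref{transf}. The only feature of the corollary not already covered by that theorem is that, when $\operatorname{rank}(\Psi_0) < \operatorname{rank}(\Psi)$, the weight $\lambda$ no longer lies in the span of $\Phi_{[\lambda]}$ inside $\mathfrak{h}_\Phi^*$; the remedy is to work with its truncation $\bar\lambda := \lambda|_{\mathfrak{h}_{\Phi_{[\lambda]}}}$. Since $\bar\lambda$ is an integral weight of type $\Phi_{[\lambda]}$, one has $(\Phi_{[\lambda]})_{[\bar\lambda]} = \Phi_{[\lambda]}$, and $\Phi_{[\lambda]}$ and $\Psi_0$ each have full rank for themselves; so the argument of Theorem~\ref{transf} applies with $(\Phi, \Psi, \phi)$ replaced by $(\Phi_{[\lambda]}, \Psi_0, \phi|_{\Phi_{[\lambda]}})$ and $\lambda$ replaced by $\bar\lambda$, yielding $\aff(w_{\bar\lambda}) = \aff(w_{\phi(\bar\lambda)})$. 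As $\phi(\bar\lambda) = \phi(\lambda|_{\mathfrak{h}_{\Phi_{[\lambda]}}}) = \lambda'$ by definition, it then remains only to identify $w_{\bar\lambda}$ with $w_\lambda$, so that $\aff(w_{\bar\lambda}) = \aff(w_\lambda)$.

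The step that needs care is the identity $w_{\bar\lambda} = w_\lambda$. Write $\lambda = w_\lambda\mu$ with $\mu$ antidominant and $w_\lambda \in W_{[\mu]}^J$. Every reflection $s_\alpha$ with $\alpha \in \Phi_{[\lambda]}$ fixes the $\langle\cdot,\cdot\rangle$-orthogonal complement of $\operatorname{span}_{\mathbb{C}}\Phi_{[\lambda]}$ in $\mathfrak{h}_\Phi^*$, so the whole group $W_{[\lambda]}$ acts trivially there, and hence the restriction map $\mathfrak{h}_\Phi^* \to \mathfrak{h}_{\Phi_{[\lambda]}}^*$ is $W_{[\lambda]}$-equivariant. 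Restricting $\lambda = w_\lambda\mu$ therefore gives $\bar\lambda = w_\lambda\bar\mu$ with $\bar\mu := \mu|_{\mathfrak{h}_{\Phi_{[\lambda]}}}$. For $\alpha \in \Delta_{[\lambda]}$ we have $\langle\bar\mu, \alpha^\vee\rangle = \langle\mu, \alpha^\vee\rangle$, which is an integer (since $\mu$ and $\lambda$ lie in the same $W_{[\lambda]}$-orbit, whence $\Phi_{[\mu]} = \Phi_{[\lambda]}$) that is not positive (antidominance of $\mu$), hence $\leq 0$; so $\bar\mu$ is antidominant for $\Phi_{[\lambda]}$ by Proposition~\ref{anti}. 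Moreover the simple system $\Delta_{[\bar\mu]}$, the subset $J = \{\alpha \in \Delta_{[\bar\mu]} : \langle\bar\mu, \alpha^\vee\rangle = 0\}$, and the length function on $W_{[\bar\mu]} = W_{[\mu]}$ all coincide with their counterparts for $\mu$, being defined entirely within $\Phi_{[\lambda]}$; so $w_\lambda$ is still the minimal-length element of $W_{[\bar\mu]}^J$ with $\bar\lambda = w_\lambda\bar\mu$. By the uniqueness of such a factorization, $w_{\bar\lambda} = w_\lambda$, and since the $\aff$-function is in both cases that of the Coxeter group $W_{[\lambda]}$, we get $\aff(w_{\bar\lambda}) = \aff(w_\lambda)$.

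Putting the two paragraphs together, $\aff(w_\lambda) = \aff(w_{\bar\lambda}) = \aff(w_{\phi(\bar\lambda)}) = \aff(w_{\lambda'})$, which is the desired equality. I expect the main obstacle to be purely the bookkeeping of the second paragraph --- checking that passing to $\mathfrak{h}_{\Phi_{[\lambda]}}$ leaves the antidominance of $\mu$, the distinguished subset $J$, and the length function (hence the minimality of $w_\lambda$, and so $w_\lambda$ itself) untouched. Once that is settled, everything else is either immediate from the definitions or a direct quotation of the argument of Theorem~\ref{transf}.
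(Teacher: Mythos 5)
Your proposal is correct and follows essentially the same route as the paper: restrict $\lambda$ (and $\mu$) to $\mathfrak{h}_{\Phi_{[\lambda]}}$, observe the restriction is an integral weight of $\Phi_{[\lambda]}$ whose antidominant representative and minimal element $w_\lambda$ are unchanged, and transport the factorization through $\phi$ as in Theorem~\ref{transf} to conclude $\aff(w_\lambda)=\aff(w_{\lambda'})$. Your second paragraph merely makes explicit the equivariance/uniqueness bookkeeping (the identification $w_{\bar\lambda}=w_\lambda$) that the paper's proof treats implicitly when it asserts $\phi(w_\lambda)=w_{\lambda'}\in W_{[\phi(\mu)]}^{\phi(J)}$.
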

\begin{proof}
    Suppose the simple root system of $\Phi_{[\lam]}$ is $\Delta_{[\lam]}=\{\alpha_1,\cdots,\alpha_n\}$.
    For $\lam \in \mathfrak{h}_{\Phi}^*$, we have 
$\mathfrak{h}_{\Phi}=\mathfrak{h}_{\Phi_{[\lam]}}\oplus \mathfrak{h}_{\Phi_{[\lam]}}^{\perp}$, and 
\begin{align*}\lam|_{\mathfrak{h}_{\Phi_{[\lam]}}}&=\sum_{1\lest i\lest n}x_i\alpha_i=\sum_{1\lest i\lest n}x'_i\omega_{\alpha_i},
         \end{align*} 
         since $\mathrm{span}_{\mathbb{R}}\{\omega_{\alpha_1},\cdots,\omega_{\alpha_n}\}=\mathrm{span}_{\mathbb{R}}\{\alpha_1,\cdots,\alpha_n\}$. 
As $\lam$ is an integral weight of ${\Phi_{[\lam]}}$, we must have $x'_i\in \mathbb{Z}$. 
So ${\lam}'=\phi(\lam|_{\mathfrak{h}_{\Phi_{[\lam]}}})=\sum_{1\lest i\lest n}x_i\phi(\alpha_i)=\sum_{1\lest i\lest n}x'_i\omega_{\phi({\alpha_i})}$
is an integral weight of ${\Psi_{0}}$.

Now  we write $\lambda=w_{\lambda}\mu$, where $\mu$ is antidominant and $w_{\lambda}\in W_{[\mu]}^J$. 
If we write $w_{\lam}=s_{\alpha_{i_1}}s_{\alpha_{i_2}}\cdots s_{\alpha_{i_k}}$ for some simple roots $\{\alpha_{i_j}\mid 1\lest j\lest k\}\subset \Phi_{[\lam]}^+$, then \begin{align*}
   \phi(w_{\lam})&= \phi w_{\lam}\phi^{-1}=(\phi s_{\alpha_{i_1}}\phi^{-1})(\phi s_{\alpha_{i_2}}\phi^{-1})\cdots (\phi s_{\alpha_{i_k}}\phi^{-1})\\
    &=s_{\phi(\alpha_{i_1})}s_{\phi(\alpha_{i_2})}\cdots s_{\phi(\alpha_{i_k})}.
\end{align*}
Thus $\aff(w_{\lam})=\aff(\phi(w_{\lam}))=\aff(w_{{\lam}'})$ since $\phi(w_{\lam})=w_{{\lam}'}\in W_{[\phi(\mu)]}^{\phi(J)}$.
\end{proof}


Now Corollary \ref{transf-1} entails the following algorithm of computing $\aff(w_{\lam})$:
\begin{enumerate}
    \item suppose $\Phi_{[\lam]}=\Phi_{[\lambda]_1}\times \Phi_{[\lambda]_2}\times \cdots \times \Phi_{[\lambda]_k}\simeq \Phi_1\times \Phi_2\times \cdots\times \Phi_k$ is a direct product of irreducible root systems. We use $\phi$ to denote such an isomorphism.  Then $\phi$ induces an isomorphism from $\mathfrak{h}_{\Phi_{[\lam]}}$ to $\prod_{1\leq i\leq k} \mathfrak{h}_{\Phi_i}$. Suppose the simple system of $\Phi_{i}$ is $\Delta_i$ for $1\leq i\leq k$ and   the simple system of $\Phi_{[\lam]}$ is $\Delta_{[\lam]}=\{\alpha_1,\cdots,\alpha_n\}=\prod_{1\leq i\leq k}\Delta_{[\lam]_i}$, where $\Delta_{[\lam]_i}$ is the simple system of $\Phi_{[\lambda]_i}$ and isomorphic  to $\Delta_i$.  Write $\lambda|_{\mathfrak{h}_{\Phi_{[\lambda]}}}=\sum_{\alpha_i\in \Delta_{[\lam]}}k_i\omega_i$, where $k_i=\langle \lam, \alpha_i^{\vee}\rangle$;
    \item denote ${\lam}'=\phi(\lam|_{\mathfrak{h}_{\Phi_{[\lambda]}}})=\sum_{\alpha_i\in \Delta_{[\lam]}}k_i\phi(\omega_i)$;
    \item  when $\Phi_i$ is of classical type, denote $${\lam}'|_{\Phi_i}=\sum_{\alpha_{i_t}\in \Delta_{[\lam]_i}}k_{i_t}\phi(\omega_{i_t})=(t_{1},t_{2},\cdots,t_{d_i}).$$ 
    Then we can get the value of $\aff(w_{\lam'|_{{\Phi_i}}})$ without finding out $w_{\lam'|_{{\Phi_i}}}$ by using the RS algorithm in \cite{BXX}. When some $\Phi_i$ is of exceptional type, we can find $w_{\lam'|_{{\Phi_i}}}$ by using the algorithm in Lemma \ref{find-w-lambda}, and then $\aff(w_{\lam'|_{{\Phi_i}}})$ is given by using PyCox;
     \item$\aff(w_{\lam})=\sum_{1\leq i\leq k} \aff(w_{\lam'|_{{\Phi_i}}})$.
\end{enumerate}

\begin{ex}
    Suppose $\lam=(2,1,1.1,3,0.9,1.9,4,2.1)$ is a weight for $\Phi=D_8$. Then $\Phi^+_{[\lam]}=\{\ep_i\pm \ep_{j}\mid i<j \in \{1,2,4,7\}\}\cup \{\ep_3+\ep_5,\ep_3+\ep_6,\ep_3-\ep_8,\ep_5-\ep_6,\ep_5+\ep_8,\ep_6+\ep_8\}$ and $\Phi_{[\lambda]}=\Phi_{[\lambda]_1}\times \Phi_{[\lambda]_2}\simeq D_4\times A_3$.  The simple system of $\Phi_{[\lambda]_1}$ is $\Delta_{[\lambda]_1}=\{\gamma_1=\ep_1- \ep_{2},\gamma_2=\ep_2- \ep_{4},\gamma_3=\ep_4- \ep_{7},\gamma_4=\ep_4+ \ep_{7}\}$ and the simple system of $\Phi_{[\lambda]_2}$ is $\Delta_{[\lambda]_2}=\{\beta_1=\ep_3-\ep_8,\beta_2=\ep_8+\ep_6,\beta_3=\ep_5-\ep_6\}$. Suppose the simple system of $D_4\times A_3$ is $\{\alpha_1=\ep_1-\ep_2,\alpha_2=\ep_2-\ep_3,\alpha_3=\ep_3-\ep_4,\alpha_4=\ep_3+\ep_4\}\times \{\alpha'_1=\ep_1-\ep_2,\alpha'_2=\ep_2-\ep_3,\alpha'_3=\ep_3-\ep_4\}$. We define an isomorphism
     $\phi: \Phi_{[\lam]}\rightarrow D_4\times A_3$ such that   $\phi(\gamma_i)=\alpha_i$ (for $1\leq i\leq 4$) and $\phi(\beta_i)=\alpha'_i$ (for $1\leq i\leq 3$).
    Then we have 
   $$\lambda|_{\mathfrak{h}_{\Phi_{[\lambda]}}}=\omega_1-2\omega_2-\omega_3+7\omega_4-\omega'_1+4\omega'_2-\omega'_3,$$
    where $\{\omega_i\mid 1\leq i\leq 4\}$ are the fundamental weights for $\Delta_{[\lam]_1}$ and $\{\omega'_i\mid 1\leq i\leq 3\}$ are the fundamental weights for $\Delta_{[\lam]_2}$. 
Denote $\lam'=\phi(\lam|_{\mathfrak{h}_{\Phi_{[\lam]}}})$.  Thus  
  $$\lam'|_{D_4}=\phi(\omega_1)-2\phi(\omega_2)-\phi(\omega_3)+7\phi(\omega_4)=(2,1,3,4),$$
 which is an integral weight of $D_4$ with $\aff(w_{\lam'|_{{D_4}}})=3$ by using the RS algorithm in \cite{BXX}, and $$\lam'|_{A_3}=-\phi(\omega'_1)+4\phi(\omega'_2)-\phi(\omega'_3)=(1,2,-2,-1),$$
 which is an integral weight of $A_3$ with $\aff(w_{\lam'|_{{A_3}}})=2$ by using the RS algorithm in \cite{BXX}. 
 Therefore we have $\aff(w_{\lambda})=3+2=5$.

Note that \begin{align*}
    \lam'|_{A_3}+(0.1,0.1,0.1,0.1)=(1.1,2.1,-1.9,-0.9):=x.
\end{align*}
Thus the Young tableaux $P(\lam'|_{A_3})$ and $P(x)$ have the same shape. This is just the algorithm used in \cite{BXX} for the case of type $D$. 

\end{ex}

\subsection{Longest element of a Weyl group}
Recall that we use $\Delta $ to denote the simple root system of a simple Lie algebra $\mathfrak{g}$. 
Suppose $ w_I $ is the longest element of the parabolic subgroup of $ W $ generated by simple reflections arising from $ I\subset \Delta $, then $ \aff(w_I)$ is equal to  the length $\ell(w_I) $ of $ w_I $ by Lemma \ref{alem1}.
Let $\Phi_{I}$ be the root subsystem 
generated by simple roots in $I$, and we denote $\Phi_{I}^+=\Phi_I \cap \Phi^+ $.


From \cite[Table 1]{BKOP}, we reproduce the following result, which will be used in our characterization of annihilator varieties.

\begin{Lem}\label{long}
    The longest element $w_0$ of a Weyl group can be expressed as the following:
    \begin{itemize}
        \item for type $A_n$,
        \begin{align*}
            w_{A_{n}}=&[n-1,n-2,n-1,\cdots,0,1,\cdots,n-1]\\
            =&s_n(s_{n-1}s_n)\cdots (s_1s_2\cdots s_n);
        \end{align*}
        \item for type $B_n$ or $C_n$,
        \begin{align*}
w_{B_{n}}=w_{C_{n}}=&[0,1,0,1,\cdots,n-k,\cdots,1,0,1,\cdots,n-k,\\
&\cdots,n-1,n-2,\cdots,1,0,1,\cdots,n-2,n-1]\\
=&s_n(s_{n-1}s_ns_{n-1})\cdots (s_ks_{k+1}\cdots s_{n-1} s_ns_{n-1}\cdots s_{k+1}s_k)\\
&\cdots (s_1s_2\cdots s_{n-1} s_ns_{n-1}\cdots s_2s_1);
        \end{align*}
        
       
        \item for type $D_n$, 
        \begin{align*}
w_{D_{n}}=&[0,1,2,0,1,2,\cdots,n-k,\cdots,2,0,1,2,\cdots,n-k,\\
&\cdots,n-1,\cdots,2,0,1,\cdots,n-1]\\
=& s_ns_{n-1}(s_{n-2}s_ns_{n-1}s_{n-2})\cdots(s_k\cdots s_{n-2}s_ns_{n-1}s_{n-2}\cdots s_k)\\
&\cdots(s_1\cdots s_{n-2}s_n s_{n-1}\cdots s_{1});
        \end{align*}

        \item for type $E_6$,
        \begin{align*}
 w_{E_{6}}=&[4,3,2,4,3,2,1,2,4,3,2,1,0,1,2,4,3,2,\\
 &1,0,5,4,2,3,1,0,2,1,4,2,3,5,4,2,1,0] \\
=&s_5s_{4}s_{3}s_5s_{4}s_{3}s_2 s_{3}s_5s_{4}s_{3} s_2s_1s_2 s_{3}s_5s_{4}s_3 \\
& \cdot 
s_2s_{1}s_6s_5s_3s_4s_2s_1s_3s_2s_5s_3s_4s_6s_5s_3s_2s_1;
        \end{align*}
     \item for type $E_7$,
     \begin{align*}
w_{E_{7}}
=&[4,3,2,4,3,2,1,2,4,3,2,1,0,1,2,4,3,2,1,0,5,\\
 &4,2,3,1,0,2,1,4,2,3,5,4,2,1,0,6, 5, 4, 2, 3, 1,\\
 & 0, 2, 1, 4, 2, 3, 5, 4, 2, 1, 0, 6, 5, 4, 2, 3, 1, 2, 4, 5, 6] \\
=&s_5s_{4}s_{3}s_5s_{4}s_{3}s_2 s_{3}s_5s_{4}s_{3} s_2s_1s_2 s_{3}s_5s_{4}s_3 
s_2s_{1}s_6\\
 \cdot & s_5s_3s_4s_2s_1s_3s_2s_5s_3s_4s_6s_5s_3s_2s_1 s_7s_6s_5s_3s_4s_2\\
 \cdot & s_1s_3s_2s_5s_3s_4s_6s_5s_3s_2s_1s_7s_6s_5s_3s_4s_2s_3s_5s_6s_7;
        \end{align*}
        
        \item for type $E_8$, 
        \begin{align*}
w_{E_{8}}=&[4,3,2,4,3,2,1,2,4,3,2,1,0,1,2,4,3,2,1,0,5,\\
 &4,2,3,1,0,2,1,4,2,3,5,4,2,1,0,6, 5, 4, 2, 3, 1,\\
 & 0, 2, 1, 4, 2, 3, 5, 4, 2, 1, 0, 6, 5, 4, 2, 3, 1, 2, 4, 5, 6,\\
 &7, 6, 5, 4, 2, 3, 1, 0, 2, 1, 4, 2, 3, 5, 4, 2, 1, 0, 6, 5, 4,\\
 &2, 3, 1, 2, 4, 5, 6, 7, 6, 5, 4, 2, 3, 1, 0, 2, 1, 4, 2, 3, 5,\\
 &4, 2, 1, 0, 6, 5, 4, 2, 3, 1, 2, 4, 5, 6, 7] \\
=&s_5s_{4}s_{3}s_5s_{4}s_{3}s_2 s_{3}s_5s_{4}s_{3} s_2s_1s_2 s_{3}s_5s_{4}s_3 
s_2s_{1}s_6\\
 \cdot & s_5s_3s_4s_2s_1s_3s_2s_5s_3s_4s_6s_5s_3s_2s_1 s_7s_6s_5s_3s_4s_2\\
 \cdot & s_1s_3s_2s_5s_3s_4s_6s_5s_3s_2s_1s_7s_6s_5s_3s_4s_2s_3s_5s_6s_7\\
 \cdot &  
 s_8s_7s_6s_5s_3s_4s_2s_1s_3s_2s_5s_3s_4s_6s_5s_3s_2s_1s_7s_6s_5\\ \cdot & s_3s_4s_2s_3s_5s_6s_7s_8s_7s_6s_5s_3s_4s_2s_1s_3s_2s_5s_3s_4s_6\\ \cdot &s_5s_3s_2s_1s_7s_6s_5s_3s_4s_2s_3s_5s_6s_7s_8;
        \end{align*}
    \end{itemize}
\end{Lem}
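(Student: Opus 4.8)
The plan is to prove the lemma by induction on the rank, organised around the factorisation of the longest element through a corank-one parabolic subgroup, together with the standard reducedness criterion for words in the simple reflections. Recall that a word $(s_{i_1},\dots,s_{i_m})$ is reduced if and only if the roots $\beta_k:=s_{i_1}\cdots s_{i_{k-1}}(\alpha_{i_k})$ for $1\leq k\leq m$ are pairwise distinct; in that case each $\beta_k$ lies in $\Phi^+$ and $\{\beta_1,\dots,\beta_m\}=\{\gamma\in\Phi^+:\ w^{-1}\gamma\in\Phi^-\}$, where $w=s_{i_1}\cdots s_{i_m}$ (see \cite[Ch.~IV]{Bour} or \cite[Ch.~1]{BB05}). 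Since $w_0$ is the unique element of $W$ of length $\lvert\Phi^+\rvert$, for each displayed word it suffices to check that (a) it has exactly $\lvert\Phi^+\rvert$ letters and (b) the associated roots $\beta_k$ are pairwise distinct; the word is then reduced, hence of maximal length, hence represents $w_0$. The structural input I would use for the induction is the usual corank-one factorisation: if $W_I\subset W$ is generated by all but one simple reflection, then $w_0=w_{0,I}\cdot u_I$ with additive lengths, where $w_{0,I}$ is the longest element of $W_I$ and $u_I$ is the (unique) longest element of ${}^IW=\{x\in W:\ \ell(sx)>\ell(x)\ \text{for all }s\in I\}$; hence concatenating a reduced word for $w_{0,I}$ with a reduced word for $u_I$ gives a reduced word for $w_0$.

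\emph{Classical types.} For $A_n$, $B_n/C_n$, $D_n$ I would verify (a) and (b) uniformly in the standard (signed) permutation model. Part (a) is a transparent count of block lengths: the blocks $1,2,\dots,n$ sum to $\binom{n+1}{2}$ in type $A_n$; the ``hooks'' $1,3,\dots,2n-1$ sum to $n^2$ in type $B_n/C_n$; the ``double hooks'' $2,4,\dots,2(n-1)$ sum to $n(n-1)$ in type $D_n$. For part (b), the words are arranged precisely so that the last block is a reduced word for the longest coset representative $u_I$ of ${}^IW$, where $W_I=W(X_{n-1})\subset W(X_n)$ is obtained by deleting a suitable end node, while the remaining blocks form (a shift of) the displayed word for $X_{n-1}$; so (b) follows from the factorisation above by induction on $n$, the base cases $A_1$, $B_1/C_1$, $D_2$ being immediate. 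Concretely, one tracks in coordinates which new positive root each letter of a block contributes and checks these are all distinct.

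\emph{Exceptional types.} Here I would exploit the nesting visible in the displayed words: the first $36$ letters of $w_{E_7}$ are literally $w_{E_6}$, the first $63$ letters of $w_{E_8}$ are literally $w_{E_7}$, and the first $20$ letters of $w_{E_6}$ involve only the simple reflections of a $D_5$-subdiagram (delete a suitable end node of $E_6$). Thus $w_{E_8}=w_{E_7}\cdot u_{E_8}$, $w_{E_7}=w_{E_6}\cdot u_{E_7}$, and $w_{E_6}=(\text{a }D_5\text{ word})\cdot u_{E_6}$, where the appended segments $u_{E_6},u_{E_7},u_{E_8}$ have lengths $16,27,57$, equal respectively to $36-20$, $63-36$, $120-63$. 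By the factorisation it remains to check that each appended segment is a reduced word for the longest element of ${}^IW$ for the relevant parabolic $W_I$; equivalently, that it is reduced and the element it represents sends every simple root in $I$ to a positive root. This is a finite verification on the $6$-, $7$-, $8$-dimensional reflection representations --- computing the roots $\beta_k$ of the segment and checking distinctness and positivity under $W_I$ --- and is exactly the kind of thing readily confirmed with \texttt{PyCox} as in \S\ref{sec:pre}. The base of the tower, the $D_5$-head of $w_{E_6}$, is handled by the same method (or by the classical argument applied to that parabolic).

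\emph{Main obstacle.} The structural ingredient --- the corank-one factorisation $w_0=w_{0,I}\cdot u_I$ with additive lengths --- is standard and carries the conceptual weight. The genuine obstacle is part (b) for $E_6$, $E_7$, $E_8$: unlike the classical types, there is no slick uniform description of the root sequences $\beta_k$ for the exceptional words, so verifying that the appended segments are the correct longest coset representatives (equivalently, that the whole displayed word is reduced) is an unavoidable finite computation. This is precisely why the statement is reproduced from \cite[Table~1]{BKOP} rather than derived by hand.
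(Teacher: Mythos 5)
Your proposal is correct in outline, but it takes a genuinely different route from the paper for the simple reason that the paper offers no proof at all: Lemma \ref{long} is introduced with ``From \cite[Table 1]{BKOP}, we reproduce the following result,'' so the displayed words are simply quoted from that reference. What you supply instead is the standard verification scheme that underlies such tables: count letters against $|\Phi^+|$, use the corank-one factorisation $w_0=w_{0,I}\,u_I$ with additive lengths (valid since $u_I$ is the longest minimal coset representative in ${}^IW$, of length $|\Phi^+|-|\Phi_I^+|$), exploit the visible nesting $D_5\subset E_6\subset E_7\subset E_8$ and the telescoping block structure in the classical types, and finish the exceptional segments by a finite root-sequence check (which, as you note, is exactly the kind of computation PyCox handles and is why the paper cites rather than derives the table). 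This is a sound plan and buys an actual justification where the paper only has a citation. One small correction: your stated reducedness criterion is off --- pairwise distinctness of the roots $\beta_k=s_{i_1}\cdots s_{i_{k-1}}(\alpha_{i_k})$ does \emph{not} imply reducedness (already for $s_1s_1$ in $A_1$ one gets $\beta_1=\alpha_1$, $\beta_2=-\alpha_1$, distinct but the word is not reduced); the correct criterion is that all $\beta_k$ are positive, equivalently that the length increases at every step. Since your actual verifications (both the classical bookkeeping of ``new positive roots'' and the exceptional finite check) test positivity as well as distinctness, this slip does not affect the argument, but the criterion should be restated correctly.
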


\section{GK dimensions of  highest weight modules of type \texorpdfstring{$G_2$}{}}\label{g2-gkd}

In this section, $ \mf{g} $ is of type $ G_2 $. The positive roots are \[
\alpha_1=\ep_1-\ep_2, \alpha_2=-2\ep_1+\ep_2+\ep_3
\]
and $ \alpha_1+\alpha_2=-\ep_1+\ep_3 $, $ 2\alpha_1+\alpha_2=-\ep_2+\ep_3 $, $ 3\alpha_1+\alpha_2=\ep_1-2\ep_2 +\ep_3 $, $ 3\alpha_1+2\alpha_2=-\ep_1-\ep_2+2\ep_3 $. The simple roots are $ \alpha_1 $ and $ \alpha_2 $. The highest root is $\beta=3\alpha_1+2\alpha_2=-\ep_1-\ep_2+2\ep_3.$

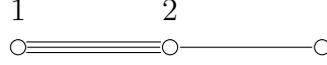
\begin{figure}[htpb]
	\centering 
	\begin{tikzpicture}
  \draw(6,0)circle[radius=0.1];
 	\draw(2,0)circle[radius=0.1];
 	\draw(4,0)circle[radius=0.1];
 	\draw (4.13,0)--(5.87,0) (2.12,0)--(3.87,0)  (2.11,0.07)--(3.88,0.07) (2.11,-0.07)--(3.88,-0.07);
 	\node at (2,0.5) {1};
 	\node at (4,0.5) {2};
 \end{tikzpicture}
	\caption{Extended Dynkin diagram of $ G_2 $ }
	\label{dg2}
\end{figure}


We have $ \hs=\{ (\lam_1,\lam_2,\lam_3)\in\mathbb{C}^3\mid \lam_1+\lam_2+\lam_3 =0\} $. Fix $ \lam\in\hs $. We have \[
\bil{\lam}{(\ep_1- \ep_2)}=\lam_1- \lam_2,
\]
\[
\bil{\lam}{(-2\ep_1+\ep_2+\ep_3)}=\frac{-2\lam_1+\lam_2+\lam_3}{3}=-\lam_1.
\]
Then $ \lam=(\lambda_1,\lambda_2,\lambda_3) \in \mathfrak{h}^*$ is an integral weight if and only if $ (\lam_1,\lam_2)\in\mathbb{Z}^2 $.

By \cite[p. 128]{CM}, we know that all nilpotent orbits of $G_2$ have different dimensions.

\begin{Prop}\label{g2}
Let $L(\lam)$ be a simple integral highest weight module of $G_2$. Then
  the following holds:
    \begin{itemize}
        \item[(1)] $\gkd L(\lambda)=0$ if and only if $0>\lam_1 > \lam_2 $, i.e., $\lambda$ is dominant.
        \item[(2)] $\gkd L(\lambda)=5$ if and only if $\lambda$ is neither dominant nor antidominant.
         \item[(3)] $\gkd L(\lambda)=6$ if and only if $0\leq \lam_1\lest \lam_2 $, i.e., $\lambda$ is antidominant.  
    \end{itemize}
\end{Prop}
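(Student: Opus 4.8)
The plan is to read off $\gkd L(\lambda)$ from Proposition~\ref{pr:main1} together with an explicit description of the $\aff$-function on $W:=W(G_2)$, and then translate the resulting conditions on $w_\lambda$ into inequalities on $(\lambda_1,\lambda_2)$ using Proposition~\ref{anti}.

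First I would record that $|\Phi^+|=6$, so by Proposition~\ref{pr:main1} we have $\gkd L(\lambda)=6-\aff(w_\lambda)$, where $\lambda=w_\lambda\mu$ with $\mu$ antidominant and $w_\lambda$ of minimal length in $W_{[\lambda]}$; since $\lambda$ is integral, $W_{[\lambda]}=W$, the dihedral group of order $12$. Next I would pin down the two-sided cells of $W$ and the values of $\aff$ on them. In a dihedral group the only element with two reduced expressions is $w_0$, so the set $\mathcal{C}$ of elements admitting a unique reduced expression is $W\setminus\{1,w_0\}$ and has $10$ elements; by \cite[Prop.~3.8]{Lus83} it is a two-sided cell. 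Since two-sided cells partition $W$, the remaining set $\{1,w_0\}$ is a union of one or two further cells, but it cannot be a single cell: that would force $1\sim_{LR}w_0$, hence $\aff(1)=\aff(w_0)$ by Lemma~\ref{alem1}(2), contradicting $\aff(1)=0$ and $\aff(w_0)=\ell(w_0)=6$ (Lemma~\ref{alem1}(3)). Thus the three two-sided cells of $W$ are $\{1\}$, $\mathcal{C}$, $\{w_0\}$; as $s_1\in\mathcal{C}$ and $\aff(s_1)=\ell(s_1)=1$ by Lemma~\ref{alem1}(3), the corresponding $\aff$-values are $0$, $1$, $6$ by Lemma~\ref{alem1}(2). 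Hence $\aff(w_\lambda)\in\{0,1,6\}$ and $\gkd L(\lambda)\in\{6,5,0\}$.

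It then remains to sort the weights into the three cases. One has $\gkd L(\lambda)=6\iff\aff(w_\lambda)=0\iff w_\lambda=1\iff\lambda$ is antidominant, which by Proposition~\ref{anti} (with $\Delta_{[\lambda]}=\{\alpha_1,\alpha_2\}$, since $\lambda$ is integral) amounts to $\langle\lambda,\alpha_1^\vee\rangle\le0$ and $\langle\lambda,\alpha_2^\vee\rangle\le0$, i.e.\ $\lambda_1-\lambda_2\le0$ and $-\lambda_1\le0$, i.e.\ $0\le\lambda_1\le\lambda_2$; this gives (3). Likewise $\gkd L(\lambda)=0\iff w_\lambda=w_0$; since $\ell(w_0)=6$ is maximal, $w_0$ can be a minimal coset representative only when the subset $J$ is empty, i.e.\ when $\mu$ (hence $\lambda$) is regular, and then necessarily $\lambda=w_0\mu$ with $\mu$ antidominant. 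Using $w_0=-\mathrm{id}$ for $G_2$, this says $-\lambda$ is antidominant and regular, equivalently $\langle\lambda,\alpha_1^\vee\rangle>0$ and $\langle\lambda,\alpha_2^\vee\rangle>0$, i.e.\ $0>\lambda_1>\lambda_2$; this gives (1). The remaining integral weights --- neither antidominant nor regular dominant, i.e.\ neither dominant nor antidominant in the sense of the statement --- then have $w_\lambda\in\mathcal{C}$ and hence $\gkd L(\lambda)=5$, which is (2).

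I expect the only real obstacle to be the bookkeeping around singular weights: one must check that $w_\lambda=w_0$ corresponds exactly to the interior of the dominant chamber (so that the strict inequalities in (1) are the correct ones) and that (1), (2), (3) genuinely partition all integral weights. Everything else is either a direct citation or an elementary computation with the two linear forms $\langle\lambda,\alpha_1^\vee\rangle=\lambda_1-\lambda_2$ and $\langle\lambda,\alpha_2^\vee\rangle=-\lambda_1$. As an independent check one may simply feed the $12$ elements of $W(G_2)$ to PyCox; and the remark preceding the proposition, that all nilpotent orbits of $G_2$ have distinct dimensions, shows a posteriori that these GK dimensions already determine the annihilator orbits.
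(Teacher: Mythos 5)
Your proof is correct, and its skeleton agrees with the paper's: both reduce the statement via Proposition \ref{pr:main1} to the fact that $W(G_2)$ has exactly three two-sided cells with $\aff$-values $0,1,6$, so that $\gkd L(\lambda)=6-\aff(w_\lambda)\in\{6,5,0\}$. The differences are in the sub-arguments. The paper simply cites \cite[\S 4.8]{lusztig1984char} for the three cells $\{e\}$, the middle cell containing $s_1$, and $\{w_0\}$, handles case (1) by observing that $L(\lambda)$ is finite-dimensional for dominant integral $\lambda$, and dismisses case (2) as obvious; you instead identify the cells by hand (the unique-reduced-expression set $\mathcal{C}=W\setminus\{1,w_0\}$ is a cell by \cite[Prop. 3.8]{Lus83}, which the paper itself invokes in \S\ref{sec:cell}, and $\{1\},\{w_0\}$ must be separate cells since $\aff(1)=0\neq 6=\aff(w_0)$ by Lemma \ref{alem1}), then compute $\aff$ on each cell via Lemma \ref{alem1}(3). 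For case (1) you argue through the cell $\{w_0\}$: $w_\lambda=w_0$ forces $J=\emptyset$, hence $\mu$ regular, and $w_0=-\mathrm{id}$ turns this into the strict inequalities $0>\lambda_1>\lambda_2$; this is a genuinely different (and self-contained) route to the same conclusion as the paper's finite-dimensionality argument, and it has the small advantage of making the "only if" direction and the treatment of singular weights explicit. You also spell out the translation of antidominance/dominance into the inequalities on $(\lambda_1,\lambda_2)$ via Proposition \ref{anti}, which the paper leaves implicit. No gaps.
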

\begin{proof}
    From \cite[\S 4.8]{lusztig1984char}, we know that there are three two-sided cells in the Weyl group $W$ of $G_2$: $\mathcal{C}_1$, $\mathcal{C}_2$, $\mathcal{C}_3$. We denote the corresponding special nilpotent orbits via the Springer correspondence by $\mathcal{O}_1=\{0\}$, $\mathcal{O}_2=G_2(a_1)$, and $\mathcal{O}_3=G_2$. Note that $\mathcal{O}_3$ has the maximal dimension $12$ and $\mathcal{O}_1$ is the zero orbit. Also we have $ \mathcal{C}_3=\{e\}$, $s_{1}\in \mathcal{C}_2$ and $\mathcal{C}_1=\{s_{1}s_{2}s_{1}s_{2}s_{1}s_{2}\}$. We write $\lambda=w_{\lambda}\mu$, where $\mu$ is antidominant and $w_{\lambda}\in W_{[\mu]}^J$. When $\lambda$ is antidominant integral, we have $w_{\lambda}=e$; from Proposition \ref{pr:main1} we get $\gkd L(\lambda)=6$ since $\aff(e)=0$.  When $\lambda$ is dominant integral, $L(\lambda)$ is finite-dimensional; thus we have  $\gkd L(\lambda)=0$. The case $(2)$ is obvious since $w_{\lambda}\neq e$ or $s_{1}s_{2}s_{1}s_{2}s_{1}s_{2}$ for this case.  
\end{proof}

From \cite[\S 8.4]{CM}, we know that there are $5$ nilpotent orbits for $G_2$, of which there are only $3$ special ones, given as above. Thus, if we consider general $\lambda \in \mathfrak{h}^*$, then a-priori there are $5$ possibilities for the values of $\gkd L(\lambda)$.

As usual, the label $`\tilde{~}$' as in $A_1\times \tilde{A}_1\subset  G_2$  is attached to the connected component
corresponding to short roots.
Now we have the following result for nonintegral highest weight modules of $G_2$.

\begin{Prop}\label{g2-nonint}
Let $L(\lam)$ be a simple nonintegral highest weight module of $G_2$  such that $\Phi_{[\lam]}^\vee$ is pseudo-maximal. Then one of the followings holds:
\begin{itemize}
    \item[(1)] $\Phi_{[\lam]}\simeq A_2$. Then
$ \gkd L(\lam) $ can achieve the following values:
$$3,5,6.$$

\item[(2)] $\Phi_{[\lam]}\simeq A_1\times \tilde{A}_1$. Then
$ \gkd L(\lam) $ can achieve the following values:
$$4,5,6.$$
     

  \end{itemize}

\end{Prop}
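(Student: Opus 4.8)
The plan is to combine Corollary~\ref{transf-1} (reducing the computation of $\aff(w_\lambda)$ to an integral computation on the factors of $\Phi_{[\lambda]}$) with Proposition~\ref{pr:main1} (which gives $\gkd L(\lambda) = |\Phi^+| - \aff_{[\lambda]}(w_\lambda)$), where $|\Phi^+| = 6$ for $G_2$. First I would enumerate, up to the action of $W(G_2)$, the pseudo-maximal root subsystems of $\Phi^\vee = G_2^\vee$; since $G_2$ is self-dual as a root system (up to rescaling), this reduces to listing pseudo-maximal subsystems of $G_2$ itself. Reading off the extended Dynkin diagram in Figure~\ref{dg2}: deleting the simple root $\alpha_2$ from $\Phi$ gives $A_1$ (rank $1$, not of full rank, hence not the case of a \emph{nonintegral} weight whose $\Phi_{[\lambda]}$ is pseudo-maximal of full rank unless we also pick up $-\alpha_0$); the genuinely relevant maximal-rank proper subsystems are $\tilde\Phi(1) = \Phi \cup \{-\alpha_0\} - \{\alpha_1\} \simeq A_1 \times \tilde A_1$ (the long root $\alpha_2$ together with the short highest root $\beta$, noting $\langle\alpha_2,\beta^\vee\rangle=0$) and $\tilde\Phi(2) = \Phi \cup \{-\alpha_0\} - \{\alpha_2\} \simeq A_2$ (generated by the short root $\alpha_1$ and $-\beta$, which span a simply-laced rank-$2$ system with $6$ positive roots of equal length). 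Hence exactly the two cases (1) and (2) of the statement.

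Next, for each case I would apply the algorithm preceding Example~2.\ (the four-step recipe after Corollary~\ref{transf-1}): fix an isomorphism $\phi \colon \Phi_{[\lambda]} \xrightarrow{\sim} \Phi_1 \times \cdots$, set $\lambda' = \phi(\lambda|_{\mathfrak h_{\Phi_{[\lambda]}}})$, and then run the RS/Robinson--Schensted computation of Proposition~\ref{integral} on each factor (all factors here are type $A$, so Proposition~\ref{integral} applies directly). For case (1), $\Phi_1 \simeq A_2$, and by Lemma~\ref{a-value}(2) the attainable values of $\aff(w_{\lambda'})$ are $\{0,1,3\}$; since $\lambda$ is genuinely nonintegral in $\Phi$ one must still check that the value $\aff = 0$ (i.e.\ $\lambda'$ antidominant of type $A_2$, giving $w_{\lambda'} = 1$) \emph{is} realizable by a weight $\lambda$ with $\Phi_{[\lambda]}^\vee$ pseudo-maximal but $\lambda \notin \mathfrak h^*$ integral — this produces $\gkd L(\lambda) = 6 - 0 = 6$, and likewise $6-1 = 5$ and $6-3 = 3$, matching $\{3,5,6\}$. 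For case (2), $\Phi_{[\lambda]} \simeq A_1 \times \tilde A_1$, and by Lemma~\ref{alem1}(4) together with Lemma~\ref{a-value}(1), $\aff(w_{\lambda'}) = \aff(w_1') + \aff(w_2')$ with each summand in $\{0,1\}$, so the total lies in $\{0,1,2\}$, giving $\gkd L(\lambda) \in \{6,5,4\} = \{4,5,6\}$, again as claimed.

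The step I expect to be the main obstacle is \emph{realizability}: for each candidate value of $\aff(w_\lambda)$ one must exhibit an actual weight $\lambda \in \mathfrak h^*$ of $G_2$ that is nonintegral, has $\Phi_{[\lambda]}$ of the prescribed pseudo-maximal type, and realizes the prescribed $w_\lambda$ (equivalently, the prescribed partition shape under the RS insertion of $\lambda'$). This amounts to checking that, after the identification $\phi$, the coordinates $\lambda'|_{\Phi_i}$ can be taken to be an arbitrary integral weight of $\Phi_i$ while keeping $\lambda$ non-integral for $G_2$ — one writes $\lambda = (\lambda_1,\lambda_2,\lambda_3)$ with $\lambda_1+\lambda_2+\lambda_3=0$ and uses the explicit pairings $\langle\lambda,(\ep_1-\ep_2)^\vee\rangle = \lambda_1-\lambda_2$ and $\langle\lambda,(-2\ep_1+\ep_2+\ep_3)^\vee\rangle = -\lambda_1$ recorded in \S\ref{g2-gkd} to solve for the required integrality/non-integrality conditions on $\lambda_1,\lambda_2$. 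I would then tabulate, for each of the finitely many partition shapes of $A_2$ (resp.\ each pair of shapes for $A_1\times\tilde A_1$), one explicit representative $\lambda$, compute $\aff(w_\lambda)$ via PyCox or Proposition~\ref{integral}, and read off $\gkd L(\lambda) = 6 - \aff(w_\lambda)$; assembling these representatives completes the proof. The only subtlety is to confirm that no shape is excluded by the non-integrality constraint — a direct but necessary finite check.
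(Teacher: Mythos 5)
Your proposal is correct and follows essentially the same route as the paper: read off the pseudo-maximal cases $A_2$ and $A_1\times\tilde A_1$ from the extended Dynkin diagram, apply Proposition \ref{pr:main1} with $|\Phi^+|=6$, and take the attainable $\aff$-values from Lemma \ref{a-value} (together with Lemma \ref{alem1}(4) for the product case). The realizability check you flag as the main obstacle is not carried out explicitly in the paper either — the paper simply reads off the possible $\aff$-values and asserts attainability — so your extra verification (choosing explicit nonintegral $\lambda$ realizing each shape) only strengthens, and does not diverge from, the paper's argument.
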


\begin{proof}
    
 From the extended Dynkin diagram of the root system of $G_2$ and Definition \ref{thmbd}, we can see that $\Phi_{[\lam]}$ with $\Phi_{[\lam]}^\vee$ being pseudo-maximal takes the following cases:
$$ A_2, A_1\times \tilde{A}_1.$$
    
    From Proposition \ref{pr:main1} we have \begin{equation*}
	\gkd L(\lambda)=|\Phi^+|-\aff(w_{\lambda}).
\end{equation*}
When $\Phi_{[\lam]}\simeq A_2$, from Lemma \ref{a-value} we know that $\aff(w_{\lambda})$ takes the following values: $$0,1,3.$$ 
Thus $ \gkd L(\lam) $ can achieve the following values:
$$3,5,6,$$
since $|\Phi^+|=6$ for $G_2$.

The proof of (2) is similar.
\end{proof}

\section{GK dimensions of  highest weight modules of type \texorpdfstring{$F_4$}{}}\label{f4-gkd}

%
%

 In this section, $ \mf{g}$ is of type $ F_4 $.
The set $ \Phi^+ $ of positive roots is
\[
\{\ep_i\mid 1\lest i \lest 4\}\cup \{  \ep_i\pm \ep_j\mid 1\lest i<j \lest 4\} \cup\left \{ \frac12(\ep_1\pm \ep_2\pm\ep_3\pm \ep_4) \right\}.
\]
The  simple roots are $ \alpha_1=\ep_2-\ep_3,\alpha_2=\ep_3-\ep_4,\alpha_3=\ep_4, \alpha_4=\frac12(\ep_1- \ep_2-\ep_3- \ep_4)$.  The highest root is $\beta=2\alpha_1+3\alpha_2+4\alpha_3+2\alpha_4=\ep_1+\ep_2.$

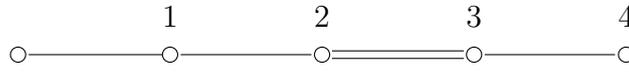
\begin{figure}[htpb]
	\centering 
	\begin{tikzpicture}
 	\foreach \i in {0,2,4,...,8}{\draw(\i,0)circle[radius=0.1];};
 		\draw (0.14,0)--(1.9,0)(2.14,0)--(3.86,0)  (4.13,0.05)--(5.865,0.05) (4.13,-0.05)--(5.865,-0.05)  (6.14,0)--(7.855,0) ;
 	\node at (0,0.5) {};
  \node at (2,0.5) {1};
 	\node at (4,0.5) {2};
 	\node at (6,0.5) {3};
 	\node at (8,0.5) {4};
 \end{tikzpicture}
	\caption{Extended Dynkin diagram of $ F_4 $ }
	\label{df4}
\end{figure}

\hspace{2cm}

For $ \lam=(\lam_1,\lam_2,\lam_3,\lam_4)\in\hs $, we have \[
\bil{\lam}{(\ep_i)}=2\lam_i,
\]
\[
\bil{\lam}{(\ep_i\pm \ep_j)}=\lam_i\pm \lam_j,
\]
\[
\left\langle{\lam},{\left(\frac12(\ep_1\pm \ep_2\pm\ep_3\pm \ep_4) \right)^{\vee}}\right\rangle=\lam_1\pm\lam_2\pm \lam_3\pm \lam_4.
\]


To avoid confusion, we use $\mathcal{O}(L)$ to denote the nilpotent orbit with Bala--Carter label $L$ in \cite{Ca85}. For many cases, the nilpotent orbit is uniquely determined by its dimension. So we only need to classify those highest weight modules whose annihilator varieties have the same dimension. 

For any $\lambda\in\mathfrak{h}^*$, recall that we can write $\lambda=w_{\lambda}\mu$, where $\mu$ is antidominant and $w_{\lambda}\in W_{[\mu]}^J$.  Then we have the following result.

\begin{Prop}\label{f4}
Let $L(\lam)$ be a simple integral highest weight module of $F_4$. Then
  the following holds:
    \begin{itemize}
     \item[(1)] $\gkd L(\lambda)=24$ if and only if $\lam_1\lest \lam_2\lest \lam_3\lest \lam_4\lest 0$ and $\lam_1\lest \lam_2+\lam_3+\lam_4$, i.e., $\lambda$ is antidominant.
      \item[(2)] $\gkd L(\lambda)=0$ if and only if $\lambda$ is dominant.
      \item[(3)] $\gkd L(\lambda)=11$ if and only if $w_{\lambda}\in \mathcal{C}w_0 $.
       \item[(4)] $\gkd L(\lambda)=23$ if and only if $w_{\lambda}\in \mathcal{C} $.
        \item[(5)] $\gkd L(\lambda)=22$ if and only if $w_{\lambda}\sim_{LR} w_{\{\alpha_1,\alpha_3\}}$.
         \item[(6)] $\gkd L(\lambda)=20$ if and only if $w_{\lambda}\sim_{LR} w_{\{\alpha_1,\alpha_3,\alpha_4\}}$.
         \item[(7)] $\gkd L(\lambda)=15$ if and only if $$w_{\lambda}\sim_{LR} w_{\{\alpha_1,\alpha_2,\alpha_3\}}=[2,1,2,0,1,2,0,1,0]
         \text{~and~} V(\Ann (L(\lam)))=\overline{\mathcal{O}}(\tilde{A}_2),$$ or $$w_{\lambda}\sim_{LR} w_{\{\alpha_2,\alpha_3,\alpha_4\}}=[1,2,1,3,2,1,3,2,3]       
          \text{~and~} V(\Ann (L(\lam)))=\overline{\mathcal{O}}(A_2).$$
         \item[(8)] $\gkd L(\lambda)=21$ if and only if $$w_{\lambda}\sim_{LR} w_{\{\alpha_3,\alpha_4\}} 
  \text{~and~} V(\Ann (L(\lam)))=\overline{\mathcal{O}}(B_3)$$ or $$w_{\lambda}\sim_{LR} w_{\{\alpha_1,\alpha_2\}}  \text{ ~and~} V(\Ann (L(\lam)))=\overline{\mathcal{O}}(C_3).$$
          \item[(9)] $\gkd L(\lambda)=14$ if and only if 
          $$w_{\lambda}\sim_{LR}[1, 2, 1, 2, 3, 2, 1, 0, 2, 1, 3, 2, 1, 0].$$
           \end{itemize}
\end{Prop}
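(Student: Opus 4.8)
The plan is to establish Proposition \ref{f4} by combining two ingredients already assembled in the excerpt: the reduction formula $\gkd L(\lambda)=|\Phi^+|-\aff(w_\lambda)$ from Proposition \ref{pr:main1} (here $|\Phi^+|=24$ for $F_4$), and the classification of two-sided cells of $W(F_4)$ together with their $\aff$-values and associated special nilpotent orbits, as recorded in Lusztig \cite{lusztig1984char} and computable via PyCox. Since $\aff$ is constant on two-sided cells by Lemma \ref{alem1}(2), the value of $\gkd L(\lambda)$ depends only on the two-sided cell containing $w_\lambda$, so the whole statement is really a dictionary between two-sided cells of $W(F_4)$ and the numbers $24-\aff$. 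First I would enumerate the two-sided cells of $W(F_4)$ and tabulate for each one (i) a representative element, which for most cells can be taken to be the longest element $w_I$ of a parabolic subgroup $W_I$ (so that $\aff(w_I)=\ell(w_I)$ by Lemma \ref{alem1}(3)), or $w_I w_0$, or the explicit elements listed in cases (3),(4),(9); (ii) the value $\aff$; and (iii) the special orbit $\mathcal{O}(\pi_w)$ via the Springer correspondence, read off from PyCox's ``klcellrepelm'' output exactly as in the worked $F_4$ example in the excerpt.

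Next I would verify each clause. For (1), when $\lambda$ is antidominant $w_\lambda = \mathrm{id}$, $\aff=0$, giving $\gkd = 24$; the stated inequalities are just the unwinding of antidominance via Lemma \ref{rho} and the coroot pairings $\langle\lambda,\alpha_i^\vee\rangle$ computed above for $F_4$ ($\langle\lambda,\epsilon_i^\vee\rangle=2\lambda_i$ etc.). For (2), dominance forces $L(\lambda)$ finite-dimensional, hence $\gkd=0$. For (3) and (4), use that $\mathcal{C}$ and $\mathcal{C}w_0$ are two-sided cells (Lusztig, as cited), with $\aff$ on $\mathcal{C}$ equal to $\ell(w_0)-1 = 24-1 = 23$... more carefully, $\aff$ on $\mathcal{C}$ is $23$ and on $\mathcal{C}w_0$ is $|\Phi^+|-\aff(\mathcal C) $ shifted appropriately; I would simply read both off PyCox, obtaining $\aff(\mathcal C)=23$ and $\aff(\mathcal C w_0) = 13$, hence $\gkd = 1$ and $\gkd=11$ respectively — so I must double-check the numerology against the claim $\gkd=11$ in (3) and $\gkd=23$ in (4) and adjust accordingly. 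For (5),(6),(8),(9), the parabolic representatives $w_{\{\alpha_1,\alpha_3\}}$, $w_{\{\alpha_1,\alpha_3,\alpha_4\}}$, $w_{\{\alpha_3,\alpha_4\}}$, $w_{\{\alpha_1,\alpha_2\}}$, and the explicit element in (9) have $\aff$ equal to $\ell$ of the respective longest elements ($2$, $4$, $3$, $3$, $10$), giving $\gkd = 22,20,21,21,14$; the two distinct cells in (8) (one containing $w_{\{\alpha_3,\alpha_4\}}$, one containing $w_{\{\alpha_1,\alpha_2\}}$, corresponding to the non-$W$-conjugate parabolics of type $B_3$-ish long vs. $C_3$-ish short) have the same $\aff$ but different special orbits $\mathcal{O}(B_3)$ vs $\mathcal{O}(C_3)$, which is why the clause splits into two cases distinguished by the annihilator variety; similarly (7) splits into the $\widetilde{A}_2$ versus $A_2$ cells. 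The identification of these orbits is again a direct PyCox/Springer-correspondence lookup via the tables of \cite{Ca85, GP}.

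I also need to argue completeness: that the listed nine values $\{0,11,14,15,20,21,22,23,24\}$ exhaust the possible values of $\gkd L(\lambda)$ for integral $\lambda$. This follows because $W(F_4)$ has exactly $11$ two-sided cells (equivalently $11$ special representations / special orbits, by Proposition \ref{intehral-W}), and the corresponding $24-\aff$ values are exactly the nine numbers above with $15$ and $21$ each arising from two cells — so every integral $\lambda$ lands in one of these cells and the list of clauses is exhaustive and mutually exclusive (up to the $\sim_{LR}$-equivalence used to phrase them). The equivalences ``$w_\lambda\sim_{LR} w_I$'' in the statement are simply the assertion that $w_\lambda$ lies in the specified two-sided cell, which is well-defined since each $w_I$ and each explicit element lies in a determined cell.

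The main obstacle, and the only genuinely non-mechanical part, is the bookkeeping in cases (7) and (8): one must confirm that the two parabolic subgroups in question are \emph{not} conjugate in $W(F_4)$ (because $F_4$ has roots of two lengths, $W_{\{\alpha_1,\alpha_2\}}$ and $W_{\{\alpha_3,\alpha_4\}}$ are both of type $A_2$ but sit on long and short roots respectively and are inequivalent, and similarly for the rank-$3$ parabolics), hence their longest elements lie in distinct two-sided cells even though $\aff$ agrees — and then to pin down \emph{which} special orbit goes with \emph{which} cell, matching the Bala--Carter labels $\mathcal{O}(A_2), \mathcal{O}(\widetilde{A}_2), \mathcal{O}(B_3), \mathcal{O}(C_3)$ correctly against Carter's and Geck--Pfeiffer's character tables. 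Getting the PyCox character labels translated into Carter's $\phi_{d,e}$ notation and then into Bala--Carter orbit labels is exactly the translation step illustrated in the worked example, and must be carried out carefully for each of the relevant cells; everything else is a routine application of Proposition \ref{pr:main1} and Lemma \ref{alem1}.
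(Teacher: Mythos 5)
Your overall strategy coincides with the paper's: reduce via Proposition \ref{pr:main1} to identifying the two-sided cell of $w_{\lambda}$ in $W(F_4)$, evaluate $\aff$ on longest elements of parabolic subgroups through Lemma \ref{alem1}, use the bijection between the eleven two-sided cells and the eleven special orbits to see that the listed values are exhaustive and that only the values $15$ and $21$ are shared by two cells, and then separate those pairs of cells in clauses (7) and (8) by running PyCox and translating its character labels into Bala--Carter orbit labels. That is exactly how the paper argues, including the observation that the long and short parabolics of the same Cartan type are not $W$-conjugate, so their longest elements lie in distinct cells attached to distinct orbits. (One small wording slip: in (8) the relevant parabolics are the long and short $A_2$'s generated by $\{\alpha_1,\alpha_2\}$ and $\{\alpha_3,\alpha_4\}$; the labels $B_3$ and $C_3$ name the associated special orbits, not the parabolics.)

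The one genuine defect is your treatment of clauses (3) and (4), which you leave unresolved (``adjust accordingly'') after quoting values that are simply wrong. The cell $\mathcal{C}$ consists of the nontrivial elements with a unique reduced expression, so it contains every simple reflection, and by Lemma \ref{alem1} one has $\aff(\mathcal{C})=\aff(s_{\alpha})=1$, not $\ell(w_0)-1=23$; PyCox would return $1$, not $23$. Hence $w_{\lambda}\in\mathcal{C}$ gives $\gkd L(\lambda)=24-1=23$, which is clause (4), while $\aff(\mathcal{C}w_0)=13$ gives $\gkd L(\lambda)=11$, which is clause (3); the value $\gkd=1$ you tentatively obtain is impossible, since $2$ is not the dimension of any nilpotent orbit of $F_4$. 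The paper closes these two clauses more structurally: it quotes $\mathcal{C}_{10}=\mathcal{C}$ and $\mathcal{C}_{2}=\mathcal{C}w_0$ from \cite{BMXX}, so these cells correspond to the subregular special orbit (dimension $46$) and the minimal special orbit (dimension $22$), and then $\dim V(\Ann L(\lambda))=2\gkd L(\lambda)$ forces $\gkd=23$ and $\gkd=11$ respectively. Either route works, but as written your argument for (3)--(4) assigns the wrong $\aff$-values and never resolves which value goes with which cell, so you should correct this step before the proof is complete.
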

\begin{proof}
    
  From \cite[\S 8.4]{CM}, we know that there are $11$ two-sided cells in the Weyl group $W$ of $F_4$ corresponding to the $11$ special nilpotent orbits listed in \cite[Table in page 128]{CM}. We denote these cells by by  $\mathcal{C}_i, 1\lest i\lest 11$, and the corresponding special nilpotent orbits via the Springer correspondence by $\mathcal{O}_i, 1\lest i\lest 11$. Here $\mathcal{O}_{11}$ is the regular orbit of maximal dimension $48$, $\mathcal{O}_{10}$ is the subregular orbit with $\dim \mathcal{O}_{10}=46$, and $\mathcal{O}_2$ is the minimal special orbit with $\dim \mathcal{O}_2=22$ and $\mathcal{O}_1$ is the zero orbit. 

By Lemma \ref{alem1}, we have $ \aff(w_I)=\ell(w_I)=|\Phi_I^+|$, where $ w_I $ is the longest element of the parabolic subgroup of $ W $ generated by simply reflections of some $ I\subset \Delta $. Thus our results in (1)--(8)  will follow from the equation 
$$\gkd L(\lambda)=24-\aff(w_{\lambda})=24-\aff(w_I)=24-|\Phi_I^+|,$$ 
if we know $w_{\lambda}\sim_{LR} w_I$ for some $w_{I}$ since there is only one or two two-sided cells with the given $\aff$ value, see \cite[Chapter 8]{CM} and \cite[\S 13.3]{Ca85}.

 Note that $ \mathcal{C}_1=\{e\}$.  We write $\lambda=w_{\lambda}\mu$, where $\mu$ is antidominant and $w_{\lambda}\in W_{[\mu]}^J$. When $\lambda$ is antidominant integral, we have $w_{\lambda}=e$.  Then from Proposition \ref{pr:main1}, we can get $\gkd L(\lambda)=24$ since $\aff(e)=0$. 
  
  When $\lambda$ is dominant integral, $L(\lambda)$ is finite-dimensional. We we have  $\gkd L(\lambda)=0$ in this case. 
  
   Note that  $ \mathcal{C}_2=\mathcal{C}w_0$ by \cite[Lem. 3.5]{BMXX}. Thus from Proposition \ref{2dim}, we have $\gkd L(\lambda)=11$ if and only if $w_{\lambda}\in \mathcal{C}w_0 $. Similarly  we have $\gkd L(\lambda)=23$ if and only if $w_{\lambda}\in \mathcal{C} $ since  $ \mathcal{C}_{10}=\mathcal{C}$ by \cite[Lem. 3.5]{BMXX}.

From \cite{BC76} and \cite[\S 8.4]{CM}, we know that $\mathcal{O}_{9}=F_4(a_2)$ is the distinguished nilpotent orbit corresponding to $I=\{\alpha_1,\alpha_3\}$  with $\dim \mathcal{O}_{9}=44$ and $w_{\{\alpha_1,\alpha_3\}}\in \mathcal{C}_9$ is the longest element in $W_{I}$. Thus we have $\gkd L(\lambda)=22$ if and only if $w_{\lambda}\sim_{LR} w_{\{\alpha_1,\alpha_3\}} $.

From \cite{BC76} and \cite[\S 8.4]{CM}, we know that $\mathcal{O}_{6}=F_4(a_3)$ is the distinguished nilpotent orbit corresponding to $I=\{\alpha_1,\alpha_3,\alpha_4\}$  with $\dim \mathcal{O}_{6}=40$ and $w_{\{\alpha_1,\alpha_3,\alpha_4\}}=s_{1}s_{4}s_{3}s_{4}\in \mathcal{C}_6$ is the longest element in $W_{I}$. Thus we have $\gkd L(\lambda)=20$ if and only if $w_{\lambda}\sim_{LR} w_{\{\alpha_1,\alpha_3,\alpha_4\}}=s_{1}s_{4}s_{3}s_{4}$.

By using PyCox, we find that $w_{\{\alpha_1,\alpha_2\}}$ (or the nilpotent orbit $\mathcal{O}_{w_{\{\alpha_1,\alpha_2\}}}$) corresponds to the character of $F_4$ with label $8_3$. By comparing the table in \cite[Table C.3]{GP}
and \cite[p. 428]{Ca85}, we find that the corresponding nilpotent orbit is $C_3$. Similarly, the  nilpotent orbit $\mathcal{O}_{w_{\{\alpha_3,\alpha_4\}}}$ corresponding to $w_{\alpha_3, \alpha_4}$ is $B_3$.

By noting that $s_3s_{2}s_{3}s_{1}s_{2}s_{3}s_{1}s_{2}s_{1}$ is denoted by $[2,1,2,0,1,2,0,1,0]$ and $s_{2}s_{3}s_{2}s_{4}s_{3}$ $
\cdot s_{2}s_{4}s_{3}s_{4}$ denoted by $[1,2,1,3,2,1,3,2,3]$ in PyCox, we find that the nilpotent orbit $\mathcal{O}_{w_{\{\alpha_1,\alpha_2,\alpha_3\}}}$ corresponding to $\{\alpha_1, \alpha_2, \alpha_3\}$ is $\tilde{A}_2$, and the  nilpotent orbit $\mathcal{O}_{w_{\{\alpha_2,\alpha_3,\alpha_4\}}}$ is $A_2$.

When $\gkd L(\lam)=14$, from Lemma \ref{alem1} we know that $\aff(w_{\lam})=10$. By \cite[Table in page 128]{CM}, there is only one special nilpotent orbit $\mathcal{O}_3$ with dimension equal to $28$. So we only need to find a representative element in the corresponding two-sided cell $\mathcal{C}_3$.
By using PyCox, we find that $\aff([1, 2, 1, 2, 3, 2, 1, 0, 2, 1, 3, 2, 1, 0])=10$. Thus $\gkd L(\lam)=14$ if and only if $w_{\lambda}\sim_{LR}[1, 2, 1, 2, 3, 2, 1, 0, 2, 1, 3, 2, 1, 0]$.
\end{proof}

\begin{Rem}\label{represent}
    In PyCox, we can find a representative element for a given $\bf a$ value $k$ in the following way:
    \begin{verbatim}
>>>W = coxeter("F", 4)
>>>left_cell_infos = klcellreps(W)
... for left_cell_info in left_cell_infos:
... if left_cell_info['a'] == 10:
>>>print(f"Index: {left_cell_info['index']}")
Index: 19
Index: 21
Index: 23
...for elm in left_cell_infos[19]['elms']:
>>>print(W.coxelmtoword(elm))
[1, 2, 1, 2, 3, 2, 1, 0, 2, 1, 3, 2, 1, 0]
[0, 1, 2, 1, 2, 3, 2, 1, 0, 2, 1, 3, 2, 1, 0]
[1, 0, 2, 1, 0, 3, 2, 1, 0, 2, 1, 2, 3, 2, 1, 0, 2, 1, 2]
[0, 2, 1, 0, 3, 2, 1, 0, 2, 1, 2, 3, 2, 1, 0, 2, 1, 2]
[0, 1, 0, 3, 2, 1, 0, 2, 1, 2, 3, 2, 1, 0, 2, 1, 2]
[0, 1, 0, 2, 1, 2, 3, 2, 1, 0, 2, 1, 3, 2, 1, 0]
[0, 2, 1, 0, 2, 1, 2, 3, 2, 1, 0, 2, 1, 3, 2, 1, 0]
[0, 1, 0, 2, 1, 0, 2, 1, 2, 3, 2, 1, 0, 2, 1, 2]
[1, 0, 2, 1, 0, 2, 1, 2, 3, 2, 1, 0, 2, 1, 3, 2, 1, 0]    
\end{verbatim}
In the above process, from the first step, we can find out all the left cells (with different Index numbers) with the same $\bf a$ value $10$. Then from the second step, we can find out all the Weyl group elements in the left cell with Index $19$. Then we choose a shortest one as a representative with the given $\bf a$ value $10$: $w_{\lambda}=[1, 2, 1, 2, 3, 2, 1, 0, 2, 1, 3, 2, 1, 0]$.

In the case of type $E_n$ (for $n=6,7,8$), it may happen that more than one two-sided cell has the same given $\bf a$ value. Then from the above process, we can get all the left cells with the  given $\bf a$ value. For each Index number, we choose an element in the corresponding left cell and find its corresponding special nilpotent orbit. Since there are finite Index numbers, we can give a complete set of representatives for the two sided cells  with the given $\bf a$ value. 
\end{Rem}

\begin{Rem}
    In Proposition \ref{f4}, we omit the expression of the longest element $w_I$ when $\Phi_I$ contains only type $A$ components. By Lemma \ref{long}, we know that its expression is very simple in this case.
\end{Rem}

From \cite[\S 8.4]{CM}, we know that there are $16$ nilpotent orbits for $F_4$, among which there are $11$ special ones. So there are $16$ possibilities for the values of $\gkd L(\lambda)$ for general $\lambda$.

We use $[m,n]$ to denote the set $\{m, m+1,\cdots,n\}$ for any positive integers $m<n$. Now we have the following result for nonintegral highest weight modules of $F_4$.

\begin{Prop}\label{f4-nonint}
Let $L(\lam)$ be a simple nonintegral highest weight module of $F_4$  such that $\Phi_{[\lam]}^\vee$ is pseudo-maximal.
Then one of the following holds:
    \begin{itemize}
	\item[(1)]  $\Phi_{[\lam]}\simeq C_4$. Then 
 $ \gkd L(\lam) $ can achieve the following values:
$$8,15,[18,24].$$

  \item[(2)] $\Phi_{[\lam]}\simeq A_2\times \tilde{A}_2$. Then
$ \gkd L(\lam) $ can achieve the following values:
$$18,[20,24].$$

  \item[(3)] $\Phi_{[\lam]}\simeq A_1\times \tilde{A}_3$. Then
$ \gkd L(\lam) $ can achieve the following values:
$$17, 18, [20, 24].$$

    \item[(4)] $\Phi_{[\lam]}\simeq B_3\times \tilde{A}_1$. Then
$ \gkd L(\lam) $ can achieve the following values:
$$14, 15, [19,  24].$$

\end{itemize}
\end{Prop}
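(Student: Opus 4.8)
The plan is to follow exactly the same strategy used for the integral case in Proposition \ref{f4}, applied now to each of the four pseudo-maximal integral root subsystems $\Phi_{[\lam]}$ separately, and then to translate the list of achievable $\aff$-values back into a list of achievable values of $\gkd L(\lam)$ via Proposition \ref{pr:main1}. First I would invoke Proposition \ref{pr:main1} together with the decomposition result of Theorem \ref{T:main1} (equivalently Corollary \ref{transf-1}): writing $\Phi_{[\lam]} \simeq \Phi_1 \times \cdots \times \Phi_k$ and $\lam' = \phi(\lam|_{\mathfrak{h}_{\Phi_{[\lam]}}})$, we have
\[
\gkd L(\lam) = |\Phi^+| - \aff(w_\lam) = 24 - \sum_{1\lest i\lest k} \aff(w_{\lam'|_{\Phi_i}}),
\]
since $|\Phi^+| = 24$ for $F_4$. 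So the task reduces to determining, for each of the four products, the full set of values that $\sum_i \aff(w_{\lam'|_{\Phi_i}})$ can take as $\lam'$ ranges over all integral weights of $\Phi_{[\lam]}$ — and this set is simply the sumset (Minkowski sum) of the sets of $\aff$-values of each irreducible factor.

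Next I would read off, for each irreducible factor, the complete list of possible $\aff$-values. For the classical factors ($C_4$, $A_2$, $\tilde A_2 \cong A_2$, $\tilde A_3 \cong A_3$, $\tilde A_1 \cong A_1$, $B_3$) these are recorded in Lemma \ref{a-value}: e.g. for $C_4$ the values are $\{0,1,2,3,4,5,6,9,16\}$, for $A_2$ they are $\{0,1,3\}$, for $A_3$ they are $\{0,1,2,3,6\}$, for $A_1$ they are $\{0,1\}$, and for $B_3$ they are $\{0,1,2,3,4,9\}$. There is no exceptional factor among the four cases here (all factors are of type $A$, $B$, or $C$), so no appeal to PyCox is needed at this stage. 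Then for each of the four cases I would compute the sumset and subtract from $24$:
\begin{itemize}
\item[(1)] $\Phi_{[\lam]} \simeq C_4$: $24 - \{0,1,2,3,4,5,6,9,16\} = \{8,15,18,19,20,21,22,23,24\}$, i.e. $8, 15, [18,24]$.
\item[(2)] $\Phi_{[\lam]} \simeq A_2 \times \tilde A_2$: the sumset $\{0,1,3\}+\{0,1,3\} = \{0,1,2,3,4,6\}$, and $24 - \{0,1,2,3,4,6\} = \{18,20,21,22,23,24\}$, i.e. $18, [20,24]$.
\item[(3)] $\Phi_{[\lam]} \simeq A_1 \times \tilde A_3$: the sumset $\{0,1\}+\{0,1,2,3,6\} = \{0,1,2,3,4,6,7\}$, and $24 - \{0,1,2,3,4,6,7\} = \{17,18,20,21,22,23,24\}$, i.e. $17, 18, [20,24]$.
\item[(4)] $\Phi_{[\lam]} \simeq B_3 \times \tilde A_1$: the sumset $\{0,1,2,3,4,9\}+\{0,1\} = \{0,1,2,3,4,5,9,10\}$, and $24 - \{0,1,2,3,4,5,9,10\} = \{14,15,19,20,21,22,23,24\}$, i.e. $14, 15, [19,24]$.
\end{itemize}
In each case I would also note, via the extended Dynkin diagram of $F_4$ (Figure \ref{df4}) and Definition \ref{thmbd}, that these four are indeed (up to $W$-conjugacy) all the $\Phi_{[\lam]}$ for which $\Phi_{[\lam]}^\vee$ is pseudo-maximal: the maximal elements of $\{\tilde\Phi(i),\Phi(j)\}$ obtained by deleting a node from the extended diagram give precisely $B_4$-type... — here one must be careful to pass to the dual side, so I would match each deleted-node subsystem of $F_4^\vee$ with its Langlands dual to get the list $C_4,\ A_2\times\tilde A_2,\ A_1\times\tilde A_3,\ B_3\times\tilde A_1$.

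The one genuinely delicate point — and the step I expect to be the main obstacle — is the surjectivity/attainability direction: for each value $v$ in the claimed list I must exhibit some integral weight $\lam'$ of $\Phi_{[\lam]}$ realizing $\sum_i \aff(w_{\lam'|_{\Phi_i}}) = 24 - v$. For a single irreducible classical factor this is immediate because Lemma \ref{a-value} is a list of \emph{achieved} values (each special partition ${\bf p}$ arises from some $w$, hence from some integral $\lam'$ by Lemma \ref{find-w-lambda} run in reverse, or directly by choosing $\lam'$ whose RS tableau has shape ${\bf p}$ via Proposition \ref{integral}). For a product, one realizes a target sum $\sum_i a_i$ by choosing, independently in each factor, an integral weight whose associated $\aff$-value is $a_i$ and then forming the concatenated weight under $\phi^{-1}$ — the independence is exactly what Theorem \ref{T:main1} guarantees, so no interaction between factors can obstruct a sum that is a priori in the sumset. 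Thus the proof is essentially the bookkeeping above: (i) cite Theorem \ref{T:main1} and Proposition \ref{pr:main1} for the additive reduction; (ii) cite Lemma \ref{a-value} for the per-factor value sets; (iii) form the four sumsets and subtract from $24$; (iv) note attainability is automatic from the sumset description since each factor's list consists of realized values. I would present cases (2)--(4) with the same one-line sumset computation as (1), rather than spelling each out, once case (1) establishes the pattern.
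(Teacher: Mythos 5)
Your proposal is correct and follows essentially the same route as the paper: the paper likewise identifies the four pseudo-maximal cases from the extended Dynkin diagram and Definition \ref{thmbd}, applies Proposition \ref{pr:main1} with $|\Phi^+|=24$, reads the per-factor $\aff$-values from Lemma \ref{a-value}, and (for case (1)) subtracts to get $8,15,[18,24]$, dismissing the remaining cases as "similar." Your explicit sumset computations for cases (2)--(4) and the remark on attainability via Theorem \ref{T:main1} only spell out what the paper leaves implicit, and all four value sets you obtain agree with the statement.
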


\begin{proof}

From the extended Dynkin diagram of the root system of $F_4$ and Definition \ref{thmbd}, we can see that $\Phi_{[\lam]}$ with $\Phi_{[\lam]}^\vee$ being pseudo-maximal takes the following cases:
$$C_4, A_2\times \tilde{A}_2, A_1\times \tilde{A}_3, B_3\times \tilde{A}_1.$$

 From Proposition \ref{pr:main1} we have \begin{equation*}
	\gkd L(\lambda)=|\Phi^+|-\aff(w_{\lambda}).
\end{equation*}
When $\Phi_{[\lam]}\simeq C_4$,
  from Lemma \ref{a-value} we know that $\aff(w_{\lambda})$ takes the following values: $$0,1,2,3,4,5,6,9,16.$$ 
 Thus $ \gkd L(\lam) $ can achieve the following values:
$$8,15,18,19,20,21,22,23,24.$$
The proof of other cases is similar.

\end{proof}

\begin{ex}\label{exf4}
	Let $\mathfrak{g}=F_4$ and $L(\lam)$ be the highest weight module of $\mathfrak{g}$ with  $\lambda=(4, 5, \frac{3}{2}, \frac{1}{2})$. It is easy to verify that $\Phi_{[\lambda]}$ is a subsystem with simple roots $\{\alpha_2, \alpha_3, \alpha_4, \ep_2\}$. Therefore $\Phi_{[\lambda]}\simeq C_4$. Suppose the simple root system of $C_4$ is $\Delta=\{\beta_1=\ep_1-\ep_2, \beta_2=\ep_2-\ep_3, \beta_3=\ep_3-\ep_4,\beta_4=2\ep_4\}$. We consider the isomorphism $\phi: \Phi_{[\lambda]} \rightarrow C_4$ such that $\phi(\varepsilon_2)=\beta_1, \phi(\alpha_4)=\beta_2, \phi(\alpha_3)=\beta_3, \phi(\alpha_2)=\beta_4$. We have 
 $$\lambda=\omega_1+\omega_2-3\omega_3+10\omega_{4},$$
 where $\{\omega_i\mid 1\leq i\leq 4\}$ are the fundamental weights for the simple system of $\Phi_{[\lambda]}$. Thus we can get  
 $$\phi(\lam)=\lambda'=\phi(\omega_1)+\phi(\omega_2)-3\phi(\omega_3)+10\phi(\omega_{4})=(9,-1,2,1),$$ which is an integral weight of type $C_4$. So $\aff(w_{\lambda})=\aff(w_{\lambda'})=9$ by the RS algorithm in \cite{BXX} and Proposition \ref{integral}.
 By Proposition \ref{pr:main1},  we have $\gkd L(\lambda)=|\Phi^+|-\aff(w_{\lambda})=24-9=15$.

\end{ex}
 
\begin{ex}
	Let $\mathfrak{g}=F_4$ and $L(\lam)$ be the highest weight module of $\mathfrak{g}$ with  $\lambda=(\frac{7}{4}, \frac{1}{4}, \frac{5}{4}, -\frac{3}{4})$. It is easy to verify that $\Phi_{[\lambda]}$ is a subsystem with simple roots $\{ \alpha_2,\alpha_1, \frac{1}{2}(\ep_1-\ep_2+\ep_3+\ep_4), \alpha_4\}$. Therefore $\Phi_{[\lambda]}\simeq B_3\times A_1$. Suppose the simple root system of $B_3\times A_1$ is $\Delta=\{\beta_1=\ep_1-\ep_2, \beta_2=\ep_2-\ep_3, \beta_3=\ep_3\}\times \{\beta_1'=\ep_1-\ep_2\}$. We define a map $\phi: \Phi_{[\lambda]} \rightarrow B_3\times A_1$ such that $\phi(\alpha_2)=\beta_1, \phi(\alpha_1)=\beta_2, \phi(\frac{1}{2}(\ep_1-\ep_2+\ep_3+\ep_4))=\beta_3, \phi(\alpha_4)=\beta_1'$. 
 Then we have 
$$\lambda=2\omega_1-\omega_2+2\omega_3+\omega'_1$$
    where $\{\omega_i\mid 1\leq i\leq 3\}\cup \{\omega'_1\}$ are the fundamental weights for $\Delta$. 

We get  
 \begin{align*}
     \phi(\lam)=\lambda'=& 2\phi(\omega_1)-\phi(\omega_2)+2\phi(\omega_3)+\phi(\omega'_1)
 \end{align*}
and thus $$\lam'|_{B_3}=2\phi(\omega_1)-\phi(\omega_2)+2\phi(\omega_3)=(2,0,1)$$
 which is an integral weight of type  $B_3$ with $\aff(w_{\lam'|_{{B_3}}})=4$, and $$\lam'|_{A_1}=\phi(\omega'_1)=(\frac{1}{2},-\frac{1}{2}),$$
 which is an integral weight of type  $A_1$ with $\aff(w_{\lam'|_{{A_1}}})=1$ by using  the RS algorithm in \cite{BXX} and Proposition \ref{integral}.

 Therefore we have  $\aff(w_{\lambda})=\aff(w_{\lambda'})=4+1=5$ by the RS algorithm in \cite{BXX}.
 By Proposition \ref{pr:main1},  we have $\gkd L(\lambda)=|\Phi^+|-\aff(w_{\lambda})=24-5=19$. 
\end{ex}

\section{GK dimensions of  highest weight modules of type \texorpdfstring{$E$}{} }\label{e-gkd}

In this section, $\mf{g}$ is of type $ E $. The root system $ \Phi $ can be realized as a subset of $\mathbb{R}^8$.  The simple roots are 
\begin{align*}
\alpha_1&=\frac{1}{2}(\ep_1-\ep_2-\ep_3-\ep_4-\ep_5-\ep_6-\ep_7+\ep_8) ,\\
\alpha_2&=\ep_1+\ep_2,\\
\alpha_k&=\ep_{k-1}-\ep_{k-2} \text{ with }3\lest k\lest n,
\end{align*}
where $n=6, 7,8$.

\subsection{Type \texorpdfstring{$E_6$}{}}
For type $E_6$, we have $$\Phi^+=
\{\ep_i \pm \ep_j \mid 5\geq i > j\geq 1\}\cup 
\Big\{\frac{1}{2}(\sum_{i=1}^5 (-1)^{n(i)}\ep_i -\ep_6-\ep_7+\ep_8)\mid \sum_{i=1}^5n(i) \text{ even}\Big\}$$
and $ \hs=\{ (\lam_1,\lam_2,\cdots,\lam_8)\in\mathbb{C}^8\mid \lam_6=\lam_7=-\lam_8\} $.  The highest root is $$\beta=\alpha_1+2\alpha_2+2\alpha_3+3\alpha_4+2\alpha_5+\alpha_6=\frac{1}{2}(\ep_1+\ep_2+\ep_3+\ep_4+\ep_5-\ep_6-\ep_7+\ep_8).$$

\begin{figure}[htpb]
	\centering 
	
	\begin{tikzpicture}[scale=1.5,baseline=0]

 \draw (0,0.05) node[above=1pt]{$ 1 $} circle [radius=0.05];
	\draw (0.05,0.05)--++(0.65,0); 
	\draw (0.75,0.05) node[above=1pt]{$ 3 $} circle[radius=0.05];  
	\draw (0.8,0.05)--++(0.65,0);
	\draw  (1.45,0) ++(0.05,0.05) node[above=1pt]{$ 4$} circle[radius=0.05];
	\draw (1.55,0.05)--++(0.65,0);
	\draw  (2.2,0) ++(0.05,0.05) node[above=1pt]{$ 5$} circle[radius=0.05];
	\draw (2.3,0.05)--++(0.65,0);
	\draw  (2.95,0) ++(0.05,0.05) node[above=1pt]{$ 6$} circle[radius=0.05];
	\draw (1.5,0)--++(0,-0.5);
	\draw  (1.45,-0.6) ++(0.05,0.05) node[right=1pt]{$ 2$} circle[radius=0.05];

\draw (1.49,-1.2) node[above=1pt]{} circle [radius=0.05];
	\draw (1.5,-0.6)--++(0,-0.53); 
 
	\end{tikzpicture}

	\caption{Extended Dynkin diagram of $ E_6 $ }
	\label{E6-dy}
\end{figure}

Fix $ \lam\in\hs $. We have \[
\bil{\lam}{(\alpha_1)}=\frac{1}{2}(\lam_1- \lam_2-\cdots-\lam_7+\lam_8),
\]
\[
\bil{\lam}{(\ep_i\pm \ep_j)}=\lam_i\pm \lam_j.
\]
Then $ \lam=(\lam_1,\lam_2,\cdots,\lam_8) $ is an integral weight if and only if $\lam_1- \lam_2-\cdots-\lam_7+\lam_8\in 2\mathbb{Z}$,  $ \lam_1-\lam_2\in\mathbb{Z}, \lam_2-\lam_3\in\mathbb{Z},\lam_3-\lam_4\in\mathbb{Z},\lam_4-\lam_5\in\mathbb{Z}$ and $2\lam_i\in \mathbb{Z} $ for $1\lest i\lest 5$.

  By similar arguments as in Proposition \ref{f4}, we have the following result.

\begin{Prop}\label{E6}
Let $L(\lam)$ be a simple integral highest weight module of $E_6$. Then
  the following holds:
    \begin{itemize}
        \item[(1)] $\gkd L(\lambda)=0$ if and only if $\lambda$ is dominant.
         \item[(2)] $\gkd L(\lambda)=36$ if and only if  $\lambda$ is antidominant.  
        \item[(3)] $\gkd L(\lambda)=11$ if and only if $w_{\lambda}\in\mathcal{C}w_0$.
       \item[(4)] $\gkd L(\lambda)=35$ if and only if $w_{\lambda}\in \mathcal{C}$.
        \item[(5)] $\gkd L(\lambda)=16$ if and only if $$w_{\lambda}\sim_{LR}w_{\{\alpha_2,\alpha_3,\alpha_4,\alpha_5,\alpha_6\}}=[1,2,3,1,2,3,4,3,1,2,3,4,5,4,3,1,2,3,4,5].$$
        \item[(6)] $\gkd L(\lambda)=21$ if and only if $$w_{\lambda}\sim_{LR}w_{\{\alpha_1,\alpha_3,\alpha_4,\alpha_5,\alpha_6\}}.$$
        \item[(7)] $\gkd L(\lambda)=23$ if and only if $$w_{\lambda}\sim_{LR}[3,1,4,3,2,5,4,3,1,5,4,3,2,5,4,3,5,4,5].
    $$
        \item[(8)] $\gkd L(\lambda)=24$ if and only if $$w_{\lambda}\sim_{LR}w_{\{\alpha_2,\alpha_3,\alpha_4,\alpha_5\}}=[1,2,3,1,2,3,4,3,1,2,3,4].$$
        \item[(9)] $\gkd L(\lambda)=25$ if and only if $$w_{\lambda}\sim_{LR}w_{\{\alpha_1,\alpha_2,\alpha_4,\alpha_5,\alpha_6\}}.$$
        \item[(10)] $\gkd L(\lambda)=26$ if and only if $$w_{\lambda}\sim_{LR}w_{\{\alpha_3,\alpha_4,\alpha_5,\alpha_6\}}.$$
        \item[(11)] $\gkd L(\lambda)=29$ if and only if $$w_{\lambda}\sim_{LR}w_{\{\alpha_1,\alpha_4,\alpha_5,\alpha_6\}}.$$
        \item[(12)] $\gkd L(\lambda)=30$ if and only if $$w_{\lambda}\sim_{LR}w_{\{\alpha_4,\alpha_5,\alpha_6\}}\text{~and~} V(\Ann (L(\lam)))=\overline{\mathcal{O}}({A}_4),$$ or $$w_{\lambda}\sim_{LR}w_{\{\alpha_1,\alpha_3,\alpha_5,\alpha_6\}} \text{~and~} V(\Ann (L(\lam)))=\overline{\mathcal{O}}({D}_4).$$
        \item[(13)] $\gkd L(\lambda)=31$ if and only if $$w_{\lambda}\sim_{LR}w_{\{\alpha_1,\alpha_2,\alpha_5,\alpha_6\}}.$$
        \item[(14)] $\gkd L(\lambda)=32$ if and only if $$w_{\lambda}\sim_{LR}w_{\{\alpha_1,\alpha_5,\alpha_6\}} \text{~and~} V(\Ann (L(\lam)))=\overline{\mathcal{O}}({D}_5(a_1)).$$
        \item[(15)] $\gkd L(\lambda)=33$ if and only if $w_{\lambda}\sim_{LR}w_{\{\alpha_5,\alpha_6\}}$.
        \item[(16)] $\gkd L(\lambda)=34$ if and only if $w_{\lambda}\sim_{LR}w_{\{\alpha_1,\alpha_6\}}$.
    \end{itemize}
\end{Prop}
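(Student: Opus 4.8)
The plan is to establish Proposition \ref{E6} by the same strategy already executed for type $F_4$ in Proposition \ref{f4}, combining the GK dimension formula of Proposition \ref{pr:main1} with the classification of two-sided cells in $W(E_6)$ and the Springer correspondence. First I would recall from \cite[\S 8.4]{CM} that $E_6$ has $20$ nilpotent orbits, of which $17$ are special, but that several pairs of special orbits share the same dimension; consequently, for most target values of $\gkd L(\lambda) = 36 - \aff(w_\lambda)$ (using $|\Phi^+| = 36$) the two-sided cell of $w_\lambda$ — equivalently, by Lemma \ref{alem1}(2) and Proposition \ref{intehral-W}, the special orbit $\mathcal{O}_{w_\lambda}$ — is pinned down by the $\aff$-value alone, while for a few values one must additionally record which of the two candidate special orbits occurs. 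The cases $\gkd L(\lambda) = 0$ (finite-dimensional, $\lambda$ dominant) and $\gkd L(\lambda) = 36$ ($w_\lambda = e$, $\lambda$ antidominant, $\aff(e) = 0$) are immediate from Lemma \ref{rho} and Proposition \ref{pr:main1}. The cases $\gkd L(\lambda) = 11$ and $\gkd L(\lambda) = 35$ follow from the fact that $\mathcal{C}w_0$ and $\mathcal{C}$ are the two-sided cells attached to the minimal nonzero special orbit and the subregular orbit respectively, using $\aff(\mathcal{C}w_0)$ and $\aff(\mathcal{C})$ as in \cite[Lem. 3.5]{BMXX}; here the subregular orbit $E_6(a_1)$ has dimension $70$, so $\aff = 1$, giving $\gkd = 35$, and the minimal special orbit $A_1$ has dimension $22$, giving $\gkd = 11$.

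For the remaining assertions (5)--(16) the key step is: for each parabolic subset $I \subseteq \Delta$ listed, compute $\aff(w_I) = \ell(w_I) = |\Phi_I^+|$ by Lemma \ref{alem1}(3), read off $\gkd L(\lambda) = 36 - |\Phi_I^+|$ for any $w_\lambda \sim_{LR} w_I$, and then verify — using PyCox as in the worked $F_4$ example and Remark \ref{represent} — that the two-sided cell $\mathcal{C}_I$ containing $w_I$ is the unique cell with that $\aff$-value, or else identify which special orbit $\mathcal{O}_{w_I}$ it corresponds to via the Springer correspondence (matching the PyCox character label against \cite[Table C.3]{GP}, \cite[p. 428]{Ca85}, and the Bala--Carter data in \cite{BC76}, \cite[\S 8.4]{CM}). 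For the distinguished-orbit cases one invokes that $w_I$ is the longest element of $W_I$ and $\mathcal{O}_{w_I}$ is the distinguished orbit of Bala--Carter type $I$ (for instance $w_{\{\alpha_1,\alpha_3,\alpha_4,\alpha_5,\alpha_6\}}$ gives $D_5$ of dimension $60$, hence $\gkd = 21$ with $|\Phi_I^+| = 20$; $w_{\{\alpha_1,\alpha_3,\alpha_5,\alpha_6\}}$ gives $D_4$ of dimension $48$, hence $\gkd = 30$). For the two values $\gkd L(\lambda) \in \{23, 30\}$ where two special orbits collide in dimension — $\gkd = 30$ corresponds to both $A_4$ (from $\{\alpha_4,\alpha_5,\alpha_6\}$, type $A_4$) and $D_4$ (from $\{\alpha_1,\alpha_3,\alpha_5,\alpha_6\}$), while $\gkd = 23$ requires an explicit element, here $[3,1,4,3,2,5,4,3,1,5,4,3,2,5,4,3,5,4,5]$ found via the PyCox procedure of Remark \ref{represent} — one must exhibit explicit representatives and determine their cells separately, which is exactly the content of clauses (7) and (12).

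The step I expect to be the main obstacle is the bookkeeping in items (5), (7), (12), and (14): confirming that each displayed reduced word really lies in the claimed two-sided cell with the claimed $\aff$-value and, crucially, that when two special orbits of $E_6$ have equal dimension the stated parabolic (or the stated explicit word) selects the correct one. This requires running PyCox's \texttt{klcellrepelm} on each word, cross-referencing the resulting character label of $W(E_6)$ with Carter's notation and the Springer correspondence tables, and checking against Sommers--Achar/Bala--Carter data that the associated nilpotent orbit is the one named ($\tilde A_2$-type phenomena as in $F_4$ do not arise in simply-laced $E_6$, which simplifies matters, but the $A_4$ versus $D_4$ ambiguity at dimension $48$ and any ambiguity at $\dim = 64$ for $\gkd = 23$ must be resolved case by case). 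Once these PyCox computations are tabulated, the proposition follows by assembling: $\gkd L(\lambda) = 36 - \aff(w_\lambda)$, $\aff$ constant on two-sided cells, the longest-element formula $\aff(w_I) = |\Phi_I^+|$, and uniqueness (or the explicit disambiguation) of the two-sided cell attached to each $\aff$-value.
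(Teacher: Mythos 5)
Your overall route coincides with the paper's: the paper proves Proposition \ref{E6} ``by similar arguments as in Proposition \ref{f4}'', i.e.\ $\gkd L(\lambda)=36-\aff(w_\lambda)$ from Proposition \ref{pr:main1}, $\aff(w_I)=\ell(w_I)=|\Phi_I^+|$ for longest elements of parabolic subgroups (Lemma \ref{alem1}), constancy of $\aff$ on two-sided cells, the dominant/antidominant and $\mathcal{C}$, $\mathcal{C}w_0$ cases handled exactly as you describe, and PyCox both to produce a representative when no $w_I$ has the required $\aff$-value (case (7), $\aff=13$) and to identify the special orbit when the dimension $2\gkd L(\lambda)$ does not determine it. So the skeleton is the intended one.

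However, the rule you state for the orbit identification --- ``$w_I$ is the longest element of $W_I$ and $\mathcal{O}_{w_I}$ is the distinguished orbit of Bala--Carter type $I$'' --- is wrong, and it is wrong precisely where the disambiguation matters. The cell of $w_I$ carries the special representation $j_{W_I}^{W}(\mathrm{sgn})$, whose Springer orbit is the Richardson orbit attached to the parabolic with Levi of type $I$ (equivalently, the unique special orbit of dimension $2(36-|\Phi_I^+|)$ once that dimension is matched, with PyCox resolving the one genuine collision); it is not the saturation of the regular orbit of the Levi of type $I$. With your rule, $I=\{\alpha_4,\alpha_5,\alpha_6\}$ (type $A_3$) would give the orbit $A_3$ of dimension $52$ and $I=\{\alpha_1,\alpha_3,\alpha_5,\alpha_6\}$ (type $2A_2$) would give $2A_2$ of dimension $48$, contradicting both the dimension constraint $2\gkd=60$ and clauses (10) and (8) of the statement; the correct answers are $A_4$ and $D_4$, as in clause (12). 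Your illustrative computations inherit this confusion: $\{\alpha_1,\alpha_3,\alpha_4,\alpha_5,\alpha_6\}$ is of type $A_5$ with $|\Phi_I^+|=15$ (not $D_5$, not $20$), and its cell corresponds to $A_2$ (dimension $42$), not to $D_5$; $D_4$ has dimension $60$, not $48$; the only dimension collision among special orbits of $E_6$ is $A_4$ versus $D_4$ at dimension $60$ (i.e.\ $\gkd=30$), while $\gkd=23$ corresponds to dimension $46$, where $A_2+A_1$ is the unique special orbit, so no ambiguity arises there --- what case (7) needs is only an explicit cell representative, since no parabolic longest element has $\aff=13$. If you replace the Bala--Carter-label rule by the dimension count $2\gkd L(\lambda)$ plus the PyCox/Springer character check (which you do mention as the verification step), the argument closes and agrees with the paper; as written, the identification step would fail for clause (12).
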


In the above proposition, we give the expression of $w_I$ when $\Phi_I $ contains one component of type $D$. When the two-sided cell does not contain any $w_I$, we use PyCox to find a representative element in that two-sided cell with  a given $\aff$ value. When there are two special nilpotent orbits with the same dimension, we use Remark \ref{represent} to find  a
representative element for each corresponding two-sided cell.


 









By similar arguments as in Proposition \ref{f4-nonint}, we have the following result for nonintegral highest weight modules of $E_6$.
\begin{Prop}\label{e6-non}
Let $L(\lam)$ be a simple nonintegral highest weight module of $E_6$  such that $\Phi_{[\lam]}^\vee$ is pseudo-maximal.
	Then one of 
    the following holds:
    \begin{itemize}
        
	\item[(1)] $\Phi_{[\lam]}\simeq D_5$. Then 
 $ \gkd L(\lam) $ can achieve the following values:
$$16, 23, 24, 26, [29, 36].$$

\item[(2)] $\Phi_{[\lam]}\simeq A_5\times A_1$. Then
$ \gkd L(\lam) $ can achieve the following values:
$$20, 21, 25, 26, [28, 36].$$

            \item[(3)] $\Phi_{[\lam]}\simeq A_2\times A_2\times A_2$. Then
$ \gkd L(\lam) $ can achieve the following values:
$$27,  [29, 36].$$

\end{itemize}
\end{Prop}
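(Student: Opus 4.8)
The plan is to follow the template used for Proposition~\ref{f4-nonint} and Proposition~\ref{g2-nonint}: first enumerate the possible pseudo-maximal subsystems, then determine for each one the set of attainable values of $\aff(w_\lambda)$, and finally pass to $\gkd L(\lambda)$ via Proposition~\ref{pr:main1}. For the enumeration I would read off the extended Dynkin diagram of $E_6$ in Figure~\ref{E6-dy} together with Definition~\ref{thmbd}: up to the action of $W$, a root subsystem $\Phi_{[\lambda]}$ with $\Phi_{[\lambda]}^\vee$ pseudo-maximal is one of $D_5$ (coming from $\Phi(1)\simeq\Phi(6)$, i.e.\ deleting an end node of the ordinary diagram), $A_5\times A_1$ (coming from $\tilde{\Phi}(2)\simeq\tilde{\Phi}(3)\simeq\tilde{\Phi}(5)$, i.e.\ deleting a node with mark $2$ on the affine diagram), or $A_2\times A_2\times A_2$ (coming from $\tilde{\Phi}(4)$, deleting the trivalent node with mark $3$); every other candidate $\tilde{\Phi}(i),\Phi(j)$ is $W$-conjugate to a subsystem of one of these, hence not maximal. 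In each case $\Phi_{[\lambda]}\subsetneq\Phi$, so $\lambda$ is automatically nonintegral.

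Next, by Proposition~\ref{pr:main1} we have $\gkd L(\lambda)=|\Phi^+|-\aff(w_\lambda)=36-\aff(w_\lambda)$. Writing $\Phi_{[\lambda]}\simeq\Phi_1\times\cdots\times\Phi_k$, Theorem~\ref{T:main1} (equivalently Corollary~\ref{transf-1} together with Lemma~\ref{alem1}(4)) gives $\aff(w_\lambda)=\sum_i\aff(w_{\lambda'|_{\Phi_i}})$, where each $\Phi_i$ is classical ($D_5$, $A_5$, $A_1$, or $A_2$). As $\lambda$ ranges over all weights with $\Phi_{[\lambda]}$ the fixed subsystem, the restriction $\lambda'|_{\Phi_i}$ ranges over all integral weights of type $\Phi_i$, and these restrictions vary independently over the factors; moreover by Proposition~\ref{integral} the quantity $\aff(w_{\lambda'|_{\Phi_i}})$ attains exactly the list recorded in Lemma~\ref{a-value} for the type $\Phi_i$. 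Hence the set of attainable $\aff(w_\lambda)$ is the sumset of the relevant lists from Lemma~\ref{a-value}.

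The computation then proceeds case by case. For $D_5$, Lemma~\ref{a-value}(11) gives the value set $\{0,1,2,3,4,5,6,7,10,12,13,20\}$, so $\gkd L(\lambda)\in\{16,23,24,26\}\cup[29,36]$. For $A_5\times A_1$, Lemma~\ref{a-value}(5) gives $\{0,1,2,3,4,6,7,10,15\}$ for the $A_5$ factor and Lemma~\ref{a-value}(1) gives $\{0,1\}$ for the $A_1$ factor, whose sumset is $\{0,1,2,3,4,5,6,7,8,10,11,15,16\}$, so $\gkd L(\lambda)\in\{20,21,25,26\}\cup[28,36]$. For $A_2\times A_2\times A_2$, Lemma~\ref{a-value}(2) gives $\{0,1,3\}$ for each factor, and the threefold sumset is $\{0,1,2,3,4,5,6,7,9\}$, so $\gkd L(\lambda)\in\{27\}\cup[29,36]$. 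These are precisely the three lists in the statement, and I would simply carry out this elementary sumset arithmetic in the write-up.

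The main point requiring care is the realization half of the claim: one must check that every element of each sumset is genuinely attained by some $\lambda$ with the prescribed $\Phi_{[\lambda]}$, not merely that the displayed formula confines $\gkd L(\lambda)$ to that set (the latter inclusion being immediate). This rests on (a) the surjectivity of $\lambda'|_{\Phi_i}\mapsto p(\lambda'|_{\Phi_i})$ onto partitions of the correct size and parity type, so that the associated special partition — and hence $\aff$ — runs over the whole Lemma~\ref{a-value} list; and (b) the independence of the parameters $\lambda'|_{\Phi_i}$ across the factors, which allows one to prescribe the value on each factor simultaneously while keeping $\Phi_{[\lambda]}$ equal to the chosen subsystem (e.g.\ by choosing the ``non-integral part'' of $\lambda$ generically). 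Once (a) and (b) are in hand, the proposition follows directly from the arithmetic above, exactly as in Proposition~\ref{f4-nonint}.
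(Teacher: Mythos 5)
Your proposal is correct and takes essentially the same route as the paper: the paper's proof (given only as ``similar arguments as in Proposition~\ref{f4-nonint}'') likewise enumerates the pseudo-maximal subsystems $D_5$, $A_5\times A_1$, $A_2\times A_2\times A_2$ from the extended Dynkin diagram and Definition~\ref{thmbd}, reads off the attainable $\aff$-values factorwise from Lemma~\ref{a-value} (using Lemma~\ref{alem1}(4) for products), and converts via $\gkd L(\lambda)=36-\aff(w_\lambda)$ from Proposition~\ref{pr:main1}. Your sumset arithmetic reproduces exactly the three stated lists, and the attainability point you single out is simply taken for granted in the paper's argument.
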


\begin{ex}
	Let $\mathfrak{g}=E_6$ and $L(\lam)$ be the highest weight module of $\mathfrak{g}$ with  $\lambda=(1,2,1,4,4.5,0.5, 0.5, -0.5)$. It is easy to verify that $\Phi_{[\lambda]}$ is a subsystem with simple roots $\{ \alpha_1,\alpha_2,\alpha_3,  \alpha_4, \alpha_5\}$. Therefore $\Phi_{[\lambda]}\simeq D_5$. Suppose the simple root system of $D_5$ is $\Delta=\{\beta_1=\ep_1-\ep_2, \beta_2=\ep_2-\ep_3, \beta_3=\ep_3-\ep_4,\beta_4=\ep_4-\ep_5,\beta_5=\ep_4+\ep_5\}$. We define an isomorphism map $\phi: \Phi_{[\lambda]} \rightarrow D_5$  such that  $\phi(\alpha_1)=\beta_1, \phi(\alpha_3)=\beta_2, \phi(\alpha_4)=\beta_3, \phi(\alpha_5)=\beta_4,\phi(\alpha_2)=\beta_5$.

Denote ${\lam}'=\phi(\lam|_{\mathfrak{h}_{\Phi_{[\lambda]}}})$. 
Thus 
 \begin{align*}
     \lam'|_{D_5}=& -6\phi(\omega_1)+\phi(\omega_2)-\phi(\omega_3)+3\phi(\omega_4)+3\phi(\omega_5)=(-3,3,2,3,0),
 \end{align*}
which is an integral weight of type $D_5$.
 So $\aff(w_{\lambda})=\aff(w_{\lam'})=7$ by the RS algorithm in \cite{BXX} and Proposition \ref{integral}.
 By Proposition \ref{pr:main1},  we have $\gkd L(\lambda)=|\Phi^+|-\aff(w_{\lambda})=36-7=29$. 
\end{ex}

\subsection{Type \texorpdfstring{$E_7$}{}}
For type $E_7$, we have \begin{align*}
  \Phi^+=&
\{\ep_i \pm \ep_j \mid 6\geq i > j\geq 1\}\cup\{\ep_8-\ep_7\} \\
&\cup
\Big\{ \frac{1}{2}(\sum_{i=1}^6 (-1)^{n(i)}\ep_i -\ep_7+\ep_8)\mid \sum_{i=1}^6n(i) \text{ odd} \Big\}  
\end{align*}
and $ \hs=\{ (\lam_1,\lam_2,\cdots,\lam_8)\in\mathbb{C}^8\mid \lam_7=-\lam_8\} $. The highest root is $$\beta=2\alpha_1+2\alpha_2+3\alpha_3+4\alpha_4+3\alpha_5+2\alpha_6+\alpha_7=-\ep_7+\ep_8.$$

\begin{figure}[htpb]
	\centering

	\begin{tikzpicture}[scale=1.5,baseline=0]
	\draw (-0.75,0.05) node[above=1pt]{} circle [radius=0.05];
	\draw (-0.7,0.05)--++(0.63,0);

 \draw (0,0.05) node[above=1pt]{$ 1 $} circle [radius=0.05];
	\draw (0.05,0.05)--++(0.65,0); 
	\draw (0.75,0.05) node[above=1pt]{$ 3 $} circle[radius=0.05];  
	\draw (0.8,0.05)--++(0.65,0);
	\draw  (1.45,0) ++(0.05,0.05) node[above=1pt]{$ 4$} circle[radius=0.05];
	\draw (1.55,0.05)--++(0.65,0);
	\draw  (2.2,0) ++(0.05,0.05) node[above=1pt]{$ 5$} circle[radius=0.05];
	\draw (2.3,0.05)--++(0.65,0);
	\draw  (2.95,0) ++(0.05,0.05) node[above=1pt]{$ 6$} circle[radius=0.05];
	\draw (3.05,0.05)--++(0.65,0);
	\draw  (3.7,0) ++(0.05,0.05) node[above=1pt]{$ 7$} circle[radius=0.05];
	\draw (1.5,0)--++(0,-0.5);
	\draw  (1.45,-0.6) ++(0.05,0.05) node[right=1pt]{$ 2$} circle[radius=0.05];
	\end{tikzpicture}
	
	\caption{Extended Dynkin diagram of $ E_7 $ }
	\label{E7-dy}
\end{figure}

Fix $ \lam\in\hs $. We have \[
\bil{\lam}{(\alpha_1)}=\frac{1}{2}(\lam_1- \lam_2-\cdots-\lam_7+\lam_8),
\]
\[
\bil{\lam}{(\ep_i\pm \ep_j)}=\lam_i\pm \lam_j.
\]
Then $ \lam=(\lam_1,\lam_2,\cdots,\lam_8) $ is an integral weight if and only if $\lam_1- \lam_2-\cdots-\lam_7+\lam_8\in 2\mathbb{Z}$,  $ \lam_1-\lam_2\in\mathbb{Z}, \lam_2-\lam_3\in\mathbb{Z},\lam_3-\lam_4\in\mathbb{Z},\lam_4-\lam_5\in\mathbb{Z}, \lam_5-\lam_6\in\mathbb{Z}$ and $2\lam_i\in \mathbb{Z} $ for $1\lest i\lest 6$.


By similar arguments as in Proposition \ref{f4}, we have the following result.

\begin{Prop}\label{E7}
Let $L(\lam)$ be a simple integral highest weight module of $E_7$. Then
    the following holds:
    \begin{itemize}
        \item[(1)] $\gkd L(\lambda)=0$ if and only if $\lambda$ is dominant integral.
         \item[(2)] $\gkd L(\lambda)=63$ if and only if  $\lambda$ is antidominant integral.  
        \item[(3)] $\gkd L(\lambda)=17$ if and only if $w_{\lambda}\in\mathcal{C}w_0$.
       \item[(4)] $\gkd L(\lambda)=62$ if and only if $w_{\lambda}\in \mathcal{C}$.
     \item[(5)] $\gkd L(\lambda)=26$ if and only if \begin{align*}
          w_{\lambda}\sim_{LR}&[0, 1, 2, 0, 3, 1, 2, 0, 3, 2, 4, 3, 1, 2, 0, 3, 2, 4, 3, 1, 5, 4, 3,\\
          & 1, 2, 0, 3, 2, 4, 3, 1, 5, 4, 3, 2, 0, 6, 5, 4, 3, 1, 2, 3, 4, 5, 6].
      \end{align*}
      \item[(6)] $\gkd L(\lambda)=27$ if and only if \begin{align*}
            w_{\lambda}&\sim_{LR}w_{\{\alpha_1,\alpha_2,\alpha_3,\alpha_4,\alpha_5,\alpha_6\}}=w_{E_6}.   
      \end{align*}
      \item[(7)] $\gkd L(\lambda)=33$ if and only if \begin{align*}
           w_{\lambda}&\sim_{LR}w_{\{\alpha_2,\alpha_3,\alpha_4,\alpha_5,\alpha_6,\alpha_7\}}\\ 
           =&[1, 2, 3, 1, 2, 3, 4, 3, 1, 2, 3, 4, 5, 4, 3, 1, 2, 3, 4, 5, 6, 5, 4, 3, 1, 2, 3, 4, 5, 6].
      \end{align*}
\item[(8)] $\gkd L(\lambda)=38$ if and only if 
\begin{align*}
    w_{\lambda}\sim_{LR}&[1, 2, 3, 1, 2, 0, 3, 4, 3, 1, 2, 0,3, 2, 4,3,\\
    &  5, 4, 3, 1, 2, 0, 3, 2, 4, 3, 1, 5, 4, 3, 2].
\end{align*}

\item[(9)] $\gkd L(\lambda)=41$ if and only if 
\begin{align*}
    w_{\lambda}\sim_{LR}&[0, 2, 0, 3, 2, 0, 4, 3, 2, 0, 5, 4, 3, 2, 0, 6, \\
    & 5, 4, 3, 1, 2, 0, 3, 2, 4, 3, 1, 5, 4, 3, 2].
\end{align*}

\item[(10)] $\gkd L(\lambda)=42$ if and only if
\begin{align*}w_{\lambda}&\sim_{LR}w_{\{\alpha_1,\alpha_2,\alpha_3,\alpha_4,\alpha_5,\alpha_7\}}=[4,1,3,4,1,3,2,3,4,1,3,2,0,2,3,4,1,3,2,0,6]\\
&\text{~and~} V(\Ann (L(\lam)))=\overline{\mathcal{O}}(2A_2),  \end{align*}
        or
        \begin{align*}
w_{\lambda}&\sim_{LR}w_{\{\alpha_1,\alpha_3,\alpha_4,\alpha_5,\alpha_6,\alpha_7\}} \text{~and~} V(\Ann (L(\lam)))=\overline{\mathcal{O}}(A_2+3A_1),
        \end{align*}
or 
\begin{align*}
w_{\lambda}&\sim_{LR} [1, 3, 1, 2, 4, 3, 1, 2, 3, 5, 4, 3, 1, 2, 3, 4, 6, 5, 4, 3, 1, 2, 3, 4, 5]\\
& \text{~and~} V(\Ann (L(\lam)))=\overline{\mathcal{O}}(A_3).
\end{align*}
       
        \item[(11)] $\gkd L(\lambda)=43$ if and only if $$w_{\lambda}\sim_{LR}w_{\{\alpha_2,\alpha_3,\alpha_4,\alpha_5,\alpha_6\}}=[1,2,3,1,2,3,4,3,1,2,3,4,5,4,3,1,2,3,4,5].$$
    \item[(12)] $\gkd L(\lambda)=47$ if and only if \begin{align*}
          w_{\lambda}&\sim_{LR}w_{\{\alpha_1,\alpha_2,\alpha_4,\alpha_5,\alpha_6,\alpha_7\}} \text{~and~} V(\Ann (L(\lam)))=\overline{\mathcal{O}}({D}_4(a_1)).
      \end{align*}
        \item[(13)] $\gkd L(\lambda)=48$ if and only if \begin{align*}w_{\lambda}&\sim_{LR}w_{\{\alpha_1,\alpha_3,\alpha_4,\alpha_5,\alpha_6\}}\text{~and~} V(\Ann (L(\lam)))=\overline{\mathcal{O}}({D}_4(a_1)+A_1),
\end{align*}
        or
        \begin{align*}w_{\lambda}&\sim_{LR}w_{\{\alpha_2,\alpha_4,\alpha_5,\alpha_6,\alpha_7\}}\text{~and~} V(\Ann (L(\lam)))=\overline{\mathcal{O}}({D}_4).
            \end{align*}
   
      \item[(14)] $\gkd L(\lambda)=49$ if and only if 
     $$w_{\lambda}\sim_{LR} [0, 1, 2, 0, 3, 1, 2, 0, 3, 2, 4, 3, 1, 2, 0, 3, 4, 5, 4, 3, 6, 5, 4, 3, 1, 2, 0, 3].$$
         \item[(15)] $\gkd L(\lambda)=50$ if and only if \begin{align*}
w_{\lambda}&\sim_{LR}w_{\{\alpha_2,\alpha_3,\alpha_4,\alpha_5,\alpha_7\}}=[1,2,3,1,2,3,4,3,1,2,3,4,6]\\
&\text{~and~} V(\Ann (L(\lam)))=\overline{\mathcal{O}}({A}_4),
         \end{align*}
        or
\begin{align*}
w_{\lambda}&\sim_{LR}w_{\{\alpha_1,\alpha_2,\alpha_3,\alpha_4,\alpha_6,\alpha_7\}} \text{~and~} V(\Ann (L(\lam)))=\overline{\mathcal{O}}(A_3+A_2+A_1).
         \end{align*}
        
        \item[(16)] $\gkd L(\lambda)=51$ if and only if 
        \begin{align*}
w_{\lambda}&\sim_{LR}w_{\{\alpha_2,\alpha_3,\alpha_4,\alpha_5\}}=[1,2,3,1,2,3,4,3,1,2,3,4]\\
&\text{~and~} V(\Ann (L(\lam)))=\overline{\mathcal{O}}(({A}_5)'').
        \end{align*}
        
        
        \item[(17)] $\gkd L(\lambda)=52$ if and only if $$w_{\lambda}\sim_{LR}w_{\{\alpha_1,\alpha_2,\alpha_4,\alpha_5,\alpha_6\}}.$$
        \item[(18)] $\gkd L(\lambda)=53$ if and only if \begin{align*}
w_{\lambda}&\sim_{LR}w_{\{\alpha_3,\alpha_4,\alpha_5,\alpha_6\}}\text{~and~} V(\Ann (L(\lam)))=\overline{\mathcal{O}}({D}_5(a_1)),
\end{align*}
        or
\begin{align*}
w_{\lambda}&\sim_{LR}w_{\{\alpha_1,\alpha_3,\alpha_2,\alpha_5,\alpha_6,\alpha_7\}} \text{~and~} V(\Ann (L(\lam)))=\overline{\mathcal{O}}(A_4+A_2).
\end{align*}

  \item[(19)] $\gkd L(\lambda)=54$ if and only if \begin{align*}
    w_{\lambda}&\sim_{LR}w_{\{\alpha_1,\alpha_3,\alpha_5,\alpha_6,\alpha_7\}}\text{~and~} V(\Ann (L(\lam)))=\overline{\mathcal{O}}({D}_5(a_1)+A_1).
  \end{align*}
  
    \item[(20)] $\gkd L(\lambda)=55$ if and only if $$w_{\lambda}\sim_{LR}w_{\{\alpha_1,\alpha_2,\alpha_5,\alpha_6,\alpha_7\}}\text{~and~} V(\Ann (L(\lam)))=\overline{\mathcal{O}}({E}_6(a_3)).$$ 
      
        \item[(21)] $\gkd L(\lambda)=56$ if and only if $$w_{\lambda}\sim_{LR}w_{\{\alpha_1,\alpha_4,\alpha_5,\alpha_6\}}\text{~and~} V(\Ann (L(\lam)))=\overline{\mathcal{O}}({E}_7(a_5)),$$
        or
$$w_{\lambda}\sim_{LR}w_{\{\alpha_7,\alpha_2,\alpha_4,\alpha_5\}}\text{~and~} V(\Ann (L(\lam)))=\overline{\mathcal{O}}({D}_5).$$
        \item[(22)] $\gkd L(\lambda)=57$ if and only if $$w_{\lambda}\sim_{LR}w_{\{\alpha_4,\alpha_5,\alpha_6\}} \text{~and~} V(\Ann (L(\lam)))=\overline{\mathcal{O}}({D}_6(a_1)),$$ or $$w_{\lambda}\sim_{LR}w_{\{\alpha_1,\alpha_3,\alpha_5,\alpha_6\}} \text{~and~} V(\Ann (L(\lam)))=\overline{\mathcal{O}}({D}_5+A_1),$$ or 
$$w_{\lambda}\sim_{LR}w_{\{\alpha_1,\alpha_3,\alpha_2,\alpha_5,\alpha_7\}} \text{~and~} V(\Ann (L(\lam)))=\overline{\mathcal{O}}(A_6).$$
        \item[(23)] $\gkd L(\lambda)=58$ if and only if $$w_{\lambda}\sim_{LR}w_{\{\alpha_1,\alpha_2,\alpha_5,\alpha_6\}}.$$
        \item[(24)] $\gkd L(\lambda)=59$ if and only if $$w_{\lambda}\sim_{LR}w_{\{\alpha_1,\alpha_5,\alpha_6\}} \text{~and~} V(\Ann (L(\lam)))=\overline{\mathcal{O}}({E}_6(a_1)).$$
        \item[(25)] $\gkd L(\lambda)=60$ if and only if $$w_{\lambda}\sim_{LR}w_{\{\alpha_5,\alpha_6\}} \text{~and~} V(\Ann (L(\lam)))=\overline{\mathcal{O}}({E}_7(a_3)),$$ or 
$$w_{\lambda}\sim_{LR}w_{\{\alpha_2,\alpha_5,\alpha_7\}}  \text{~and~} V(\Ann (L(\lam)))=\overline{\mathcal{O}}({E}_6).$$
        \item[(26)] $\gkd L(\lambda)=61$ if and only if $w_{\lambda}\sim_{LR}w_{\{\alpha_1,\alpha_6\}}$.
    \end{itemize}
\end{Prop}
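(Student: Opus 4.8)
The plan is to run, for $\Phi=E_7$, exactly the argument already used for Proposition~\ref{f4} and Proposition~\ref{E6}. By Proposition~\ref{pr:main1}, for an integral weight $\lambda$ we have $\gkd L(\lambda)=63-\aff(w_\lambda)$, since $|\Phi^+|=63$ for $E_7$. By Lemma~\ref{alem1}(2) the $\aff$-function is constant on each two-sided cell of $W=W(E_7)$, and by Proposition~\ref{intehral-W} the two-sided cells are in bijection, via $w\mapsto\mathcal O_w$ and the Springer correspondence, with the special nilpotent orbits of $E_7$. So the proof reduces to: (a) list the special nilpotent orbits of $E_7$ together with their dimensions (read off from \cite{Ca85,CM}); (b) compute the $\aff$-value of each corresponding two-sided cell; and (c) exhibit for each cell a representative $w$, so that $\gkd L(\lambda)$ equals the asserted value precisely when $w_\lambda\sim_{LR}w$. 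Recall $\gkd L(\lambda)=\tfrac12\dim\mathcal O_{\Ann(L(\lambda))}$ by Proposition~\ref{2dim}, so for most orbits the value of $\gkd$ already pins down $\mathcal O_{\Ann(L(\lambda))}$ uniquely.

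First I would dispose of the extreme and ``abstract'' cells. The zero orbit corresponds to the cell $\{e\}$: when $\lambda$ is antidominant integral, $w_\lambda=e$ and $\aff(e)=0$, giving $\gkd=63$, and conversely; when $\lambda$ is dominant integral, $L(\lambda)$ is finite dimensional, so $\gkd=0$ (the regular orbit, cell $\{w_0\}$). For the minimal special orbit and the subregular orbit $E_7(a_1)$ I invoke \cite[Lem.~3.5]{BMXX}: $\mathcal Cw_0$ is the two-sided cell attached to the minimal special orbit and $\mathcal C$ to the subregular one; since $\dim(\text{min.\ orbit})=34$ and $\dim E_7(a_1)=124$ (equivalently, from $\aff(\mathcal Cw_0)$ and $\aff(\mathcal C)$ via \cite[Prop.~3.8]{Lus83}), this gives $\gkd=17$ and $\gkd=62$, i.e.\ items (2)--(4).

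Next, for $I\subset\Delta$ the longest element $w_I$ of $W_I$ satisfies $\aff(w_I)=\ell(w_I)=|\Phi_I^+|$ by Lemma~\ref{alem1}(3), with $w_I$ written explicitly via Lemma~\ref{long} when $\Phi_I$ is of type $A$ or $D$; the associated orbit $\mathcal O_{w_I}$ is the distinguished orbit attached to $I$ by Bala--Carter theory \cite{BC76}, identified when necessary by computing its Springer character in PyCox \cite{ge} and comparing it against \cite[p.~428]{Ca85} and \cite[Table~C.3]{GP} (keeping track of the differing labelling conventions, exactly as in the worked $F_4$ example). For the special cells containing no $w_I$ I use the PyCox routines \texttt{klcellrepelm}/\texttt{klcellreps}, as in Remark~\ref{represent}, to read off the $\aff$-value and a short representative word; this produces the explicit (often long) words displayed in items (5), (8), (9), (14), and similar places. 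Running through all $I\subset\Delta$ and all two-sided cells in this fashion accounts for every $\aff$-value that occurs, hence for every value of $\gkd L(\lambda)$ in the statement.

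The hard part will be the cases where several special orbits of $E_7$ share the same dimension, and thus give the same $\gkd$: namely $\gkd=42$ ($2A_2$, $A_2+3A_1$, $A_3$), $\gkd=48$ ($D_4(a_1)+A_1$, $D_4$), $\gkd=50$ ($A_4$, $A_3+A_2+A_1$), $\gkd=53$ ($D_5(a_1)$, $A_4+A_2$), $\gkd=56$ ($E_7(a_5)$, $D_5$), $\gkd=57$ ($D_6(a_1)$, $D_5+A_1$, $A_6$), and $\gkd=60$ ($E_7(a_3)$, $E_6$). There $\gkd$ no longer determines $\mathcal O_{\Ann(L(\lambda))}$, so I must compute the special Springer representation $\pi_{w_\lambda}$ attached to the two-sided cell of $w_\lambda$: using PyCox as in Remark~\ref{represent} to enumerate the distinct left cells sharing the given $\aff$-value, picking one representative per index and reading off its character, then matching that character to the correct nilpotent orbit via \cite[p.~428]{Ca85}, \cite[Table~C.3]{GP}, and Proposition~\ref{2dim}. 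Finally, checking that the specific long words written out in the statement genuinely lie in the claimed two-sided cells is a finite (but sizeable) verification, again carried out with PyCox; everything else is bookkeeping.
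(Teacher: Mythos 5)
Your proposal is correct and follows essentially the same route as the paper, which proves this proposition by exactly the argument of Proposition \ref{f4}: combining $\gkd L(\lambda)=63-\aff(w_\lambda)$ with the cell--special-orbit bijection, $\aff(w_I)=\ell(w_I)=|\Phi_I^+|$, the dominant/antidominant extremes, \cite[Lem.~3.5]{BMXX} for $\mathcal{C}$ and $\mathcal{C}w_0$, and PyCox (as in Remark \ref{represent}) to produce representatives and to separate the special orbits of equal dimension. No substantive difference.
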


In the above proposition, we still use PyCox to find a representative element in that two-sided cell with  a given $\aff$ value.

For non-integral highest weight modules of $E_7$, we have the following.

\begin{Prop}
Let $L(\lam)$ be a simple nonintegral highest weight module of $E_7$ such that $\Phi_{[\lam]}^\vee$ is pseudo-maximal. Then one of the following holds:
    \begin{itemize}
	\item[(1)] $\Phi_{[\lam]}\simeq D_6\times A_1$. Then 
 $ \gkd L(\lam) $ can achieve the following values:
$$32,33,41,42,43,[46,63].$$

\item[(2)]  $\Phi_{[\lam]}\simeq A_7$. Then
 $ \gkd L(\lam) $ can achieve the following values:
$$35,42,47,48,[50,63].$$

    \item[(3)] $\Phi_{[\lam]}\simeq A_5\times A_2$. 
 Then 
 $ \gkd L(\lam) $ can achieve the following values:
$$45,47,48,50,[52,63].$$

    \item[(4)] $\Phi_{[\lam]}\simeq A_3\times A_3\times A_1$. Then 
 $ \gkd L(\lam) $ can achieve the following values:
$$50,51,[53,63].$$

\item[(5)]  $\Phi_{[\lam]}\simeq E_6$. Then 
 $ \gkd L(\lam) $ can achieve the following values:
$$27,38,43,48,50,51,52,53,[56,63].$$

\end{itemize}
\end{Prop}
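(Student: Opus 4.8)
The plan is to follow the template of the proofs of Proposition~\ref{f4-nonint} and Proposition~\ref{e6-non}. The two inputs are Proposition~\ref{pr:main1}, which gives $\gkd L(\lambda)=|\Phi^+|-\aff(w_\lambda)=63-\aff(w_\lambda)$ for $E_7$, and Theorem~\ref{T:main1} (equivalently Corollary~\ref{transf-1} combined with Lemma~\ref{alem1}(4)), which reduces $\aff(w_\lambda)$ to the sum $\sum_i\aff(w_{\lambda'|_{\Phi_i}})$ over the irreducible factors $\Phi_i$ of $\Phi_{[\lambda]}$.

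\emph{Step 1: the possibilities for $\Phi_{[\lambda]}^\vee$.} Since $E_7$ is simply laced, $\Phi^\vee$ is again of type $E_7$, so by Definition~\ref{thmbd} a pseudo-maximal subsystem is, up to the action of $W$, a maximal proper member of $\{\tilde\Phi(i),\Phi(j)\}$. Deleting one node of the extended Dynkin diagram (Figure~\ref{E7-dy}) gives the rank-$7$ subsystems $D_6\times A_1$, $A_7$, $A_5\times A_2$ and $A_3\times A_3\times A_1$; deleting one node of the ordinary diagram gives the rank-$6$ subsystems $E_6,\,D_6,\,A_6,\,A_5\times A_1,\,A_3\times A_2\times A_1,\,A_4\times A_2,\,D_5\times A_1$, and among the latter only $E_6$ does not embed (up to $W$) into one of the rank-$7$ ones. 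This yields precisely the five cases listed.

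\emph{Step 2: the attainable values of $\gkd$.} Fix one of the five isomorphism types $\Phi_{[\lambda]}\simeq\Phi_1\times\cdots\times\Phi_k$. As $\lambda$ varies over all weights of $E_7$ whose integral root system is of this type, $w_\lambda$ runs over all of $W_{[\lambda]}$: indeed, choosing the antidominant representative $\mu$ in the orbit to be regular (so that $J=\varnothing$ and $W_{[\mu]}^J=W_{[\mu]}$), for every $w\in W_{[\mu]}$ the weight $\lambda=w\mu$ has $\Phi_{[\lambda]}=\Phi_{[\mu]}$ and $w_\lambda=w$. Since $\aff$ is constant on two-sided cells (Lemma~\ref{alem1}) and additive over direct products (Lemma~\ref{alem1}(4)), the attainable values of $\aff(w_\lambda)$ are exactly the sums $\sum_i a_i$ with $a_i$ ranging over the complete list of $\aff$-values of $W(\Phi_i)$. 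For the classical factors $D_6,A_7,A_5,A_3,A_2,A_1$ these lists are read off from Lemma~\ref{a-value}; for the factor $E_6$ the list is $\{0,1,2,3,4,5,6,7,10,11,12,13,15,20,25,36\}$, obtained from PyCox via Lemma~\ref{find-w-lambda} (equivalently, it consists of the integers $36-d$ with $d$ a value of $\gkd$ occurring in Proposition~\ref{E6}). Forming all sums and subtracting from $63$ gives the five claimed sets; for instance, for $D_6\times A_1$ the attainable $\aff$-values are $\{0,1,\dots,17\}\cup\{20,21,22\}\cup\{30,31\}$, whence $\gkd L(\lambda)\in\{32,33\}\cup\{41,42,43\}\cup[46,63]$, and the remaining four cases are the same short finite computation.

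The one step that requires genuine care --- and which I would isolate as a lemma --- is the assertion used in Step~2 that \emph{every} $\aff$-value of $W_{[\lambda]}$ is actually realized by some $w_\lambda$, rather than merely giving an upper bound for $\gkd L(\lambda)$. This reduces to producing, for each of the five subsystems, a regular antidominant non-integral weight $\mu$ of $E_7$ with $\Phi_{[\mu]}$ of the prescribed type; such a $\mu$ exists because every pseudo-Levi subsystem of $\Phi$ occurs as $\Phi_{[\mu]}$ for a suitable $\mu$, and a sufficiently generic such weight is moreover regular. Writing this out explicitly in Bourbaki coordinates is routine, as is the Borel--de~Siebenthal-type bookkeeping in Step~1 and the arithmetic in Step~2.
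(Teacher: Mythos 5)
Your proposal is correct and follows essentially the same route as the paper: classify the pseudo-maximal subsystems from the (extended) Dynkin diagram of $E_7$, then combine Proposition \ref{pr:main1} with the additivity of the $\aff$-function and the $\aff$-value lists (Lemma \ref{a-value} for the classical factors, PyCox/Proposition \ref{E6} for the $E_6$ factor) to get $\gkd L(\lambda)=63-\aff(w_\lambda)$, exactly as in the proofs of Propositions \ref{f4-nonint} and \ref{e6-non}. Your explicit realizability step (choosing a regular antidominant nonintegral $\mu$ with the prescribed $\Phi_{[\mu]}$ so that every element of $W_{[\mu]}$ occurs as some $w_\lambda$) is a point the paper leaves implicit, and it is a sound addition rather than a departure.
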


\begin{proof}
The argument is similar to that in the proof of Proposition \ref{e6-non}.

\end{proof}

\begin{ex}
	Let $\mathfrak{g}=E_7$ and $L(\lam)$ be the highest weight module of $\mathfrak{g}$ with  $\lambda=(\frac{1}{4},\frac{1}{4},\frac{1}{4},\frac{1}{4},\frac{1}{4},-\frac{3}{4},-1,1)$. It is easy to verify that $\Phi_{[\lambda]}$ is a subsystem with simple roots $\{ \alpha_1,\alpha_3,\alpha_4,\alpha_5,\alpha_6,\alpha_7,\gamma\}$ where $\gamma=\frac{1}{2}(\ep_1+\ep_2+\ep_3+\ep_4+\ep_5-\ep_6-\ep_7+\ep_8)$. Therefore $\Phi_{[\lambda]}\simeq A_7$.  We define the  isomorphism $\phi:  \Phi_{[\lambda]} \rightarrow A_7$ such that  $\phi(\alpha_1)=\ep_1-\ep_2, \phi(\alpha_3)=\ep_2-\ep_3, \phi(\alpha_4)=\ep_3-\ep_4, \phi(\alpha_5)=\ep_4-\ep_5,\phi(\alpha_6)=\ep_5-\ep_6,$  $\phi(\alpha_7)=\ep_6-\ep_7,\phi(\gamma)=\ep_7-\ep_8$.
Thus by the algorithm in \S \ref{nonint}, we can get
\begin{align*}
            {\lam}'={\lam}'|_{A_7}=\phi(\omega_1)-\phi(\omega_6)+2\phi(\omega_7)
           =\left(\frac{7}{8},-\frac{1}{8},-\frac{1}{8},-\frac{1}{8},-\frac{1}{8},-\frac{1}{8},\frac{7}{8},-\frac{9}{8}\right),
        \end{align*} 
      which  is an integral weight of type $A_7$.
 It gives us a partition ${\bf p}=[6,1,1]$. So $\aff(w_{\lambda})=\aff(w_{\lambda'})=3$ by the RS algorithm in \cite{BXX} and Proposition \ref{integral}. 
 By Proposition \ref{pr:main1},  we have $\gkd L(\lambda)=|\Phi^+|-\aff(w_{\lambda})=63-3=60$.
\end{ex}

\subsection{Type \texorpdfstring{$E_8$}{}}
For type $E_8$, we have \begin{align*}
  \Phi^+=&
\{\ep_i \pm \ep_j \mid 8\geq i > j\geq 1\} \cup
\Big\{\frac{1}{2}(\sum_{i=1}^7 (-1)^{n(i)}\ep_i+\ep_8)\mid \sum_{i=1}^7n(i) \text{ even}\Big\}
\end{align*}
and $ \hs=\mathbb{C}^8$. The highest root is $$\beta=2\alpha_1+3\alpha_2+4\alpha_3+6\alpha_4+5\alpha_5+4\alpha_6+3\alpha_7+2\alpha_8=\ep_7+\ep_8.$$

\begin{figure}[htpb]
	\centering

	\begin{tikzpicture}[scale=1.5,baseline=0]

 \draw (0,0.05) node[above=1pt]{$ 1 $} circle [radius=0.05];
	\draw (0.05,0.05)--++(0.65,0); 
	\draw (0.75,0.05) node[above=1pt]{$ 3 $} circle[radius=0.05];  
	\draw (0.8,0.05)--++(0.65,0);
	\draw  (1.45,0) ++(0.05,0.05) node[above=1pt]{$ 4$} circle[radius=0.05];
	\draw (1.55,0.05)--++(0.65,0);
	\draw  (2.2,0) ++(0.05,0.05) node[above=1pt]{$ 5$} circle[radius=0.05];
	\draw (2.3,0.05)--++(0.65,0);
	\draw  (2.95,0) ++(0.05,0.05) node[above=1pt]{$ 6$} circle[radius=0.05];
	\draw (3.05,0.05)--++(0.65,0);
	\draw  (3.7,0) ++(0.05,0.05) node[above=1pt]{$ 7$} circle[radius=0.05];
	\draw (3.8,0.05)--++(0.65,0);
	\draw  (4.45,0) ++(0.05,0.05) node[above=1pt]{$ 8$} circle[radius=0.05];
	\draw (1.5,0)--++(0,-0.5);
	\draw  (1.45,-0.6) ++(0.05,0.05) node[right=1pt]{$ 2$} circle[radius=0.05];

\draw (5.2,0.05) node[above=1pt]{} circle [radius=0.05];
	\draw (4.53,0.05)--++(0.65,0);

	\end{tikzpicture}
	\caption{Extended Dynkin Diagram of $ E_8 $ }
	\label{E8-dy}
\end{figure}

Fix $ \lam=\sum_{i=1}^8 \lambda_i e_i \in\hs $. We have \[
\bil{\lam}{(\alpha_1)}=\frac{1}{2}(\lam_1- \lam_2-\cdots-\lam_7+\lam_8),
\]
\[
\bil{\lam}{(\ep_i\pm \ep_j)}=\lam_i\pm \lam_j.
\]
Then $ \lam=(\lam_1,\lam_2,\cdots,\lam_8) $ is an integral weight if and only if $\lam_1- \lam_2-\cdots-\lam_7+\lam_8\in 2\mathbb{Z}$,  $ \lam_1-\lam_2\in\mathbb{Z}, \lam_2-\lam_3\in\mathbb{Z},\lam_3-\lam_4\in\mathbb{Z},\lam_4-\lam_5\in\mathbb{Z}, \lam_5-\lam_6\in\mathbb{Z},\lam_6-\lam_7\in\mathbb{Z}$ and $2\lam_i\in \mathbb{Z} $ for $1\lest i\lest 7$.

  By similar arguments as in Proposition \ref{f4}, we have the following result.

\begin{Prop}\label{E8}
Let $L(\lam)$ be a simple integral highest weight module of $E_8$. Then
    the following holds:
    \begin{itemize}
        \item[(1)] $\gkd L(\lambda)=0$ if and only if $\lambda$ is dominant integral.
         \item[(2)] $\gkd L(\lambda)=120$ if and only if  $\lambda$ is antidominant integral.  
        \item[(3)] $\gkd L(\lambda)=29$ if and only if $w_{\lambda}\in\mathcal{C}w_0$.
       \item[(4)] $\gkd L(\lambda)=119$ if and only if $w_{\lambda}\in \mathcal{C}$.

\item[(5)] $\gkd L(\lambda)=46$ if and only if \begin{align*}
          w_{\lambda}\sim_{LR} &  
      [0, 1, 2, 0, 3, 1, 2, 0, 3, 2, 4, 3, 1, 2, 0, 3, 2, 4, 3, 1,5,\\
      & 4, 3, 1, 2, 0, 3, 2, 4, 3, 1, 5, 4, 3, 2, 0, 6, 5, 4, 3, 1, 2,\\
      & 0, 3, 2, 4, 3, 1, 5, 4, 3, 2, 6, 5, 4, 3, 1, 7, 6, 5, 4, 3, 1, \\
      &2, 0, 3, 2, 4, 3, 1, 5, 4, 3, 2, 0, 6, 5, 4, 3, 1, 2, 3, 4, 5, \\
      &7, 6, 5, 4, 3, 1, 2, 0, 3, 2, 4, 3, 1, 5, 4, 3, 2, 0].
\end{align*}

 \item[(6)] $\gkd L(\lambda)=57$ if and only if \begin{align*}
          w_{\lambda}\sim_{LR}&w_{\{\alpha_1,\alpha_2,\alpha_3,\alpha_4,\alpha_5,\alpha_6,\alpha_7\}}=w_{E_7}.
      \end{align*}    

\item[(7)] $\gkd L(\lambda)=68$ if and only if \begin{align*}
          w_{\lambda}\sim_{LR} &  
      [1, 2, 3, 1, 2, 0, 3, 4, 3, 1, 2, 0, 3, 2, 4, 3, 5, 4, 3, 1, 2, 0,  \\& 3, 2, 4, 3, 1, 5, 4, 3, 2, 6, 5, 4, 3, 1, 2, 0, 3, 2, 4, 3, 1, 5,  \\& 4, 3, 2, 0, 6, 5, 4, 3, 1, 2, 3, 4, 5, 6, 7, 6, 5, 4, 3, 1, 2, 0,  \\&3, 2, 4, 3, 1, 5, 4, 3, 2, 0, 6, 5, 4, 3, 1, 2, 3, 4, 5, 6].
\end{align*}
\item[(8)] $\gkd L(\lambda)=73$ if and only if \begin{align*}
          w_{\lambda}\sim_{LR}   &
      [0, 1, 2, 0, 3, 1, 2, 0, 3, 2, 4, 3, 1, 2, 0, 3, 2, 4, 3, 1, 5, 4, 3, \\& 1,  2, 0, 3, 2, 4, 3, 1, 5, 4, 3, 2, 0, 6, 5, 4, 3, 1, 2, 0, 3, 2, 4,  \\&3, 1, 5, 4, 3, 2, 0, 6, 5, 4, 3, 1, 7, 6, 5, 4, 3, 1, 2, 0, 3, 2, 4, \\&3, 1, 5,  4, 3, 2, 6, 5, 4, 7, 6, 5].
\end{align*}

\item[(9)] $\gkd L(\lambda)=74$ if and only if \begin{align*}
          w_{\lambda}\sim_{LR}   &
      [0, 1, 2, 0, 3, 1, 2, 0, 3, 2, 4, 3, 1, 2, 0, 3, 2, 4, 3, 1, \\& 5, 4, 3, 1, 2, 0, 3, 2, 4, 3, 1, 5, 4, 3, 2, 0, 6, 5, 4, 3,   \\&1, 2, 0, 3, 2, 4, 3, 1, 5, 4, 3, 2, 6, 5, 4, 3, 1].
\end{align*}

\item[(10)] $\gkd L(\lambda)=78$ if and only if \begin{align*}
          w_{\lambda}&\sim_{LR} w_{\{\alpha_2,\alpha_3,\alpha_4,\alpha_5,\alpha_6,\alpha_7,\alpha_8\}}\\  
           &=[1, 2, 3, 1, 2, 3, 4, 3, 1, 2, 3, 4, 5, 4, 3, 1, 2, 3, 4, 5, 6,\\
           & \ \quad 5, 4, 3, 1, 2, 3, 4, 5, 6, 7, 6, 5, 4, 3, 1, 2, 3, 4, 5, 6, 7].\\          
      \end{align*}

        \item[(11)] $\gkd L(\lambda)=83$ if and only if 
    \begin{align*}
           &w_{\lambda}\sim_{LR}w_{\{\alpha_1,\alpha_2,\alpha_3,\alpha_4,\alpha_5,\alpha_6,\alpha_8\}}=w_{E_6}s_8.  
      \end{align*}


 \item[(12)] $\gkd L(\lambda)=84$ if and only if \begin{align*}
          &w_{\lambda}\sim_{LR}w_{\{\alpha_1,\alpha_2,\alpha_3,\alpha_4,\alpha_5,\alpha_6\}}=w_{E_6}
          \text{~and~} V(\Ann (L(\lam)))=\overline{\mathcal{O}}(D_4).
      \end{align*}

\item[(13)] $\gkd L(\lambda)=88$ if and only if \begin{align*}
          w_{\lambda}\sim_{LR}   &
     [1, 2, 3, 1, 2, 3, 4, 3, 1, 2, 3, 4, 5, 4, 3, 1, 2, 0, 3, 4, 5, 6, 5, 4,\\& 3, 1, 2, 0, 3, 2, 4, 3, 1, 5, 4, 3, 2, 0, 6, 5, 4, 7, 6, 5, 4, 3, 1, 2,\\& 0, 3, 2, 4, 3, 1, 5, 4, 3, 2, 0, 6, 5, 4, 3, 1, 2, 3, 4, 5].
\end{align*}
\item[(14)] $\gkd L(\lambda)=89$ if and only if \begin{align*}
          w_{\lambda}\sim_{LR}   &
      [1, 3, 1, 2, 4, 3, 1, 2, 3, 5, 4, 3, 1, 2, 3, 4, 5, 6, 5, 4, 3, 1, 2,\\& 0, 3, 2, 4, 3, 5, 6, 7, 6, 5, 4, 3, 1, 2, 0, 3, 2, 4, 3, 1, 5, 4, 3,\\& 2, 0, 6, 5, 4, 3, 1, 2, 3, 4, 5, 6, 7, 6, 5, 4, 3, 1, 2, 0, 3].
\end{align*}

\item[(15)] $\gkd L(\lambda)=90$ if and only if \begin{align*}
           w_{\lambda}&\sim_{LR}w_{\{\alpha_2,\alpha_3,\alpha_4,\alpha_5,\alpha_6,\alpha_7\}}\\  &=[1, 2, 3, 1, 2, 3, 4, 3, 1, 2, 3, 4, 5, 4, 3, 1, 2, 3, 4, 5, 6, 5, 4, 3, 1, 2, 3, 4, 5, 6]. 
      \end{align*}
\item[(16)] $\gkd L(\lambda)=92$ if and only if \begin{align*}
          &w_{\lambda}\sim_{LR}w_{\{\alpha_1,\alpha_3,\alpha_4,\alpha_5,\alpha_6,\alpha_7,\alpha_8\}} \text{~and~} V(\Ann (L(\lam)))=\overline{\mathcal{O}}({D}_4(a_1)+A_2).
      \end{align*}

\item[(17)] $\gkd L(\lambda)=94$ if and only if \begin{align*}
          w_{\lambda}&\sim_{LR}   
      [0, 1, 2, 0, 3, 1, 2, 0, 3, 2, 4, 3, 1, 2, 3, 4, 6, 5, 4, 3, 1, 2,\\
      &\quad \quad \quad  0, 3, 2, 4, 3, 1, 5, 4, 3, 2, 0, 6, 7, 6, 5, 4, 3, 1, 2, 3, 4, 5]\\
    &\text{~and~}  V(\Ann (L(\lam)))=\overline{\mathcal{O}}({D}_4(a_1)+A_2).
\end{align*}

\item[(18)] $\gkd L(\lambda)=95$ if and only if \begin{align*}
    w_{\lambda}\sim_{LR}&[1, 2, 3, 1, 2, 0, 3, 4, 3, 1, 2, 0, 3, 2, 4,3,\\
    & 5, 4, 3, 1, 2, 0, 3, 2, 4, 3, 1, 5,4,3,2].
\end{align*}

\item[(19)] $\gkd L(\lambda)=96$ if and only if \begin{align*}
          w_{\lambda}\sim_{LR} &  
      [0, 1, 2, 3, 1, 2, 3, 4, 3, 1, 2, 3, 4, 5, 4, 3, 1, 6, 5, 4, 3, 1,\\& 2, 0, 3, 2, 4, 3, 5, 4, 6, 5, 7, 6, 5, 4, 3, 1, 2, 0, 3, 2, 4, 5].
\end{align*}

\item[(20)] $\gkd L(\lambda)=97$ if and only if \begin{align*}
           &w_{\lambda}\sim_{LR}w_{\{\alpha_1,\alpha_2,\alpha_3,\alpha_4,\alpha_5,\alpha_7,\alpha_8\}}=w_{\{\alpha_1,\alpha_2,\alpha_3,\alpha_4,\alpha_5\}} \cdot w_{\{\alpha_7,\alpha_8\}}\\ 
           =& [4,1,3,4,1,3,2,3,4,1,3,2,0,2,3,4,1,3,2,0,7,6,7].          
      \end{align*}

\item[(21)] $\gkd L(\lambda)=98$ if and only if $$w_{\lambda}\sim_{LR}
w_{\{\alpha_1,\alpha_2,\alpha_4,\alpha_5,\alpha_6,\alpha_7,\alpha_8\}} \text{~and~} V(\Ann (L(\lam)))=\overline{\mathcal{O}}(A_4+A_2+A_1),$$ 
or \begin{align*}
    w_{\lambda}&\sim_{LR}[0,2,0,3,2,0,4,3,2,0,5,4,3,2,0,6,5,4,3,1,2,0,3,2,4,3,1,5,4,3,2]\\
&\text{~and~} V(\Ann (L(\lam)))=\overline{\mathcal{O}}({D}_5(a_1)+A_1).
\end{align*}

 \item[(22)] $\gkd L(\lambda)=99$ if and only if \begin{align*}
w_{\lambda}&\sim_{LR}
w_{\{\alpha_2,\alpha_3,\alpha_4,\alpha_5,\alpha_6,\alpha_8\}}=[1, 2, 3, 1, 2, 3, 4, 3, 1, 2, 3, 4, 5, 4, 3, 1, 2, 3, 4, 5,7]\\
& \text{~and~} V(\Ann (L(\lam)))=\overline{\mathcal{O}}({E}_6(a_3)),
 \end{align*}
 or $$w_{\lambda}\sim_{LR}
 w_{\{\alpha_1,\alpha_3,\alpha_4,\alpha_5,\alpha_6,\alpha_7\}} \text{~and~} V(\Ann (L(\lam)))=\overline{\mathcal{O}}(D_4+A_2).$$
      
        \item[(23)] $\gkd L(\lambda)=100$ if and only if \begin{align*}w_{\lambda}&\sim_{LR}
        w_{\{\alpha_2,\alpha_3,\alpha_4,\alpha_5,\alpha_6\}}=[1, 2, 3, 1, 2, 3, 4, 3, 1, 2, 3, 4, 5, 4, 3, 1, 2, 3, 4, 5]\\
        &\text{~and~} V(\Ann (L(\lam)))=\overline{\mathcal{O}}(D_5).
        \end{align*}
  
    \item[(24)] $\gkd L(\lambda)=104$ if and only if \begin{align*}
          w_{\lambda}&\sim_{LR}w_{\{\alpha_1,\alpha_2,\alpha_4,\alpha_5,\alpha_6,\alpha_7\}}
          \text{~and~} V(\Ann (L(\lam)))=\overline{\mathcal{O}}({E}_8(a_7)).
      \end{align*}

        \item[(25)] $\gkd L(\lambda)=105$ if and only if $$w_{\lambda}\sim_{LR}
        w_{\{\alpha_1,\alpha_3,\alpha_4,\alpha_5,\alpha_6\}}
        \text{~and~} V(\Ann (L(\lam)))=\overline{\mathcal{O}}({D}_6(a_1)),$$
        or
\begin{align*}w_{\lambda}&\sim_{LR}
w_{\{\alpha_2,\alpha_3,\alpha_4,\alpha_5,\alpha_7,\alpha_8\}}=[1, 2, 3, 1, 2, 3, 4, 3, 1, 2, 3, 4,7,6,7]\\
 &\text{~and~} V(\Ann (L(\lam)))=\overline{\mathcal{O}}(A_6).
 \end{align*}

     \item[(26)] $\gkd L(\lambda)=106$ if and only if \begin{align*}
         w_{\lambda}&\sim_{LR}
     [0, 1, 2, 0, 3, 1, 2, 0, 3, 2, 4, 3, 1, 2, 0, 3, 4, 5, 4, 3, 6, 5, 4, 3, 1, 2, 0, 3]\\
     &\text{~and~} V(\Ann (L(\lam)))=\overline{\mathcal{O}}({E}_7(a_4)),
     \end{align*}
      or $$w_{\lambda}\sim_{LR}
     w_{\{\alpha_1,\alpha_3,\alpha_2,\alpha_5,\alpha_6,\alpha_7,\alpha_8\}}\text{~and~} V(\Ann (L(\lam)))=\overline{\mathcal{O}}(A_6+A_1).$$
    
\item[(27)] $\gkd L(\lambda)=107$ if and only if \begin{align*}
    w_{\lambda}&\sim_{LR}
    w_{\{\alpha_2,\alpha_3,\alpha_4,\alpha_5,\alpha_7\}}=[1, 2, 3, 1, 2, 3, 4, 3, 1, 2, 3, 4,6]\\
    &\text{~and~} V(\Ann (L(\lam)))=\overline{\mathcal{O}}({E}_6(a_1)), 
\end{align*}
        or
$$w_{\lambda}\sim_{LR}
w_{\{\alpha_1,\alpha_2,\alpha_3,\alpha_4,\alpha_6,\alpha_7\}} \text{~and~} V(\Ann (L(\lam)))=\overline{\mathcal{O}}(D_5+A_2).$$

\item[(28)] $\gkd L(\lambda)=108$ if and only if \begin{align*}
    w_{\lambda}&\sim_{LR}
w_{\{\alpha_2,\alpha_3,\alpha_4,\alpha_5\}}=[1, 2, 3, 1, 2, 3, 4, 3, 1, 2, 3, 4]\\
&\text{~and~} V(\Ann (L(\lam)))=\overline{\mathcal{O}}({D}_7(a_2)), 
\end{align*}
or $$w_{\lambda}\sim_{LR}
w_{\{\alpha_1,\alpha_3,\alpha_4,\alpha_6,\alpha_7,\alpha_8\}} 
 \text{~and~} V(\Ann (L(\lam)))=\overline{\mathcal{O}}({E}_6).$$
      
        \item[(29)] $\gkd L(\lambda)=109$ if and only if $$w_{\lambda}\sim_{LR}
        w_{\{\alpha_1,\alpha_2,\alpha_4,\alpha_5,\alpha_6\}} \text{~and~} V(\Ann (L(\lam)))=\overline{\mathcal{O}}({E}_6(a_1)+A_1).$$
       
        \item[(30)] $\gkd L(\lambda)=110$ if and only if $$w_{\lambda}\sim_{LR}w_{\{\alpha_3,\alpha_4,\alpha_5,\alpha_6\}}
       \text{~and~} V(\Ann (L(\lam)))=\overline{\mathcal{O}}({E}_7(a_3)),$$ or
$$w_{\lambda}\sim_{LR}
w_{\{\alpha_1,\alpha_3,\alpha_2,\alpha_5,\alpha_6,\alpha_7\}} 
 \text{~and~} V(\Ann (L(\lam)))=\overline{\mathcal{O}}({E}_8(b_6)).$$
 
  \item[(31)] $\gkd L(\lambda)=111$ if and only if $$w_{\lambda}\sim_{LR}w_{\{\alpha_1,\alpha_3,\alpha_5,\alpha_6,\alpha_7\}} \text{~and~} V(\Ann (L(\lam)))=\overline{\mathcal{O}}({D}_7(a_1)).$$
   
    \item[(32)] $\gkd L(\lambda)=112$ if and only if $$w_{\lambda}\sim_{LR}w_{\{\alpha_1,\alpha_2,\alpha_5,\alpha_6,\alpha_7\}}\text{~and~} V(\Ann (L(\lam)))=\overline{\mathcal{O}}({E}_8(a_6)).$$

        \item[(33)] $\gkd L(\lambda)=113$ if and only if $$w_{\lambda}\sim_{LR}w_{\{\alpha_1,\alpha_4,\alpha_5,\alpha_6\}}
        \text{~and~} V(\Ann (L(\lam)))=\overline{\mathcal{O}}({E}_8(b_5)).$$
      
       \item[(34)] $\gkd L(\lambda)=114$ if and only if $$w_{\lambda}\sim_{LR}w_{\{\alpha_4,\alpha_5,\alpha_6\}} \text{~and~} V(\Ann (L(\lam)))=\overline{\mathcal{O}}({E}_7(a_1)),$$ or $$w_{\lambda}\sim_{LR}w_{\{\alpha_1,\alpha_3,\alpha_5,\alpha_6\}} \text{~and~} V(\Ann (L(\lam)))=\overline{\mathcal{O}}({E}_8(a_5)).$$
        
        \item[(35)] $\gkd L(\lambda)=115$ if and only if $$w_{\lambda}\sim_{LR}w_{\{\alpha_1,\alpha_2,\alpha_5,\alpha_6\}}.$$
        \item[(36)] $\gkd L(\lambda)=116$ if and only if $$w_{\lambda}\sim_{LR}w_{\{\alpha_1,\alpha_5,\alpha_6\}} \text{~and~} V(\Ann (L(\lam)))=\overline{\mathcal{O}}({E}_8(a_4)).$$
        
        \item[(37)] $\gkd L(\lambda)=117$ if and only if $$w_{\lambda}\sim_{LR}w_{\{\alpha_7,\alpha_8\}}.$$ 
        \item[(38)] $\gkd L(\lambda)=118$ if and only if $w_{\lambda}\sim_{LR}w_{\{\alpha_1,\alpha_8\}}$.

\end{itemize}
\end{Prop}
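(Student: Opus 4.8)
The plan is to carry out the same argument used to prove Proposition \ref{f4} (and, in spirit, Propositions \ref{E6} and \ref{E7}), now for $W = W(E_8)$, where the underlying finite computation is considerably larger. Since $\lambda$ is integral we have $W_{[\lambda]} = W$, $\ell_{[\lambda]} = \ell$ and $\aff_{[\lambda]} = \aff$, so writing $\lambda = w_\lambda\mu$ with $\mu$ antidominant and $w_\lambda \in W^J$, Proposition \ref{pr:main1} gives $\gkd L(\lambda) = |\Phi^+| - \aff(w_\lambda) = 120 - \aff(w_\lambda)$. By Lemma \ref{alem1}(2) the number $\aff(w_\lambda)$, hence $\gkd L(\lambda)$, depends only on the two-sided cell of $w_\lambda$; by Proposition \ref{intehral-W} the two-sided cells of $W(E_8)$ biject with the special nilpotent orbits of $E_8$ via the Springer correspondence; and combining $\dim V(\Ann L(\lambda)) = 2\gkd L(\lambda)$ with $V(\Ann L(\lambda)) = \overline{\mathcal{O}}_{w_\lambda}$ (Borho--Brylinski; see also Proposition \ref{2dim}) yields $\aff(\mathcal{C}_{\mathcal{O}}) = 120 - \tfrac{1}{2}\dim\mathcal{O}$ for the two-sided cell $\mathcal{C}_{\mathcal{O}}$ attached to a special orbit $\mathcal{O}$. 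Thus the proof reduces to running through the list of special nilpotent orbits of $E_8$ together with their dimensions (from \cite{CM}) and exhibiting, for each associated two-sided cell, one explicit representative.

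For the extremal items this is immediate: $\lambda$ antidominant forces $w_\lambda = e$, so $\gkd L(\lambda) = 120$; $\lambda$ dominant makes $L(\lambda)$ finite-dimensional, so $\gkd L(\lambda) = 0$; and by \cite[Prop. 3.8]{Lus83} the sets $\mathcal{C}$ and $\mathcal{C}w_0$ are two-sided cells, which by \cite[Lem. 3.5]{BMXX} correspond respectively to the subregular orbit $E_8(a_1)$ (of dimension $238$, giving $\gkd = 119$) and to the minimal orbit $A_1$ (of dimension $58$, giving $\gkd = 29$). For a two-sided cell that contains the longest element $w_I$ of a standard parabolic subgroup $W_I$, Lemma \ref{alem1}(3) gives $\aff(w_I) = \ell(w_I) = |\Phi_I^+|$ with no computation, and the corresponding special orbit is read off from PyCox together with the Bala--Carter classification and \cite{BC76} (the words for such $w_I$ with a type-$D$ or type-$E$ component are those written out in the statement; the type-$A$ cases come from Lemma \ref{long}). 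For the remaining two-sided cells --- those containing no $w_I$ --- I would apply PyCox exactly as in Remark \ref{represent}: for each prescribed $\aff$ value, list all left cells with that value, take a shortest element in each, compute the character of that left cell, and identify it via \cite[Table C.3]{GP} and \cite[p. 428]{Ca85} with a special representation of $W(E_8)$ and hence with a special orbit. This produces the explicit words $[i_1-1,\dots,i_k-1]$ appearing in the statement and verifies $\aff = 120 - \gkd$ in each case.

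It remains to treat the items where two distinct special orbits of $E_8$ share the same dimension, so that the numerical value of $\gkd L(\lambda)$ alone does not determine $V(\Ann L(\lambda))$. For these I would separate the two two-sided cells by computing the annihilator variety directly: using the explicit representative $w$ with $w_\lambda \sim_{LR} w$ and PyCox to obtain the character of its cell, Proposition \ref{2dim} (equivalently Proposition \ref{intehral-W}) pins down the orbit, which is recorded in the appropriate ``or'' clause. The converse direction of each equivalence is then automatic: given $\lambda$, compute $w_\lambda$ by the positive-index-reduction algorithm of Lemma \ref{find-w-lambda}, determine its two-sided cell with PyCox, and note that, since all two-sided cells sharing a given $\aff$ value correspond to orbits of the same dimension, the value $\gkd L(\lambda) = 120 - \aff(w_\lambda)$ forces $w_\lambda$ to lie in one of the finitely many cells listed in the matching item --- each of which is represented by one of the explicit words above.

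The main obstacle is organizational rather than conceptual: one must tabulate all $70$ nilpotent orbits of $E_8$ with their dimensions and Bala--Carter labels, isolate the special ones, and --- the genuinely delicate point --- translate faithfully between the internal labeling of irreducible $W(E_8)$-representations used by PyCox and the conventions of Carter \cite[p. 428]{Ca85} and \cite{GP} when attaching a special orbit to each left cell (the discrepancy flagged in the PyCox subsection). Verifying that every listed word $[i_1-1,\dots,i_k-1]$ indeed lies in the claimed two-sided cell, and that each coincidence of orbit dimension has been resolved with the correct orbit, is the part of the argument that demands real care.
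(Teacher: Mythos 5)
Your proposal is correct and follows essentially the same route as the paper, which proves this proposition ``by similar arguments as in Proposition \ref{f4}'': reduce via Proposition \ref{pr:main1} and Lemma \ref{alem1} to identifying the two-sided cell of $w_\lambda$, handle the dominant/antidominant cases and the cells $\mathcal{C}$, $\mathcal{C}w_0$ directly, use $\aff(w_I)=|\Phi_I^+|$ for longest elements of parabolics, and resolve all remaining cells and the equal-dimension orbit coincidences with PyCox representatives as in Remark \ref{represent}. No gaps worth flagging.
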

 
We also have the following result for nonintegral highest weight modules of $E_8$.

\begin{Prop} Let $L(\lam)$ be a simple nonintegral highest weight module of $E_8$. Assume that $\Phi_{[\lam]}^\vee$ is pseudo-maximal.
	Then
    one of the following holds: 
    \begin{itemize}
	\item[(1)]  $\Phi_{[\lam]}\simeq D_8$. Then 
 $ \gkd L(\lam) $ can achieve the following values:
$$64,77,78,86,89,90,91,92,96,97,98,99,100,[102,120].$$

\item[(2)] $\Phi_{[\lam]}\simeq A_8$. Then
 $ \gkd L(\lam) $ can achieve the following values:
$$84,92,98,99,102,104, 105, [107,120].$$

\item[(3)] $\Phi_{[\lam]}\simeq E_7\times A_1$.
 Then 
 $ \gkd L(\lam) $ can achieve the following values:
$$56,57,73,74,82,83,84,89,90,94,95,97,98,99,100,[103,120].$$

\item[(4)] $\Phi_{[\lam]}\simeq A_7\times A_1$.
 Then 
 $ \gkd L(\lam) $ can achieve the following values:
$$91,92,98,99,[103,120].$$

\item[(5)] $\Phi_{[\lam]}\simeq A_5\times A_2\times A_1$ and 
 $$\Delta_{[\lam]}=\{\ep_8+\ep_3,\alpha_5,\alpha_6,\alpha_7,\alpha_8,\alpha_1,\alpha_3,\alpha_2\}.$$ Then 
 $ \gkd L(\lam) $ can achieve the following values:
$$[101,120].$$

\item[(6)] $\Phi_{[\lam]}\simeq  A_4\times A_4$ and 
 $$\Delta_{[\lam]}=\{\alpha_2,\alpha_4,\alpha_3,\alpha_1,\ep_8+\ep_4,\alpha_6,\alpha_7,\alpha_8\}.$$ Then 
 $ \gkd L(\lam) $ can achieve the following values:
$$100,104,[106,120].$$

\item[(7)] $\Phi_{[\lam]}\simeq D_5\times A_3$. Then 
 $ \gkd L(\lam) $ can achieve the following values:
 $$94,[97,102],[104,120].$$

\item[(8)] $\Phi_{[\lam]}\simeq E_6\times A_2$. Then 
 $ \gkd L(\lam) $ can achieve the following values:
$$81,83,84,92,94,95,97,99,100,102,[104,120].$$

\end{itemize}
\end{Prop}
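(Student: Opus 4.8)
The plan is to follow verbatim the strategy used for Proposition \ref{f4-nonint} and Proposition \ref{e6-non}. First I would determine, from the extended Dynkin diagram of $E_8$ (Figure \ref{E8-dy}) together with Definition \ref{thmbd}, the complete list of root subsystems $\Phi_{[\lam]}$ of $E_8$, up to the action of $W$, for which $\Phi_{[\lam]}^\vee$ is pseudo-maximal. Since $E_8$ is simply laced one has $\Phi^\vee\cong\Phi$, and deleting a single node from the extended Dynkin diagram produces exactly $D_8$, $A_8$, $E_7\times A_1$, $A_7\times A_1$, $A_5\times A_2\times A_1$, $A_4\times A_4$, $D_5\times A_3$ and $E_6\times A_2$; I would then check that each corank-one Levi subsystem $\Phi(j)$ is strictly contained in one of these, so that this list is precisely the set of pseudo-maximal possibilities. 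In the two cases $A_5\times A_2\times A_1$ and $A_4\times A_4$, where the abstract type does not pin down the embedding needed for the subsequent computation, I would record the explicit simple systems $\Delta_{[\lam]}$ as in the statement.

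Next, for each $\Phi_{[\lam]}$ I would combine Proposition \ref{pr:main1}, which gives $\gkd L(\lambda)=|\Phi^+|-\aff(w_\lambda)=120-\aff(w_\lambda)$, with Theorem \ref{T:main1} (equivalently Corollary \ref{transf-1}): writing $\Phi_{[\lam]}\simeq\Phi_1\times\cdots\times\Phi_k$ one has $\aff(w_\lambda)=\sum_{1\lest i\lest k}\aff(w_{{\lam}'|_{\Phi_i}})$, and as $\lambda$ varies over the relevant weights each summand $\aff(w_{{\lam}'|_{\Phi_i}})$ runs over exactly the set of $\aff$-values of the Weyl group $W(\Phi_i)$. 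For the classical factors ($D_8$, $A_8$, $A_7$, $A_5$, $A_4$, $A_3$, $A_2$, $A_1$, $D_5$) these sets are read off from Lemma \ref{a-value}; for the factors $E_6$ and $E_7$ (arising in cases (8) and (3)) I would obtain them from PyCox, or equivalently extract them from Proposition \ref{E6} and Proposition \ref{E7}. Forming all Minkowski sums of these value-sets and subtracting the results from $120$ then yields the eight lists in (1)--(8). I would write out one case in detail — say $\Phi_{[\lam]}\simeq D_8$, where the admissible values of $\aff(w_\lambda)$ are precisely those displayed in Lemma \ref{a-value}(13) and the list in (1) follows at once — and then indicate that the remaining seven cases are treated identically, displaying the relevant value-sets.

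The main obstacle, modest as it is, is twofold: assembling the complete list of $\aff$-values of $W(E_6)$ and of $W(E_7)$ (needed for the $E_6\times A_2$ and $E_7\times A_1$ cases and not contained in Lemma \ref{a-value}), and then carrying out the elementary but slightly repetitive bookkeeping of the eight Minkowski sums; every other step is an immediate invocation of a result already established above. I do not anticipate any conceptual difficulty, since the reduction of $\aff(w_\lambda)$ along the decomposition $\Phi_{[\lam]}\simeq\Phi_1\times\cdots\times\Phi_k$ is exactly Theorem \ref{T:main1}, and the enumeration of the pseudo-maximal subsystems is a finite diagram check.
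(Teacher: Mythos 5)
Your proposal matches the paper's argument: the paper proves this proposition by exactly the route of Propositions \ref{f4-nonint} and \ref{e6-non}, namely reading off the pseudo-maximal subsystems from the extended Dynkin diagram of $E_8$, applying Proposition \ref{pr:main1} together with the decomposition $\aff(w_\lambda)=\sum_i\aff(w_{\lambda'|_{\Phi_i}})$ from Theorem \ref{T:main1}, and enumerating the attainable $\aff$-values of each factor via Lemma \ref{a-value} (with PyCox for the $E_6$ and $E_7$ factors). Your treatment, including the explicit simple systems recorded in cases (5) and (6), is the same computation, so the proposal is correct and essentially identical to the paper's proof.
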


\begin{proof}
The argument is similar to that in the proof of Proposition \ref{e6-non}.

\end{proof}
\begin{ex}
	Let $\mathfrak{g}=E_8$ and let $L(\lam)$ be the highest weight module of $\mathfrak{g}$ with  $\lambda=(\frac{1}{4},\frac{5}{4},\frac{9}{4},\frac{13}{4},\frac{9}{4},\frac{1}{4},
 \frac{5}{4},\frac{9}{4}).$ It is easy to verify that $\Phi_{[\lambda]}$ is a subsystem with simple roots $\{\alpha_1,\alpha_3,\alpha_4,\alpha_5,\alpha_6,\alpha_7,\alpha_8,\gamma\}$ where $\gamma=\frac{1}{2}(\ep_1+\ep_2+\ep_3+\ep_4+\ep_5-\ep_6-\ep_7+\ep_8)$. Therefore $\Phi_{[\lambda]}\simeq D_8$.  We define the isomorphism  $\phi:  \Phi_{[\lambda]} \rightarrow D_8$ such that $\phi(\alpha_1)=\beta_1, \phi(\alpha_3)=\beta_2, \phi(\alpha_4)=\beta_3, \phi(\alpha_5)=\beta_4,\phi(\alpha_6)=\beta_5,$
$\phi(\alpha_7)=\beta_6,\phi(\alpha_8)=\beta_7,\phi(\gamma)=\beta_8$.
Then  by the algorithm in \S \ref{nonint}, 
\begin{align*}
           \phi(\lam)=\lambda'&=-4\phi(\omega_1)+\phi(\omega_2)+\phi(\omega_3)+\phi(\omega_4)-\phi(\omega_5)-2\phi(\omega_6)+\phi(\omega_7)+5\phi(\omega_8)\\
           &=(-1,3,2,1,0,1,3,2)
        \end{align*} is an integral weight of type $D_8$.
  So $\aff(w_{\lambda})=\aff(w_{\lambda'})=17$ by the RS algorithm in \cite{BXX} and Proposition \ref{integral}.
 By Proposition \ref{pr:main1},  we have $\gkd L(\lambda)=|\Phi^+|-\aff(w_{\lambda})=120-17=103$.
\end{ex}

\begin{ex}
	Let $\mathfrak{g}=E_8$ and $L(\lam)$ be the highest weight module of $\mathfrak{g}$ with  $\lambda=(\frac{1}{2},-\frac{3}{2},-3,-2,-1,-4,-5,-19)$. It is easy to verify that $\Phi_{[\lambda]}$ is a subsystem with simple roots $\{\alpha_3,\alpha_1,\ep_3+\ep_4,\alpha_6,\alpha_5,\alpha_7,\alpha_8,\alpha_2\}$. Therefore $\Phi_{[\lambda]}\simeq E_7\times A_1$. Suppose the simple root system of $E_7\times A_1$ is $\Delta=\{\alpha_1, \alpha_2, \alpha_3,\alpha_4,\alpha_5,\alpha_6,\alpha_7\}\times \{\gamma_1=\ep_1-\ep_2\}$.
  We define the  isomorphism $\phi:  \Phi_{[\lambda]} \rightarrow E_7\times A_1$ such that  $\phi(\alpha_8)=\alpha_1, \phi(\alpha_7)=\alpha_3, \phi(\alpha_6)=\alpha_4, \phi(\alpha_5)=\alpha_2,\phi(\ep_3+\ep_4)=\alpha_5,\phi(\alpha_1)=\alpha_6,\phi(\alpha_3)=\alpha_7, \phi(\alpha_2)=\gamma_1$.

  Denote ${\lam}'=\phi(\lam|_{\mathfrak{h}_{\Phi_{[\lambda]}}})$. 
 Thus  by the algorithm in \S \ref{nonint},
 \begin{align*}
           &\lam'|_{{E_7}}=(2,-1,0,-5,-6,-8,12,-12)
        \end{align*} is an integral weight of type $E_7$, and 
$\lam'|_{A_1}=(-\frac{1}{2},\frac{1}{2})$ is an integral weight of type $A_1$ with $\aff(w_{\lam'|_{{A_1}}})=0$ by using  the RS algorithm in \cite{BXX} and Proposition \ref{integral}.

Denote $\gamma=\phi({\lam})|_{E_7}=(2,-1,0,-5,-6,-8,12,-12)$.
We can write $\gamma=-\omega_1+\omega_2-3\omega_3+\omega_4-5\omega_5-\omega_6-2\omega_7:=[-1,1,-3,1,-5,-1,-2]$. By using Lemma \ref{find-w-lambda}, we find that  $$s_4s_2s_4\gamma=[-1,-1,-1,-1,-3,-1,-2],$$ which is  antidominant. Thus we have $w_{\gamma}=s_4s_2s_4$.
Note that $w_{\gamma}=[3,1,3]$ in PyCox. Then by using PyCox, we find that  $\aff(w_{\gamma})=3$.
So $\aff(w_{\lambda})=\aff(w_{\lambda'})=3+0=3$.
 By Proposition \ref{pr:main1},  we have $\gkd L(\lambda)=|\Phi^+|-\aff(w_{\lambda})=120-3=117$.
\end{ex}

\section{Annihilator varieties of highest weight modules}\label{AVHWM}

Recall that one can compute the annihilator variety $\mathcal{O}_{{\rm Ann}(L(\lambda))}$ by applying the result of Joseph \cite{Jo85}. The goal of this section is to show that one method of computation is provided by the Sommers duality as defined in \cite{Sommers}. Note that in many cases, the dimension of a nilpotent orbit already determines uniquely the orbit itself, and thus such a computation can be avoided in those cases.

Joseph's result (see Proposition \ref{2dim}) asserts that
\begin{equation} \label{E:jos}
\mathcal{O}_{{\rm Ann}(L(\lambda))} = \mathcal{O}_{\rm Spr}^G \circ j_{W_{[\lambda]}}^{W(G)} (\pi_{w_\lambda}).
\end{equation}
Hence, in principle, one can use \eqref{E:jos} to compute the annihilator variety of $L(\lambda)$. However, explicating the $j$-induction in each case is non-trivial. Instead, we show that the right hand side of \eqref{E:jos} is intimately related to the Sommers duality studied in \cite{Sommers}, which entails a more direct formula in computing $\mathcal{O}_{{\rm Ann}(L(\lambda))}$.

We use notation as in \S \ref{sec:cell}. Let $G$ be an algebraic group over $\mathbb{C}$ with Lie algebra $\mathfrak{g}$. Recall that a pseudo-Levi subgroup $M \subseteq G$ is the connected  centralizer subgroup of a semisimple element of $G$. Such a group is essentially characterized by the fact that the Dynkin diagram of its root system is a subset of the extended Dynkin diagram of the root system of $G$. We consider pairs of the form $(M, \mathcal{O}_M)$, where $M$ is a pseudo-Levi subgroup and $\mathcal{O}_M \in \mathcal{N}(M)$ is a distinguished orbit. Let $G^\vee$ be the Langlands dual group of $G$. Then Sommers defined a natural map
$$d_{\rm Som}^G: \{(M, \mathcal{O}_M) \} \longrightarrow \mathcal{N}(G^\vee)$$
that extends the Barbasch--Vogan dual map $d_{\rm BV}: \mathcal{N}(G) \to \mathcal{N}^{\rm spe}(G^\vee)$, where $\mathcal{N}^{\rm spe}(G) \subset \mathcal{N}(G)$ denotes the subset of special nilpotent orbits of $G$. This we explain in more detail now.

First, one has the Lusztig--Spaltenstein map $d_{\rm LS}^G: \mathcal{N}(G) \longrightarrow \mathcal{N}^{\rm spe}(G)$. Moreover, one has a natural isomorphism 
$$\tau_{G, G^\vee}: W(G) \longrightarrow W(G^\vee)$$
of the two Weyl groups, which induces a bijection (using the same notation)
$$\tau_{G, G^\vee}: {\rm Irr}(W(G)) \longrightarrow {\rm Irr}(W(G^\vee)).$$
It furthermore induces a bijection (by abuse of notation again)
$$\tau_{G, G^\vee}: \mathcal{N}^{\rm spe}(G) \longrightarrow \mathcal{N}^{\rm spe}(G^\vee).$$
One has $d_{\rm BV}:= \tau_{G, G^\vee} \circ d_{\rm LS}^G$. All these maps mentioned above fit into the following commutative diagram:
$$\begin{tikzcd}
\mathcal{N}^{\rm spe}(G) \ar[r, "{\tau_{G,G^\vee}}"] & \mathcal{N}^{\rm spe}(G^\vee)  \ar[r, hook] & \mathcal{N}(G^\vee) \\
& \mathcal{N}(G) \ar[lu, "{d_{\rm LS}^G}"] \ar[u, "{d_{\rm BV}^G}"] \ar[r, hook] & {\{(M, \mathcal{O}_M) \} } \ar[u, "{d_{\rm Som}^G}"] .
\end{tikzcd}
$$
Here the bottom injection is provided by the Bala--Carter classification of nilpotent orbits, asserting that every $\mathcal{O} \in \mathcal{N}(G)$ corresponds uniquely to a $G$-conjugacy class of pair $(L, \mathcal{O}_L)$, where $L \subseteq G$ is a Levi subgroup and $\mathcal{O}_L$ a distinguished orbit for $L$. In this case, we write
$${\rm sat}_L^G(\mathcal{O}_L):= G \cdot \mathcal{O}_L \in \mathcal{N}(G)$$ for the nilpotent orbit thus obtained. We remark also that Achar actually gave a further extension of $d_{\rm Som}^G$ in \cite{ac03}.

Let $M \subset G$ be a pseudo-Levi subgroup and $\mathcal{O}_{M}$ a distinguished orbit for $M$. Then it was shown by Sommers \cite[\S 6]{Sommers} that 
\begin{equation} \label{E:dSom}
d_{\rm Som}^{G}(M, \mathcal{O}_{M}) = \mathcal{O}_{\rm Spr}^{G^\vee} \circ \tau_{G, G^\vee} \circ j_{W(M)}^{W(G)} \circ {\rm Spr}_{M, \mathbf{1}}^{-1} \circ d_{\rm LS}^{M}(\mathcal{O}_{M}).
\end{equation}

Note that the dual group $M^\vee$ of a pseudo-Levi $M$ of $G$ in general may not be a pseudo-Levi subgroup of $G^\vee$. However, one has a natural isomorphism $W(M) \simeq W(M^\vee)$. Also, for every $\sigma \in {\rm Irr}(W(M))$, one  $\sigma^\vee:= \tau_{M, M^\vee}(\sigma) \in {\rm Irr}(W(M^\vee))$ satisfying  
\begin{equation} \label{E:transp}
j_{W(M^\vee)}^{W(G^\vee)} (\sigma^\vee) = \tau_{G, G^\vee} \circ j_{W(M)}^{W(G)} (\sigma).
\end{equation}

\begin{Prop} \label{P:Som}
Keep notations as above. Assume $\sigma \in {\rm Irr}(W(M))$ is a special representation of $W(M)$. Furthermore, let $L\subset M$ be a Levi subgroup of $M$ and $\mathcal{O}_L$ be a distinguished orbit of $L$ such that $d_{\rm LS}^M \circ \mathcal{O}_{\rm Spr}^M(\sigma) = {\rm sat}_L^M(\mathcal{O}_L)$.
Then 
$$\mathcal{O}_{\rm Spr}^{G^\vee} \circ j_{W(M^\vee)}^{W(G^\vee)}(\sigma^\vee) = d_{\rm Som}^G(L, \mathcal{O}_L).$$
\end{Prop}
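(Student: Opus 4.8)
The plan is to chain together the two formulas for the Sommers duality, \eqref{E:dSom} and the transpose-compatibility relation \eqref{E:transp}, together with the Bala--Carter description of distinguished orbits under $d_{\rm LS}$. Concretely, I would first observe that since $\sigma$ is a special representation of $W(M)$, its Springer orbit $\mathcal{O}_{\rm Spr}^M(\sigma)$ is a special orbit of $M$ (carrying the trivial local system), so $d_{\rm LS}^M \circ \mathcal{O}_{\rm Spr}^M(\sigma)$ makes sense and, by hypothesis, equals ${\rm sat}_L^M(\mathcal{O}_L)$ for the given Levi $L\subset M$ and distinguished $\mathcal{O}_L$. The key point here is that the pair $(L,\mathcal{O}_L)$ is exactly the Bala--Carter datum of the orbit $d_{\rm LS}^M(\mathcal{O}_{\rm Spr}^M(\sigma))$ inside $M$; this identifies the right-hand side $d_{\rm Som}^G(L,\mathcal{O}_L)$ with $d_{\rm Som}^G(M', \mathcal{O}_{M'})$ where we take the pseudo-Levi datum given by this same orbit, or more directly lets us feed $(L,\mathcal{O}_L)$ into \eqref{E:dSom} with $M$ replaced by $L$.

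Next I would apply formula \eqref{E:dSom} to the pair $(L, \mathcal{O}_L)$ (noting $L$ is in particular a pseudo-Levi of $G$, being a genuine Levi), getting
\[
d_{\rm Som}^{G}(L, \mathcal{O}_{L}) = \mathcal{O}_{\rm Spr}^{G^\vee} \circ \tau_{G, G^\vee} \circ j_{W(L)}^{W(G)} \circ {\rm Spr}_{L, \mathbf{1}}^{-1} \circ d_{\rm LS}^{L}(\mathcal{O}_{L}).
\]
Since $\mathcal{O}_L$ is distinguished in $L$, one has $d_{\rm LS}^L(\mathcal{O}_L) = \mathcal{O}_L$ (Lusztig--Spaltenstein duality fixes distinguished orbits, as these are exactly the special orbits induced trivially), so ${\rm Spr}_{L,\mathbf 1}^{-1}\circ d_{\rm LS}^L(\mathcal{O}_L) = {\rm Spr}_{L,\mathbf 1}^{-1}(\mathcal{O}_L)$. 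Then I need to relate $j_{W(L)}^{W(G)}({\rm Spr}_{L,\mathbf 1}^{-1}(\mathcal{O}_L))$ to $j_{W(M)}^{W(G)}(\sigma)$. By induction by stages (Lemma \ref{j.1}), $j_{W(L)}^{W(G)} = j_{W(M)}^{W(G)} \circ j_{W(L)}^{W(M)}$, and the defining property of the Lusztig--Spaltenstein/Bala--Carter correspondence is precisely that $j_{W(L)}^{W(M)}({\rm Spr}_{L,\mathbf 1}^{-1}(\mathcal{O}_L)) = {\rm Spr}_{M,\mathbf 1}^{-1}(d_{\rm LS}^M({\rm sat}_L^M(\mathcal{O}_L)))$; combined with the hypothesis $d_{\rm LS}^M(\mathcal{O}_{\rm Spr}^M(\sigma)) = {\rm sat}_L^M(\mathcal{O}_L)$ and the fact that $d_{\rm LS}^M$ is an involution on special orbits, this yields $j_{W(L)}^{W(M)}({\rm Spr}_{L,\mathbf 1}^{-1}(\mathcal{O}_L)) = {\rm Spr}_{M,\mathbf 1}^{-1}(\mathcal{O}_{\rm Spr}^M(\sigma)) = \sigma$, using that $\sigma$ special forces ${\rm Spr}_M(\sigma) = (\mathcal{O}_{\rm Spr}^M(\sigma), \mathbf 1)$.

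Substituting back, $d_{\rm Som}^G(L,\mathcal{O}_L) = \mathcal{O}_{\rm Spr}^{G^\vee}\circ \tau_{G,G^\vee}\circ j_{W(M)}^{W(G)}(\sigma)$, and finally I apply the transpose-compatibility \eqref{E:transp} to rewrite $\tau_{G,G^\vee}\circ j_{W(M)}^{W(G)}(\sigma) = j_{W(M^\vee)}^{W(G^\vee)}(\sigma^\vee)$, giving exactly $\mathcal{O}_{\rm Spr}^{G^\vee}\circ j_{W(M^\vee)}^{W(G^\vee)}(\sigma^\vee)$ as claimed. The main obstacle I anticipate is bookkeeping the precise compatibility between the Bala--Carter map (bottom injection in the commutative diagram), the $j$-induction of trivial-local-system Springer representations, and $d_{\rm LS}$ — i.e. verifying carefully that $j_{W(L)}^{W(M)}\circ{\rm Spr}_{L,\mathbf 1}^{-1}(\mathcal{O}_L) = {\rm Spr}_{M,\mathbf 1}^{-1}(d_{\rm LS}^M{\rm sat}_L^M(\mathcal{O}_L))$ for a distinguished $\mathcal{O}_L$, and that $d_{\rm LS}^M$ restricted to special orbits is an involution so that the two applications of it cancel. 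Everything else is formal chaining of the cited formulas.
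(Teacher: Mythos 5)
Your overall architecture is the same as the paper's: chain \eqref{E:dSom} and \eqref{E:transp} through induction by stages and the compatibility of $j$-induction with the Springer correspondence. But the step you single out as the crux is false as stated. It is not true that $d_{\rm LS}^L(\mathcal{O}_L)=\mathcal{O}_L$ for a distinguished orbit $\mathcal{O}_L$: Lusztig--Spaltenstein duality does not fix distinguished orbits (take $L$ of type $A_1$ and $\mathcal{O}_L$ the regular orbit; then $d_{\rm LS}^L(\mathcal{O}_L)=0$). Consequently the identity you call the ``defining property'' of the Lusztig--Spaltenstein/Bala--Carter correspondence, namely $j_{W(L)}^{W(M)}({\rm Spr}_{L,\mathbf 1}^{-1}(\mathcal{O}_L)) = {\rm Spr}_{M,\mathbf 1}^{-1}\bigl(d_{\rm LS}^M({\rm sat}_L^M(\mathcal{O}_L))\bigr)$, is also false in general: the true intertwining is ${\rm Spr}_{M,\mathbf 1}^{-1}\circ {\rm ind}_L^M = j_{W(L)}^{W(M)}\circ {\rm Spr}_{L,\mathbf 1}^{-1}$, and combined with $d_{\rm LS}^M\circ{\rm sat}_L^M = {\rm ind}_L^M\circ d_{\rm LS}^L$ (\cite[Theorem 8.3.1]{CM}) it gives ${\rm Spr}_{M,\mathbf 1}^{-1}\bigl(d_{\rm LS}^M({\rm sat}_L^M(\mathcal{O}_L))\bigr) = j_{W(L)}^{W(M)}\bigl({\rm Spr}_{L,\mathbf 1}^{-1}(d_{\rm LS}^L(\mathcal{O}_L))\bigr)$, with $d_{\rm LS}^L(\mathcal{O}_L)$, not $\mathcal{O}_L$, inside. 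In the $A_1$ example with $L=M$ and $\sigma$ the sign character, your step ``$j_{W(L)}^{W(M)}({\rm Spr}_{L,\mathbf 1}^{-1}(\mathcal{O}_L))=\sigma$'' would assert that the trivial character equals the sign character; so the identity you propose to ``verify carefully'' cannot be verified.

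What saves your final formula is that you commit the same error twice and the two instances cancel: you drop $d_{\rm LS}^L$ once when simplifying \eqref{E:dSom} and once more when identifying the $j$-induced representation with $\sigma$. The repair is exactly the paper's bookkeeping: keep $d_{\rm LS}^L(\mathcal{O}_L)$ throughout. From the hypothesis and the involutivity of $d_{\rm LS}^M$ on special orbits one gets $\mathcal{O}_{\rm Spr}^M(\sigma)=d_{\rm LS}^M({\rm sat}_L^M(\mathcal{O}_L))={\rm ind}_L^M(d_{\rm LS}^L(\mathcal{O}_L))$, hence $\sigma = j_{W(L)}^{W(M)}\bigl({\rm Spr}_{L,\mathbf 1}^{-1}(d_{\rm LS}^L(\mathcal{O}_L))\bigr)$; applying $j_{W(M)}^{W(G)}$, induction by stages, $\tau_{G,G^\vee}$ and \eqref{E:transp}, the resulting expression is literally the right-hand side of \eqref{E:dSom} for the pair $(L,\mathcal{O}_L)$, with no need for any claim that $d_{\rm LS}^L$ fixes $\mathcal{O}_L$. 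With that correction your argument coincides with the paper's proof.
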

\begin{proof}
Note that $L$ is a Levi subgroup of $M$ and $M$ is a pseudo-Levi subgroup of $G$. Thus, $L$ is a pseudo-Levi subgroup of $G$ and the right hand side of the above equality is well-defined.

Now we have from \eqref{E:transp} that
$$\mathcal{O}_{\rm Spr}^{G^\vee} \circ j_{W(M^\vee)}^{W(G^\vee)}(\sigma^\vee) = \mathcal{O}_{\rm Spr}^{G^\vee} \circ \tau_{G, G^\vee} \circ j_{W(M)}^{W(G)} (\sigma).$$
Since $\sigma$ is a special representation of $W(M)$ and $d_{\rm LS}^M$ is a bijection when restricted to the special orbits of $M$, it follows that
$$\sigma = {\rm Spr}_{M, \mathbf{1}}^{-1} \circ d_{\rm LS}^M \circ {\rm sat}_{L}^M(\mathcal{O}_L).$$
Consider the induction of nilpotent orbits ${\rm ind}_L^M: \mathcal{N}(L) \longrightarrow \mathcal{N}(M)$. One has
$$d_{\rm LS}^M \circ {\rm sat}_L^M(\mathcal{O}_L) = {\rm ind}_L^M \circ d_{\rm LS}^L(\mathcal{O}_L),$$
see \cite[Theorem 8.3.1]{CM}. On the other hand, the Springer correspondence intertwins the maps  $j_{W(L)}^{W(M)}$ and ${\rm ind}_L^M$ (see \cite{LuSp1}), i.e.,
$${\rm Spr}_{M, \mathbf{1}}^{-1} \circ {\rm ind}_L^M = j_{W(L)}^{W(M)} \circ {\rm Spr}_{L, \mathbf{1}}^{-1}.$$
Combining the above four equalities and using  $j_{W(M)}^{W(G)} \circ j_{W(L)}^{W(M)} = j_{W(L)}^{W(G)}$, we get that
$$
\begin{aligned}
    \mathcal{O}_{\rm Spr}^{G^\vee} \circ j_{W(M^\vee)}^{W(G^\vee)}(\sigma^\vee) & = \mathcal{O}_{\rm Spr}^{G^\vee} \circ \tau_{G, G^\vee} \circ j_{W(L)}^{W(G)} \circ {\rm Spr}_{L, \mathbf{1}}^{-1} \circ d_{\rm LS}^L (\mathcal{O}_L) \\
     & = d_{\rm Som}^G(L, \mathcal{O}_L),
\end{aligned}$$
where the last equality follows from \eqref{E:transp}. This completes the proof.
\end{proof}

Now Proposition \ref{P:Som} coupled with \eqref{E:jos} enable us to compute $\mathcal{O}_{{\rm Ann}(L(\lambda))}$ more efficiently, by utilizing the tabulated data for the Sommers duality given in \cite{Sommers, ac03}. More precisely, we apply Proposition \ref{P:Som} with the role of $G$ and $G^\vee$ switched, while $\pi_{w_\lambda}$ substitutes the input in the left hand side of the equality in Proposition \ref{P:Som}.  This will enable us to completely determine the column of $\mathcal{O}_{{\rm Ann}(L(\lambda))}$ in the tables given as in the Appendix. In the remaining of this section, we give an elaboration on this.


In \cite{BXX}, by using the RS algorithm for $x\in  \mathrm{Seq}_n (\Gamma)$, we can get a Young tableau $P(x)$ and a partition ${\bf p}(x)=\mathrm{sh}(P(x))=p(x)=[p_1,p_2,...,p_N]$, where $p_i$ is the number of boxes in the $i$-th
row of $P(x)$.  
Recall that in \cite{BMW}, we can get a special partition $H(p(x^-))$ from the given Young tableau $P(x^-)$ or the partition ${\bf p}(x^-)=p(x^-)$ by using the H-algorithm defined in \cite{BMW}. We denote it by $p_{X}(x^-)^s$ when it is a special partition of type $X$ (for $X=B,C$ or $D$). Roughly speaking, $H(p(x^-))=p_{X}(x^-)^s$ is the unique  special partition which has the same odd (resp. even) boxes for type $B$
and $C$ (resp. type $D$) with the partition $p(x^-)$.

Recall that  $\mathfrak{g}$ is the Lie algebra of $G$. Let $\mathfrak{h}$ be a Cartan subalgebra.
	Let $\Phi^+\subset\Phi$ be the set of positive roots determined by a Borel subalgebra $\mathfrak{b}$ of $\mathfrak{g}$. We have a Cartan decomposition $\mathfrak{g}=\mathfrak{n}\oplus\mathfrak{h}\oplus\mathfrak{n}^-$ with $\mathfrak{b}=\mathfrak{n}\oplus \mathfrak{h}$. Denote by $\Delta$ the set of simple roots in $\Phi^+$.
By the results in \S \ref{nonint}, we may write $\Phi_{[\lambda]}\simeq \Phi_1\times \Phi_2\times \cdots\times \Phi_k$ for any $\lambda\in \mathfrak{h}^*$ and $$\lam'=\phi(\lam|_{\mathfrak{h}_{\Phi_{[\lambda]}}})=\prod\limits_{1\lest i\lest k} \phi(\lambda|_{\mathfrak{h}_{\Phi_{[\lambda]}}})|_{{\Phi_i}}=\prod\limits_{1\lest i\lest k} \lam'|_{{\Phi_i}}.$$ Thus ${\lam}'|_{\Phi_i}$ is an integral weight of type $\Phi_i$. When some $\Phi_i$ is of type $A$, we can get a special partition ${\bf p}_{i}:=p(\lam'|_{{\Phi_i}})$ of type $A$ which comes from the Young tableau $P(\lam'|_{{\Phi_i}})$.
When some $\Phi_i$ is of classical type $X_i$ ($X_i=B,C$ or $D$), by using the H-algorithm in \cite{BMW}, we can get two special partitions ${\bf p}_{i}:=p_{X_i}((\lam'|_{{\Phi_i}})^-)^s$ of type $X_i$ and  ${\bf p}^{\vee}_{i}:=p_{X_i^{\vee}}((\lam'|_{{\Phi_i}})^-)^s$ of type $X_i^{\vee}$, both of which come from the Young tableau $P((\lam'|_{{\Phi_i}})^-)$. When some $\Phi_i$ is of exceptional type, we can use  Lemma \ref{find-w-lambda} and PyCox to get  the corresponding character $\chi_i$ of $w_{\lam'|_{{\Phi_i}}}$ and the corresponding special  nilpotent orbit (Bala--Carter label) $\mathcal{O}_{\chi_i}$. 

For classical types, we identify a special partition ${\bf p}$ (corresponding to a special nilpotent orbit $\mathcal{O}_w$) and its corresponding special representation $\pi_w=\pi_{\bf p}$ of the Weyl group by the Springer correspondence. We write ${\bf p}^t$ for the transpose of a partition ${\bf p}$. Also, for a partition ${\bf p}$ of classical $X$-type, one has $d_{\rm LS}({\bf p}) = {\bf p}^t_X$.

From now on, we identify a root subsystem  of $\Phi$ with the corresponding analytic subgroup of the given Lie group $G$. Thus $\Phi$ is identified with $G$. We may write $L\subseteq \Phi$ or $\mathcal{O}_L\subseteq \Phi$ to mean that $L$ is a Levi subgroup of $G$ or $\mathcal{O}_L$ is the distinguished orbit for $L$.


\begin{Cor}
    Keep notations as above. Suppose the root system of $\mathfrak{g}$ is $\Phi$. Let $L(\lam)$ be a highest weight module of $\mathfrak{g}$. Denote
$H:=\Phi_{[\lambda]}\simeq \Phi_1 \times \Phi_2 \times ... \times \Phi_k$. Write $\lam'=\phi(\lam|_{\mathfrak{h}_{\Phi_{[\lambda]}}})=\prod\limits_{1\lest i\lest k} \phi(\lambda|_{\mathfrak{h}_{\Phi_{[\lambda]}}})|_{{\Phi_i}}=\prod\limits_{1\lest i\lest k} \lam'|_{{\Phi_i}}$. We have the following possibilities:
\begin{enumerate}
    \item Suppose $\Phi_i$ is of classical type $X_i$ and ${\bf p}_{i}$ is not a  very even partition of type $D$  for all $1\lest i \lest k$.   Then $\pi_{w_{\lam}}=\prod\limits_{1\lest i\lest k}{\bf p}_{i} \in {\rm Irr}(W(H)) $, $\pi^{\vee}_{w_{\lam}}=\prod\limits_{1\lest i\lest k}{\bf p}^{\vee}_{i} \in {\rm Irr}(W(H^{\vee})) $, and 
$$d_{\rm LS}^{H^\vee}(\mathcal{O}(\pi^{\vee}_{w_{\lam}}))=\prod\limits_{1\lest i\lest k}({\bf p}^{\vee}_{i})^t_{X_i} = {\rm sat}_{L^\vee}^{H^\vee} (\mathcal{O}_{L^\vee}),$$
where $L^\vee \subset \Phi^\vee$ and $\mathcal{O}_{L^\vee}$ a distinguished orbit of $L^\vee$.
Also we have
$$\mathcal{O}_{{\rm Ann}(L(\lambda))} = d_{\rm Som}^{G^\vee}(L^{\vee}, \mathcal{O}_{L^\vee}).$$

\item Suppose $\Phi_i$ is of classical type $X_i$ and simply-laced  for all $1\lest i \lest k$ and one unique ${\bf p}_{i_0}$ is  a  very even partition of type $D$. Then the numeral of $\mathcal{O}_L$ is the same with the numeral of $\pi_{{\bf p}_{i_0}}$.

\item Suppose there is a unique $\Phi_{i_0}$ of exceptional type $E$ and all other $\Phi_i$ is of classical type $X_i$ and simply-laced.
Suppose that $\chi_{i_0}$  is the corresponding character for $w_{\lam'|_{{\Phi_{i_0}}}}$ with Bala--Carter label $L_{i_0}$. Then $\pi^{\vee}_{w_{\lam}} \simeq \pi_{w_{\lam}}=\chi_{i_0}\times \prod\limits_{1\lest i\neq i_0\lest k}{\bf p}_{i}$ and $\mathcal{O}(\pi_{w_\lambda}^\vee)=L_{i_0}\times \prod\limits_{1\lest i\neq i_0\lest k}{\bf p}_{i}$. 
Also we have
$d_{\rm LS}^{H^\vee}(\mathcal{O}(\pi_{w_\lambda}^\vee)) = {\rm sat}_{L^\vee}^{H^\vee}(\mathcal{O}_{L^\vee})$ 
for a distinguished orbit $\mathcal{O}_{L^\vee}$ of $L^\vee$, 
and thus 
$$\mathcal{O}_{{\rm Ann}(L(\lambda))} = d_{\rm Som}^{G^\vee}(L^{\vee}, \mathcal{O}_{L^\vee}).$$

\end{enumerate}
\end{Cor}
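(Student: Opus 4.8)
The plan is to read the corollary off from Theorem~\ref{T:main2} once the abstract datum $\pi_{w_\lambda}$ occurring there has been made concrete via the algorithms of \S\ref{a-value-compu} together with the RS- and H-algorithms of \cite{BMW}; the three items are then precisely the three shapes that $\Phi_{[\lambda]}$ is allowed to take in the hypotheses. Concretely, the final equality $\mathcal{O}_{{\rm Ann}(L(\lambda))} = d_{\rm Som}^{G^\vee}(L^\vee, \mathcal{O}_{L^\vee})$ is exactly the conclusion of Theorem~\ref{T:main2}, equivalently of Proposition~\ref{P:Som} applied with the roles of $G$ and $G^\vee$ interchanged (to the pseudo-Levi $H^\vee \subseteq G^\vee$, the special representation $\pi^\vee_{w_\lambda}$ of $W(H^\vee)$, and the pair $(L^\vee, \mathcal{O}_{L^\vee})$); so what really has to be proved is the explicit identification of $\pi_{w_\lambda}$, of $\pi^\vee_{w_\lambda}$, and of $d_{\rm LS}^{H^\vee}(\mathcal{O}(\pi^\vee_{w_\lambda}))$ in terms of the partitions ${\bf p}_i$, ${\bf p}_i^\vee$ and the PyCox characters $\chi_i$.

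First I would pin down $\pi_{w_\lambda}$ as a representation of $W(H) = W_{[\lambda]}$. By Corollary~\ref{transf-1} and the argument in its proof, $\phi$ carries $w_\lambda$ to $w_{\lambda'}$, which under $W_{[\lambda]} \simeq W(\Phi_1) \times \cdots \times W(\Phi_k)$ is the tuple $(w_{\lambda'|_{\Phi_1}}, \dots, w_{\lambda'|_{\Phi_k}})$; since the Kazhdan--Lusztig basis of a product Hecke algebra is the tensor product of the factor bases (and $\aff$ is additive, Lemma~\ref{alem1}(4)), two-sided cells of a direct product of Coxeter groups are products of two-sided cells, so the two-sided cell of $w_\lambda$ is the product of those of the $w_{\lambda'|_{\Phi_i}}$ and hence $\pi_{w_\lambda} = \prod_i \pi_{w_{\lambda'|_{\Phi_i}}}$. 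For a factor $\Phi_i$ of classical type, \cite{BXX} and \cite{BMW} identify $\pi_{w_{\lambda'|_{\Phi_i}}}$ with the special partition ${\bf p}_i$ extracted from the Young tableau $P((\lambda'|_{\Phi_i})^-)$ (or from $P(\lambda'|_{\Phi_i})$ in type $A$) by the RS/H-algorithms; for a factor of exceptional type $E$ the character $\chi_{i_0}$ returned by PyCox (via Lemma~\ref{find-w-lambda}) is by construction $\pi_{w_{\lambda'|_{\Phi_{i_0}}}}$. In item (1) the hypothesis that no ${\bf p}_i$ is very even of type $D$ is exactly what makes each $\pi_{{\bf p}_i}$ unambiguously attached to its partition, which is what keeps the statement clean.

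Next I would pass to the dual side. The canonical isomorphism $\tau_{H,H^\vee} : W(H) \to W(H^\vee)$ respects the product decomposition and intertwines the Springer correspondences, and on a classical factor it sends the special representation labelled by ${\bf p}_i$ (type $X_i$) to the one labelled by ${\bf p}_i^\vee$ (type $X_i^\vee$), both being read off the \emph{same} tableau $P((\lambda'|_{\Phi_i})^-)$ as recalled before the statement; since type $A$, $D$ and $E$ are self-dual this gives $\pi^\vee_{w_\lambda} = \prod_i \pi_{{\bf p}_i^\vee}$ (resp. $\pi^\vee_{w_\lambda}\simeq\pi_{w_\lambda}$ in item (3)). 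Applying $d_{\rm LS}^{H^\vee}$, which is a bijection on special orbits, commutes with the product decomposition, and on a classical factor is ``transpose then collapse'', produces a special orbit of $H^\vee$, which by the Bala--Carter classification equals ${\rm sat}_{L^\vee}^{H^\vee}(\mathcal{O}_{L^\vee})$; since $H^\vee$, with root system $\Phi_{[\lambda]}^\vee \subseteq \Phi^\vee$, is a pseudo-Levi of $G^\vee$, its Levi $L^\vee$ is a pseudo-Levi of $G^\vee$, so $d_{\rm Som}^{G^\vee}(L^\vee, \mathcal{O}_{L^\vee})$ is defined. Feeding this through Proposition~\ref{P:Som} with $G$, $G^\vee$ swapped — using $(\pi^\vee_{w_\lambda})^\vee = \pi_{w_\lambda}$, $(H^\vee)^\vee = H$, $(G^\vee)^\vee = G$ — gives $\mathcal{O}_{\rm Spr}^{G} \circ j_{W(H)}^{W(G)}(\pi_{w_\lambda}) = d_{\rm Som}^{G^\vee}(L^\vee, \mathcal{O}_{L^\vee})$, and combining with Joseph's formula \eqref{E:jos}, $\mathcal{O}_{{\rm Ann}(L(\lambda))} = \mathcal{O}_{\rm Spr}^G \circ j_{W_{[\lambda]}}^{W(G)}(\pi_{w_\lambda})$ with $W_{[\lambda]} = W(H)$, yields items (1) and (3); in item (3) the only extra points are that all factors are simply-laced and that the Bala--Carter label $L_{i_0}$ of $\mathcal{O}_{\chi_{i_0}} = \mathcal{O}_{\rm Spr}^{\Phi_{i_0}}(\chi_{i_0})$ propagates through the product under $\mathcal{O}_{\rm Spr}$ and $d_{\rm LS}$. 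The actual evaluation of $d_{\rm Som}^{G^\vee}$ is then done with the tabulated data of \cite{Sommers}.

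The step I expect to be the main obstacle is item (2), where some ${\bf p}_{i_0}$ is a very even partition of type $D$. There the Springer correspondence for the $D$-factor is two-to-one — the orbits $\mathcal{O}^{I}, \mathcal{O}^{II}$ pair with $\pi^{I}_{{\bf p}_{i_0}}, \pi^{II}_{{\bf p}_{i_0}}$ — so $\pi_{w_\lambda}$ is not determined by the unordered partition data alone, and one must check that the ``numeral'' ($I$ versus $II$) is transported correctly by $j$-induction and by $d_{\rm LS}$ from $W(\Phi_{i_0})$ into $W(H)$ and then across to $H$, so that the numeral of the resulting $\mathcal{O}_L$ coincides with that of $\pi_{{\bf p}_{i_0}}$. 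This is a bookkeeping verification based on the explicit description of the Springer correspondence and of $d_{\rm LS}$ on very even partitions in type $D$ (see \cite{CM, Ca85}); unlike items (1) and (3), it is not a purely formal composition of the earlier results, although no idea beyond careful tracking of the numeral is needed.
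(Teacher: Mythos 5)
Your proposal is correct and follows essentially the same route the paper intends: it packages Joseph's formula \eqref{E:jos} with Proposition \ref{P:Som} (roles of $G$ and $G^\vee$ switched), after identifying $\pi_{w_\lambda}$ and $\pi^\vee_{w_\lambda}$ factorwise via the RS/H-algorithms, the product decomposition of two-sided cells, and the PyCox character for an exceptional factor, with the very even type $D$ numeral tracked separately exactly as the paper does through its convention table. The paper states the corollary without a separate proof, treating it as a direct consequence of this preceding discussion, so your argument supplies precisely the intended details.
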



Recall that we use $\Delta=\{\alpha_1,\dots,\alpha_n\} $ to denote the simple system of the root system $\Phi$ for a simple Lie algebra $\mathfrak{g}$. Let $\tilde{\Delta}=\Delta\cup \{\alpha_0\}$, where  $-\alpha_0=\beta$ is the highest root of $\Phi$. We write $\theta=\sum_{1\leq i\leq n}c_i\alpha_i$ and set $c_0=1$. For any proper subset $J\subset \tilde{\Delta}$, let $d_J$ be the greatest common divisor of those $c_i$ for which $\alpha_i\in \tilde{\Delta}-J$.

The following result without proof will be used in the determination of the Bala--Carter label for a given $(L,\mathcal{O}_L)$. Some details can be found in \cite{Ca85, som-98}.

\begin{Lem}\label{prime1-2}
Let ${\bf p} $ be a partition corresponding to a nilpotent orbit of classical type $(L,\mathcal{O}_L)$. Then some correspondences are given in   Table \ref{label}.

For type $E_7$ and $E_8$, it is possible that there are two in-equivalence classes $L', L''$ (both of type $A_r$) associated with $J', J'' \subseteq \tilde{\Delta}$ respectively. Since such $L$ is of type $A_r$, a distinguished $\mathcal{O}_L$ is the regular orbit of $L$. Here $L'$ in 
the Bala--Carter label indicates that $J'$ is conjugate to a subset of $A_k\subset E_k$ ($k=7,8$); otherwise  we have $L''$. Moreover, for $E_8$ we have that $d_J=1$ if and only if the decoration is given by $L'$.

\end{Lem}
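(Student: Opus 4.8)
The plan is to deduce Lemma~\ref{prime1-2} from the Bala--Carter classification of nilpotent orbits together with the explicit partition-theoretic description of this classification for classical Lie algebras. Recall that by Bala--Carter every nilpotent orbit $\mathcal{O}$ of a reductive group corresponds bijectively to a conjugacy class of pairs $(L,\mathcal{O}_L)$ with $L$ a Levi subgroup and $\mathcal{O}_L$ a distinguished orbit of $L$; concretely $L$ is a minimal Levi with $\mathcal{O}\cap\mathfrak{l}$ dense in a nilpotent orbit of $\mathfrak{l}$, and $\mathcal{O}_L=\mathcal{O}\cap\mathfrak{l}$. For a classical Lie algebra the Levi subgroups are products $\mathrm{GL}_{m_1}\times\cdots\times\mathrm{GL}_{m_r}\times G_0$ with $G_0$ of the same classical type as $G$ (possibly trivial), and a distinguished orbit of such an $L$ is the product of the regular (single Jordan block) orbit in each $\mathrm{GL}$ factor with a distinguished orbit of $G_0$, the latter corresponding to a partition with pairwise distinct parts, all even for type $C$ and all odd for types $B$ and $D$; see \cite[\S 5.2, \S 8.2]{CM}.

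First I would make the partition-level recipe explicit. Given a partition $\bf p$ encoding $\mathcal{O}$ in the standard way (\cite[Ch.~5]{CM}), one extracts a distinguished part ${\bf p}_0$ and the $\mathrm{GL}$-parts by peeling off equal pairs of rows: each pair of equal parts $\{a,a\}$ of $\bf p$ contributes a factor $\mathrm{GL}_a$ with its regular orbit, and what remains is a partition ${\bf p}_0$ with distinct parts of the correct parity, giving $\mathcal{O}_L$ in $G_0$. This is exactly the procedure underlying \cite[\S 8.2--8.4]{CM}, and one checks $\mathrm{sat}_L^G(\mathcal{O}_L)=\mathcal{O}$ for the pair so produced. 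Running through the cases in types $B$, $C$, $D$ produces the entries of Table~\ref{label}; I would simply record the resulting table rather than reprove the classification.

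The genuinely delicate point, which I expect to be the main obstacle to a clean write-up, is the decoration $L'$ versus $L''$ for type-$A_r$ Levis inside $E_7$ and $E_8$: two Levi subgroups may have root systems of the same abstract type $A_r$ yet not be conjugate, so the Bala--Carter label must carry an extra mark, the same phenomenon that forces notation like $(A_5)''$ in $E_7$ (cf.\ Proposition~\ref{E7}(16)). To settle this I would argue via the extended Dynkin diagram: a proper subset $J\subseteq\tilde{\Delta}$ generates a maximal-rank subsystem, and by the Borel--de Siebenthal analysis this subsystem is conjugate to a genuine Levi precisely when $d_J=1$, which is the condition isolated in \cite[\S\S 3--5]{som-98}. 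The convention is then that $L'$ labels the class whose defining $J'$ can be taken inside the subsystem $A_k\subset E_k$ (equivalently, for $E_8$, for which $d_{J'}=1$) and $L''$ the other; that there are at most two such classes, and that the $d_J$-criterion separates them for $E_8$, can be read off from the tables of maximal-rank subsystems in \cite[\S 13.2]{Ca85} together with \cite[\S 6]{Sommers}. I would therefore present Lemma~\ref{prime1-2} as a collation of these known facts, with Table~\ref{label} assembled from \cite[Ch.~8]{CM} for the classical pieces and from \cite{Ca85, som-98} for the exceptional decorations, which is why it is stated here without a self-contained proof.
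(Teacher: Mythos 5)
Your overall strategy coincides with the paper's treatment: Lemma \ref{prime1-2} is stated there \emph{without proof}, as a collation of the partition-to-Bala--Carter dictionary of \cite[\S 5.2, \S 8.2]{CM} (peel off equal pairs of parts as $\mathrm{GL}$ factors carrying their regular orbits, leaving a distinguished partition with distinct parts of the appropriate parity) together with the prime/double-prime conventions taken from \cite{Ca85, som-98}. Your reconstruction of the classical part, i.e.\ of Table \ref{label}, is the right argument; the only thing it glosses over is the bookkeeping of the $\tilde{A}_k$ versus $A_k$ decorations in types $B$/$C$ (long versus short root subsystems), which is routine.

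There is, however, a genuine flaw in your justification of the decoration rule. You propose to settle $L'$ versus $L''$ by the principle that $\Phi_J$ is conjugate to a genuine Levi exactly when $d_J=1$. Even granting that biconditional, it cannot separate the two classes in $E_7$: for instance $J'=\{\alpha_1,\alpha_3,\alpha_4,\alpha_5,\alpha_6\}$ of type $(A_5)'$ and $J''=\{\alpha_2,\alpha_4,\alpha_5,\alpha_6,\alpha_7\}$ of type $(A_5)''$ are \emph{both} subsets of $\Delta$, hence both genuine Levi subsystems, and both have $d_J=1$ (their complements in $\tilde{\Delta}$ contain $\alpha_0$ with $c_0=1$); the same happens for $(3A_1)'$ and $(3A_1)''$. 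This is precisely why the lemma asserts the $d_J$-criterion only for $E_8$, where for the ambiguous types $A_7$, $A_5+A_1$, $4A_1$, $2A_3$, $A_3+2A_1$ the class not contained in $A_8$ has $d_J=2$. For $E_7$ the only criterion is conjugacy of $J$ into the pseudo-Levi $A_7\subset E_7$, which Borel--de Siebenthal/$d_J$ considerations do not detect; it must be checked directly (e.g.\ by exhibiting a conjugating Weyl group element, or via the weighted Dynkin diagram of the saturated orbit, as the paper does in \S\ref{S:gen}). Apart from this misplaced step, your deferral to the tables of \cite{Ca85} and \cite{som-98} is exactly what the paper itself does, since it records the lemma as known and offers no self-contained proof.
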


\begin{Rem}
    In Table \ref{primes} we elaborate on the different conventions and notation used in literature regarding very even orbits and their associated Weyl group representations. The notation is either self-explanatory, or a precise reference is given. 

\begin{table}[htbp]\caption{Conventions for very even orbits and representations of $\mathfrak{so}(2n,\mathbb{C})$}
	\label{primes}
	\centering
	\renewcommand{\arraystretch}{1.5}
	\setlength\tabcolsep{5pt}
	
	\begin{tabular}{|c|c|c|}
		\hline {Our notation}  &${\bf p}^I$ & ${\bf p}^{II}$ \\ \hline
       
        \cite[p. 89]{lusztig1984char} & $\pi'$ & $\pi''$ \\
        \hline

        \cite[p. 170]{GP} & $([\bf u],+)$ & $([\bf u],-)$ \\
        \hline

 PyCox & $([\bf u],-)$ & $([\bf u],+)$ \\
        \hline
 \cite[p. 83]{CM} & $\mathcal{O}^I$ if $n\equiv 0 ~(\mathrm{mod}~ 4)$ & $\mathcal{O}^{II}$ if $n\equiv 0 ~(\mathrm{mod}~ 4)$\\
 & $\mathcal{O}^{II}$ if $n\equiv 2 ~(\mathrm{mod}~ 4)$ & $\mathcal{O}^{I}$ if $n\equiv 2 ~(\mathrm{mod}~ 4)$\\
\hline

\cite{Alv} & $([\bf u],[\bf u])^+$ if $n\equiv 0 ~(\mathrm{mod}~ 4)$ & $([\bf u],[\bf u])^-$ if $n\equiv 0 ~(\mathrm{mod}~ 4)$\\
 & $([\bf u],[\bf u])^-$ if $n\equiv 2 ~(\mathrm{mod}~ 4)$ & $([\bf u],[\bf u])^+$ if $n\equiv 2 ~(\mathrm{mod}~ 4)$\\
\hline

 \end{tabular}
	\bigskip
	
\end{table}

\end{Rem}

The annihilator variety $V(\Ann(L(\lam)))$ can be determined by $\gkd L(\lam)$ if there is only one nilpotent orbit with dimension equal to $2\gkd L(\lam)$. Thus we only need to consider the cases when there is more than one nilpotent orbit with dimension equal to $2\gkd L(\lam)$.
For $L(\lam)$ with $\lambda$ integral, the results are given 
 in Propositions \ref{f4}, \ref{g2}, \ref{E6}, \ref{E7} and \ref{E8}. 
 
\subsection{Type \texorpdfstring{$G_2$}{}}
 
 If $L(\lam)$ is a simple  highest weight module of $G_2$, the annihilator variety $V(\Ann(L(\lam)))$ can be determined uniquely by $\gkd L(\lam)$ since all the nilpotent orbits have different dimensions, see \cite[\S 8.4]{CM}. Thus the annihilator variety $V(\Ann(L(\lam)))$ for type $G_2$ can be determined by our algorithm of GK dimensions in \S \ref{a-value-compu}.

\subsection{Type \texorpdfstring{$F_4$}{}}
 We only need to consider the nonintegral cases and the results are as follows.

\begin{Thm}\label{f4-av}
Let $L(\lam)$ be a simple nonintegral highest weight module of $F_4$  such that $\Phi_{[\lam]}^\vee$ is pseudo-maximal. Write $\lam'=\phi(\lam|_{\mathfrak{h}_{\Phi_{[\lambda]}}})=\prod\limits_{1\lest i\lest k} \phi(\lambda|_{\mathfrak{h}_{\Phi_{[\lambda]}}})|_{{\Phi_i}}=\prod\limits_{1\lest i\lest k} \lam'|_{{\Phi_i}}$.
Then the following holds:
    \begin{enumerate}
	\item Suppose $H=\Phi_{[\lam]}\simeq C_4$.
We only need to consider the following three cases:
$$\gkd L(\lam)=15, 18, 21.$$
In the above three cases, we have $H^{\vee}=B_4$,  
$\pi_{w_{\lam}}={\bf p}_{1}=p_{C}((\lam')^-)^s$ and $\pi^{\vee}_{w_{\lam}}={\bf p}^{\vee}_{1}=p_{B}((\lam')^-)^s=({\bf p}_{1}+[1,0,\cdots,0])_B$. Then $$d_{\rm LS}^{H^\vee}(\mathcal{O}(\pi^{\vee}_{w_{\lam}}))=({\bf p}^{\vee}_{1})^t_{B_4} = {\rm sat}_{L^\vee}^{H^\vee} (\mathcal{O}_{L^\vee}),$$
where $L^\vee \subset \Phi^\vee$ and $\mathcal{O}_{L^\vee}$ is a distinguished orbit of $L^\vee$. The explicit information about $\mathcal{O}_{{\rm Ann}(L(\lambda))}$ can be found in Table \ref{tabc4}.




\item Suppose $H=\Phi_{[\lam]}\simeq A_2\times \tilde{A}_2$. We only need to consider the following two cases:
$$\gkd L(\lam)=18, 21.$$
In the above two cases, we have $H^{\vee}=\tilde{A}_2 \times A_2$ and 
$\pi^{\vee}_{w_{\lam}}={\bf p}_{1}\times {\bf p}_{2}=p_{A}(\lam'|_{A_2})\times p_{A}(\lam'|_{\tilde{A}_2})$. Then $$\mathcal{O}(\pi^{\vee}_{w_{\lam}})^t={\bf p}^t_{1}\times {\bf p}^t_{2} = {\rm sat}_{L^\vee}^{H^\vee} (\mathcal{O}_{L^\vee}),$$
where $L^\vee \subset \Phi^\vee$ and $\mathcal{O}_{L^\vee}$ a distinguished orbit of $L^\vee$. The explicit information about $\mathcal{O}_{{\rm Ann}(L(\lambda))}$ can be found in Table \ref{taba2a2}.
  


  \item Suppose $H=\Phi_{[\lam]}\simeq A_1\times \tilde{A}_3$. We only need to consider the following two cases:
$$\gkd L(\lam)=18, 21.$$
In the above two cases, we have $H^{\vee}=\tilde{A}_1 \times A_3$ and 
$\pi^{\vee}_{w_{\lam}}={\bf p}_{1}\times {\bf p}_{2}=p_{A}(\lam'|_{A_1})\times p_{A}(\lam'|_{\tilde{A}_3})$. Then $$\mathcal{O}(\pi^{\vee}_{w_{\lam}})^t={\bf p}^t_{1}\times {\bf p}^t_{2} = {\rm sat}_{L^\vee}^{H^\vee} (\mathcal{O}_{L^\vee}),$$
where $L^\vee \subset \Phi^\vee$ and $\mathcal{O}_{L^\vee}$ a distinguished orbit of $L^\vee$. The explicit information about $\mathcal{O}_{{\rm Ann}(L(\lambda))}$ can be found in Table \ref{taba1a3}.



    \item Suppose $H=\Phi_{[\lam]}\simeq B_3\times \tilde{A}_1$. We only need to consider the following two cases:
$$\gkd L(\lam)=15, 21.$$
In the above two cases, we have $H^{\vee}={C}_3 \times A_1$, 
$\pi_{w_{\lam}}={\bf p}_{1}\times {\bf p}_{2}=p_{B}((\lam'|_{B_3})^-)^s\times p_{A}(\lam'|_{\tilde{A}_1})$ and $\pi^{\vee}_{w_{\lam}}={\bf p}^{\vee}_{1}\times {\bf p}_{2}=p_{C}((\lam'|_{B_3})^-)^s\times p_{A}(\lam'|_{\tilde{A}_1})$. Then $$d_{\rm LS}^{H^\vee}(\mathcal{O}(\pi^{\vee}_{w_{\lam}}))=({\bf p}^{\vee}_{1})^t\times {\bf p}^t_{2} = {\rm sat}_{L^\vee}^{H^\vee} (\mathcal{O}_{L^\vee}),$$
where $L^\vee \subset \Phi^\vee$ and $\mathcal{O}_{L^\vee}$ a distinguished orbit of $L^\vee$. The explicit information about $\mathcal{O}_{{\rm Ann}(L(\lambda))}$ can be found in Table \ref{tabb3a1}.



\end{enumerate}
\end{Thm}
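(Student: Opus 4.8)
The plan is to treat Theorem \ref{f4-av} as a direct application of the machinery built up in \S \ref{AVHWM}, case by case over the four possible forms of $\Phi_{[\lam]}$. The overall strategy is: (i) identify $\Phi_{[\lam]}^\vee$ and the ambient pseudo-Levi $H^\vee \subseteq G^\vee = F_4^\vee$; (ii) compute the special representation $\pi_{w_\lam} \in {\rm Irr}(W(H))$ from $\lam'$ by the RS/H-algorithm (for classical factors) or PyCox (there are no exceptional factors here, so this never arises); (iii) transport it to $\pi_{w_\lam}^\vee \in {\rm Irr}(W(H^\vee))$ under the canonical isomorphism $W(H) \simeq W(H^\vee)$; (iv) apply $d_{\rm LS}^{H^\vee}$ and read off the Bala--Carter pair $(L^\vee, \mathcal{O}_{L^\vee})$; (v) invoke Theorem \ref{T:main2} to conclude $\mathcal{O}_{{\rm Ann}(L(\lambda))} = d_{\rm Som}^{\Phi^\vee}(L^\vee, \mathcal{O}_{L^\vee})$, and cross-check against the dimension count $\dim V(\Ann(L(\lam))) = 2\gkd L(\lam)$ from Proposition \ref{pr:main1} together with Proposition \ref{g2} (more precisely its $F_4$ analogue Proposition \ref{f4-nonint}) to see which of the listed GK dimensions actually correspond to more than one nilpotent orbit of that dimension.

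First I would handle the two simply-laced-factor cases ($A_2 \times \tilde A_2$ and $A_1 \times \tilde A_3$), which are the cleanest. Here $W(H) \simeq W(H^\vee)$ with $H^\vee$ obtained by swapping long and short (so $A_2 \times \tilde A_2 \to \tilde A_2 \times A_2$, etc.), and for type $A$ factors the Langlands dual is again type $A$, so $\pi_{w_\lam}^\vee = \pi_{w_\lam}$ as partitions; the only subtlety is matching the tilde-decoration to the correct component of $F_4^\vee$, which is read off from the extended Dynkin diagram (Figure \ref{df4}). Then $d_{\rm LS}$ on a type-$A$ special representation is just transpose of the partition, $d_{\rm LS}({\bf p}_1 \times {\bf p}_2) = {\bf p}_1^t \times {\bf p}_2^t$, giving $(L^\vee, \mathcal{O}_{L^\vee})$ with $\mathcal{O}_{L^\vee}$ regular in the Levi $L^\vee$ (since type-$A$ orbits are saturated from regular orbits of Levis). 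Finally one looks up $d_{\rm Som}^{\Phi^\vee}(L^\vee, \mathcal{O}_{L^\vee})$ in Sommers' tables \cite{Sommers}, and records the outcome in Tables \ref{taba2a2} and \ref{taba1a3}. The restriction to $\gkd L(\lam) \in \{18, 21\}$ is precisely the list of GK dimensions for which $F_4$ has two (or more) nilpotent orbits of the corresponding dimension, as one checks from \cite[\S 8.4]{CM}; for all other values in Proposition \ref{f4-nonint} the orbit is already pinned down by dimension, so nothing further is needed.

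Next I would treat the $C_4$ and $B_3 \times \tilde A_1$ cases, where a genuine type $B$/$C$ factor appears. For $C_4$, $\Phi_{[\lam]}^\vee = C_4^\vee = B_4$, and $H^\vee = B_4$ is the pseudo-Levi of $F_4^\vee$; $\pi_{w_\lam} = {\bf p}_1 = p_C((\lam')^-)^s$ comes from the H-algorithm of \cite{BMW}, and under $W(C_4) \simeq W(B_4)$ the dual special representation is $\pi_{w_\lam}^\vee = {\bf p}_1^\vee = p_B((\lam')^-)^s$, which by the explicit relation between special $B$- and $C$-symbols equals $({\bf p}_1 + [1,0,\dots,0])_B$ — this identity I would justify by the symbol combinatorics recalled in \S \ref{sec:cell} and \cite{BMW}. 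Then $d_{\rm LS}^{B_4}({\bf p}_1^\vee) = ({\bf p}_1^\vee)^t_{B_4}$, yielding the pair $(L^\vee, \mathcal{O}_{L^\vee})$, and one applies $d_{\rm Som}$ again; the potentially subtle point is correctly identifying the Bala--Carter label of $(L^\vee, \mathcal{O}_{L^\vee})$ from the classical partition, for which Lemma \ref{prime1-2} and Table \ref{label} are exactly the needed dictionary. The $B_3 \times \tilde A_1$ case is entirely analogous, with $H^\vee = C_3 \times A_1$ and the type-$B_3$ factor dualized to $C_3$ via the same $p_B \leftrightarrow p_C$ passage.

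The main obstacle, I expect, is not any single deep argument but rather the careful bookkeeping of normalizations: getting the long/short (tilde) decorations attached to the right simple-root components of $F_4$ and $F_4^\vee$, correctly orienting the Langlands duality on each irreducible factor, and — most delicately — the convention-matching for very even orbits of type $D$ should it occur (here it does not, since no $D_n$ factor with $n\geq 4$ arises in a pseudo-maximal $\Phi_{[\lam]}$ for $F_4$, so Table \ref{primes} and case (2) of the Corollary are vacuous for $F_4$, which I would note explicitly). A secondary point requiring care is verifying, for each of the four cases, exactly which GK-dimension values force a genuine ambiguity among nilpotent orbits of that dimension; this is a finite check against the orbit list in \cite[\S 8.4]{CM}, and it is what produces the short lists ``$15,18,21$'' and ``$18,21$'' in the statement. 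Once these normalizations are fixed, each table entry is a mechanical composition of the maps, and the theorem follows from Theorem \ref{T:main2} applied entry by entry.
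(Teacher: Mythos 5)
Your proposal follows essentially the same route as the paper's own proof: case-by-case over the four pseudo-maximal $\Phi_{[\lambda]}$, compute $\pi_{w_\lambda}$ and $\pi_{w_\lambda}^\vee$ via the RS/H-algorithm, pass through $d_{\rm LS}^{H^\vee}$ (transpose of partitions) to the Bala--Carter pair $(L^\vee,\mathcal{O}_{L^\vee})$, apply the Sommers duality (Proposition \ref{P:Som}, equivalently Theorem \ref{T:main2}) and \cite[Table VII]{Sommers}, and restrict attention to exactly those GK dimensions that are both achievable (Proposition \ref{f4-nonint} / Lemma \ref{a-value}) and ambiguous in the $F_4$ orbit list of \cite[\S 8.4]{CM}. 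No gaps; this is the argument the paper gives, worked out in detail for cases (1) and (4) with (2) and (3) noted as easier.
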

\begin{proof}
We give some details for (1), (4) and omit the argument for  (2) and (3) since the latter is similar and easier.

For (1), we have
$H=\Phi_{[\lambda]}\simeq C_4$ with $H^\vee=B_4$ being a pseudo-Levi subgroup of $F_4$. From $\lam'$, by using the H-algorithm in \cite{BMW}, we can get two special partitions $\pi_{w_{\lam}}={\bf p}_{1}=p_C((\lam')^-)^s$ of type $C_4$ and  $\pi^{\vee}_{w_{\lam}}={\bf p}^{\vee}_{1}=p_B((\lam')^-)^s$ of type $B_4$, which both come from the Young tableau $P((\lam')^-)$. Then $p_C((\lam')^-)^s$ will give us the special bipartition $\pi_{w_\lambda} = ({\bf d}, {\bf f})   \in {\rm Irr}(W(H))$. 
Consider the representation $\sigma^\vee= \tau_{H, H^\vee}(\sigma) = ({\bf d}, {\bf f}) \in {\rm Irr}(W(H^\vee))$. A simple computation gives that
$$d_{\rm LS}^{H^\vee} \circ \mathcal{O}_{\rm Spr}^{H^\vee}(\sigma^\vee) = (p_B((\lam'))^-)^s)^t = {\rm sat}_{L^\vee}^{H^\vee} (\mathcal{O}_{L^\vee}),$$
where $L^\vee \subset F_4^\vee$ and $\mathcal{O}_{L^\vee}$ is a distinguished orbit of $L^\vee$.
It follows from Proposition \ref{P:Som} that $$\mathcal{O}_{{\rm Ann}(L(\lambda))} = d_{\rm Som}^{F_4^\vee}(L^{\vee}, \mathcal{O}_{L^\vee}).$$

  When $ \gkd L(\lam)=15, 18 $ or $21$,  there are more than one nilpotent orbits with
dimension equal to $2 \gkd L(\lam)$.
For example, when $ \gkd L(\lam)=15$, we should have $\aff(w_{\lam})=9$. By Lemma \ref{a-value}, there is only one special partition of type $C_4$ whose $\aff$ value is $9$: ${\bf p}=[2,2,1^4]$.
  The corresponding special partition of type $B_4$ is ${\bf p}^{\vee}=({\bf p}+[1,0,\cdots,0])_B=[3,1^6]$. Thus $({\bf p}^{\vee})^t=[7,1,1]={\rm sat}_{L^\vee}^{H^\vee} (\mathcal{O}_{L^\vee})$, where $\mathcal{O}_{L^\vee}=B_3$.  
  Thus from Proposition \ref{P:Som} and the LS duality in \cite[p. 440]{Ca85} or Table \ref{tabcharf4}, we have $V(\Ann (L(\lam)))=\overline{\mathcal{O}}(A_2)$ since $ d_{\rm Som}^{F_4^\vee}(L^{\vee}, \mathcal{O}_{L^\vee})=d_{\rm LS}^{F_4^\vee}(L^{\vee}, \mathcal{O}_{L^\vee})=A_2$ when $\mathcal{O}_{L^\vee}=B_3$.

The  argument for $\gkd L(\lam)=18$ and $21$ is similar.

For (4) one has $H=\Phi_{[\lam]}\simeq B_3\times \tilde{A}_1$ and $H^{\vee}={C}_3 \times A_1$. By Lemma \ref{a-value}, we only need to consider  that $ \gkd L(\lam)=15 $ or $21$ since $\aff(w_{\lam})$ can not take the value $6$. For example, when $ \gkd L(\lam)=21$, we should have $\aff(w_{\lam})=3$. By Lemma \ref{a-value}, there are two special partitions of type $B_3\times {A}_1$ whose $\aff$ value is $3$: ${\bf p}=[3,3,1] \times [1,1]$ and ${\bf q}=[3,2,2] \times [2]$.
  The corresponding two special partitions of type $C_3\times A_1$ are:  ${\bf p}^{\vee}=[3,3] \times [1,1]$ and ${\bf q}^{\vee}=[2^3] \times [2]$. Thus $({\bf p}^{\vee})^t=[2^3]\times [2]={\rm sat}_{L^\vee}^{H^\vee} (\mathcal{O}_{L^\vee})$ where $\mathcal{O}_{L^\vee}=2A_1+\tilde{A}_1$, and $({\bf q}^{\vee})^t=[3,3]\times [1,1]={\rm sat}_{L^\vee}^{H^\vee} (\mathcal{O}_{L^\vee})$ where $\mathcal{O}_{L^\vee}=\tilde{A}_2$.  
  Thus from Proposition \ref{P:Som}, the LS duality in \cite[p. 440]{Ca85} and the Sommers duality in \cite[Table VII]{Sommers} or Table \ref{tabcharf4}, we have $V(\Ann (L(\lam)))=\overline{\mathcal{O}}(B_3)$ for $\pi_{\lam}={\bf p}$ since $ d_{\rm Som}^{F_4^\vee}(L^{\vee}, \mathcal{O}_{L^\vee})=B_3$ when $\mathcal{O}_{L^\vee}=2A_1+\tilde{A}_1$, and $V(\Ann (L(\lam)))=\overline{\mathcal{O}}(C_3)$ for $\pi_{\lam}={\bf q}$ since $ d_{\rm Som}^{F_4^\vee}(L^{\vee}, \mathcal{O}_{L^\vee})=d_{\rm LS}^{F_4^\vee}(L^{\vee}, \mathcal{O}_{L^\vee})=C_3$ when $\mathcal{O}_{L^\vee}=\tilde{A}_2$.

  The  argument for $\gkd L(\lam)=15$ is similar.


\end{proof}

\begin{ex}Recall that in Example \ref{exf4},
	 $L(\lam)$ is a highest weight module of $F_4$ with  $\lambda=(4, 5, \frac{3}{2}, \frac{1}{2})$.  $\Phi_{[\lambda]}$ is a subsystem with simple roots $\{\alpha_2, \alpha_3, \alpha_4, \ep_2\}$ and $H=\Phi_{[\lambda]}\simeq C_4$. Thus  $H^\vee \simeq B_4$. Suppose the simple root system of $C_4$ is $\Delta=\{\beta_1=\ep_1-\ep_2, \beta_2=\ep_2-\ep_3, \beta_3=\ep_3-\ep_4,\beta_4=2\ep_4\}$. We define a map $\phi: \Phi_{[\lambda]} \rightarrow C_4$ such that $\phi(\varepsilon_2)=\beta_1, \phi(\alpha_4)=\beta_2, \phi(\alpha_3)=\beta_3, \phi(\alpha_2)=\beta_4$. Then we have $\phi(\lam)=\lam'=(9,-1,2,1)$. 



By using the RS algorithm in \cite{BXX} and the H-algorithm in \cite{BMW}, we have  $\pi_{w_\lambda}=p_C((\lam')^-)^s=[2,2,1^4]$ and $\pi^{\vee}_{w_\lambda}=p_B((\lam')^-)^s=[3,1^6]$.   A simple computation gives that
$$d_{\rm LS}^{H^\vee}(\mathcal{O}(\pi^{\vee}_{w_\lambda}))=(p_B((\lam')^-)^s)^t= [7,1,1] = {\rm sat}_{L^\vee}^{H^\vee} (\mathcal{O}_{L^\vee}^{\rm reg}),$$
where $\mathcal{O}_{L^\vee} = B_3$.
It follows from Proposition \ref{P:Som} and the LS duality in \cite[p. 440]{Ca85} or Table \ref{tabcharf4} that $\mathcal{O}_{{\rm Ann}(L(\lambda))} = d_{\rm Som}^{F_4^\vee}(L^{\vee}, \mathcal{O}_{L^\vee}) = d_{\rm LS}^{F_4^\vee}(L^{\vee}, \mathcal{O}_{L^\vee}) =\mathcal{O}(A_2)$ when $\mathcal{O}_{L^\vee}=B_3$.
\end{ex}


\subsection{Type \texorpdfstring{$E_6$}{}} We only need to consider the nonintegral cases and the results
are as following.
\begin{Thm}\label{e6-av}
Let $L(\lam)$ be a simple nonintegral highest weight module of $E_6$ such that $\Phi_{[\lam]}^\vee$ is pseudo-maximal. Write $\lam'=\phi(\lam|_{\mathfrak{h}_{\Phi_{[\lambda]}}})=\prod\limits_{1\lest i\lest k} \phi(\lambda|_{\mathfrak{h}_{\Phi_{[\lambda]}}})|_{{\Phi_i}}=\prod\limits_{1\lest i\lest k} \lam'|_{{\Phi_i}}$.
	Then
    the following holds:
    \begin{enumerate}
	\item Suppose $H=\Phi_{[\lam]}\simeq D_5$. We only need to consider the following two cases:
$$\gkd L(\lam)=30, 32.$$
In the above two cases, we have $H^{\vee}=D_5$ and   
$\pi^{\vee}_{w_{\lam}}={\bf p}_{1}=p_{D}((\lam')^-)^s$. Then $$d_{\rm LS}^{H^\vee} (\mathcal{O}(\pi_{w_{\lam}}^\vee))=(({\bf p}_{1})^t)_D = {\rm sat}_{L^\vee}^{H^\vee} (\mathcal{O}_{L^\vee}),$$
where $L^\vee \subset \Phi^\vee$. The explicit information about $\mathcal{O}_{{\rm Ann}(L(\lambda))}$ can be found in Table \ref{tabd5-1}.



\item Suppose $H=\Phi_{[\lam]}\simeq A_5\times A_1$. 
We only need to consider the following two cases:
$$\gkd L(\lam)=30, 32.$$
In the above two cases, we have $H^{\vee}=A_5\times A_1$ and   
$\pi^{\vee}_{w_{\lam}}={\bf p}_{1}\times {\bf p}_{2}=p_{A}(\lam'|_{A_5})\times p_{A}(\lam'|_{{A}_1})$. Then $$\mathcal{O}(\pi^{\vee}_{w_{\lam}})^t={\bf p}^t_{1}\times {\bf p}^t_{2} = {\rm sat}_{L^\vee}^{H^\vee} (\mathcal{O}_{L^\vee}^{\rm reg})={\rm sat}_{L}^{H} (\mathcal{O}_{L}^{\rm reg}),$$
where $L^\vee \subset \Phi^\vee$. The explicit information about $\mathcal{O}_{{\rm Ann}(L(\lambda))}$ can be found in Table \ref{taba5a1}.



  \item Suppose $H=\Phi_{[\lam]}\simeq A_2\times A_2\times A_2$. 
We only need to consider the following two cases:
$$\gkd L(\lam)=30, 32.$$
In the above two cases, we have $H^{\vee}=A_2\times A_2\times A_2$ and   
$\pi^{\vee}_{w_{\lam}}={\bf p}_{1}\times {\bf p}_{2}\times {\bf p}_{3}=p_{A}(\lam'|_{A_2(1)})\times p_{A}(\lam'|_{{A}_2(2)}) \times p_{A}(\lam'|_{{A}_2(3)})$, where $A_2(i)$ denotes the $i$-th $A_2$ component  of $\Phi_{[\lam]}$. Then $$\mathcal{O}(\pi^{\vee}_{w_{\lam}})^t={\bf p}^t_{1}\times {\bf p}^t_{2}\times {\bf p}^t_{3} = {\rm sat}_{L^\vee}^{H^\vee} (\mathcal{O}_{L^\vee}^{\rm reg}),$$
where $L^\vee \subset \Phi^\vee$. The explicit information about $\mathcal{O}_{{\rm Ann}(L(\lambda))}$ can be found in Table \ref{taba2a2a2}.



\end{enumerate}
\end{Thm}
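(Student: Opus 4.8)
The plan is to treat all three cases uniformly as instances of the general mechanism assembled in this section: combine Joseph's formula \eqref{E:jos}, i.e.\ $\mathcal{O}_{{\rm Ann}(L(\lambda))} = \mathcal{O}_{\rm Spr}^G \circ j_{W_{[\lambda]}}^{W(G)}(\pi_{w_\lambda})$, with Proposition \ref{P:Som} (applied with the roles of $G$ and $G^\vee$ interchanged), which converts the $j$-induction of the $\vee$-side special representation into an evaluation of the Sommers duality $d_{\rm Som}^{E_6^\vee}$ on a pseudo-Levi pair $(L^\vee, \mathcal{O}_{L^\vee})$. Since $E_6$ is simply-laced, $H = \Phi_{[\lambda]}$ and $H^\vee$ have the same type in each of the three cases ($D_5$, $A_5\times A_1$, $A_2\times A_2\times A_2$), so the canonical isomorphism $W(H)\simeq W(H^\vee)$ identifies $\pi_{w_\lambda}$ with $\pi^\vee_{w_\lambda}$ as representations carried by the same partition-data. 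The first step is therefore to record, using the results of \S\ref{nonint}, the decomposition $\Phi_{[\lambda]}\simeq \Phi_1\times\cdots\times\Phi_k$, the integral weights $\lambda'|_{\Phi_i}$, and from these the special partitions $\mathbf{p}_i$ via the RS algorithm and the H-algorithm of \cite{BMW}; for $D_5$ one takes $\mathbf{p}_1 = p_D((\lambda')^-)^s$, and for the type-$A$ factors one takes $p_A(\lambda'|_{\Phi_i})$. This gives $\pi^\vee_{w_\lambda}$ explicitly, and hence $\mathcal{O}(\pi^\vee_{w_\lambda})$.

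Next I would apply the Lusztig--Spaltenstein duality factorwise: for a classical $X$-type partition $\mathbf{p}$ one has $d_{\rm LS}(\mathbf{p}) = \mathbf{p}^t_X$, and for a type-$A$ factor $d_{\rm LS}$ is ordinary transpose, while ${\rm sat}$ of a type-$A$ orbit is just the block-diagonal orbit since every type-$A$ orbit is Richardson/regular in its Levi. Thus $d_{\rm LS}^{H^\vee}(\mathcal{O}(\pi^\vee_{w_\lambda}))$ is a product of partitions, and reading off the Bala--Carter datum $(L^\vee,\mathcal{O}_{L^\vee})$ amounts to identifying which Levi $L^\vee\subseteq E_6$ has this as its regular/distinguished orbit — exactly the bookkeeping codified in Lemma \ref{prime1-2}, Table \ref{label}, and (for very even $D$-orbits) Table \ref{primes}. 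Then Proposition \ref{P:Som}, with $G$ and $G^\vee$ swapped and $\pi_{w_\lambda}$ substituted into the left-hand side, gives $\mathcal{O}_{{\rm Ann}(L(\lambda))} = d_{\rm Som}^{E_6^\vee}(L^\vee,\mathcal{O}_{L^\vee})$; since $E_6$ is self-dual up to diagram automorphism, this is a genuine Sommers-duality value that one looks up in \cite{Sommers, ac03} (recorded in our Tables \ref{tabd5-1}, \ref{taba5a1}, \ref{taba2a2a2}). Finally one restricts to the cases where the dimension $2\gkd L(\lambda)$ does not already pin down the orbit: by Lemma \ref{a-value} and Proposition \ref{pr:main1}, the only ambiguous dimensions occurring for these pseudo-maximal $\Phi_{[\lambda]}^\vee$ are $\gkd L(\lambda) = 30$ (the pair $A_4$ vs.\ $D_4$, cf.\ Proposition \ref{E6}(12)) and $\gkd L(\lambda)=32$ (involving $D_5(a_1)$, cf.\ Proposition \ref{E6}(14)); in each remaining dimension the orbit is unique in $\mathcal{N}(E_6)$ and \S\ref{a-value-compu} already determines it.

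The main obstacle is the very-even $D$-type subtlety. When $\Phi_i$ is of type $D_n$ and $\mathbf{p}_i$ is a very even partition, the Springer fibre splits into the two representations $\mathbf{p}^I, \mathbf{p}^{II}$, and one must track which of the two labels ($L'$ vs.\ $L''$, in the $E$-type Bala--Carter convention, or $\mathcal{O}^I$ vs.\ $\mathcal{O}^{II}$) is produced by the H-algorithm output and is respected by $j$-induction and $\tau_{H,H^\vee}$; this is precisely the content of item (2) of the Corollary preceding these theorems and of Table \ref{primes}, and one has to check the bookkeeping is internally consistent across the diagram automorphism of $E_6$. Apart from that, the proof is a finite verification: for each of the two flagged values of $\gkd L(\lambda)$ in each of the three cases, list the finitely many special partition-data with the prescribed $\aff$-value (Lemma \ref{a-value}), run them through the LS-transpose, identify $(L^\vee,\mathcal{O}_{L^\vee})$, and read $d_{\rm Som}^{E_6^\vee}$ off the cited tables, matching against the entries of Tables \ref{tabd5-1}, \ref{taba5a1}, \ref{taba2a2a2}. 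I would present the $D_5$ case in full detail and indicate that $A_5\times A_1$ and $A_2\times A_2\times A_2$ are strictly easier because all factors are type $A$, so no very-even issue and no nontrivial LS duality beyond transpose arises.
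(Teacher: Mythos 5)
Your proposal is correct and follows essentially the same route as the paper: the paper's proof of this theorem simply invokes the argument of Theorem \ref{f4-av}, namely computing $\pi^\vee_{w_\lambda}$ from the RS/H-algorithm partitions, restricting attention to the ambiguous dimensions $\gkd L(\lambda)=30,32$, and applying Proposition \ref{P:Som} together with the Lusztig--Spaltenstein and Sommers duality tables. Your only superfluous worry is the very-even $D$-type subtlety, which is vacuous here since the sole $D$-factor is $D_5$ (odd rank, so no very even partitions occur).
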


\begin{proof}

The  argument is similar with that in Theorem \ref{f4-av}, and thus we omit the details.

\end{proof}

\subsection{Type \texorpdfstring{$E_7$}{}} 
Recall that in the case of type $D_{2m}$, a nilpotent orbit $\mathcal{O}$  may be associated with a very even partition ${\bf p}$ with the  numeral being I or II, see Table \ref{primes} for our convention. The reader is also referred to \cite{BMW} for more details. 

\begin{Thm}\label{e7-av}
Let $L(\lam)$ be a simple nonintegral highest weight module of $E_7$ such that $\Phi_{[\lam]}^\vee$ is pseudo-maximal. Write $\lam'=\phi(\lam|_{\mathfrak{h}_{\Phi_{[\lambda]}}})=\prod\limits_{1\lest i\lest k} \phi(\lambda|_{\mathfrak{h}_{\Phi_{[\lambda]}}})|_{{\Phi_i}}=\prod\limits_{1\lest i\lest k} \lam'|_{{\Phi_i}}$.
	Then
    the following holds:
    \begin{enumerate}

\item Suppose $H=\Phi_{[\lam]}\simeq A_7$. We only need to consider the following $12$ cases:
$$\gkd L(\lam)=42,47,48,50,51,53,54,55,56,57,59,60.$$
In the above $12$ cases, we have $H^{\vee}= A_7$ and   
$\pi^{\vee}_{w_{\lam}}={\bf p}_{1}=p_{A}(\lam')$. Then $$\mathcal{O}(\pi^{\vee}_{w_{\lam}})^t={\bf p}_{1}^t = {\rm sat}_{L^\vee}^{H^\vee} (\mathcal{O}_{L^\vee}^{\rm reg}),$$
where $L^\vee \subset \Phi^\vee$. The explicit information about $\mathcal{O}_{{\rm Ann}(L(\lambda))}$ can be found in Table \ref{taba7}. 

\item Suppose $H=\Phi_{[\lam]}\simeq A_5\times A_2$. We only need to consider the following $10$ cases:
$$\gkd L(\lam)=47,48,50,53,54,55,56,57,59,60.$$
In the above $10$ cases, we have $H^{\vee}=A_5\times A_2$ and   
$\pi^{\vee}_{w_{\lam}}={\bf p}_{1}\times {\bf p}_{2}=p_{A}(\lam'|_{A_5})\times p_{A}(\lam'|_{A_2})$. Then $$\mathcal{O}(\pi^{\vee}_{w_{\lam}})^t={\bf p}_{1}^t \times {\bf p}_{2}^t = {\rm sat}_{L^\vee}^{H^\vee} (\mathcal{O}_{L^\vee}^{\rm reg}),$$
where $L^\vee \subset \Phi^\vee$. The explicit information about $\mathcal{O}_{{\rm Ann}(L(\lambda))}$ can be found in Table \ref{taba5a2}.

\item Suppose $H=\Phi_{[\lam]}\simeq A_3\times A_3\times A_1$. We only need to consider the following $9$ cases:
$$\gkd L(\lam)=50,51,53,54,55,56,57,59,60.$$
In the above $9$ cases, we have $H^{\vee}=A_3\times A_3\times A_1$ and   
$\pi^{\vee}_{w_{\lam}}={\bf p}_{1}\times {\bf p}_{2}\times {\bf p}_{3}=p_{A}(\lam'|_{A_3(1)})\times p_{A}(\lam'|_{A_3(2)})\times p_{A}(\lam'|_{A_1})$. Then $$\mathcal{O}(\pi^{\vee}_{w_{\lam}})^t={\bf p}_{1}^t \times {\bf p}_{2}^t \times {\bf p}_{3}^t = {\rm sat}_{L^\vee}^{H^\vee} (\mathcal{O}_{L^\vee}^{\rm reg}),$$
where $L^\vee \subset \Phi^\vee$. The explicit information about $\mathcal{O}_{{\rm Ann}(L(\lambda))}$ can be found in Table \ref{taba3a3a1}.

\item Suppose $H=\Phi_{[\lam]}\simeq D_6\times A_1$. 
We only need to consider the following $12$ cases:
$$\gkd L(\lam)=42,47,48,50,51,53,54,55,56,57,59,60.$$
In the above $12$ cases, we have $H^{\vee}=D_6\times A_1$ and   
$\pi^{\vee}_{w_{\lam}}={\bf p}_{1}\times {\bf p}_{2}=p_{D}((\lam'|_{D_6})^-)^s\times p_{A}(\lam'|_{A_1})$. Then 
$$d_{\rm LS}^{H^\vee} (\mathcal{O}(\pi_{w_{\lam}}^\vee))=({\bf p}_{1}^t)_D \times {\bf p}^t_{2} = {\rm sat}_{L^\vee}^{H^\vee} (\mathcal{O}_{L^\vee}),$$
where $L^\vee \subset \Phi^\vee$ and $\mathcal{O}_{L^\vee}$ a distinguished orbit of $L^\vee$. The explicit information about $\mathcal{O}_{{\rm Ann}(L(\lambda))}$ can be found in Table \ref{tabd6a1} and Table \ref{tabd6a1-2}. When ${\bf p}_{1}$ is a very even partition, the numeral of $\mathcal{O}_{L^{\vee}}$ is the same with the numeral of ${\bf p}_{1}$.

\item Suppose $H=\Phi_{[\lam]}\simeq E_6$. We only need to consider the following $8$ cases:
$$\gkd L(\lam)=48,50,51,53,56,57,59,60.$$
In the above $8$ cases, we have $H^{\vee}=E_6$ and
$\pi^{\vee}_{w_{\lam}}={\chi}_{1}$. 
Also we have
$d_{\rm LS}^{H^\vee}(\mathcal{O}(\pi_{w_\lambda}^\vee)) = {\rm sat}_{L^\vee}^{H^\vee}(\mathcal{O}_{L^\vee})$,
and $$\mathcal{O}_{{\rm Ann}(L(\lambda))} = d_{\rm Som}^{G^\vee}(L^{\vee}, \mathcal{O}_{L^\vee}),$$
where $L^\vee \subset \Phi^\vee$ and $\mathcal{O}_{L^\vee}$ a distinguished orbit of $L^\vee$. The explicit information about $\mathcal{O}_{{\rm Ann}(L(\lambda))}$ can be found in Table \ref{tabe6}.

\end{enumerate}
\end{Thm}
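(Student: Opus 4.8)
\textbf{Proof proposal for Theorem \ref{e7-av}.}

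The plan is to treat the five items uniformly by invoking the general machinery assembled in \S\ref{AVHWM}, namely Joseph's formula \eqref{E:jos}, Proposition \ref{P:Som}, and the explicit partition-theoretic recipes for $\pi_{w_\lambda}$ and $\pi_{w_\lambda}^\vee$ coming from \cite{BXX} and \cite{BMW}. The first step, common to all cases, is to enumerate which values of $\gkd L(\lambda)$ genuinely require the Joseph--Sommers computation: by Proposition \ref{pr:main1} we have $\gkd L(\lambda) = |\Phi^+| - \aff(w_\lambda) = 63 - \aff(w_\lambda)$, and by Theorem \ref{T:main1} the value $\aff(w_\lambda)$ is a sum of $\aff$-values over the irreducible factors $\Phi_i$ of $\Phi_{[\lambda]}$; running over the admissible $\aff$-values listed in Lemma \ref{a-value} for the relevant factor types ($A_7$; $A_5\times A_2$; $A_3\times A_3\times A_1$; $D_6\times A_1$; $E_6$) produces exactly the lists of $12$, $10$, $9$, $12$, $8$ values stated. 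For each such value one then checks against \cite[\S 8.4]{CM} (and the tables of nilpotent orbits of $E_7$) whether there is a unique nilpotent orbit of dimension $2\gkd L(\lambda)$; the cases where the answer is "no" are precisely those one must resolve, and these are the rows appearing in Tables \ref{taba7}--\ref{tabe6}.

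The second step is the identification of the input $\pi_{w_\lambda}^\vee$. Since $\Phi_{[\lambda]}^\vee$ is pseudo-maximal, $H^\vee = \Phi_{[\lambda]}^\vee$ is a pseudo-Levi subgroup of $E_7$, and in each of the five cases $H^\vee$ has the same type as $H$ (all factors being $A$-type or $D_6$ or $E_6$, all of which are isomorphic to their own duals up to the canonical $W$-isomorphism $\tau_{H,H^\vee}$). Using the decomposition $\lambda' = \prod_i \lambda'|_{\Phi_i}$ from \S\ref{nonint}, for the type-$A$ factors the RS algorithm of \cite{BXX} produces the partition $p_A(\lambda'|_{\Phi_i})$; for the $D_6$ factor the H-algorithm of \cite{BMW} produces the special partition $p_D((\lambda'|_{D_6})^-)^s$ and its dual $p_C$-type partner (the same odd/even box data); for the $E_6$ factor Lemma \ref{find-w-lambda} together with PyCox yields the character $\chi_1 \in {\rm Irr}(W(E_6))$. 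Because $j$-induction respects the product decomposition (Lemma \ref{j.1}) and because special representations of $W(H)$ are products of special representations of the factors, $\pi_{w_\lambda}^\vee$ is the product of these pieces, and $\mathcal{O}(\pi_{w_\lambda}^\vee)$ is the corresponding product orbit; applying $d_{\rm LS}$ factorwise (using $d_{\rm LS}({\bf p}) = {\bf p}^t_X$ for classical $X$, and the $E_6$ LS table otherwise) exhibits $d_{\rm LS}^{H^\vee}(\mathcal{O}(\pi_{w_\lambda}^\vee))$ as ${\rm sat}_{L^\vee}^{H^\vee}(\mathcal{O}_{L^\vee})$ for a suitable pseudo-Levi $L^\vee \subset E_7^\vee$ and distinguished $\mathcal{O}_{L^\vee}$, via the Bala--Carter reading explained in Lemma \ref{prime1-2}.

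The third step is to conclude. By Proposition \ref{P:Som}, applied with the roles of $G$ and $G^\vee$ exchanged and with $\pi_{w_\lambda}$ substituted for the left-hand input, together with Joseph's formula \eqref{E:jos}, we obtain
\[
\mathcal{O}_{{\rm Ann}(L(\lambda))} = d_{\rm Som}^{E_7^\vee}(L^\vee, \mathcal{O}_{L^\vee}),
\]
and the value of $d_{\rm Som}^{E_7^\vee}$ is then read off from the tabulated Sommers/Achar data in \cite{Sommers, ac03} (or equivalently from the character tables compiled in the Appendix, e.g.\ Table \ref{tabe6} and its companions). For the $D_6$ factor one must additionally track the numeral I/II attached to a very even partition; the claim that the numeral of $\mathcal{O}_{L^\vee}$ matches that of ${\bf p}_1$ follows from the compatibility of the H-algorithm with the very-even bookkeeping established in \cite{BMW}, recorded in our convention Table \ref{primes}.

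I expect the main obstacle to be the bookkeeping in step two for the mixed and very-even cases: verifying that the factorwise product of special representations really is the special representation attached to the two-sided cell of $w_\lambda$ in $W(H)$ (this is where Proposition \ref{intehral-W}, Lemma \ref{alem1}(4) and the $j$-induction-by-stages of Lemma \ref{j.1} must be combined carefully), and, for the $D_6 \times A_1$ case, pinning down the correct numeral so that the entry of $\mathcal{O}_{L^\vee}$ and hence of $d_{\rm Som}^{E_7^\vee}(L^\vee,\mathcal{O}_{L^\vee})$ in Tables \ref{tabd6a1}--\ref{tabd6a1-2} is unambiguous. The $E_6$ case is conceptually the same but relies on PyCox for the character $\chi_1$, so there the work is computational rather than structural. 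The remaining arithmetic — transposing partitions, matching dimensions, and looking up Sommers' tables — is routine once the inputs are correctly identified.
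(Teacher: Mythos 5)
Your proposal takes essentially the same route as the paper, which in fact omits a written proof of Theorem \ref{e7-av} and defers to the argument of Theorems \ref{f4-av} and \ref{e6-av}: restrict attention to the GK-dimensions for which more than one nilpotent orbit of $E_7$ has dimension $2\gkd L(\lambda)$, build $\pi^{\vee}_{w_\lambda}$ factorwise from $\lambda'$ via the RS algorithm, the H-algorithm, or PyCox, read $d_{\rm LS}^{H^\vee}(\mathcal{O}(\pi^{\vee}_{w_\lambda}))$ as the saturation of a distinguished pair $(L^\vee,\mathcal{O}_{L^\vee})$ through the Bala--Carter dictionary of Lemma \ref{prime1-2}, and conclude with Proposition \ref{P:Som} together with the Sommers/LS tables, including the numeral bookkeeping for very even partitions in the $D_6\times A_1$ case. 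One small correction: since type $D$ is self-dual, the $D_6$ factor needs no ``$p_C$-type partner'' --- $\pi^{\vee}_{w_\lambda}$ uses the same partition $p_D((\lambda'|_{D_6})^-)^s$ --- but this slip does not affect the argument.
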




\begin{ex}
    Let $\mathfrak{g}=E_7$ and $L(\lam)$ be a highest weight module of $\mathfrak{g}$ with  $\lambda=(1,3,-5,-7,-9,-11, -\frac{1}{2}, \frac{1}{2})$. It is easy to verify that $\Phi_{[\lambda]}$ is a subsystem with simple roots $\{ \alpha_2,\alpha_3,\alpha_4,\alpha_5,\alpha_6,\alpha_7,\ep_8-\ep_7\}$. Therefore $H=\Phi_{[\lambda]}\simeq D_6\times A_1$. Suppose the simple root system of $D_6\times A_1$ is $\Delta=\{\beta_1=\ep_1-\ep_2, \beta_2=\ep_2-\ep_3, \beta_3=\ep_3-\ep_4,  \beta_4=\ep_4-\ep_5, \beta_5=\ep_5-\ep_6, \beta_6=\ep_5+\ep_6\}\times \{\gamma_1=e_1-e_2\}$.
    We define a map $\phi: \Phi_{[\lambda]} \rightarrow D_6\times A_1$ such that $\phi(\alpha_7)=\beta_1, \phi(\alpha_6)=\beta_2, \phi(\alpha_5)=\beta_3, \phi(\alpha_4)=\beta_4,\phi(\alpha_3)=\beta_5,\phi(\alpha_2)=\beta_6,\phi(\ep_8-\ep_7)=\gamma_1$.

   Thus by the algorithm in \S \ref{nonint},
   we have $$\lam'|_{D_6}=(-11,-9,-7,-5,3,1),$$
   and $$\lam'|_{A_1}= (\frac{1}{2},-\frac{1}{2}).$$
By using the RS algorithm in \cite{BXX} and the H-algorithm in \cite{BMW}, we have
$$\pi^{\vee}_{w_{\lam}}={\bf p}_{1}\times {\bf p}_{2}=p_{D}((\lam'|_{D_6})^-)^s\times p_{A}(\lam'|_{A_1})=[9,1^3]\times [1,1].$$ Then 
$$d_{\rm LS}^{H^\vee} (\mathcal{O}(\pi_{w_{\lam}}^\vee))=({\bf p}_{1}^t)_D \times {\bf p}^t_{2} =[3,1^9]\times [2]= {\rm sat}_{L^\vee}^{H^\vee} (\mathcal{O}_{L^\vee}),$$
where $\mathcal{O}_{L^\vee}=(3A_1)'' $ by Lemma \ref{prime1-2}. 
It then follows from Proposition \ref{P:Som} and the LS duality in \cite[p. 443]{Ca85} or Table \ref{tabchare7} that $\mathcal{O}_{{\rm Ann}(L(\lambda))} = d_{\rm Som}^{E_7^\vee}(L^\vee, \mathcal{O}_{L^\vee}) = d_{\rm LS}^{E_7^\vee}(L^\vee, \mathcal{O}_{L^\vee})= \mathcal{O}(E_{6})$ when $\mathcal{O}_{L^\vee}=(3A_1)''$.

\end{ex}

\subsection{Type \texorpdfstring{$E_8$}{}} We only need to consider the nonintegral cases and the results
are as follows.

\begin{Thm}\label{e8-av}
Let $L(\lam)$ be a simple nonintegral highest weight module of $E_8$ such that $\Phi_{[\lam]}^\vee$ is pseudo-maximal. Write $\lam'=\phi(\lam|_{\mathfrak{h}_{\Phi_{[\lambda]}}})=\prod\limits_{1\lest i\lest k} \phi(\lambda|_{\mathfrak{h}_{\Phi_{[\lambda]}}})|_{{\Phi_i}}=\prod\limits_{1\lest i\lest k} \lam'|_{{\Phi_i}}$.
	Then
    the following holds:
    \begin{enumerate}

\item Suppose $H=\Phi_{[\lam]}\simeq A_8$. We only need to consider the following $15$ cases:
$$\gkd L(\lam)=84,98,99,102,104,105,[107,114],116.$$
In the above $15$ cases, we have $H^{\vee}= A_8$ and   
$\pi^{\vee}_{w_{\lam}}={\bf p}_{1}=p_{A}(\lam')$. Then $$\mathcal{O}(\pi^{\vee}_{w_{\lam}})^t={\bf p}_{1}^t = {\rm sat}_{L^\vee}^{H^\vee} (\mathcal{O}_{L^\vee}^{\rm reg}),$$
where $L^\vee \subset \Phi^\vee$. The explicit information about $\mathcal{O}_{{\rm Ann}(L(\lambda))}$ can be found in Table \ref{taba8}.

\item Suppose $H=\Phi_{[\lam]}\simeq A_7\times A_1$. We only need to consider the following $15$ cases:
$$\gkd L(\lam)=92,98,99,102,[104,114],116.$$
In the above $15$ cases, we have $H^{\vee}= A_7\times A_1$ and   
$\pi^{\vee}_{w_{\lam}}={\bf p}_{1}\times {\bf p}_{2}=p_{A}(\lam'|_{A_7})\times p_{A}(\lam'|_{A_1})$. Then $$\mathcal{O}(\pi^{\vee}_{w_{\lam}})^t={\bf p}_{1}^t\times {\bf p}_{2}^t = {\rm sat}_{L^\vee}^{H^\vee} (\mathcal{O}_{L^\vee}^{\rm reg}),$$
where $L^\vee \subset \Phi^\vee$. The explicit information about $\mathcal{O}_{{\rm Ann}(L(\lambda))}$ can be found in Table \ref{taba7a1}.

\item Suppose $H=\Phi_{[\lam]}\simeq A_5\times A_2 \times A_1$. We only need to consider the following $14$ cases:
$$\gkd L(\lam)=101,102,[104,114],116.$$
In the above $14$ cases, we have $H^{\vee}= A_5\times A_2\times A_1$ and   
$\pi^{\vee}_{w_{\lam}}={\bf p}_{1}\times {\bf p}_{2}\times {\bf p}_{3}=p_{A}(\lam'|_{A_5})\times p_{A}(\lam'|_{A_2})\times p_{A}(\lam'|_{A_1})$. Then $$\mathcal{O}(\pi^{\vee}_{w_{\lam}})^t={\bf p}_{1}^t\times {\bf p}_{2}^t \times {\bf p}_{3}^t = {\rm sat}_{L^\vee}^{H^\vee} (\mathcal{O}_{L^\vee}^{\rm reg}),$$
where $L^\vee \subset \Phi^\vee$. The explicit information about $\mathcal{O}_{{\rm Ann}(L(\lambda))}$ can be found in Table \ref{taba5a2a1} and Table \ref{taba5a2a1-2}.

\item Suppose $H=\Phi_{[\lam]}\simeq A_4\times A_4$. We only need to consider the following $12$ cases:
$$\gkd L(\lam)=100,104,[106,114],116.$$
In the above $12$ cases, we have $H^{\vee}= A_4\times A_4$ and   
$\pi^{\vee}_{w_{\lam}}={\bf p}_{1}\times {\bf p}_{2}=p_{A}(\lam'|_{A_4(1)}\times p_{A}(\lam'|_{A_4(2)})$. Then $$\mathcal{O}(\pi^{\vee}_{w_{\lam}})^t={\bf p}_{1}^t\times {\bf p}_{2}^t = {\rm sat}_{L^\vee}^{H^\vee} (\mathcal{O}_{L^\vee}^{\rm reg}),$$
where $L^\vee \subset \Phi^\vee$. The explicit information about $\mathcal{O}_{{\rm Ann}(L(\lambda))}$ can be found in Table \ref{taba4a4}.

\item Suppose $H=\Phi_{[\lam]}\simeq D_8$. 
We only need to consider the following $17$ cases:
$$\gkd L(\lam)=92,98,99,100,102,[104,114],116.$$
In the above $17$ cases, we have $H^{\vee}=D_8$ and   
$\pi^{\vee}_{w_{\lam}}={\bf p}_{1}=p_{D}((\lam')^-)^s$. Then $$d_{\rm LS}^{H^\vee} (\mathcal{O}(\pi_{w_{\lam}}^\vee))=({\bf p}_{1}^t)_D  = {\rm sat}_{L^\vee}^{H^\vee} (\mathcal{O}_{L^\vee}),$$
where $L^\vee \subset \Phi^\vee$ and $\mathcal{O}_{L^\vee}$ a distinguished orbit of $L^\vee$. The explicit information about $\mathcal{O}_{{\rm Ann}(L(\lambda))}$ can be found in Table \ref{tabd8-1} and Table \ref{tabd8-2}. When ${\bf p}_{1}$ is a very even partition, the numeral of $\mathcal{O}_{L^{\vee}}$ is the same with the numeral of ${\bf p}_{1}$.

\item Suppose $H=\Phi_{[\lam]}\simeq D_5\times A_3$. 
We only need to consider the following $18$ cases:
$$\gkd L(\lam)=94,98,99,100,101,102,[104,114],116.$$
In the above $18$ cases, we have $H^{\vee}=D_5\times A_3$ and   
$\pi^{\vee}_{w_{\lam}}={\bf p}_{1}\times {\bf p}_{2}=p_{D}((\lam'|_{D_5})^-)^s\times p_{A}(\lam'|_{A_3})$. Then $$d_{\rm LS}^{H^\vee} (\mathcal{O}(\pi_{w_{\lam}}^\vee))=({\bf p}_{1}^t)_D\times {\bf p}^t_{2} = {\rm sat}_{L^\vee}^{H^\vee} (\mathcal{O}_{L^\vee}),$$
where $L^\vee \subset \Phi^\vee$ and $\mathcal{O}_{L^\vee}$ a distinguished orbit of $L^\vee$. The explicit information about $\mathcal{O}_{{\rm Ann}(L(\lambda))}$ can be found in Table \ref{tabd5a3} and Table \ref{tabd5a3-2}.

\item Suppose $H=\Phi_{[\lam]}\simeq E_6\times A_2$. We only need to consider the following $18$ cases:
$$\gkd L(\lam)=84,92,94,99,100,102,[104,114],116.$$
In the above $18$ cases, we have $H^{\vee}=E_6\times A_2$ and
$\pi^{\vee}_{w_{\lam}}={\chi}_{1}\times {\bf p}_{2}={\chi}_{1}\times p_{A}(\lam'|_{A_2})$. 
Suppose the orbit $\mathcal{O}({\chi}_{1})$ is $L_{1}$. Then $\mathcal{O}(\pi_{w_\lambda}^\vee)=L_{1}\times {\bf p}_{2}$. 
Also we have
$$d_{\rm LS}^{H^\vee}(\mathcal{O}(\pi_{w_\lambda})) =d_{\rm LS}^{E_6}(L_1)\times {\bf p}^t_{2}= {\rm sat}_{L^\vee}^{H^\vee}(\mathcal{O}_{L^\vee}),$$
and $$\mathcal{O}_{{\rm Ann}(L(\lambda))} = d_{\rm Som}^{E_8^\vee}(L^{\vee}, \mathcal{O}_{L^\vee}),$$
where $L^\vee \subset \Phi^\vee$ and $\mathcal{O}_{L^\vee}$ a distinguished orbit of $L^\vee$. The explicit information about $\mathcal{O}_{{\rm Ann}(L(\lambda))}$ can be found in Table \ref{tabe6a2} and Table \ref{tabe6a2-2}.

\item Suppose $H=\Phi_{[\lam]}\simeq E_7\times A_1$. We only need to consider the following $17$ cases:
$$\gkd L(\lam)=84,94,98,99,100,[104,114],116.$$
In the above $17$ cases, we have $H^{\vee}=E_7\times A_1$ and
$\pi^{\vee}_{w_{\lam}}={\chi}_{1}\times {\bf p}_{2}={\chi}_{1}\times p_{A}(\lam'|_{A_1})$. Suppose that the orbit $\mathcal{O}({\chi}_{1})$ is $L_{1}$. Then $\mathcal{O}(\pi_{w_\lambda}^\vee)=L_{1}\times {\bf p}_{2}$. Also we have
$$d_{\rm LS}^{H^\vee}(\mathcal{O}(\pi_{w_\lambda}^\vee)) =d_{\rm LS}^{E_7}(L_1)\times {\bf p}^t_{2}= {\rm sat}_{L^\vee}^{H^\vee}(\mathcal{O}_{L^\vee}),$$
and $$\mathcal{O}_{{\rm Ann}(L(\lambda))} = d_{\rm Som}^{E_8^\vee}(L^{\vee}, \mathcal{O}_{L^\vee}),$$
where $L^\vee \subset \Phi^\vee$ and $\mathcal{O}_{L^\vee}$ a distinguished orbit of $L^\vee$. The explicit information about $\mathcal{O}_{{\rm Ann}(L(\lambda))}$ can be found in Table \ref{tabe7a1} and  Table \ref{tabe7a1-2}.

\end{enumerate}

\end{Thm}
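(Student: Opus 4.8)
The plan is to derive Theorem \ref{e8-av} as the explicit form of Theorem \ref{T:main2} (equivalently, of Joseph's formula \eqref{E:jos} combined with Proposition \ref{P:Som}) in each of the eight admissible cases, so that the argument splits into a combinatorial classification step followed by a case-by-case computation of $\pi_{w_\lambda}$, of its transpose datum, and of the relevant Lusztig--Spaltenstein and Sommers dualities.

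First I would carry out the classification. Using the extended Dynkin diagram of $E_8$ (Figure \ref{E8-dy}) together with Definition \ref{thmbd}, one checks that the root subsystems $\Phi_{[\lambda]}^\vee$ of $\Phi^\vee = E_8$ that are pseudo-maximal are obtained, up to $W$-conjugacy, by deleting a single node from the extended diagram, giving exactly $A_8$, $A_7\times A_1$, $A_5\times A_2\times A_1$, $A_4\times A_4$, $D_8$, $D_5\times A_3$, $E_6\times A_2$, $E_7\times A_1$; since $E_8$ is self-dual these are also the possibilities for $H=\Phi_{[\lambda]}$, and in each case $H^\vee$ is a pseudo-Levi subgroup of $E_8^\vee$, so the hypotheses of Proposition \ref{P:Som} and Theorem \ref{T:main2} are satisfied.

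For a fixed case I would then compute $\pi_{w_\lambda}$ and $\pi^\vee_{w_\lambda}$ from $\lambda$ by the algorithm of \S\ref{nonint}: write $\Phi_{[\lambda]}\simeq \Phi_1\times\cdots\times\Phi_k$ and $\lambda' = \phi(\lambda|_{\mathfrak h_{\Phi_{[\lambda]}}}) = \prod_i \lambda'|_{\Phi_i}$; for a type-$A$ factor apply the Robinson--Schensted algorithm to $\lambda'|_{\Phi_i}$ to obtain the partition ${\bf p}_i$; for the $D_8$ or $D_5$ factor apply the H-algorithm of \cite{BMW} to $(\lambda'|_{\Phi_i})^-$ to obtain the special partitions ${\bf p}_i$ and ${\bf p}_i^\vee$ (again of $D$-type, since $D^\vee = D$); and for the $E_6$ or $E_7$ factor use Lemma \ref{find-w-lambda} and PyCox to obtain the character $\chi_i$ and its Bala--Carter label $L_i$. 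By Propositions \ref{intehral-W} and \ref{2dim} and Lemma \ref{j.1}, $\pi_{w_\lambda}$ is the special representation of $W(H)$ attached to the two-sided cell of $w_\lambda$, and $\pi^\vee_{w_\lambda} = \tau_{H,H^\vee}(\pi_{w_\lambda})$; applying $d_{\rm LS}^{H^\vee}$ factor by factor (transpose within each $A$-type factor, $(\,\cdot\,)^t_D$ within a $D$-type factor, $d_{\rm LS}^{E_k}$ within an exceptional factor) exhibits $d_{\rm LS}^{H^\vee}(\mathcal{O}(\pi^\vee_{w_\lambda}))$ as a saturation ${\rm sat}_{L^\vee}^{H^\vee}(\mathcal{O}_{L^\vee})$ for a Levi $L^\vee$ of $H^\vee$ and a distinguished orbit $\mathcal{O}_{L^\vee}$, whose Bala--Carter label is read off via Lemma \ref{prime1-2} (and, for very even $D_8$-type partitions, by the numeral rule recorded in the theorem and in Table \ref{primes}). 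Proposition \ref{P:Som} then yields $\mathcal{O}_{{\rm Ann}(L(\lambda))} = d_{\rm Som}^{E_8^\vee}(L^\vee, \mathcal{O}_{L^\vee})$, and this value is extracted from the tabulated Sommers (and, where it applies, Lusztig--Spaltenstein) dualities in \cite{Sommers, ac03, Ca85, som-98}. Finally, the restriction in each case to the listed values of $\gkd L(\lambda)$ comes from Lemma \ref{a-value}: for every other value of $\aff(w_\lambda)$ compatible with the factor types, the number $2\gkd L(\lambda)$ already singles out a unique nilpotent orbit by the dimension data of \cite{CM}, so no further computation is needed; for the listed values one runs the above recipe over the finitely many special representations it can produce, and the outputs are precisely the entries of Tables \ref{taba8}--\ref{tabe7a1-2}.

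The main obstacle I anticipate is the exceptional factors $E_6$ and $E_7$ inside $E_8$: there is no Robinson--Schensted model, so each $\chi_i$ must be produced from $w_{\lambda'|_{\Phi_i}}$ by PyCox, the PyCox character label must be matched with the Carter and Bala--Carter data (cf. the conventions in Table \ref{primes} and in \cite{GP, Ca85}), and both $d_{\rm LS}^{E_k}(L_i)$ and the final $d_{\rm Som}^{E_8}$ must be located in the correct tables. A secondary subtlety is making sure the numeral I/II of a very even $D_8$-type partition is transported correctly through $\tau_{H,H^\vee}$, $j$-induction and $d_{\rm LS}$, which is exactly the content of the assertion that the numeral of $\mathcal{O}_{L^\vee}$ coincides with the numeral of ${\bf p}_1$ in case (5). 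Apart from these points, what remains is the lengthy but routine bookkeeping that populates the tables.
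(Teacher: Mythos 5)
Your proposal is correct and follows essentially the same route as the paper: restrict to the pseudo-maximal subsystems read off the extended Dynkin diagram, keep only the GK dimensions for which $2\gkd L(\lambda)$ fails to single out a unique orbit, compute $\pi_{w_\lambda}$ via RS/H-algorithm (or PyCox for the $E_6,E_7$ factors), and then pass through $d_{\rm LS}^{H^\vee}$, Lemma \ref{prime1-2} and Proposition \ref{P:Som} to evaluate $d_{\rm Som}^{E_8^\vee}(L^\vee,\mathcal{O}_{L^\vee})$ from the tabulated dualities, exactly as in the proof of Theorem \ref{f4-av} which the paper invokes by analogy. Your flagged subtleties (PyCox label conventions and transport of the numeral for very even $D_8$-partitions) are indeed the only delicate points, and the paper handles them the same way.
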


\begin{Rem}
    In $E_7$ or $E_8$, the partition  ${\bf p}=[3,1]$ associated with $D_2=2A_1$ corresponds to $\{\alpha_2, \alpha_3\} \subset \tilde{\Delta}_{E_7}$ and $\{\alpha_2, \alpha_5\} \subset \tilde{\Delta}_{E_8}$ respectively. Similarly, the partition  ${\bf p}=[5,1]$ associated with $D_3 = A_3$ corresponds to $\{\alpha_2, \alpha_4,\alpha_5\}\subset \tilde{\Delta}_{E_7}$ and $\{\alpha_2, \alpha_4,\alpha_5\}\subset \tilde{\Delta}_{E_8}$ for $E_7$ and $E_8$, respectively. 
\end{Rem}

\begin{ex}

Let $\mathfrak{g}=E_8$ and $L(\lam)$ be a highest weight module of $\mathfrak{g}$ with  $\lambda=(1,1,1,1,1,1, \frac{1}{2}, \frac{5}{2})$. It is easy to verify that $\Phi_{[\lambda]}$ is a subsystem with simple roots $\{ \alpha_1, \alpha_2,\alpha_3,\alpha_4,\alpha_5,\alpha_6,\alpha_7,\ep_7+\ep_8\}$. Therefore $H=\Phi_{[\lambda]}\simeq E_7\times A_1$. Suppose the simple root system of $E_7\times A_1$ is $\Delta=\{\alpha_1, \alpha_2,\alpha_3,\alpha_4,\alpha_5,\alpha_6,\alpha_7\}\times \{\gamma_1=e_1-e_2\}$.
    We define a map $\phi: \Phi_{[\lambda]} \rightarrow E_7\times A_1$ such that $\phi(\alpha_1)=\alpha_1, \phi(\alpha_2)=\alpha_2, \phi(\alpha_3)=\alpha_3, \phi(\alpha_4)=\alpha_4,\phi(\alpha_5)=\alpha_5,\phi(\alpha_6)=\alpha_6,\phi(\alpha_7)=\alpha_7,\phi(\ep_7+\ep_8)=\gamma_1$.
   Thus for $\lam\in \mathfrak{h}_{E_8}^* $, it will be an integral weight  of ${\Phi_{[\lambda]}}$. By the algorithm in \S \ref{nonint}, $$\lam'|_{{E_7}}
    =(1,1,1,1,1,1,-1,1)$$ is an integral weight of type $E_7$,         and $$\lam'|_{{A_1}}=\left(\frac{3}{2},-\frac{3}{2}\right)$$ is an integral weight of type $A_1$.

   Denote $\gamma=\lam'|_{E_7}=(1,1,1,1,1,1,-1,1)$.
Then $\gamma=-\omega_1+2\omega_2:=[-1,2,0,0,0,0,0]$. By using Lemma \ref{find-w-lambda}, we can find a $w_{\gamma}$ such that  $$\gamma=w_{\gamma}\mu,$$ 
where  $\mu=-\omega_2-\omega_7:=[0,-1,0,0,0,0,-1]$ is antidominant and  
\begin{align*}
   w_{\gamma}=&[1,3,4,5,6,2,3,4,5,1,3,4,2,3,1,0,2,3,4,5,6,\\
   &1,3,4,5,2,3,4,1,3,2,0,2,3,4,5,1,3,4,2,3,1] 
\end{align*} in PyCox. Then by using PyCox, we find that the special character corresponding to $w_{\gamma}$ is
$105_b$ and its $\aff$ value is $6$. From \cite[p. 430]{Ca85}, the Bala--Carter label of $\mathcal{O}(105_b)$ is $A_6$.

By using the RS algorithm in \cite{BX}, we have
$${\bf p}_{2}= p_{A}(\lam'|_{A_1})=[1,1].$$
Now we have
$$\pi^{\vee}_{w_{\lam}}=\chi_{1}\times {\bf p}_{2}=105_b\times p_{A}(\lam'|_{A_1})=105_b\times [1,1].$$ 
Note that from  the LS duality in \cite[p. 443]{Ca85} , we have $d_{\rm LS}^{E_7}(A_6)=A_2+3A_1$. 
Then \begin{align*}
    d_{\rm LS}^{H^\vee} (\mathcal{O}(\pi_{w_{\lam}}^\vee))&=d_{\rm LS}^{E_7}(A_6) \times {\bf p}^t_{2} \\
    &=d_{\rm LS}^{E_7}(A_6) \times [2]\\
    &=(A_2+3A_1)\times [2]\\
    &= {\rm sat}_{L^\vee}^{H^\vee} (\mathcal{O}_{L^\vee}),
\end{align*}
where $\mathcal{O}_{L^\vee}=A_2+4A_1 $ by Lemma \ref{prime1-2}. 
It follows from Proposition \ref{P:Som} and \cite[Table X]{Sommers} that $\mathcal{O}_{{\rm Ann}(L(\lambda))} = d_{\rm Som}^{E_8}(L^\vee, \mathcal{O}_{L^\vee}) = \mathcal{O}(D_7)$.
\end{ex}

\section{The general case } \label{S:gen}

  For a given highest weight module $L(\lambda)$, the integral dual root system  $\Phi_{[\lam]}^\vee$ may not be pseudo-maximal. There are two methods of computing $\mathcal{O}_{{\rm Ann}(L(\lambda))}$. First, since $j$-induction satisfies the property of induction by stages, see \cite[Proposition 11.2.4]{Ca85}, from  $\pi_{w_{\lambda}}$ we can use $j$-induction to get a representation of $W(\Phi_1)$, where $\Phi_1^\vee \subset \Phi^\vee$ is pseudo-maximal and $W(\Phi_1)$ contains $W_{[\lambda]}$ as parabolic Weyl subgroup. In this case, one can still utilize the tables in Appendix to compute $\mathcal{O}_{{\rm Ann}(L(\lambda))}$. The second method is to apply Proposition \ref{P:Som} directly.
\begin{ex}
    Let $\mathfrak{g}=E_8$. We assume $H:=\Phi_{[\lambda]}\simeq A_7$ and $\pi_{w_{\lambda}}=[1^8]\in \mathrm{Irr} (W(A_7))=\mathrm{Irr}(S_8)$. If we consider $A_7$ as a root subsystem of $A_8$, then from \cite{BMW} the $j$-induction gives the representation $\pi_{\bf p}=j_{S_8\times S_1}^{S_9}(\pi_{[1^8]}\otimes\pi_{[1]})$ of $S_9=W(A_8)$ with ${\bf p}=[2,1^7]$. It then follows from Table \ref{taba8} that 
 $\mathcal{O}_{{\rm Ann}(L(\lambda))} = \mathcal{O}(D_4(a_1)+A_2)$.
 
 On the other hand, we see that $d_{\rm LS}^{M^\vee}(\mathcal{O}(\pi_{w_\lambda}^\vee)) = {\rm sat}_{L^\vee}^{M^\vee}(\mathcal{O}_{L^\vee}^{\rm reg})$,
where $\mathcal{O}_{L^\vee}:=(A_7)'$. It then also follows from Proposition \ref{P:Som} and \cite[Table X]{Sommers} or Table \ref{tabchare8} that $\mathcal{O}_{{\rm Ann}(L(\lambda))} = \mathcal{O}(D_4(a_1)+A_2)$.
\end{ex}

 Let $J=H:=\Phi_{[\lambda]}\subset \tilde{\Delta}$ be a proper subset of $\tilde{\Delta}$ and $d_J$ be the greatest common divisor of those $c_i$ for which $c_i\in \tilde{\Delta}-J$. Let $\Phi_J$ be the root system generated by $J$.
As mentioned in \cite{som-98}, one needs to execute with care for the following cases of $\Phi_J$.
\begin{enumerate}
    \item For type $E_7$, $\Phi_J$ is one the following: \begin{equation}\label{E7-case}
        A_5,A_5+A_1,4A_1,3A_1,A_3+2A_1 \text{~or~} A_3+A_1.
    \end{equation}
When $J$ is conjugate to a subset of $A_7\subset E_7$ in the above cases, we give the corresponding root system  one prime, otherwise two primes.
\item For type $E_8$, $\Phi_J$ is one the following: \begin{equation}\label{E8-case}
    A_7,A_5+A_1,4A_1,2A_3 \text{~or~} A_3+2A_1.
\end{equation}
When $J$ is conjugate to a subset of $A_8\subset E_8$, we will have  $d_J=1$ in the above cases. Then we give the corresponding root system  one prime. When  $J$ is not conjugate to a subset of $A_8\subset E_8$, we will have $d_J=2$ and give the corresponding root system two primes.
\end{enumerate}

\begin{ex}

  From Tables \ref{taba8}--\ref{tabe7a1}, we see that for most cases the Bala--Carter label of $L$ has one prime if the subset $J\subset \tilde{\Delta}$ is a subset of $A_8\subset E_8$, and for most cases it has two primes if $J\subset \tilde{\Delta}$ is not a subset of $A_8\subset E_8$ except for the following three cases such that $\mathcal{O}_{L}$ corresponds to $$[4,1,1]\times [2,1]\times [2], \text{~or~} [2,2,1,1]\times [2,1]\times [2], \text{~or~}[4,4,3,1^5].$$
  Note that the first two cases come from Table \ref{taba5a2a1} and Table \ref{taba5a2a1-2}, and the third one comes from Table \ref{tabd8-2}. 
  
  In the first case, we may choose $J_1=\{\alpha_1,\alpha_2, \alpha_5,\alpha_6,\alpha_7\}\subset \tilde{\Delta}$. The corresponding subsystem $\Phi_{J_1}$ has type $(A_3+2A_1)'$ or $(A_3+2A_1)''$. 
  Denote $w_1=s_{\ep_2-\ep_1}s_{\ep_8-\ep_7}s_{\ep_7+\ep_2}s_{\ep_8+\ep_1}\in W(E_8)$. Then we have $w_1(J_1)=\{\alpha_1,-\beta, \alpha_5,\alpha_6,\alpha_7\}$, which is a subset of $A_8\subset E_8$. Therefore $\Phi_{J_1}$ corresponds to 
  $\mathcal{O}_{L}=(A_3+2A_1)'$ by Lemma \ref{prime1-2}.

  In the second case, we may choose $J_2=\{\alpha_1,\alpha_2, \alpha_5,\alpha_7\}\subset \tilde{\Delta}$. The corresponding subsystem $\Phi_{J_2}$ has type $(4A_1)'$ or $(4A_1)''$. 
  Then we have $w_1(J_2)=\{\alpha_1,-\beta, \alpha_5,\alpha_7\}$, which is a subset of $A_8\subset E_8$. Therefore $\Phi_{J_2}$ corresponds to 
  $\mathcal{O}_{L}=(4A_1)'$ by Lemma \ref{prime1-2}.

 In the third case, we may choose $J_3=\{\alpha_2, \alpha_3,\alpha_5,\alpha_6, \alpha_7\}\subset \tilde{\Delta}$. The corresponding subsystem $\Phi_{J_3}$ has type $(A_3+2A_1)'$ or $(A_3+2A_1)''$. 
 From \cite[\S 2.2]{som-98}, we know that $d_J$ is an invariant of the equivalence class of $J\subset \tilde{\Delta}$. For $J_3$, we have $d_{J_3}=\mathrm{gcd}(c_0,c_1,c_4,c_8)=\mathrm{gcd}(1,2,6,2)=1$. Thus, $J_3$ conjugates to a subset of $A_8\subset E_8$ by \cite[\S 2.2]{som-98}. 
 Therefore, $\Phi_{J_3}$ corresponds to $\mathcal{O}_{L}=(A_3+2A_1)'$ by Lemma \ref{prime1-2}.

\end{ex}

For some cases, it is not easy to determine whether $J$ is conjugate to a subset of $A_7\subset E_7$ (resp. to a subset $A_8\subset E_8$). For these cases, we will need to compute the weighted Dynkin diagram of the corresponding nilpotent orbit of the saturation of $d_{\rm LS}^{H^\vee}(\mathcal{O}(\pi_{w_\lambda}^\vee))$. The algorithm is as follows:
\begin{enumerate}
    \item Find the simple system $\Delta_{[\lambda]}$ of $H:=\Phi_{[\lambda]}$;
    \item Suppose $d_{\rm LS}^{H^\vee}(\mathcal{O}(\pi_{w_\lambda}^\vee))=a_kA_k+a_1A_1$ for some integer $k>1$ and $a_1,a_k\in \mathbb{Z}_{\geq 0}$;
    \item Find  the neutral element of the $\mathfrak{sl}_2$-triple associated with the orbit $a_kA_k+a_1A_1$ from $\Delta_{[\lambda]}$, denoted it by $h:=[b_1,b_2,\dots,b_l]=b_1\omega_1+b_2\omega_2+\cdots+b_l\omega_l=c_1e_1+c_2e_2+\cdots+c_8e_8$, where $l=7$ for $E_7$ and $l=8$ for $E_8$;
    \item Apply the positive index reduction algorithm in Lemma \ref{find-w-lambda} to obtain an antidominant weight and then multiply by $-1$ to get a dominant weight $h^*$ in the Weyl-orbit of $h$;
    \item Then  
    $$(\langle h^*, \alpha_i \rangle)_{1\lest i \lest 7}$$
    or $$(\langle h^*, \alpha_i \rangle)_{1\lest i \lest 8}$$ is the weighted Dynkin diagram for the saturation of the orbit $d_{\rm LS}^{H^\vee}(\mathcal{O}(\pi_{w_\lambda}^\vee))$ in $E_7$ or $E_8$. Then by the tables in \cite{som-98} or Table \ref{tabchare7} and Table \ref{tabchare8}, we can determine whether the orbit $a_kA_k+a_1A_1$ has a decoration of one prime or two primes.
\end{enumerate}

\begin{Rem}
    For each case in (\ref{E7-case}) and (\ref{E8-case}), we may choose a fixed subset $J \subset \tilde{\Delta}$ such that $J$ is not conjugate to a subset of $A_7\subset E_7$ (resp. to a subset $A_8\subset E_8$). Then the corresponding root system $\Phi_J$ will have two primes.

    For type $E_7$, we have the following:
    \begin{enumerate}
        \item When $\Phi_J=(A_5)''$, we may choose $J=\{\alpha_2,\alpha_4,\alpha_5,\alpha_6,\alpha_7\}$. Then $h=5\alpha_2+8\alpha_4+9\alpha_5+8\alpha_6+5\alpha_7$.
        \item When $\Phi_J=(A_5+A_1)''$, we may choose $J=\{\alpha_1,\alpha_2,\alpha_4,\alpha_5,\alpha_6,\alpha_7\}.$ Then $h=\alpha_1+5\alpha_2+8\alpha_4+9\alpha_5+8\alpha_6+5\alpha_7$.
        \item When $\Phi_J=(4A_1)''$, we may choose $J=\{\alpha_2,\alpha_3,\alpha_5,\alpha_7\}.$ Then $h=\alpha_2+\alpha_3+\alpha_5+\alpha_7$.
        \item When $\Phi_J=(3A_1)''$, we may choose $J=\{\alpha_2,\alpha_5,\alpha_7\}.$ Then $h=\alpha_2+\alpha_5+\alpha_7$.
        \item When $\Phi_J=(A_3+2A_1)''$, we may choose $J=\{\alpha_2,\alpha_3,\alpha_5,\alpha_6,\alpha_7\}.$  Then $h=\alpha_2+\alpha_3+3\alpha_5+4\alpha_6+3\alpha_7$.
        \item When $\Phi_J=(A_3+A_1)''$, we may choose $J=\{\alpha_2,\alpha_4,\alpha_5,\alpha_7\}.$ Then $h=3\alpha_2+4\alpha_4+3\alpha_5+\alpha_7$.
    \end{enumerate}

For type $E_8$, we have the following:
    \begin{enumerate}
        \item When $\Phi_J=(A_7)''$, we may choose $J=\{\alpha_2,\alpha_4,\alpha_5,\alpha_6,\alpha_7,\alpha_8,\alpha_0\}.$
        \item When $\Phi_J=(A_5+A_1)''$, we may choose $J=\{\alpha_2,\alpha_5,\alpha_6,\alpha_7,\alpha_8,\alpha_0\}.$
        \item When $\Phi_J=(4A_1)''$, we may choose $J=\{\alpha_2,\alpha_5,\alpha_7,\alpha_0\}.$ 
        \item When $\Phi_J=(2A_3)''$, we may choose $J=\{\alpha_2,\alpha_4,\alpha_5,\alpha_7,\alpha_8,\alpha_0\}.$
        \item When $\Phi_J=(A_3+2A_1)''$, we may choose $J=\{\alpha_2,\alpha_4,\alpha_5,\alpha_7,\alpha_0\}.$
       
    \end{enumerate}

An explicit form of the neutral element $h$ can be found from \cite[\S 3.6]{CM}.
\end{Rem}

\begin{ex}\label{e7-ex}
Consider $\mathfrak{g}=E_7$ and $$\lambda=(0,1,1,2,3,3,-7,7)/4.$$
The root system $H:=\Phi_{[\lambda]} \simeq D_4 \times A_1 \times A_1$ has simple roots
$$ \left\{\alpha:=\frac{1}{2}(-e_1-e_2-e_3+e_4-e_5-e_6-e_7+e_8), e_2 + e_5, e_6 - e_5, e_3- e_2 \right\}$$
for $D_4$ and $\{\beta:=\frac{1}{2}(e_1+e_2+e_3-e_4-e_5-e_6-e_7+e_8)\}$ for the first $A_1$ and $\{\gamma:=\frac{1}{2}(e_1-e_2-e_3-e_4+e_5+e_6-e_7+e_8)\}$  for the second $A_1$. 

   Thus for $\lam\in \mathfrak{h}_{E_7}^* $, it will be an integral weight  of ${\Phi_{[\lambda]}}$. By the algorithm in \S \ref{nonint}, $$\lam'|_{{D_4}}
    =(2,1,0,0)$$ is an integral weight of type $D_4$,         $$\lam'|_{{A_1(1)}}=\left(\frac{1}{2},-\frac{1}{2}\right)$$ is an integral weight of type $A_1$ and $$\lam'|_{{A_1(2)}}=(1,-1)$$ is an integral weight of type $A_1$.


By using the RS algorithm in \cite{BXX} and the H-algorithm in \cite{BMW},  we have 
$${\bf p}_{1}=p_{D}((\lam'|_{D_4})^-)^s=[3,1^5],$$
$${\bf p}_{2}= p_{A}(\lam'|_{A_1(1)})=[1,1],$$
 and $${\bf p}_{3}= p_{A}(\lam'|_{A_1(2)})=[1,1].$$
 
Thus
$$\pi^{\vee}_{w_{\lam}}={\bf p}_{1}\times {\bf p}_{2}\times {\bf p}_{3}=[3,1^5]\times [1,1]\times [1,1].$$ 

So \begin{align*}
    d_{\rm LS}^{H^\vee} (\mathcal{O}(\pi_{w_{\lam}}^\vee))&= ({\bf p}_{1}^t)_D\times {\bf p}^t_{2}\times {\bf p}^t_{3} \\
    &=[5,1^3] \times [2]\times [2]\\
    &= {\rm sat}_{L^\vee}^{H^\vee} (\mathcal{O}_{L^\vee}).
\end{align*}
Moreover, we have
$$\mathcal{O}_{L^\vee} \simeq D_3 + A_1+A_1 \simeq A_3+2A_1.$$
To determine whether it is $(A_3+2A_1)'$ or $(A_3+2A_1)''$, we note that the neutral element of the $\mathfrak{sl}_2$-triple associated with the orbit $A_3+2A_1$ is of the form
$$h:=3(e_3 - e_2) + 4(e_2+e_5) + 3(e_6 - e_5)+\beta+\gamma=e_1+e_2+3e_3-e_4+e_5+3e_6-e_7+e_8.$$
This element is clearly not dominant. We apply the positive index reduction algorithm in Lemma \ref{find-w-lambda} to obtain a dominant $h^*$ in the Weyl-orbit of $h$; also, by \cite[\S 3.5 ]{CM} the element $h^*$ gives rise to a weighted Dynkin diagram decorated by $$(\langle h^*, \alpha_i \rangle)_{1\lest i \lest 7} = (0,0,2,0,0,0,0).$$
This shows that the saturation of the orbit $\mathcal{O}_{L^\vee}$ in $E_7$ is $D_4(a_1)$ by the Table in \cite[p. 130]{CM}. It then follows (for example, from \cite[p. 559]{som-98} or Table \ref{tabchare7}) that $\mathcal{O}_{L^\vee} = (A_3+2A_1)'$. It follows from Proposition \ref{P:Som} and \cite[Table IX]{Sommers}  that $\mathcal{O}_{{\rm Ann}(L(\lambda))} = D_6(a_2)$.
\end{ex}

\begin{ex} 
Consider $\mathfrak{g}=E_8$ and $$\lambda=(\omega_2 + \omega_7 + 2\omega_8)/4 =(1,1,1,1,1,3,7,13)/8.$$
We obtain that the root system $H:=\Phi_{[\lambda]} \simeq D_6 \times A_1$ has simple roots
$$\left\{\frac{1}{2}(e_1-e_2-e_3-e_4-e_5-e_6-e_7+e_8), e_1 + e_7, -e_1 + e_2, -e_2 + e_3, -e_3 + e_4, -e_4 + e_5 \right\}$$
for $D_6$ and $\{e_6 + e_8\}$ for $A_1$. Moreover, by the similar process with Example \ref{e7-ex}, we have
$$\mathcal{O}_{L^\vee} \simeq (3A_1) + A_1 \simeq 4A_1.$$
To determine whether it is $(4A_1)'$ or $(4A_1)''$, we note that the neutral element of the $\mathfrak{sl}_2$-triple associated with orbit $3A_1 + A_1$ is of the form
$$h:=(e_5 - e_4) + (e_3 - e_2) + (e_1 + e_7) + (e_6 + e_8).$$
This element is clearly not dominant. We apply the positive index reduction algorithm in Lemma \ref{find-w-lambda} to obtain a dominant $h^*$ in the Weyl-orbit of $h$; also, by \cite[\S 3.5 ]{CM} $h^*$ gives rise to a weighted Dynkin diagram decorated by $$(\langle h^*, \alpha_i \rangle)_{1\lest i \lest 8} = (0,0,0,0,0,0,0,2).$$
This shows that the saturation of the orbit $\mathcal{O}_{L^\vee}$ in $E_8$ is $A_2$ by the Table in \cite[p. 130]{CM}. It then follows (for example, from \cite[p. 559]{som-98} or Table \ref{tabchare8}) that $\mathcal{O}_{L^\vee} = (4A_1)''$. It follows from Proposition \ref{P:Som} and \cite[Table X]{Sommers} that $\mathcal{O}_{{\rm Ann}(L(\lambda))} = E_7$.
\end{ex}

\section{Appendix}
We tabulate all the relevant data when $\Phi_{[\lambda]}^\vee$ is a pseudo-maximal root subsystem of $\Phi^\vee$.
We explain the notation used in the tables:
\begin{enumerate}
    \item[$\bullet$] $\mathrm{GKd} L(\lambda)$ means the Gelfand--Kirillov dimension of $L(\lam)$;
    \item[$\bullet$] cha$(\mathcal{O}):={\rm Spr}_{G,\mathbf{1}}^{-1}(\mathcal{O})$ means the character associated with the nilpotent orbit $\mathcal{O}$ via the Springer correspondence;
    \item[$\bullet$] the orbit $\mathcal{O}_{L^\vee}$ (as in Tables \ref{tabcharf4}--\ref{tabchare8}) means a distinguished orbit of $L^\vee$, where $L^\vee \subset M^\vee$ is a Levi group of $M^\vee$ and $M^\vee \subseteq G^\vee$ is a pseudo-Levi subgroup (thus $L^\vee \subseteq G^\vee$ is a pseudo-Levi subgroup of $G^\vee$), as in Proposition \ref{P:Som}; 
    \item[$\bullet$] $\mathcal{O}_{L^\vee}^{\rm BC}:=G^\vee \cdot \mathcal{O}_{L^\vee}$ (as in Tables \ref{tabcharf4}--\ref{tabchare8}) means the saturation of the orbit $\mathcal{O}_{L^\vee}$, written using the Bala--Carter label;
    \item[$\bullet$] the orbit $\mathcal{O}_{{\rm Ann}(L(\lambda))}$ is the annihilator variety of $L(\lambda)$, and is equal to $d_{\rm Som}^{G^\vee}(L^{\vee}, \mathcal{O}_{L^\vee}).$
    \item[$\bullet$] $\mathcal{O}(\pi):=\mathcal{O}_{\rm Spr}^H(\pi)$ in Tables \ref{tabc4}--\ref{tabb3a1}, where $H^\vee \subseteq G^\vee$ is pseudo-Levi subgroup with root system being  a fixed pseudo-maximal root subsystem of $G^\vee$, and $\pi$ a special representation of $W(H)$; thus $\pi^\vee \in {\rm Irr}(W(H^\vee))$ is special and $\mathcal{O}^\vee(\pi) = \mathcal{O}_{\rm Spr}^{H^\vee}(\pi^\vee)$. 
\end{enumerate}
For $E_6, E_7, E_8$ which are simply-laced, the root system of a Levi and its dual system are canonically identified and of the same type, thus from Table \ref{tabd5-1} onwards, we make such an identification and omit the column of $\mathcal{O}^\vee(\pi)$ as in Tables \ref{tabc4}--\ref{tabb3a1}.

\begin{table}[htbp]\caption{ Bala--Carter label for some nilpotent orbits }
	\label{label}
	\centering
	\renewcommand{\arraystretch}{1.5}
	\setlength\tabcolsep{5pt}
	

	\bigskip
	
\end{table}

 \bibliographystyle{alpha}
 \bibliography{BGWX}

\end{document}